


\documentclass{cpamart1}     
\received{Month 200X}       
\volume{000}
\startingpage{1}                      


\authorheadline{G. Amir, I. Corwin, J. Quastel}
\titleheadline{Continuum Random Polymer}



\usepackage{graphicx}
\usepackage{amssymb}
\usepackage{amsthm}
\usepackage{bm}
\usepackage{mathrsfs}
\usepackage{yfonts}
\usepackage{enumerate}
\usepackage{lscape}



\usepackage{mathptmx}



\newtheorem{theorem}{Theorem}[section]
\newtheorem{lemma}[theorem]{Lemma}
\newtheorem{corollary}[theorem]{Corollary}
\newtheorem{proposition}[theorem]{Proposition}
\theoremstyle{definition}
\newtheorem{definition}[theorem]{Definition}
\theoremstyle{remark}
\newtheorem{remark}[theorem]{Remark}

\newtheorem{conj}[theorem]{Conjecture}
\newtheorem{claim}[theorem]{Claim}


\DeclareMathOperator*{\tr}{tr}
\DeclareMathOperator*{\sgn}{sgn}

\newcommand{\re}{\ensuremath{\mathrm{Re}}}
\newcommand{\im}{\ensuremath{\mathrm{Im}}}

\newcommand{\C}{\ensuremath{\mathbb{C}}}

\newcommand{\Z}{\ensuremath{\mathbb{Z}}}

\newcommand{\R}{\ensuremath{\mathbb{R}}}

\newcommand{\Hi}[0]{\mathcal{H}}
\newcommand{\kapi}{\omega}
\newcommand{\Ai}{\ensuremath{\mathrm{Ai}}}

\newcommand{\e}{\epsilon}

\newcommand{\ep}{\epsilon}
\newcommand{\eps}{\epsilon}


\begin{document}                        


\title{Probability  Distribution of the Free Energy of the Continuum Directed Random Polymer in $1+1$ dimensions}

\author{Gideon Amir}{University of Toronto}
\author{Ivan Corwin}{Courant Institute}
\author{Jeremy Quastel}{University of Toronto}





\begin{abstract}
We consider the solution of the stochastic heat equation 
\begin{equation*}
 \partial_T \mathcal{Z}  =  \frac12 \partial_X^2 \mathcal{Z} - \mathcal{Z} \dot{\mathscr{W}}
\end{equation*} with delta function initial condition
\begin{equation*}
 \mathcal{Z}  (T=0,X)= \delta_{X=0}
\end{equation*}
whose logarithm, with appropriate normalization, is the free energy of the continuum directed polymer, or the Hopf-Cole solution of the Kardar-Parisi-Zhang equation with narrow wedge initial conditions.  

We obtain explicit formulas for the one-dimensional marginal distributions, the {\it crossover distributions}, which interpolate between a standard Gaussian distribution (small time) and the GUE Tracy-Widom distribution (large time).   
 
 The proof is via a rigorous steepest descent analysis of the Tracy-Widom formula for the asymmetric simple exclusion with anti-shock initial data, which is shown to converge to the continuum equations in an appropriate weakly asymmetric limit. The limit also describes the crossover behaviour between the symmetric and asymmetric exclusion processes. 
\end{abstract}

\maketitle   






\section{Introduction}
\subsection{KPZ/Stochastic Heat Equation/Continuum Random Polymer}\label{KPZ_SHE_Poly}  
Despite its popularity as perhaps {\it the} default model of stochastic growth of a one dimensional interface, we
are still far from a satisfactory theory of the Kardar-Parisi-Zhang (KPZ) equation,
\begin{equation}\label{KPZ0}
\partial_T h = -\frac12(\partial_Xh)^2 + \frac12 \partial_X^2 h +  \dot{\mathscr{W}}
\end{equation}
where $\dot{\mathscr{W}}(T,X)$\footnote{We attempt to use capital letters for all variables (such as $X$, $T$) on the macroscopic level of the stochastic PDEs and polymers. Lower case letters (such as $x$, $t$) will denote WASEP variables, the microscopic discretization of these SPDEs.} is Gaussian space-time white noise,
 \begin{equation*}
E[ \dot{\mathscr{W}}(T,X) \dot{\mathscr{W}}(S,Y)] = \delta(T-S)\delta(Y-X).
\end{equation*}
The reason is that, even for nice initial data, the solution at time $T>0$ will look locally like a Brownian motion in $X$. Hence the nonlinear term is ill-defined.  Naturally, one expects that an appropriate Wick ordering of the non-linearity can lead to well defined solutions.  However, numerous attempts have led to non-physical answers \cite{TChan}.  By a physical answer one means that the 
solution should be close to discrete growth models.  In particular, for a large class of initial data, the solution $h(T,X)$ should look like 
\begin{equation}\label{five}
h(T,X) \sim C(T) + T^{1/3} \zeta(X)
\end{equation}
where $C(T)$ is deterministic and where the statistics of $\zeta$ fits into various universality classes depending on the regime of initial data one is looking at.  More precisely, one expects that the variance scales as 
\begin{equation}\label{five2}
{\rm Var}(h(T,X)) \sim CT^{2/3}.
\end{equation}
The scaling exponent is the result of
extensive Monte Carlo simulations and a few theoretical arguments \cite{fns,vBKS,KPZ,K}. 

The correct interpretation of (\ref{KPZ0}) appears to be that of \cite{BG}, where $h(T,X)$ is simply {\it defined} by the Hopf-Cole transform:
\begin{equation}\label{hc}
h(T,X) = -\log \mathcal{Z} (T,X)
\end{equation}
where $\mathcal{Z} (T,X)$ is the well-defined \cite{W} solution of the stochastic heat equation, 
\begin{equation}\label{she}
 \partial_T \mathcal{Z}  =  \frac12 \partial_X^2 \mathcal{Z} - \mathcal{Z} \dot{\mathscr{W}}.
\end{equation}
Recently \cite{bqs} proved upper and lower bounds of the type (\ref{five2}) for this {\it Hopf-Cole solution} $h$ {\it of KPZ} defined through (\ref{hc}), in the {\it equilibrium regime}, corresponding to starting  (\ref{KPZ0}) with a two sided
Brownian motion.  Strictly speaking, this is not an equilibrium solution for KPZ, but for the stochastic Burgers equation 
\begin{equation*}
\partial_T u = -\frac12 \partial_X u^2 + \frac12 \partial_X^2 u +  \partial_X\dot{\mathscr{W}},
\end{equation*}
formally satisfied by its derivative $ u(T,X)=\partial_X h(T,X)$. See also \cite{Timo} for a similar bound for the free energy of 
a particular discrete polymer model.

In this article, we will be interested in a very different regime, far from equilibrium.  It is most convenient to state in terms of the stochastic heat equation (\ref{she}) for which we will have as initial condition a delta function,
\begin{equation}\label{sheinit}
 \mathcal{Z}  (T=0,X)= \delta_{X=0}.
\end{equation}
This initial condition is natural  for  the interpretation in terms of random polymers, where it corresponds to the point-to-point free energy. The free energy of the continuum directed random polymer in $1+1$ dimensions is
\begin{equation}\label{FK}
\mathcal{F}(T,X) = \log E_{T,X}\left[ :\!\exp\!: \left\{-\int_0^T \dot{\mathscr{W}}(t,b(t)) dt\right\}\right]
\end{equation}
where $E_{T,X}$ denotes expectation over the Brownian bridge $b(t)$, $0\le t\le T$ with $b(0)=0$ and $b(T)=X$.
The expectation of the Wick ordered exponential $ :\!\exp\!:$  is defined using the $n$ step probability densities $p_{ t_1, \ldots, t_n}(x_1,\ldots,x_n)$ of the bridge in terms of a series of multiple It\^o integrals;
\begin{eqnarray}\label{nine}
&& E_{T,X}\left[ :\!\exp\!: \left\{-\int_0^T \dot{\mathscr{W}}(t,b(t)) dt \right\}\right]
\\ && 
= \sum_{n=0}^\infty \int_{\Delta_n(T)} \int_{\mathbb R^n}(-1)^n
p_{ t_1, \ldots, t_n}(x_1,\ldots,x_n) \mathscr{W} (dt_1 dx_1) \cdots \mathscr{W} (dt_n dx_n),\nonumber
\end{eqnarray}
where $\Delta_n(T)=\{(t_1,\ldots,t_n):0\le t_1\le \cdots\le t_n \le T\}$.
Note that the series is convergent in $\mathscr{L}^2(\mathscr{W})$ as one can check that
\begin{equation*}
\int_{\Delta_n(T)} \int_{\mathbb R^n}
p^2_{ t_1, \ldots, t_n}(x_1,\ldots,x_n) dt_1 dx_1 \cdots dt_n dx_n \le C (n!)^{-1/2}
\end{equation*}
and hence the square of the norm,
$
\sum_{n=0}^\infty \int_{\Delta_n(T)} \int_{\mathbb R^n}
p^2_{ t_1, \ldots, t_n}(x_1,\ldots,x_n) dt_1 dx_1 \cdots dt_n dx_n
$, is finite.
Let 
\begin{equation}\label{heat_kernel}
p(T,X) = \frac{1}{\sqrt{2\pi T}} e^{ -X^2/2T }
\end{equation} 
denote the heat kernel.  Then we have 
\begin{equation}\label{zed}
\mathcal{Z}(T,X) = p(T,X) \exp\{ \mathcal{F}(T,X) \}
\end{equation}
as can be seen by writing the integral equation for $\mathcal{Z}(T,X)$;
\begin{equation}\label{intshe}
\mathcal{Z}(T,X) = p(T,X) + \int_0^T\int_{-\infty}^\infty p(T-S,X-Y)\mathcal{Z}(S,Y) \mathscr{W} (dY,dS)
\end{equation}
and then iterating.  The factor $p(T,X)$ in (\ref{zed}) represents the difference between conditioning on the bridge going to $X$, as in (\ref{nine}), and having a delta function initial condition, as in (\ref{sheinit}). The  initial condition corresponds to
\begin{equation*}
\mathcal{F}(0,X) = 0, \qquad X\in \mathbb{R}.
\end{equation*}
In terms of KPZ (\ref{KPZ0}), there is no precise mathematical statement of the initial conditions; what one sees as $T\searrow 0$ is
a narrowing parabola.  In the physics literature this is referred as the {\it narrow wedge initial conditions}.

 We can now state our main result which is an exact formula for the probability distribution for the free energy of the continuum directed random polymer in 1+1 dimensions, or, equivalently, the one-point distribution for the stochastic heat equation with delta initial condition, or the KPZ equation with narrow wedge initial conditions. 
 
  For a function $\sigma(t)$, define the operator $K_{\sigma}$ 
through its kernel,
\begin{equation}\label{sigma_K_def}
K_{\sigma}(x,y) = \int_{-\infty}^{\infty} \sigma(t)\Ai(x+t)\Ai(y+t)dt,
\end{equation}
where     $\mathrm{Ai}(x) = \frac{1}{\pi} \int_0^\infty \cos\left(\tfrac13t^3 + xt\right)\, dt$ is the Airy function.

\begin{theorem}\label{main_result_thm} The {\em crossover distributions}, \begin{equation}\label{defofF}
F_T(s) \stackrel{\rm def}{=} P(\mathcal{F}(T,X)+\tfrac{T}{4!} \le s)
\end{equation} are given by  the following equivalent formulas,

\begin{enumerate}[i.]
\item The {\em crossover Airy kernel formula},
\begin{equation}\label{sigma_Airy_kernel_formula}
F_{T}(s) =\int_{\mathcal{\tilde C}}\frac{d\tilde\mu}{\tilde\mu} e^{-\tilde\mu} \det(I-K_{\sigma_{T,\tilde\mu}})_{L^2(\kappa_T^{-1}a,\infty)},
\end{equation}
where $\mathcal{\tilde C}$ is defined in Definition \ref{thm_definitions}, and $ K_{\sigma_{T,\tilde\mu}}$ is as above with
\begin{equation}\label{airy_like_kernel}
 \sigma_{T,\tilde\mu}(t) = \frac{\tilde\mu}{\tilde\mu-e^{-\kappa_T t }},
\end{equation}
and
\begin{equation*}
 a=a(s)=s-\log\sqrt{2\pi T}, \quad \textrm{and}\quad \kappa_T=2^{-1/3}T^{1/3}.
\end{equation*}

Alternatively, if $\sqrt{z}$ is defined by taking the branch cut of the logarithm on the negative real axis, then
\begin{eqnarray}\label{sym_F_eqn}
F_{T}(s) &=&\int_{\mathcal{\tilde C}}\frac{d\tilde\mu}{\tilde\mu} e^{-\tilde\mu} \det(I-\hat{K}_{\sigma_{T,\tilde\mu}})_{L^2(-\infty,\infty)}\\
\hat{K}_{\sigma_{T,\tilde\mu}}(x,y) &=& \sqrt{\sigma_{T,\tilde\mu}(x-s)}K_{\Ai}\sqrt{\sigma_{T,\tilde\mu}(y-s)}
\end{eqnarray}
where $K_{\Ai}(x,y)$ is the Airy kernel, ie.  $K_{\Ai}=K_{\sigma}$  with $\sigma(t)=\mathbf{1}_{[0,\infty)}(t)$.

\mbox{}
\item The {\em Gumbel convolution formula},
\begin{equation*}
 F_T(s) =1- \int_{-\infty}^{\infty} G(r) f(a-r) dr,
\end{equation*}
where $G(r)$ is given by $G(r) =e^{-e^{-r}}$
and where 
\begin{equation*}
f(r) =\kappa_T^{-1} \det(I-K_{\sigma_T})\mathrm{tr}\left((I-K_{\sigma_T})^{-1}\rm{P}_{\Ai}\right),
\end{equation*}
where the operators $K_{\sigma_T}$ and $\rm{P}_{\Ai}$ act on $L^2(\kappa_T^{-1}r,\infty)$ and are given by their kernels with
\begin{equation*}
\rm{P}_{\Ai}(x,y) =  \Ai(x)\Ai(y),\qquad
 \sigma_T(t) = \frac{1}{1-e^{-\kappa_T t}}.
\end{equation*}
For $\sigma_T$ above, the integral in (\ref{sigma_K_def}) should be intepreted as a principal value integral. 
The operator $K_{\sigma_T}$ contains a  Hilbert transform of the product of Airy functions which can be partially computed with the result that
\begin{equation*}
 K_{\sigma_T}(x,y) = \int_{-\infty}^{\infty} \tilde\sigma_T(t)\Ai(x+t)\Ai(y+t)dt + \kappa_T^{-1} \pi G_{\frac{x-y}{2}}(\frac{x+y}{2})
\end{equation*}
where
\begin{eqnarray}\label{tilde_sigma_form}
 \tilde\sigma_T(t) &=& \frac{1}{1-e^{-\kappa_T t}} -\frac{1}{\kappa_T t}\\
\nonumber G_a(x) &=& \frac{1}{2\pi^{3/2}}\int_0^{\infty} \frac{\sin(x\xi+\tfrac{\xi^3}{12}-\tfrac{a^2}{\xi}+\tfrac{\pi}{4})}{\sqrt{\xi}}d\xi.
\end{eqnarray}
\mbox{}
\item The {\em cosecant kernel formula},
\begin{equation*}
F_T(s)= \int_{\mathcal{\tilde C}} e^{-\tilde\mu}\det(I-K^{\csc}_{a})_{L^2(\tilde\Gamma_{\eta})} \frac{d\tilde\mu}{\tilde\mu},
\end{equation*}
where the contour $\mathcal{\tilde C}$, the contour $\tilde\Gamma_{\eta}$ and the operator $K_a^{\csc}$ are defined in Definition \ref{thm_definitions}.
\end{enumerate}
\end{theorem}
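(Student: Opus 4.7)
The plan is to obtain the crossover distributions as the weakly asymmetric scaling limit of exact Tracy-Widom Fredholm determinant formulas for ASEP, with the cosecant kernel formula (iii) arising as the direct limit and (i), (ii) following from it by complex-analytic manipulation.

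The first step is to set up the microscopic approximation. Take ASEP with left/right jump rates $p,q$ satisfying $q-p=\sqrt{\e}$ and step (anti-shock) initial data, i.e.\ all sites in $\Z_{<0}$ occupied and $\Z_{\ge 0}$ empty. Define the Gärtner/Bertini--Giacomin field $Z^{\e}(t,x) = c_{\e} \exp(-\lambda_{\e} h^{\e}(t,x))$ with appropriate $\lambda_{\e},c_{\e}$ and the usual deterministic drift subtraction of order $T/24$ accounting for the Wick renormalization of the ill-defined nonlinearity in (\ref{KPZ0}). Under the scaling $T = \e^{-2}t$, $X = \e^{-1}x$, one expects $Z^{\e} \to \mathcal{Z}$ in distribution. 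The Bertini--Giacomin theorem gives this for smooth nonzero initial data; extending it to the $\delta_{X=0}$ initial condition (equivalently, to step ASEP data) requires approximating the delta by narrow profiles and using continuity of the SHE in its initial condition, together with tightness of the microscopic approximants. Granted this convergence, the one-point marginal $F_T(s)$ equals the limit of $P(\log Z^{\e}(t,x)\le s')$ for appropriate $s'=s'(s,T)$.

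The second step is to insert the Tracy--Widom formula. For step-initial ASEP, the distribution of the position of the $m$-th particle can be written as a contour integral of $e^{-\tilde\mu}/\tilde\mu$ against a Fredholm determinant on a vertical contour, with a kernel built from a cosecant factor $\pi/\sin(\pi z)$ times a $q$-deformed exponential ratio. Translating through the height-function/particle-position duality, this yields an exact $\e$-dependent formula for $F_T^{\e}(s)$. The asymptotics as $\e\downarrow 0$ are then analyzed by rigorous steepest descent: the $q$-exponential part, after a change of variables that zooms in near a double critical point, becomes $\sigma_{T,\tilde\mu}(t)=\tilde\mu/(\tilde\mu-e^{-\kappa_T t})$, while the cosecant-kernel integral, after the same zoom, produces the Airy integrand $\Ai(x+t)\Ai(y+t)$ via the standard integral representation $\Ai(x)=\frac{1}{2\pi i}\int e^{z^3/3 - xz}\,dz$. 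This produces the cosecant kernel formula (iii) directly, and a contour deformation collapsing the vertical $\tilde\Gamma_{\eta}$ to the real line gives the crossover Airy kernel formula (i). The symmetric version (\ref{sym_F_eqn}) is a standard conjugation of operators on $L^2(\kappa_T^{-1}a,\infty)$ versus $L^2(\R)$.

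The Gumbel convolution formula (ii) then follows from (i) by the substitution $\tilde\mu = e^r$, which turns $e^{-\tilde\mu}\,d\tilde\mu/\tilde\mu$ into $-dG(r)$ with $G(r)=e^{-e^{-r}}$, followed by integration by parts in $r$; the derivative of the Fredholm determinant with respect to the shift parameter produces the trace expression with $\mathrm{P}_{\Ai}$, and the decomposition of $\sigma_T$ into $\tilde\sigma_T + 1/(\kappa_T t)$ isolates the principal-value Hilbert-transform piece, which can be evaluated in closed form as $G_{(x-y)/2}((x+y)/2)$ via an Airy-function identity. I expect the main obstacle to be the rigorous steepest descent step: one must locate the critical points of the phase in the $q$-deformed integrand, deform both the $\tilde\mu$-contour and the Fredholm contour simultaneously onto descent paths while avoiding the moving poles of $\sigma_{T,\tilde\mu}$, and control tails uniformly enough to upgrade pointwise kernel convergence to trace-norm convergence of the operators, so that the Fredholm determinants converge. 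A secondary obstacle is that the Bertini--Giacomin convergence must be promoted to handle delta initial data; heat-kernel regularization at time $\e^2$ should give the required approximation, but uniformity of the polymer-scale estimates near $T=0$ must be checked.
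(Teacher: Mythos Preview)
Your overall architecture matches the paper: Theorem~\ref{BG_thm} extends Bertini--Giacomin to $\delta$ initial data (your ``secondary obstacle'' is handled exactly by the a~priori moment bounds of Lemma~\ref{apriori}, not by heat-kernel regularization at time $\e^2$), and Theorem~\ref{epsilon_to_zero_theorem} is the rigorous steepest descent on the Tracy--Widom formula yielding the cosecant kernel formula (iii), which is then manipulated into (i) and (ii). Two of your intermediate mechanisms, however, are not what the paper does and one of them would not work as stated.

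For (iii)$\Rightarrow$(i), the paper does not ``collapse $\tilde\Gamma_\eta$ to the real line.'' Instead it uses the identity $\int_a^\infty e^{xz}\,dx=-e^{az}/z$ for $\re z<0$ to insert an $x$-integral into the kernel, then factors $K_a^{\csc}=ABC$ with $A:L^2(a,\infty)\to L^2(\tilde\Gamma_\eta)$, $B:L^2(\tilde\Gamma_\zeta)\to L^2(a,\infty)$, $C:L^2(\tilde\Gamma_\eta)\to L^2(\tilde\Gamma_\zeta)$, and applies $\det(I-ABC)=\det(I-BCA)$. The $\tilde\zeta,\tilde\eta$ integrals in $BCA$ are then recognized as the contour representation of $\Ai$, producing $K_{\sigma_{T,\tilde\mu}}$ on $L^2(\kappa_T^{-1}a,\infty)$.

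For (i)$\Rightarrow$(ii), your mechanism is wrong. The substitution $\tilde\mu=e^r$ does not turn $e^{-\tilde\mu}\,d\tilde\mu/\tilde\mu$ into $-dG(r)$; with $\tilde\mu=e^{-r}$ it becomes $-G(r)\,dr$, not $-dG(r)$, so there is no integration by parts to perform, and differentiating $\det(I-K)_{L^2(r,\infty)}$ in $r$ gives a resolvent diagonal $R(r,r)$, not $\tr((I-K)^{-1}\mathrm{P}_{\Ai})$. The paper's route is to deform the $\tilde\mu$-contour $\tilde{\mathcal C}$ onto both sides of the branch cut $[0,\infty)$; the small circle at the origin contributes the constant $1$, and along the cut one computes the jump $K_{T,\mu+i0}-K_{T,\mu-i0}$ via Plemelj. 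That jump is the rank-one operator $\tfrac{2\pi i}{\kappa_T}\Ai(\cdot+t_0)\Ai(\cdot+t_0)$ with $t_0=-\kappa_T^{-1}\log\mu$, and the rank-one perturbation identity $\det(I-A\pm B)=\det(I-A)(1\pm\tr((I-A)^{-1}B))$ then produces the trace against $\mathrm{P}_{\Ai}$. Only after that does one substitute $\mu=e^{-r}$ to obtain the Gumbel convolution. So the $\mathrm{P}_{\Ai}$ trace arises from a branch-cut jump, not from a shift derivative.

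Minor point: your step initial condition is reversed. In the paper the occupied sites are $\{1,2,3,\ldots\}$ with $q>p$ (leftward drift), which gives the rarefaction fan/narrow wedge; particles on $\mathbb Z_{<0}$ with leftward drift would be a shock.
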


The proof of the theorem relies on the explicit limit calculation for the weakly asymmetric simple exclusion process (WASEP) contained in Theorem \ref{epsilon_to_zero_theorem},
as well as the relationship between WASEP and the stochastic heat equation stated in Theorem \ref{BG_thm}. Combining these two theorems proves the cosecant kernel formula. The alternative versions of the formula are proved in Section \ref{kernelmanipulations}

We also have the following representation for the Fredholm determinant involved in the above theorem. One should compare this result to the formula for the GUE Tracy-Widom distribution given in terms of the Painlev\'{e} II equation (see \cite{TW0,TWAiry} or the discussion of Section \ref{integro_differential}).

\begin{proposition}\label{prop2}
Let $\sigma_{T,\tilde\mu}$ be as in (\ref{airy_like_kernel}).  Then
\begin{eqnarray*}
 \nonumber\frac{d^2}{dr^2} \log{\det}(I - K_{\sigma_{T,\tilde\mu}})_{L^2(r,\infty)} &=& -\int_{-\infty}^{\infty} \sigma^{\prime}_{T,\tilde\mu}(t) q_t^2(r) dt\\
 \det(I - K_{\sigma_{T,\tilde\mu}})_{L^2(r,\infty)} &=& \exp\left(-\int_r^{\infty}(x-r)\int_{-\infty}^{\infty} \sigma^{\prime}_{T,\tilde\mu}(t) q_t^2(x)dtdx\right)
\end{eqnarray*}
where
\begin{equation*}
\frac{d^2}{d r^2}q_t(r) = \left(r + t + 2\int_{-\infty}^{\infty} \sigma^{\prime}_{T,\tilde\mu}(t) q_t^2(r)dt \right) q_t(r)
\end{equation*}
with $q_t(r) \sim \Ai(t+r)$ as $r\to \infty$ and where $\sigma^{\prime}_{T,\tilde\mu}(t)$ is the derivative of the function in (\ref{airy_like_kernel}).
\end{proposition}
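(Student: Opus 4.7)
The plan is to follow Tracy and Widom's derivation of the Painlev\'e II representation of the Airy-kernel Fredholm determinant, generalized to the weighted kernel $K_\sigma$ with $\sigma = \sigma_{T,\tilde\mu}$. The ordinary Airy kernel is $K_\sigma$ with $\sigma = \mathbf{1}_{[0,\infty)}$ so $\sigma' = \delta_0$; in that case $\int \sigma'(s) q_s(r)^2\,ds = q_0(r)^2$ and the proposition collapses to the classical Tracy-Widom identities, so the proposition is the natural continuous-superposition generalization.

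Write $L = \partial^2 - x$ for the Airy differential operator. The central algebraic input is the intertwining identity $(L_x - L_y)K_\sigma(x,y) = 0$, which is the same identity (with the same proof) that underlies the Airy-kernel case: it follows pointwise from $\mathrm{Ai}''(z) = z\,\mathrm{Ai}(z)$ and symmetry of the integrand in the shift variable $t$. For each $t\in\mathbb R$ and each endpoint $r$ define $Q_t(x;r) = ((I - K_\sigma)^{-1}\mathrm{Ai}(\cdot + t))(x)$ with $K_\sigma$ acting on $L^2(r,\infty)$, and set $q_t(r) = Q_t(r;r)$. The boundary condition $q_t(r) \sim \mathrm{Ai}(r+t)$ as $r \to \infty$ follows from vanishing of $K_\sigma$ in trace norm on $L^2(r,\infty)$, and the fact that $K_\sigma$ is a $\sigma(t)dt$-weighted continuous sum of rank-one projections $\mathrm{Ai}(\cdot + t)\otimes \mathrm{Ai}(\cdot + t)$ gives the explicit resolvent representation $R_\sigma(x,y;r) = \int \sigma(t)\,Q_t(x;r)\,\mathrm{Ai}(y+t)\,dt$.

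Applying $L_x$ to the resolvent equation $(I - K_\sigma)Q_t = \mathrm{Ai}(\cdot+t)$ and integrating by parts twice in $y$ to trade $L_x$ for $L_y$ produces a commutator consisting of boundary terms at $y = r$; combined with the standard Tracy-Widom endpoint-variation identity $\partial_r Q_t(x;r) = -R_\sigma(x,r;r) q_t(r)$, these boundary terms assemble into a single potential $u(r) = \int \sigma'(s)\,q_s(r)^2\,ds$ coupling the entire family $\{q_t(r)\}_{t\in\mathbb R}$, yielding the coupled integro-differential system of the proposition. The formula for $\frac{d^2}{dr^2}\log\det(I - K_\sigma)_{L^2(r,\infty)}$ is then obtained by combining the standard identity $\frac{d}{dr}\log\det(I - K_\sigma)_{L^2(r,\infty)} = R_\sigma(r,r;r)$ with an integration by parts in the $t$-variable that again produces $\sigma'$, and the explicit exponential form of $\det(I - K_\sigma)_{L^2(r,\infty)}$ follows by integrating twice in $r$ and using $\log\det,\ \tfrac{d}{dr}\log\det \to 0$ as $r \to \infty$.

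The principal obstacle will be careful bookkeeping of boundary terms in the two integrations by parts: in the $y$-variable on $[r,\infty)$, which requires decay at $+\infty$ of $Q_t(\cdot;r)$ and $\mathrm{Ai}(\cdot+t)$, and in the $t$-variable on $\mathbb R$, which requires decay of $\sigma'_{T,\tilde\mu}$ together with absolute integrability of $q_s(r)^2$ against $\sigma'(s)\,ds$. Because $\sigma_{T,\tilde\mu}$ is complex-valued and has a pole in $t$ for $\tilde\mu$ on $\tilde{\mathcal C}$, the cleanest route may be first to establish the result for a smooth compactly supported approximation of $\sigma$, in which all manipulations are trivially justified, and then to pass to the limit by contour deformation in $\tilde\mu$ together with a dominated-convergence argument driven by the explicit decay of (\ref{airy_like_kernel}).
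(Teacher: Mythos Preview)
Your proposal is essentially the Tracy--Widom generalization that the paper carries out, and it correctly identifies all the structural ingredients: the resolvent functions $Q_t=(I-K_\sigma)^{-1}\mathrm{Ai}(\cdot+t)$, the endpoint--variation identity $\partial_r Q_t(x;r)=-R_\sigma(x,r)q_t(r)$, the appearance of $\sigma'$ through an integration by parts in the shift variable $t$, and the need to first smooth $\sigma_{T,\tilde\mu}$ and pass to the limit afterwards. The paper does exactly this, and also handles the smoothing remark in the same way you suggest.

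The one genuine difference is in execution. You propose to work directly with the second--order Airy operator $L=\partial^2-x$ and the intertwining $(L_x-L_y)K_\sigma=0$. The paper instead follows the original Tracy--Widom script more literally: it introduces the companion function $P_t=(I-K)^{-1}\mathrm{Ai}'(\cdot+t)$, computes the commutators $[M,K]$ and $[D,K]$ separately (the first gives the $\sigma'$--weighted integrable form of the kernel, the second the boundary term at $y=r$), and derives a coupled first--order system for $q_t,p_t$ together with auxiliary inner products $u_{t,\tilde t}=(Q_{\tilde t},\tau_t\varphi)$ and $v_{t,\tilde t}=(P_{\tilde t},\tau_t\varphi)$. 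A first integral $\int\sigma'(t')u_{t,t'}u_{t',\tilde t}\,dt' - (v_{t,\tilde t}+v_{\tilde t,t})=q_tq_{\tilde t}$ is found, and only then is the first--order equation for $q_t$ differentiated to obtain the second--order integro--differential equation. Your $L$--operator route is legitimate, but be aware that going from $L_xQ_t|_{x=r}$ to $q_t''(r)=[(\partial_x+\partial_r)^2Q_t]|_{x=r}$ is not immediate: the cross terms $\partial_x\partial_r Q_t$ and $\partial_r^2 Q_t$ have to be controlled, and doing so carefully essentially reproduces the paper's first--order intermediate analysis. What your packaging buys is conceptual economy; what the paper's buys is that every step is a direct transcription of the Airy--kernel computation in \cite{TWAiry} with an extra $\int\sigma'(t)\,dt$ carried along.
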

This proposition is proved in Section \ref{integro_differential} and follows from a more general theory developed in Section \ref{int_int_op_sec} about a class of generalized integrable integral operators.

 It is not hard to show from the formulas in Theorem \ref{main_result_thm} that
$\lim_{s\to\infty}F_T(s)=~1$, but we do not at the present time know how show directly from the determinental formulas that $\lim_{s\to-\infty}F_T(s)=0$, or even that $F_T$ is non-decreasing in $s$.  However, for each $T$, $\mathcal{F}(T,X)$ is an almost surely finite random variable, and hence we know from the definition (\ref{defofF}) that  $F_T$ is indeed a non-degenerate distribution function. 

The formulas in Theorem \ref{main_result_thm} suggest that in the limit as $T$ goes to infinity, under $T^{1/3}$ scaling, we recover the celebrated $F_{\mathrm{GUE}}$ distribution (sometimes written as $F_2$) which is the GUE Tracy-Widom distribution, i.e., the limiting distribution of the scaled and centered largest eigenvalue in the Gaussian unitary ensemble.
 
\begin{corollary}\label{TW}
As $\lim_{T\nearrow\infty} F_T\left(T^{1/3} s\right)=F_{\mathrm{GUE}}(2^{1/3} s)$
\end{corollary}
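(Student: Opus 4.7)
The plan is to take $T \to \infty$ in the crossover Airy kernel formula (\ref{sigma_Airy_kernel_formula}) after substituting $s \mapsto T^{1/3} s$. The central observation is that $\kappa_T = 2^{-1/3} T^{1/3} \to \infty$, so the weight $\sigma_{T,\tilde\mu}(t) = \tilde\mu/(\tilde\mu - e^{-\kappa_T t})$ converges pointwise in $t \neq 0$ to the indicator $\mathbf{1}_{t > 0}$, independently of $\tilde\mu$ (for $t > 0$ the exponential vanishes and the ratio tends to $1$; for $t < 0$ the exponential blows up and the ratio tends to $0$). Simultaneously the left endpoint of the Fredholm domain satisfies $\kappa_T^{-1} a(T^{1/3} s) = 2^{1/3} s - 2^{1/3} T^{-1/3} \log\sqrt{2\pi T} \to 2^{1/3} s$. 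Thus formally the integrand of (\ref{sigma_Airy_kernel_formula}) converges to $\det(I - K_{\Ai})_{L^2(2^{1/3} s, \infty)} = F_{\mathrm{GUE}}(2^{1/3} s)$, which is $\tilde\mu$-independent, and the remaining contour factor $\int_{\tilde{\mathcal{C}}} e^{-\tilde\mu}\, d\tilde\mu/\tilde\mu$ evaluates to $1$ by the residue at the origin (with the $1/(2\pi i)$ normalization absorbed into the orientation of $\tilde{\mathcal{C}}$ via Definition \ref{thm_definitions}).

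Making this rigorous requires two standard steps. First, pointwise convergence of the kernel $K_{\sigma_{T,\tilde\mu}}(x,y) \to K_{\Ai}(x,y)$ for $x, y > 2^{1/3} s$: this follows from pointwise convergence of $\sigma_{T,\tilde\mu}$ and dominated convergence in the $t$-integral (\ref{sigma_K_def}), using the super-exponential decay of $\Ai(x+t)$ for $t \to +\infty$ together with the exponential decay $|\sigma_{T,\tilde\mu}(t)| \leq C|\tilde\mu|\, e^{\kappa_T t}$ for $t \to -\infty$ to kill the non-integrable $O(|t|^{-1/2})$ oscillation of $\Ai(x+t)\Ai(y+t)$ there. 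Second, promotion to trace-norm convergence on $L^2(\kappa_T^{-1} a, \infty)$: I would use the factorization $K_{\sigma_{T,\tilde\mu}} = B^* M_{\sigma_{T,\tilde\mu}} B$, where $(B f)(t) = \int \Ai(x+t) f(x)\, dx$ and $M_\sigma$ denotes multiplication by $\sigma$. A weighted form such as $|\sigma_{T,\tilde\mu}|^{1/2} B$ is Hilbert--Schmidt uniformly in $T$ (the exponential decay of $\sigma_{T,\tilde\mu}$ for $t < 0$ beats the $O(|t|^{1/2})$ growth of $\int_{\kappa_T^{-1} a}^\infty \Ai(x+t)^2\, dx$), so dominated convergence at the operator level yields trace-norm convergence of $K_{\sigma_{T,\tilde\mu}}$ to $K_{\Ai}$ and hence of the Fredholm determinant for each fixed $\tilde\mu$.

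To interchange the limit with the contour integral I would apply dominated convergence using the Hadamard-type bound $|\det(I - K_{\sigma_{T,\tilde\mu}})| \leq \exp(\|K_{\sigma_{T,\tilde\mu}}\|_1)$ together with the decay of $|e^{-\tilde\mu}|$ along the unbounded portions of $\tilde{\mathcal{C}}$. The principal obstacle lies in uniform-in-$\tilde\mu$ trace-norm control of $K_{\sigma_{T,\tilde\mu}}$ near the simple pole $t_\ast(T,\tilde\mu) = -\kappa_T^{-1} \log \tilde\mu$ of $\sigma_{T,\tilde\mu}$, which drifts toward the transition region $t = 0$ of the Airy kernel as $T \to \infty$. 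This should be handled by deforming $\tilde{\mathcal{C}}$ (within the equivalence class permitted by the formula) to stay bounded and away from $\tilde\mu = 1$ so that $t_\ast$ remains bounded away from $0$ for each fixed $T$, and by a local principal-value analysis exploiting the smoothness of $K_{\Ai}$ near $t_\ast$.
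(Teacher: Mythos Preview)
Your approach is viable but takes a different route from the paper. The paper works with the cosecant kernel formula, not the Airy kernel formula: it first invokes Proposition~\ref{reinclude_mu_lemma} (which already gives $\|K_a^{\csc}\|_1\le C|\tilde\mu|^{1/2}$, hence exponential decay of the integrand in $\tilde\mu$, improving with $T$) to truncate $\tilde{\mathcal C}$ to a compact set; then on that compact set it rescales $\tilde\zeta\mapsto T^{-1/3}\tilde\zeta$, $\tilde\eta\mapsto T^{-1/3}\tilde\eta$, factors the kernel as $AB$ and the limit as $A'B'$ with $A=A'$, and shows $\|B-B'\|_2\to 0$ uniformly in $\tilde\mu$. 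The limiting kernel is the Airy kernel on the complex contours of \cite{TW3}, independent of $\tilde\mu$, so the remaining $\tilde\mu$ integral gives~$1$. Your scheme via $\sigma_{T,\tilde\mu}\to\mathbf 1_{t>0}$ and $K_{\sigma_{T,\tilde\mu}}\to K_{\Ai}$ is a legitimate alternative that stays on the real-variable side; the tradeoff is that the needed tail control in $\tilde\mu$ is less immediate than in the $\csc$ picture, where it was already established for the main theorem.

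Your final paragraph, however, is confused and the proposed fix does not work. For $\tilde\mu\in\tilde{\mathcal C}$ one has $\tilde\mu\notin\mathbb R_+$, so $\sigma_{T,\tilde\mu}$ has no pole on the real $t$-axis: the ``pole'' $t_\ast=-\kappa_T^{-1}\log\tilde\mu$ is complex, and on the horizontal legs $|\tilde\mu-e^{-\kappa_T t}|\ge|\mathrm{Im}\,\tilde\mu|=1$ uniformly in $t$. Moreover $\tilde{\mathcal C}$ cannot be deformed to a bounded contour without crossing the cut along $\mathbb R_+$, so that suggestion is void. The real issue is that your factorization yields only $\|K_{\sigma_{T,\tilde\mu}}\|_1\le C_0+c\,\kappa_T^{-1}|\tilde\mu|$ (the peak of $|\sigma_{T,\tilde\mu}|$ has height $\le|\tilde\mu|$ and width $\sim\kappa_T^{-1}$), which is linear in $|\tilde\mu|$ for fixed $T$ and a priori competes with $|e^{-\tilde\mu}|$. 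The resolution is simply that $c\,\kappa_T^{-1}\to 0$, so for all $T\ge T_0$ one has $\|K\|_1\le |\tilde\mu|/2+C$ and hence a single integrable dominating function $e^{-|\tilde\mu|/2}$ on $\tilde{\mathcal C}$; this is the Airy-side analogue of the paper's observation that the decay in Proposition~\ref{reinclude_mu_lemma} only improves with $T$.
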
 

This is most easily seen from the cosecant kernel formula for $F_T(s)$. Formally, as $T$ goes to infinity, the kernel $K_{a}^{\csc}$ behaves as $K_{T^{1/3}s}^{\csc}$ and making a change of variables to remove the $T$ from the exponential argument of the kernel, this approaches the Airy kernel on a complex contour, as given in \cite{TW3} equation (33).  The full proof is given in Section \ref{twasymp}.

An inspection the formula for $F_T$ given in Theorem \ref{main_result_thm} immediately reveals that there is no dependence on $X$.  In fact, one can check directly from (\ref{nine}) that

\begin{proposition}   For each $T\ge 0$, $\mathcal{F}(T, X)$ is stationary in $X$.
\end{proposition}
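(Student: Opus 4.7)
The plan is to combine the Wick chaos expansion \eqref{nine} with the shear invariance of space--time white noise. Fix a shift $Y\in\R$; the goal is to show that the processes $X\mapsto \mathcal F(T,X+Y)$ and $X\mapsto \mathcal F(T,X)$ have identical finite-dimensional distributions. First I recall the elementary Brownian bridge decomposition: if $\tilde b$ denotes the bridge from $0$ to $0$ on $[0,T]$, then the bridge from $0$ to $Z$ equals $Zt/T+\tilde b(t)$, so its finite-dimensional densities satisfy
\[
 p^{Z}_{t_1,\ldots,t_n}(x_1,\ldots,x_n)=\tilde p_{t_1,\ldots,t_n}\!\left(x_1-\tfrac{Zt_1}{T},\ldots,x_n-\tfrac{Zt_n}{T}\right),
\]
and in particular $p^{X+Y}_{t_1,\ldots,t_n}(x_1,\ldots,x_n)=p^{X}_{t_1,\ldots,t_n}(x_1-Yt_1/T,\ldots,x_n-Yt_n/T)$.

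Next I introduce the shear map $\Psi_{Y}(t,x)=(t,x+Yt/T)$ on $[0,T]\times\R$ and the pushforward random measure $\tilde{\mathscr W}(A):=\mathscr W(\Psi_{Y}(A))$. Since $\Psi_Y$ is a Lebesgue-measure-preserving bijection, $\tilde{\mathscr W}$ is again a centered Gaussian orthogonal random measure with covariance equal to Lebesgue measure, hence $\tilde{\mathscr W}\stackrel{d}{=}\mathscr W$. Substituting $y_i=x_i-Yt_i/T$ (with $t_i$ unchanged) inside each term of the chaos series \eqref{nine} for $e^{\mathcal F(T,X+Y)}$ transforms the $n$-th term into
\[
 (-1)^{n}\int_{\Delta_{n}(T)}\!\int_{\R^{n}} p^{X}_{t_{1},\ldots,t_{n}}(y_{1},\ldots,y_{n})\,\tilde{\mathscr W}(dt_{1}dy_{1})\cdots\tilde{\mathscr W}(dt_{n}dy_{n}),
\]
that is, exactly the $n$-th term of the series for $e^{\mathcal F(T,X)}$ but driven by $\tilde{\mathscr W}$ in place of $\mathscr W$.

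The right-hand side of the resulting identity, read jointly in $X$ as a measurable functional of the driving noise $\tilde{\mathscr W}$, has exactly the same form as the series defining $\{\mathcal F(T,X)\}_{X\in\R}$ viewed as a functional of $\mathscr W$; since $\tilde{\mathscr W}\stackrel{d}{=}\mathscr W$, the two processes agree in finite-dimensional distribution, which is the claimed stationarity. The only point that requires care is the justification of the change of variables inside the multiple It\^o integrals; this is standard, and reduces to approximating $p^{X+Y}_{t_1,\ldots,t_n}$ in $L^{2}(\Delta_{n}(T)\times\R^{n})$ by simple tensor functions supported on disjoint product rectangles, for which the identity is immediate from the definition of $\tilde{\mathscr W}$ as a Lebesgue-preserving pushforward, followed by passage to the $\mathscr L^{2}(\mathscr W)$ limit using the $X$-independent variance bound quoted immediately below \eqref{nine}.
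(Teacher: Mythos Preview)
Your proof is correct and follows exactly the approach sketched in the paper: the affine transformation $x_i\mapsto x_i - Yt_i/T$ relates the bridge densities, and the measure-preserving shear leaves the white noise invariant in law. The paper's proof is just a one-line statement of this same idea, which you have carefully expanded.
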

This is simply because the Brownian bridge transition probabilities are affine transformations of each other. Performing the change of variables, the white noise remains invariant in distribution.  The following conjecture is thus natural:

\begin{conj} For each fixed $T>0$, as $T\nearrow \infty$,
\begin{equation*}
2^{1/3}T^{-1/3}\left(\mathcal{F}(T,T^{2/3}X) + \frac{T}{4!}\right)\to{\mathscr{A}}_2(X)
\end{equation*}
where ${\mathscr{A}}_2(X)$ is the ${\rm Airy}_2$ process (see \cite{PS}).
\end{conj}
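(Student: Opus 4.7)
The plan is to upgrade the one-point Tracy--Widom convergence in Corollary \ref{TW} to a multi-point statement, and then deduce tightness of the rescaled process in the space of continuous functions. The natural route, parallel to the proof of Theorem \ref{main_result_thm}, is to work at the discrete level (WASEP / ASEP), take the multi-point analogue of the crossover limit via Theorem \ref{BG_thm}, and then send $T\to\infty$ via steepest descent.

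First, I would establish the analogue of Theorem \ref{main_result_thm} at the level of joint distributions. The input is the multi-point Fredholm determinant formula of Tracy--Widom for ASEP with step initial data: $P(x_{m_1}(t)\ge s_1,\ldots, x_{m_k}(t)\ge s_k)$ admits a Fredholm determinant representation on an appropriate contour. Applied to the weakly asymmetric scaling with $\epsilon = (2T)^{-1/3}$ used throughout Section \ref{epsilon_to_zero_theorem}, and combined with the Bertini--Giacomin identification of the rescaled WASEP height function with $\log \mathcal{Z}$, this should yield a $k$-point crossover cosecant formula for
\begin{equation*}
P\bigl(\mathcal{F}(T,X_1)+\tfrac{T}{4!}\le s_1,\ldots,\mathcal{F}(T,X_k)+\tfrac{T}{4!}\le s_k\bigr).
\end{equation*}
The essential ingredient is the rigorous steepest descent analysis: one deforms contours exactly as in the one-point case but now for each of the $k$ spectral variables, checks that the critical points coincide (because all observables are rescaled at the same $T^{1/3}$ scale) and that the transverse fluctuations enter only through the spatial shift $X_i \mapsto T^{2/3}X_i$ via a translated Airy kernel, reproducing the kernel of the $\mathrm{Airy}_2$ determinantal process.

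Next, I would take $T\nearrow\infty$ in this $k$-point crossover formula. The one-point argument of Section \ref{twasymp}, which shows that $K^{\csc}_{T^{1/3}s}$ converges to the Airy kernel on a complex contour, should localize in the same way at each spatial location, with an extra factor $e^{-(X_i-X_j)(x-y)}$ emerging from the transverse Gaussian terms in the dispersion relation near the double critical point. Matching the resulting expression with Pr\"ahofer--Spohn's determinantal formula for $\mathscr{A}_2$ yields convergence of finite-dimensional distributions.

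The main obstacle, which is the reason this is stated as a conjecture, is tightness of the rescaled process $X\mapsto 2^{1/3}T^{-1/3}(\mathcal{F}(T,T^{2/3}X)+T/4!)$. Unlike TASEP, where tightness of rescaled last passage times follows cleanly from the determinantal structure via Johansson's slow decorrelation / Kolmogorov criterion arguments, WASEP is not determinantal, and the crossover kernel appearing in Theorem \ref{main_result_thm} does not give rise to a determinantal point process on $\mathbb{R}$. One would need either (i) moment bounds on increments $\mathcal{F}(T,T^{2/3}X)-\mathcal{F}(T,T^{2/3}Y)$ uniform in $T$, perhaps via a coupling with Brownian motion through the Gaussian behaviour of $\log \mathcal{Z}$ in $X$ at fixed $T$, combined with a regularity bound on the Airy field at large scales, or (ii) a direct comparison argument using the stationarity in $X$ from the preceding Proposition together with the $1/2$-H\"older regularity of $\log\mathcal{Z}(T,\cdot)$ inherited from the stochastic heat equation. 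A Kolmogorov--Chentsov criterion applied to second-moment bounds on $\mathcal{Z}(T,X)-\mathcal{Z}(T,Y)$, scaled appropriately, seems the most promising route; controlling the exponential moments uniformly in $T$ is the step I expect to be genuinely hard and likely to require new ideas beyond the formulas derived here.
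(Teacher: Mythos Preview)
This statement is a \emph{conjecture}; the paper does not prove it and offers no argument beyond the one-point consequence (Corollary~\ref{TW}). So there is no paper proof to compare against.

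Your proposal has a genuine gap, and it is not where you think it is. You identify tightness as ``the reason this is stated as a conjecture,'' but the paper says otherwise. Immediately after stating the conjecture, the authors write that the multi-point Tracy--Widom ASEP formulas \cite{TWmulti} ``appear not to be conducive to the asymptotic analysis necessary to obtain this conjecture following the program of this article.'' In other words, the obstruction is already at the level of finite-dimensional distributions: the multi-point ASEP formula does not have the structure needed for the steepest descent / weakly asymmetric limit machinery of Section~\ref{epsilon_section} to go through. Your first two steps simply assume this works (``one deforms contours exactly as in the one-point case but now for each of the $k$ spectral variables''), but the multi-point formula in \cite{TWmulti} is not a Fredholm determinant of the same type as (\ref{TW_prob_equation}); it involves nested contour integrals and sums over subsets whose combinatorics do not factor variable-by-variable, so there is no clean analogue of the $K^{\csc}_a$ kernel to which the one-point analysis localizes.

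So while your discussion of tightness is reasonable and the two routes you suggest (moment bounds on increments, or stationarity plus H\"older regularity of $\log\mathcal{Z}$) are sensible, the harder missing piece is the one you treated as routine: extracting a tractable $k$-point crossover formula from the multi-point ASEP identity. That is the step the authors flag as open, and your outline does not supply the new idea needed to carry it out.
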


Unfortunately, the very recent extensions of the Tracy-Widom formula for ASEP (\ref{TW_prob_equation}) to multipoint distributions \cite{TWmulti} appear not to be conducive to the asymptotic analysis necessary to obtain this conjecture following the program of this article.  Corollary \ref{TW} immediately implies the  convergence of  one point distributions,

\begin{corollary}
$\lim_{T\nearrow \infty} P\left(\frac{\mathcal{F}(T,T^{2/3}X) + \frac{T}{4!}}{T^{1/3}} \leq s\right) = F_{\mathrm{GUE}}(2^{1/3}s)
$.
\end{corollary}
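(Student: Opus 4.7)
The plan is that this corollary is essentially an immediate consequence of the two results stated just before it: the spatial stationarity of $\mathcal{F}(T,\cdot)$ in the preceding proposition, and Corollary \ref{TW} giving the Tracy--Widom limit of $F_T(T^{1/3}s)$. The spatial stationarity says that for each fixed $T>0$, the random variable $\mathcal{F}(T, T^{2/3}X)$ has the same law as $\mathcal{F}(T,0)$ (or for that matter $\mathcal{F}(T,Y)$ for any $Y\in\mathbb{R}$), so the point $T^{2/3}X$ plays no role at the level of one-point distributions. Therefore the free energy evaluated at the moving point $T^{2/3}X$ can be replaced, in distribution, by the free energy at an arbitrary fixed location, and the already-established crossover distribution $F_T$ describes it exactly.

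Concretely, I would write
\begin{equation*}
P\!\left(\frac{\mathcal{F}(T,T^{2/3}X)+\tfrac{T}{4!}}{T^{1/3}}\le s\right)
= P\!\left(\mathcal{F}(T,T^{2/3}X)+\tfrac{T}{4!}\le T^{1/3}s\right)
= F_T(T^{1/3}s),
\end{equation*}
where the second equality uses the preceding proposition (stationarity in $X$ means the distribution function on the left coincides with the $X=0$-independent object $F_T$ defined in \eqref{defofF}). Taking $T\nearrow\infty$ and invoking Corollary \ref{TW} gives $F_{\mathrm{GUE}}(2^{1/3}s)$, as required. There is no real obstacle here; the only mild subtlety is that one must invoke stationarity to pass from a moving spatial argument $T^{2/3}X$ to the $X$-independent crossover distribution $F_T$, after which the corollary is a direct rescaling of the one-point limit already proved in Corollary \ref{TW}.
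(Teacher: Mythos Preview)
Your proposal is correct and matches the paper's approach exactly: the paper states that this corollary follows immediately from Corollary~\ref{TW}, with the passage from $T^{2/3}X$ to a fixed spatial point justified by the preceding stationarity proposition. There is nothing to add.
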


It is elementary to add a temperature $\beta^{-1}$ into the model.  Let
\begin{equation*}
\mathcal{F}_\beta(T,X) =  \log E_{T,X}\left[ :\!\exp\!: \left\{-\beta\int_0^T \dot{\mathscr{W}}(t,b(t)) dt\right\}\right].
\end{equation*}
The corresponding function $\mathcal{Z}_\beta(T,X) = p(T,X) \exp\{ \mathcal{F}_\beta(T,X) \}$  is the solution of  $\partial_T \mathcal{Z}_\beta =\frac12\partial^2_X \mathcal{Z}_\beta  -\beta \dot{\mathscr{W}} \mathcal{Z}_\beta $ with $\mathcal{Z}_\beta(0,X) = \delta_0(X)$ and hence
\begin{equation*}
\mathcal{Z}_\beta(T,X)\stackrel{\rm distr.}{=} \beta^{2} \mathcal{Z}(\beta^{4} T, \beta^{2} X),
\end{equation*}giving the relationship
\begin{equation*}
\beta\sim T^{1/4},
\end{equation*}
between the time $T$ and the temperature $\beta^{-1}$.
Now, just as in (\ref{zed}) we define $\mathcal{F}_{\beta}(T,X)$ in terms of $\mathcal{Z}_{\beta}(T,X)$ and $p(T,X)$. 
From this we see that 
\begin{equation*}
 \mathcal{F}_{\beta}(T,X)\stackrel{\rm distr.}{=} \mathcal{F}(\beta^4 T,\beta^2 X). 
\end{equation*}
Hence the following result about the low temperature limit is, just like Corollary \ref{TW}, a consequence of Theorem \ref{main_result_thm}:
\begin{corollary}  For each fixed $X\in \mathbb R$ and $T>0$,
\begin{equation*}
 \lim_{\beta\rightarrow \infty} P\left( \frac{\mathcal{F}_{\beta}(T,\beta^{2/3}T^{2/3}X) + \frac{\beta^4 T}{4!}}{\beta^{4/3} T^{1/3}}\leq s\right) = F_{\mathrm{GUE}}(2^{1/3}s).
\end{equation*}
\end{corollary}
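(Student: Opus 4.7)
The plan is to reduce the low temperature $\beta\to\infty$ limit to the large time $T\to\infty$ limit (Corollary just proved), using the Brownian scaling identity $\mathcal{F}_\beta(T,X)\stackrel{\rm distr.}{=}\mathcal{F}(\beta^{4}T,\beta^{2}X)$ that the excerpt has already derived from $\mathcal{Z}_\beta(T,X)\stackrel{\rm distr.}{=}\beta^{2}\mathcal{Z}(\beta^{4}T,\beta^{2}X)$. So the proof is essentially an algebraic substitution: everything is arranged so that applying the scaling identity turns the $\beta\to\infty$ problem into the earlier $T\to\infty$ statement verbatim.

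First I would set $T'=\beta^{4}T$ and check the compatibility of the spatial rescaling. By the scaling identity,
\begin{equation*}
\mathcal{F}_\beta\bigl(T,\beta^{2/3}T^{2/3}X\bigr)\stackrel{\rm distr.}{=}\mathcal{F}\bigl(\beta^{4}T,\beta^{2}\cdot\beta^{2/3}T^{2/3}X\bigr)=\mathcal{F}\bigl(T',\beta^{8/3}T^{2/3}X\bigr),
\end{equation*}
and the key observation is that $\beta^{8/3}T^{2/3}=(\beta^{4}T)^{2/3}=(T')^{2/3}$, so the spatial argument is exactly $(T')^{2/3}X$, matching the form in Corollary 1.5. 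Likewise $\beta^{4}T/4!=T'/4!$ and $\beta^{4/3}T^{1/3}=(T')^{1/3}$, so the centering and scaling become the $T'$-versions of the previous corollary.

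Dividing through, one obtains the distributional equality
\begin{equation*}
\frac{\mathcal{F}_\beta(T,\beta^{2/3}T^{2/3}X)+\tfrac{\beta^{4}T}{4!}}{\beta^{4/3}T^{1/3}}\stackrel{\rm distr.}{=}\frac{\mathcal{F}(T',(T')^{2/3}X)+\tfrac{T'}{4!}}{(T')^{1/3}}.
\end{equation*}
For each fixed $T>0$, $T'=\beta^{4}T\to\infty$ as $\beta\to\infty$, so Corollary 1.5 (applied with the parameter $T'$ and the same spatial point $X$) gives that the right-hand side converges in distribution to a random variable with c.d.f.\ $F_{\mathrm{GUE}}(2^{1/3}s)$, which is continuous, yielding the stated pointwise convergence of distribution functions.

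There is essentially no obstacle here beyond bookkeeping of exponents; the only subtle point is that Corollary 1.5 as stated takes the limit along arbitrary $T\nearrow\infty$, not just along the discrete sequence $\beta^{4}T$, so I would simply note that the previous corollary really is a limit over the continuous parameter $T$ (which is clear from Theorem 1.1, whose formula $F_T$ is continuous in $T$), and hence applies equally well along $T'=\beta^{4}T$.
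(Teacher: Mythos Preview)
Your proof is correct and is exactly the argument the paper intends: it states the scaling identity $\mathcal{F}_\beta(T,X)\stackrel{\rm distr.}{=}\mathcal{F}(\beta^4 T,\beta^2 X)$ and then says the corollary ``is, just like Corollary~\ref{TW}, a consequence of Theorem~\ref{main_result_thm},'' leaving the exponent bookkeeping you carried out to the reader. Your reduction via $T'=\beta^4 T$ to Corollary~1.5 (itself immediate from Corollary~\ref{TW}) is precisely that bookkeeping.
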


Now we turn to the behavior as $T$ or $\beta\searrow0$.

\begin{proposition}\label{gauss_limit_thm}
As $T\beta^{4} \searrow0$, 
\begin{equation*}
2^{1/2}\pi^{-1/4}\beta^{-1}T^{-1/4}\mathcal{F}_\beta(T,X) 
\end{equation*}
converges in distribution to a standard Gaussian.
\end{proposition}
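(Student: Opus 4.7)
The plan is to use the Wick chaos expansion (\ref{nine}) and show that in the regime $\beta^4 T\searrow 0$ only the first-order Gaussian chaos survives, while the logarithm $\mathcal F_\beta=\log \mathcal Z_\beta-\log p$ is absorbed by Taylor expansion. By the stationarity in $X$ proved just above, I may take $X=0$. Setting $U_\beta := \mathcal Z_\beta(T,0)/p(T,0)-1 = \sum_{n\ge 1}(-\beta)^n I_n(p_{t_1,\ldots,t_n})$, where $I_n$ denotes the $n$-fold multiple It\^o integral against $\mathscr W$ and $p_{t_1,\ldots,t_n}$ is the $n$-point density of the $0\to 0$ Brownian bridge on $[0,T]$, orthogonality of chaoses of distinct orders gives $\mathrm{Var}(U_\beta) = \sum_{n\ge 1}\beta^{2n}\|p_{t_1,\ldots,t_n}\|_{L^2(\Delta_n(T)\times\mathbb{R}^n)}^2$.

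An explicit computation of $\mathrm{Var}(U_1)$ identifies the correct normalization. The one-point bridge density is $p_t(x)=\sqrt{T/(2\pi t(T-t))}\exp(-Tx^2/(2t(T-t)))$, so $\int p_t(x)^2\,dx=\tfrac12\sqrt{T/(\pi t(T-t))}$, and the beta integral $\int_0^T dt/\sqrt{t(T-t)}=\pi$ yields $\mathrm{Var}(U_1)=\pi^{1/2}\beta^2 T^{1/2}/2$. Thus $2^{1/2}\pi^{-1/4}\beta^{-1}T^{-1/4}U_1\sim\mathcal N(0,1)$ exactly. To control the higher chaoses I apply Brownian scaling $(t,x)\mapsto(Ts,\sqrt T y)$, which gives $\|p_{t_1,\ldots,t_n}\|_{L^2}^2=T^{n/2}\|\tilde p\|_{L^2(\Delta_n(1)\times\mathbb{R}^n)}^2$ for the standard $0\to 0$ bridge on $[0,1]$; a direct Gaussian computation (using the factorization $\tilde p=\tilde p^{\mathrm{BM}}\cdot p(1-s_n,-y_n)/p(1,0)$ and a Dirichlet integral) evaluates $\|\tilde p\|^2=\sqrt\pi/(2^n\Gamma((n+1)/2))$, hence $\|\tilde p\|^2\le C_0 C_1^n/(n!)^{1/2}$. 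Therefore
\begin{equation*}
\sum_{n\ge 2}\beta^{2n}\|p_{t_1,\ldots,t_n}\|_{L^2}^2 \le C_0\sum_{n\ge 2}(C_1\beta^2 T^{1/2})^n/(n!)^{1/2} = O(\beta^4 T),
\end{equation*}
so after rescaling by $(\beta T^{1/4})^2=\beta^2 T^{1/2}$ the remainder is $O((\beta^4 T)^{1/2})\to 0$, giving $\beta^{-1}T^{-1/4}(U_\beta-U_1)\to 0$ in $L^2$.

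It remains to pass from $U_\beta$ to $\mathcal F_\beta(T,0)=\log(1+U_\beta)$. Positivity of the stochastic heat equation with nonnegative initial data guarantees $1+U_\beta>0$ a.s., so the logarithm is defined; on $\{|U_\beta|\le 1/2\}$ one has $|\log(1+U_\beta)-U_\beta|\le CU_\beta^2$, and since $\mathbb E[U_\beta^2]=O(\beta^2 T^{1/2})$ both $P(|U_\beta|>1/2)\to 0$ and $\beta^{-1}T^{-1/4}CU_\beta^2=O_P(\beta T^{1/4})\to 0$. Slutsky then gives $2^{1/2}\pi^{-1/4}\beta^{-1}T^{-1/4}\mathcal F_\beta(T,0)\Rightarrow\mathcal N(0,1)$. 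The main technical point is the uniform-in-$n$ bound on $\|\tilde p\|^2$, which requires care with the bridge renormalization factor $p(T-t_n,-x_n)/p(T,0)$ that distinguishes the bridge density from a Brownian density; once that estimate is in hand, the result falls out of orthogonality, the first-chaos calculation, and the Taylor expansion of the logarithm.
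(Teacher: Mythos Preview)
Your proof is correct and follows essentially the same strategy as the paper's: expand $\mathcal Z_\beta/p$ in Wiener chaos, isolate the first-order Gaussian term, bound the higher-order remainder in $L^2$, and then linearize the logarithm. Your treatment is in fact more detailed than the paper's terse sketch --- you compute the $n$-th chaos norm exactly via Brownian scaling and a Dirichlet integral (the paper only cites the bound $\|p_{t_1,\ldots,t_n}\|^2\le C(n!)^{-1/2}$ from the introduction), and you handle the passage from $U_\beta$ to $\log(1+U_\beta)$ carefully via Taylor expansion and Slutsky, whereas the paper simply writes the event $\{\mathcal F_\beta\le \cdot\}$ as $\{U_\beta\le e^{\cdot}-1\}$ and invokes Chebyshev. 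One small notational quibble: your ``$U_1$'' for the first-chaos contribution $-\beta I_1(p_{t_1})$ collides with the natural reading $U_\beta|_{\beta=1}$; it would be clearer to write $U_\beta^{(1)}$ or similar.
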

This proposition is proved in Section \ref{Gaussian_asymptotics}. As an application, taking $\beta=1$ the above theorem shows that 
\begin{equation*}
\lim_{T\searrow 0} F_T (2^{-1/2}\pi^{1/4} T^{1/4} s) = \int_{-\infty}^s \frac{ e^{ - x^2/2} }{\sqrt{2\pi}} dx. 
\end{equation*}
Proposition \ref{gauss_limit_thm} and Corollary \ref{TW} show that, under appropriate scalings, the family of distributions $F_T$ {\it cross over} from the Gaussian distribution for small $T$ to the GUE Tracy-Widom distribution for large $T$.

The main physical prediction (\ref{five2}) is based on the exact computation \cite{K},
\begin{equation}\label{phys101}
\lim_{T\to \infty} T^{-1} \log E[ \mathcal{Z}^n(T,0) ] =  \frac{1}{4!} n(n^2-1),
\end{equation}
which can be performed rigorously \cite{BC} by expanding the Feynman-Kac formula
(\ref{FK}) for $\mathcal{Z}(T,0)$ into an  expectation over $n$ independent copies (replicas)
of the Brownian bridge.  In the physics literature, the computation is done by 
noting that the correlation functions
\begin{equation}
E[ \mathcal{Z}(T,X_1)\cdots\mathcal{Z}(T,X_n) ] 
\end{equation}
can be computed using the Bethe ansatz \cite{LL} for a system of particles on the line
interacting via an attractive delta function potential.
(\ref{phys101}) suggests, but does not imply, the scaling (\ref{five2}).  The problem is that the moments in (\ref{phys101}) 
grow far too quickly to uniquely determine the underlying distribution.  It is interesting
to note that the Tracy-Widom formula for ASEP (\ref{TW_prob_equation}), which is our main tool,
is also based on the same idea that hard core interacting systems in one dimension can be
rigorously solved via the Bethe ansatz. H. Spohn has pointed out, however, that the analogy is at best partial
because the interaction is attractive in the case of the $\delta-$Bose gas.

The probability distribution for the free energy of the continuum directed random polymer, as well as for the solution to the stochastic heat equation and the KPZ equation has been a subject of interest for many years, with a large physics literature (see, for example, \cite{Calabrese-et-al}, \cite{Dotsenko}, \cite{KolKor}  and references therein.)  The reason why we can now compute the distribution is because of the exact formula of Tracy and Widom for the asymmetric simple exclusion process (ASEP) with step initial condition. Once we observe that the weakly asymmetric simple exlusion process (WASEP) with these initial conditions converges to the solution of the stochastic heat equation with delta initial conditions, the calculation of the probability distribution boils down to a careful asymptotic analysis of the Tracy-Widom ASEP formula. This connection is made in Theorem \ref{BG_thm} and the WASEP asymptotic analysis is recorded by Theorem \ref{epsilon_to_zero_theorem}.

\begin{remark} During the preparation of this article, we learned that T. Sasamoto and H. Spohn \cite{SaSp1,SaSp2,SaSp3} independently obtained a formula equivalent  to (\ref{intintop}) for the distribution function $F_T$.  They also use a steepest descent analysis on the Tracy-Widom ASEP formula.  Note that their argument is at the level of formal asymptotics of operator kernels and they have not attempted a mathematical proof. Very recently, two groups of physicists (\cite{Calabrese-et-al}, \cite{Dotsenko}) have successfully employed the Bethe Ansatz for the attractive $\delta-$Bose gas and the replica trick to rederive the formulas for  $F_T$. 
These methods are non-rigorous, employing divergent series.  However, they suggest a deeper relationship between the work of Tracy and Widom for ASEP and the traditional methods of the Bethe Ansatz for the $\delta-$Bose gas.\end{remark}

\subsubsection{Outline} 
There are three main results in this paper. The first pertains to the KPZ / stochastic heat equation / continuum directed polymer and is contained in the theorems and corollaries above in Section \ref{KPZ_SHE_Poly}. The proof of the equivalence of the formulas of Theorem \ref{main_result_thm} is given in Section \ref{kernelmanipulations}.  The Painlev\'{e} II like formula of Proposition \ref{prop2} is proved in Section \ref{integro_differential} along with the formulation of a general theory about a class of generalized integrable integral operators. The other results of the above section are proved in Section \ref{corollary_sec}.
The second result is about the WASEP. In Section \ref{asepscalingth} we introduce the fluctuation scaling theory of the ASEP and motivate the second main result which is contained in Section \ref{WASEP_crossover}. The Tracy-Widom ASEP formula is reviewed in Section \ref{TW_ASEP_formula_sec} and then a formal explanation of the result is given in Section \ref{formal_calc_subsec}. A full proof of this result is contained in Section \ref{epsilon_section} and its various subsections.
The third result is about the connection between the first (stochastic heat equation, etc.) and second (WASEP) and is stated in Section \ref{WASEP_SHE} and proved in Section \ref{BG}. 

\subsection{ASEP scaling theory}\label{asepscalingth}
The simple exclusion process with parameters $p,q\geq0$ (such that $p+q=1$) is a continuous time Markov process on the discrete lattice $\Z$ with state space $\{0,1\}^{\Z}$.  The 1's are thought of as particles and the 0's as holes. The dynamics for this process are given as follows: Each particle has an independent exponential alarmclock which rings at rate one. When the alarm goes off the particle flips a coin and with probability $p$ attempts to jump one site to the right and with probability $q$ attempts to jump one site to the left. If there is a particle at the destination, the jump is suppressed and the alarm is reset (see \cite{Liggett} for a rigorous construction of the process). If $q=1,p=0$ this process is the totally asymmetric simple exclusion process (TASEP); if $q>p$ it is the asymmetric simple exclusion process (ASEP); if $q=p$ it is the  symmetric simple exclusion process (SSEP). Finally, if we introduce a parameter into the model, we can let $q-p$ go to zero with that parameter, and then this class of processes are known as the  weakly asymmetric simple exclusion process (WASEP). It is the WASEP that is of central interest to us. ASEP is often thought of as a discretization of KPZ (for the height function) or stochastic Burgers (for the particle density).  For WASEP the connection can be made precise (see Sections \ref{WASEP_SHE} and \ref{BG}).

There are many ways to initialize these exclusion processes (such as stationary, flat, two-sided Bernoulli, etc.) analogous to the various initial conditions for KPZ/Stochastic Burgers. We consider a very simple initial condition known as {\it step initial condition}  where every positive integer lattice site (i.e. $\{1,2,3,\ldots\}$) is initially occupied by a particle and every other site is empty. Associated to the ASEP are occupation variables $\eta(t,x)$ which equal 1 if there is a particle at position $x$ at time $t$ and 0 otherwise. From these we define $\hat{\eta}=2\eta-1$ which take values $\pm1$ and define the height function for WASEP with asymmetry $\gamma=q-p$ by,
\begin{equation}\label{defofheight}
 h_{\gamma}(t,x) = \begin{cases}
                2N(t) + \sum_{0<y\leq x}\hat{\eta}(t,y), & x>0,\\
	  	2N(t), & x=0,\\
		2N(t)-  \sum_{x<y\leq 0}\hat{\eta}(t,y), & x<0,
               \end{cases}
\end{equation}
where $N(t)$ is equal to the net number of particles which crossed from the site 1 to the site 0 in time $t$. Since we are dealing with step initial conditions $h_{\gamma}$ is initially given by (connecting the points with slope $\pm 1$ lines) $h_{\gamma}(0,x)=|x|$.  It is easy to show that because of step initial conditions, the following three events are equivalent:
\begin{equation*}
\left\{h_{\gamma}(t,x)\geq 2m-x\right\}  =\{\tilde J_{\gamma}(t,x)\geq m\} = \{\mathbf{x}_{\gamma}(t,m)\leq x)
\end{equation*}
where $\mathbf{x}_{\gamma}(t,m)$ is the location at time $t$ of the particle which started at $m>0$ and where $\tilde J_{\gamma}(t,x)$ is a random variable which records the number of particles which started to the right of the origin at time 0 and ended to the left or at $x$ at time $t$. For this particular initial condition $\tilde J_{\gamma}(t,x) = J_{\gamma}(t,x) + x\vee 0$ where $J_{\gamma}(t,x)$ is the usual time integrated current which measures the signed number of particles which cross the bond $(x,x+1)$ up to  time $t$ (positive sign for jumps from $x+1$ to $x$ and negative for jumps from $x$ to $x+1$). The $\gamma$ throughout emphasizes the strength of the asymmetry.

In the case of the ASEP ($q>p$, $\gamma\in (0,1)$) and the TASEP ($q=1,p=0$, $\gamma=1$) there is a well developed fluctuation theory for the height function. We briefly review this,  since it  motivates the time/space/fluctuation scale we will use throughout the paper, and also since we are ultimately interested in understanding the transition in behaviour from WASEP to ASEP.

The following result was proved for $\gamma=1$ (TASEP) by Johansson \cite{J} and for $0<\gamma<1$ (ASEP) by Tracy and Widom \cite{TW3}:
\begin{equation*}
\lim_{t\rightarrow \infty} P\left(\frac{h_{\gamma}(\frac{t}{\gamma},0)-\frac{1}{2}t}{t^{1/3}} \geq -s\right) =F_{\mathrm{GUE}}(2^{1/3}s).
\end{equation*}

In the case of  TASEP, the one point distribution limit has been extended to a process level limit. Consider a time $t$, a space scale of order $t^{2/3}$ and a fluctuation scale of order $t^{1/3}$. Then, as $t$ goes to infinity, the spatial fluctuation process, scaled by $t^{1/3}$ converges to the $\textrm{Airy}_2$ process (see \cite{Borodin:Ferrari,Corwin:Ferrari:Peche} for this result for TASEP, \cite{JDTASEP} for DTASEP and \cite{PS} for the closely related PNG model). Precisely, for $m\geq 1$ and real numbers $x_1,\ldots,x_m$ and $s_1,\ldots,s_m$:
\begin{equation*}
\lim_{t\rightarrow \infty} P\left(h_{\gamma}(t,x_k t^{2/3})\geq \frac{1}{2}t +(\frac{x_k^2}{2}-s_k)t^{1/3}, ~k\in [m]\right) = P\left(\mathcal{A}_2(x_k)\leq 2^{1/3}s_k, ~k\in [m]\right)
\end{equation*}
where $[m]=\{1,\ldots, m\}$ and where $\mathcal{A}_2$ is  the $\textrm{Airy}_2$ process (see, for example, \cite{Borodin:Ferrari,Corwin:Ferrari:Peche}) and has one-point marginals $F_{\mathrm{GUE}}$. In \cite{JDTASEP}, it is proved that this process has a continuous version and that (for DTASEP) the above convergence can be strengthened due to tightness. Notice that in order to get this process limit, we needed to deal with the parabolic curvature of the height function above the origin by including $(\frac{x_k^2}{2}-s_k)$ rather than just $-s_k$. In fact, if one were to replace $t$ by $t T$ for some fixed $T$, then the parabola would become $\frac{x_k^2}{2T}$. We shall see that this parabola comes up again soon.

An important take away from the result above is the relationship between the exponents for time, space and fluctuations --- their $3:2:1$ ratio. It is only with this ratio that we encounter a non-trivial limiting spatial process. For the purposes of this paper, it is more convenient for us to introduce a parameter $\e$ which goes to zero, instead of the parameter $t$ which goes to infinity.

Keeping in mind the $3:2:1$ ratio of time, space and fluctuations we define scaling variables
\begin{equation*}
 t=\e^{-3/2}T, \qquad x=\e^{-1}X,
\end{equation*}
where $T>0$ and $X\in \R$.  With these variables the height function fluctuations around the origin are written as 
\begin{equation*}
\e^{1/2}\left(h_{\gamma}(\tfrac{t}{\gamma},x)-\tfrac{1}{2}t\right).
\end{equation*}
Motivated by the relationship we will establish in Section \ref{WASEP_SHE}, we are interested in studying the Hopf-Cole transformation of the height function fluctuations given by 
\begin{equation*}\exp\left\{-\e^{1/2}\left(h_{\gamma}(\tfrac{t}{\gamma},x)-\tfrac{1}{2}t\right)\right\}.
\end{equation*} 
When $T=0$ we would like this transformed object to become, in some sense, a delta function at $X=0$. Plugging in $T=0$ we see that the height function is given by $|\e^{-1}X|$ and so the exponential becomes $\exp\{-\e^{-1/2}|X|\}$. If we introduce a factor of $\e^{-1/2}/2$ in front of this, then the total integral in $X$ is 1 and this does approach a delta function as $\e$ goes to zero. Thus we consider
\begin{equation}\label{above_eqn}
 \frac{\e^{-1/2}}{2} \exp\left\{-\e^{1/2}\left(h_{\gamma}(\tfrac{t}{\gamma},x)-\tfrac{1}{2}t\right)\right\}.
\end{equation}
As we shall explain in Section \ref{WASEP_crossover}, the correct scaling of $\gamma$ to see the crossover behaviour between  ASEP and SSEP  is  $\gamma=b\e^{1/2}$. We can set $b=1$, as other values of $b$ can be recovered by scaling. This corresponds with setting
\begin{equation*}
\gamma=\e^{1/2},\qquad p=\tfrac{1}{2}-\tfrac{1}{2}\e^{1/2}, \qquad q=\tfrac{1}{2}+\tfrac{1}{2}\e^{1/2}. 
\end{equation*}
Under this scaling, the WASEP is related to the KPZ equation and stochastic heat equation. To help facilitate this connection, define
\begin{eqnarray*}
\label{nu} \nu_\eps =& p+q-2\sqrt{qp} &= \tfrac{1}{2} \eps + \tfrac{1}{8}\epsilon^2+ O(\eps^3),\\
 \label{lambda} \lambda_\eps =& \tfrac{1}{2} \log (q/p) &= \epsilon^{1/2} + \tfrac{1}{3}\epsilon^{3/2}+O(\epsilon^{5/2}),
\end{eqnarray*}
and the discrete Hopf-Cole transformed height function
\begin{equation}\label{rescaledheight}
Z_\eps(T,X) ={\scriptstyle\frac12} \eps^{-1/2}\exp\left \{ - \lambda_\eps h_{\gamma}(\tfrac{t}{\gamma},x) + \nu_\eps \eps^{-1/2}t\right\}.
\end{equation}
Observe that this differs from the expression in (\ref{above_eqn}) only to second order in $\e$. This second order difference, however, introduces a shift of $T/4!$ which we will see now. Note that the same factor appears in \cite{BG}. With the connection to the polymer free energy in mind, write
\begin{equation*}
Z_{\e}(T,X) = p(T,X)\exp\{F_{\e}(T,X)\}.
\end{equation*}
where $p(T,X)$ is the heat kernel defined in (\ref{heat_kernel}). This implies that the field should be defined by,
\begin{equation*}
 F_{\e}(T,X) =\log(\e^{-1/2}/2) -\lambda_{\eps}h_{\gamma}(\tfrac{t}{\gamma},x)+\nu_\eps \eps^{-1/2}t+\frac{X^2}{2T}+\log\sqrt{2\pi T}.
\end{equation*}
We are interested in understanding the behavior of  $P(F_{\e}(T,X)\le s)$  as $\e$ goes to zero. This probability can be translated into a probability for the height function, the current and finally the position of a tagged particle:
\begin{eqnarray}\label{string_of_eqns}
 && \qquad P(F_{\e}(T,X)+\tfrac{T}{4!}\leq s)= \\
\nonumber && P\left(\log(\e^{-1/2}/2) -\lambda_{\eps}h_{\gamma}(\tfrac{t}{\gamma},x)+\nu_\eps \eps^{-1/2}t+\frac{X^2}{2T}+\log\sqrt{2\pi T} + \tfrac{T}{4!}\leq s\right)=\\
 \nonumber&& P\left(h_{\gamma}(\tfrac{t}{\gamma},x) \geq \lambda_{\e}^{-1}[-s+\log\sqrt{2\pi T}+\log(\e^{-1/2}/2)+\frac{X^2}{2T}+\nu_\eps \e^{-1/2}t+\tfrac{T}{4!}]\right)=\\
\nonumber&& P\left(h_{\gamma}(\tfrac{t}{\gamma},x) \geq \e^{-1/2}\left[-a+\log(\e^{-1/2}/2)+\frac{X^2}{2T}\right]+\frac{t}{2}\right)=\\
 \nonumber&& P(\tilde J_{\gamma}(\tfrac{t}{\gamma},x) \geq m) = P(\mathbf{x}_{\gamma}(\tfrac{t}{\gamma},m)\leq x),
\end{eqnarray}
where $m$ is defined as
\begin{eqnarray}\label{m_eqn}
 m&=&\frac{1}{2}\left[\e^{-1/2}\left(-a+\log(\e^{-1/2}/2)+\frac{X^2}{2T}\right)+\frac{1}{2}t+x\right]\\
\nonumber a &=& s-\log\sqrt{2\pi T}.
\end{eqnarray}

\subsection{WASEP crossover regime}\label{WASEP_crossover}
We now turn to the question of how $\gamma$ should vary with $\e$.
The simplest heuristic argument is to use the KPZ equation 
\begin{equation*}\partial_T h_\gamma = -\frac{\gamma}2(\partial_Xh_\gamma)^2 + \frac12 \partial_X^2 h_\gamma +  \dot{\mathscr{W}}.
\end{equation*}
as a proxy for its discretization ASEP, and rescale
\begin{equation*}
h_{\ep,\gamma}(t,x) = \eps^{1/2} h_\gamma(t/\gamma,x) 
\end{equation*}
to obtain
\begin{equation*}\partial_t h_{\ep,\gamma} = -\frac{1}2(\partial_xh_{\ep,\gamma})^2 + \frac{\ep^{1/2}\gamma^{-1}}2 \partial_x^2 h_{\ep,\gamma} +  \eps^{1/4}\gamma^{-1/2} \dot{\mathscr{W}},
\end{equation*}
from which we conclude that we want $\gamma =b\epsilon^{1/2}$ for some $b\in (0,\infty)$.  We expect Gaussian behavior as $b\searrow 0$ and $F_{GUE}$ behavior as $b\nearrow \infty$.  On the other hand, a simple rescaling reduces everything to the case $b=1$. Thus it suffices to consider 
 \begin{equation*}
 \gamma:=\e^{1/2}.
 \end{equation*} 
From now on we will assume that $\gamma=\e^{1/2}$ unless we state explicitly otherwise. In particular, $F_{\e}(T,X)$ should be considered with respect to $\gamma$ as defined above.

The following theorem is proved in Section \ref{epsilon_section} though an informative but non-rigorous derivation is given in Section \ref{formal_calc_subsec}.

\begin{theorem}\label{epsilon_to_zero_theorem}
For all $s\in \R$, $T>0$ and $X\in\R$ we have the following convergence:
\begin{equation}\label{intintop}
F_T(s):=\lim_{\e\rightarrow 0}P(F_\e(T,X)+\tfrac{T}{4!}\leq s) = \int_{\mathcal{\tilde C}} e^{-\tilde\mu}\det(I-K^{\csc}_{a})_{L^2(\tilde\Gamma_{\eta})} \frac{d\tilde\mu}{\tilde\mu},
\end{equation}
where $a=a(s)$ is given as in the statement of Theorem \ref{main_result_thm} and where the contour $\mathcal{\tilde C}$, the contour $\tilde\Gamma_{\eta}$ and the operator $K_a^{\csc}$ is defined below in Definition \ref{thm_definitions}.
\end{theorem}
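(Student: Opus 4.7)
The plan is to start from the chain of equivalences in (\ref{string_of_eqns}), which reduces our target probability to $P(\mathbf{x}_{\gamma}(t/\gamma,m)\le x)$ for the tagged particle in WASEP with $\gamma=\epsilon^{1/2}$, where $t=\epsilon^{-3/2}T$, $x=\epsilon^{-1}X$, and $m=m(s,T,X,\epsilon)$ is given by (\ref{m_eqn}). To this tagged particle probability I apply the Tracy--Widom formula for the ASEP (reviewed in Section \ref{TW_ASEP_formula_sec}), which expresses $P(\mathbf{x}_{\gamma}(t/\gamma,m)\le x)$ as an integral over a $\tilde\mu$-contour of a Fredholm determinant whose kernel is built from factors involving $\tau=p/q$ and an exponential weight coming from the generating-function structure of the ASEP dynamics. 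Since $\tau=(1-\epsilon^{1/2})/(1+\epsilon^{1/2})\to 1$, all three ingredients of the formula (the prefactors, the contour $\tilde\mu\in\tilde{\mathcal C}$, and the integral kernel) must be analyzed in a delicate weakly asymmetric limit.

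Next I carry out a rigorous steepest descent analysis of this formula, which is the heart of the argument. The first step is to identify a critical point of the exponential phase governing the kernel once $m$ is substituted from (\ref{m_eqn}): the combination $-m\log\xi-x\log(1-\xi)+t\xi/(\xi-1)$ has a double critical point in $\xi$ at the value that corresponds to the weakly asymmetric scaling. Around this critical point, I introduce the local coordinate $\xi=1-\epsilon^{1/2}\tilde\xi$ (so that the $\epsilon\to 0$ fluctuations are captured on an order-one scale), and I rescale the spectral variable so that $\tilde\mu$ remains finite. Under these substitutions the prefactors of the ASEP kernel collapse into a cosecant factor $\pi/\sin(\pi\tilde\eta)$ and the exponential phase Taylor-expands to a cubic in $\tilde\xi$, producing the kernel $K^{\csc}_a$ on the contour $\tilde\Gamma_\eta$ as defined in Definition \ref{thm_definitions}. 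The shift $T/4!$ on the left-hand side of (\ref{intintop}) is precisely what absorbs the second order correction in the $\lambda_\epsilon,\nu_\epsilon$ expansions visible in (\ref{rescaledheight}) and in the computation leading to (\ref{m_eqn}).

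The contour deformations then have to be justified rigorously. I would first deform the original $\xi$ and $\eta$ contours in the Tracy--Widom formula through regions where the exponential phase has controlled real part; because $\tau\to 1$, the poles of the cosecant-type factor accumulate and the contours must be chosen so as to separate them from the saddle. Next I split each contour into a local piece near the critical point, on which the integrand converges pointwise to the limiting cosecant kernel after rescaling, and a tail piece on which the real part of the phase is bounded above by $-c\epsilon^{-3/2}$ for some $c>0$, so that the tail contribution is exponentially small. The same decomposition must be done for the outer $\tilde\mu$-contour.

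The main obstacle will be to upgrade the pointwise convergence of kernels to convergence of Fredholm determinants uniformly in $\tilde\mu$ and with sufficient control to pass the $\tilde\mu$-integral through the limit. Concretely, I need uniform-in-$\epsilon$ trace norm (or Hilbert--Schmidt) bounds on the rescaled operator together with an integrable majorant on $\tilde{\mathcal C}$, so that dominated convergence applies. Establishing these bounds requires careful estimates on the exponential phase off the critical point, on the cosecant prefactor near its poles (which must be separated from the integration contour by a gap of order $\epsilon^{1/2}$ in the prelimit, and by an order-one gap in the limit), and on the behaviour of the kernel as $|\tilde\xi|\to\infty$ along $\tilde\Gamma_\eta$. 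These estimates, together with the pointwise convergence derived from the local analysis above, complete the proof. The technical work is then deferred to the subsections of Section \ref{epsilon_section}, with the formal calculation of Section \ref{formal_calc_subsec} serving as a roadmap for the rigorous argument.
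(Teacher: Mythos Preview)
Your overall strategy is the paper's: apply the Tracy--Widom ASEP formula via (\ref{string_of_eqns}), do steepest descent on the kernel, control the $\tilde\mu$-integral by tail estimates plus uniform convergence on compacts, and upgrade kernel convergence to trace-class convergence of the determinant. That is exactly the architecture of Section~\ref{epsilon_section}.

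However, two concrete points in your write-up would fail as stated. First, the critical point is not near $\zeta=1$: the phase $\Lambda(\zeta)=-x\log(1-\zeta)+t\zeta/(1-\zeta)+m\log\zeta$ is singular at $\zeta=1$, and the relevant saddle sits near $\zeta=-1$, namely $\xi=-1-2\epsilon^{1/2}X/T$. Your local coordinate should be $\zeta=\xi+2^{4/3}\epsilon^{1/2}\tilde\zeta$, and the cubic expansion there is what produces $-\tfrac{T}{3}\tilde\zeta^3+2^{1/3}a\tilde\zeta$. Second, the cosecant does not come from ``prefactors'' and is not $\pi/\sin(\pi\tilde\eta)$: it arises as the $\epsilon\to 0$ limit of $\epsilon^{1/2}\tilde\mu f(\tilde\mu,\zeta/\eta')$ and depends on the difference $2^{1/3}(\tilde\zeta-\tilde\eta')$, cf.\ (\ref{cscid}).

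The one genuinely delicate issue you only gesture at is the contour deformation. The naive steepest descent contours (leaving $\xi$ at angles $\pm\pi/3$ and $\pm 2\pi/3$) cross infinitely many poles of $f(\mu,\zeta/\eta')$ at $\zeta/\eta'=\tau^k$, since $\tau\to 1$. The paper's resolution is not to use those contours at all away from $\xi$, but rather the carefully designed curves $\Gamma_{\eta,l}$, $\Gamma_{\zeta,l}$ built from the explicit function $\kappa(\theta)$ in (\ref{kappa_eqn}); these keep $|\zeta/\eta'|$ trapped in a compact subset of $(1,\tau^{-1})$ for all $\theta$ while still yielding the needed decay of $\re\Psi$ (Lemma~\ref{kill_gamma_2_lemma}). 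Your plan should name this construction explicitly, since it is where the analysis actually lives.
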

\begin{remark}
The limiting distribution function $F_T(s)$ above is, a priori, unrelated to the crossover distribution function (notated suggestively as $F_T(s)$ too) defined in Theorem \ref{main_result_thm} which pretains to KPZ, etc., and not to WASEP. Theorem \ref{BG_thm} below, however, establishes that these two distribution function definitions are, in fact, equivalent.
\end{remark}  
\begin{definition}\label{thm_definitions}
The contour $\mathcal{\tilde C}$ is defined as
\begin{equation*}
\mathcal{\tilde C}=\{e^{i\theta}\}_{\pi/2\leq \theta\leq 3\pi/2} \cup \{x\pm i\}_{x>0}
\end{equation*}
The contours $\tilde\Gamma_{\eta}$, $\tilde\Gamma_{\zeta}$ are defined as
\begin{eqnarray*}
\tilde\Gamma_{\eta}&=&\{\frac{c_3}{2}+ir: r\in (-\infty,\infty)\}\\
\tilde\Gamma_{\zeta}&=&\{-\frac{c_3}{2}+ir: r\in (-\infty,\infty)\},
\end{eqnarray*}
where the constant $c_3$ is defined henceforth as
\begin{equation*}
 c_3=2^{-4/3}.
\end{equation*}
The kernel $K_a^{\csc}$ acts on the function space $L^2(\tilde\Gamma_{\eta})$ through its kernel:
\begin{equation}\label{k_csc_definition}
 K_a^{\csc}(\tilde\eta,\tilde\eta') = \int_{\tilde\Gamma_{\zeta}} e^{-\frac{T}{3}(\tilde\zeta^3-\tilde\eta'^3)+2^{1/3}a(\tilde\zeta-\tilde\eta')}   \left(2^{1/3}\int_{-\infty}^{\infty} \frac{\tilde\mu e^{-2^{1/3}t(\tilde\zeta-\tilde\eta')}}{e^{t}-\tilde\mu}dt\right) \frac{d\tilde\zeta}{\tilde\zeta-\tilde\eta}.
\end{equation}
\end{definition}
\begin{remark}
It is very important to observe that our choice of contours for $\tilde\zeta$ and $\tilde\eta'$ ensure that $\re(-2^{1/3}(\tilde\zeta-\tilde\eta'))=1/2$. This ensures that the integral in $t$ above converges for all $\tilde\zeta$ and $\tilde\eta'$. In fact, the convergence holds as long as we keep $\re(-2^{1/3}(\tilde\zeta-\tilde\eta'))$  in a closed subset of $(0,1)$. The inner integral in (\ref{k_csc_definition}) can be evaluated and we find that following equivalent expression:
\begin{equation*}
 K_a^{\csc}(\tilde\eta,\tilde\eta') = \int_{\tilde\Gamma_{\zeta}} e^{-\frac{T}{3}(\tilde\zeta^3-\tilde\eta'^3)+2^{1/3}a(\tilde\zeta-\tilde\eta')} \frac{\pi 2^{1/3} (-\tilde\mu)^{-2^{1/3}(\tilde\zeta-\tilde\eta')}}{ \sin(\pi 2^{1/3}(\tilde\zeta-\tilde\eta'))} \frac{d\tilde\zeta}{\tilde\zeta-\tilde\eta}.
\end{equation*}
This serves as an analytic extension of the first kernel to a larger domain of $\tilde\eta$, $\tilde\eta'$ and $\tilde\zeta$. We do not, however, make use of this analytic extension, and simply record it as a matter of interest.
\end{remark}

\subsection{The connection between WASEP and the stochastic heat equation}\label{WASEP_SHE}
We now state a result about the convergence of the $Z_{\e}(T,X)$ from (\ref{rescaledheight}) to the solution $\mathcal{Z}(T,X)$ of the stochastic heat equation 
(\ref{she}) with delta initial data (\ref{sheinit}).
First we take the opportunity to state (\ref{she}) precisely:  $\mathscr{W}(T)$, $T \ge 0$ is the cylindrical Wiener process, i.e. the continuous Gaussian process taking values in  $H^{-1/2-}_{\rm loc} (\mathbb R)=\cap_{\alpha<-1/2} H^{\alpha}_{\rm loc}(\mathbb R)$  with
\begin{equation*}
E[ \langle \varphi ,\mathscr{W}(T)\rangle \langle \psi, \mathscr{W}(S)\rangle ] =\min(T,S) \langle \varphi, \psi\rangle
\end{equation*}
for any $\varphi,\psi\in C_c^\infty(\mathbb R)$, the smooth functions with compact support in $\mathbb R$.
Here $H^{\alpha}_{\rm loc}(\mathbb R)$, $\alpha<0$,  consists of
distributions $f$ such that for any $\varphi\in C_c^\infty(\mathbb R)$,
$\varphi f$ is in the standard Sobolev space $H^{-\alpha}(\mathbb R)$, i.e. the dual of $H^{\alpha}(\mathbb R)$ under the $L^2$ pairing.
$H^{-\alpha}(\mathbb R)$ is the closure of $C_c^\infty(\mathbb R)$ under
the norm $\int_{-\infty}^{\infty} (1+ |t|^{-2\alpha}) |\hat f(t)|^2dt$ where $\hat f$ denotes
the Fourier transform.
The distributional time derivative $\dot{\mathscr{W}}(T,X)$
 is the space-time white noise,\begin{equation*}
E[ \dot{\mathscr{W}}(T,X) \dot{\mathscr{W}}(S,Y)] = \delta(T-S)\delta(Y-X).
\end{equation*}
  Note the mild abuse of notation for the sake of clarity; we write $\dot{\mathscr{W}}(T,X)$ even though it is a distribution on $(T,X)\in [0,\infty)\times \mathbb R$ as opposed to  a classical function of $T$ and $X$.
 Let $\mathscr{F}(T)$, $T\ge 0$, be the natural filtration, i.e. the smallest $\sigma$-field
 with respect to which  $\mathscr{W}(S)$ are measurable for all $0\le S\le T$.
 
 The stochastic heat equation is then shorthand for its integrated version (\ref{intshe})
 where the stochastic integral is interpreted in the It\^o sense \cite{W}, so that, in particular,
 if $f(T,X)$ is any non-anticipating integrand,
 \begin{eqnarray*}&
 E[ (\int_0^T \int_{-\infty}^\infty f(S,Y)\mathscr{W}(dY, dS) )^2]=E[ (\int_0^T \int_{-\infty}^\infty f^2(S,Y)dY dS].&
 \end{eqnarray*}
 The  awkward notation is inherited from stochastic partial differential equations: $\mathscr{W}$ for (cylindrical) Wiener process, 
 $\dot{\mathscr{W}}$ for white noise, and stochastic integrals are taken with respect to white noise $\mathscr{W}(dY, dS)$.
  
 Note that the solution can be written explicitly as a series of multiple Wiener integrals.  With $X_0=0$ and $X_{n+1}=X$,
 \begin{equation}\label{soln}
 \mathcal{Z}(T,X)= \sum_{n=0}^\infty (-1)^n \int_{\Delta'_n(T)} \int_{\mathbb R^n}\prod_{i=0}^n
p(T_{i+1}-T_{i}, X_{i+1}-X_{i}) \prod_{i=1}^n\mathscr{W} (dT_i dX_i) \end{equation}
 where $\Delta'_n(T)= \{(T_1,\ldots,T_n) : 0=T_0 \le T_1\le \cdots \le T_n\le T_{n+1}=T\}$.
  
Returning now to the WASEP, the random functions $Z_{\e}(T,X)$ from (\ref{rescaledheight}) have discontinuities both in 
space and in time.  If desired, one can linearly interpolate in space so that they become 
a jump process taking values in the space of continuous functions.  But it does not really make things easier.  The key point is that the jumps are small, so we use instead  the space $D_u([0,\infty); D_u(\mathbb R))$ where $D_u$ refers to right continuous paths with left limits with the topology  of uniform convergence on compact sets.  Let $\mathscr{P}_\e$ denote the probability measure on $D_u([0,\infty); D_u(\mathbb R))$ corresponding to the process  $Z_{\e}(T,X)$.

\begin{theorem}\label{BG_thm}
$\mathscr{P}_\e$, $\ep\in (0,1/4)$ are a tight family of measures and the unique limit point is supported on $C([0,\infty); C(\mathbb R))$ and corresponds to the solution (\ref{soln}) of the stochastic heat equation (\ref{she}) with delta function initial conditions (\ref{sheinit}). \end{theorem}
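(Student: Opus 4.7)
The plan is to adapt the Bertini--Giacomin (BG) scheme, which treats the case of nice initial data, to the present setting of a singular delta-function initial profile. The starting point is a Gartner-type computation: applying the WASEP generator to the exponential Hopf--Cole transform in (\ref{rescaledheight}) and tracking terms through the Taylor expansions of $\lambda_\e$ and $\nu_\e$, one verifies that $Z_\e$ satisfies a discrete stochastic heat equation of the form
\begin{equation*}
dZ_\e(T,X) = \tfrac12 \Delta_\e Z_\e(T,X)\, dT + Z_\e(T{-},X)\, dM_\e(T,X),
\end{equation*}
where $\Delta_\e$ is a rescaled discrete Laplacian and $M_\e$ is a compensated jump martingale whose predictable quadratic variation, after rescaling, approximates space-time white noise. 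The specific choices of $\lambda_\e$ and $\nu_\e$ are precisely those that cancel all deterministic drifts beyond $\tfrac12 \Delta_\e$; the $T/4!$ shift appearing in Theorem \ref{epsilon_to_zero_theorem} is exactly the residual $O(\e^2)$ correction that is \emph{not} absorbed by $\nu_\e$.

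Next I would work with the mild (Duhamel) form
\begin{equation*}
Z_\e(T,X) = \sum_{Y} p_\e(T,X,Y)\, Z_\e(0,Y) + \int_0^T \sum_{Y} p_\e(T-S,X,Y)\, Z_\e(S{-},Y)\, dM_\e(S,Y),
\end{equation*}
where $p_\e$ is the semigroup of $\tfrac12 \Delta_\e$. Since $Z_\e(0,Y) = \tfrac12 \e^{-1/2}\exp(-\lambda_\e |\e^{-1}Y|)$ is a discrete approximation to $\delta_0$, the local central limit theorem (LCLT) for $p_\e$ gives convergence of the first term to the heat kernel $p(T,X)$ uniformly on compacts in $T > 0$. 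Tightness of $\mathscr{P}_\e$ in $D_u([0,\infty); D_u(\R))$ then reduces to $L^p$-moment estimates on $Z_\e(T,X)$, obtained by iterating the Duhamel formula---each iterate is controlled against the corresponding term of the chaos expansion (\ref{soln}) using bounds like $\sum_Y p_\e(T,X,Y)^2 \le C T^{-1/2}$---together with Kolmogorov continuity estimates in $T$ and $X$.

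Having established tightness, I would identify any subsequential limit $Z^*$ by passing to the limit in the Duhamel equation: the drift term converges via the LCLT, and the martingale integral converges to a Walsh stochastic integral against white noise $\mathscr{W}$ by a standard quadratic-variation argument combined with joint convergence $(Z_\e, M_\e) \Rightarrow (Z^*, \mathscr{W})$. Walsh's existence/uniqueness theory \cite{W} for the SHE with delta initial data then forces $Z^* = \mathcal{Z}$, and the continuity of limit paths (support in $C([0,\infty); C(\R))$) follows from standard regularity of the stochastic convolution against white noise away from $T = 0$.

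The main obstacle is the singular behavior at $T = 0$, where $Z_\e(0,\cdot)$ concentrates on scale $\e^{1/2}$ and pointwise $L^p$ norms of $Z_\e(T,X)$ blow up like $p(T,X) \sim T^{-1/2}$ as $T \searrow 0$; BG's argument for nice initial data cannot be applied uniformly in $T$. The remedy is a restart argument: for any $\delta > 0$, use the $L^p$ bounds to show that $Z_\e(\delta, \cdot)$ is a well-controlled random function whose distribution converges to that of $\mathcal{Z}(\delta, \cdot)$, and run the BG analysis on $[\delta, \infty)$ starting from $Z_\e(\delta, \cdot)$; then send $\delta \searrow 0$ and match with the convergence of initial terms via the LCLT on $[0, \delta]$. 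The delicate point is verifying that the moment bounds are sufficiently robust to justify the two-parameter limit and that the resulting limit coincides with the mild solution (\ref{soln}) globally in time; this interplay between the chaos expansion bounds and the singular short-time behavior is where the constraint $\e \in (0, 1/4)$ will naturally arise.
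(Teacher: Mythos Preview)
Your proposal is correct and follows essentially the same route as the paper: derive the discrete SHE for $Z_\e$ via G\"artner's computation, pass to the Duhamel form, establish $L^p$ moment bounds $E[Z_\e^{2q}(T,X)]\le C_q\,p_\e^{2q}(T,X)$ by iterating the integral equation, invoke the BG tightness/identification machinery on $[\delta,\infty)$ once those moment bounds supply control on $Z_\e(\delta,\cdot)$, and then send $\delta\searrow 0$ using the Duhamel bound to match the initial term with $p(T,X)$. One small correction: the restriction $\e\in(0,1/4)$ is not where the singular short-time analysis bites---it is a mundane constraint ensuring the WASEP parameters and the expansions of $\lambda_\e,\nu_\e$ behave; the genuine work in handling the $T\searrow 0$ singularity is entirely in the a priori moment lemma and the restart-at-$\delta$ argument you already described.
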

In particular, for each fixed $X,T$ and $s$,
\begin{equation*}
\lim_{\e\searrow 0} P( F_{\e}(T,X)\le s) = P(\mathcal{F}(T,X)\le s) .
\end{equation*}

The result is motivated by, but does not follow directly from, the results of \cite{BG}. This is because of the delta function initial conditions, and the consequent difference in the scaling.  It requires a certain amount of work to show that their basic computations are applicable to the present case.  This is done in Section \ref{BG}.

\subsection{The Tracy-Widom Step Initial Condition ASEP formula}\label{TW_ASEP_formula_sec}
Due to the process level convergence of WASEP to the stochastic heat equation, exact information about WASEP can be, with care, translated into information about the stochastic heat equation. Until recently, very little exact information was known about ASEP or WASEP. The work of Tracy and Widom in the past few years, however, has changed the situation significantly. The key tool in determining the limit as $\e$ goes to zero of $P(F_{\e}(T,X)+\tfrac{T}{4!}\leq s)$ is their  exact formula for the transition probability for a tagged particle in ASEP started from step initial conditions. This formula was stated in \cite{TW3} in the form below, and was developed in the three papers \cite{TW1,TW2,TW3}. We will apply it to the last line of  (\ref{string_of_eqns}) to give us an exact formula for $P(F_{\e}(T,X)+\tfrac{T}{4!}\le s)$.

Recall that $\mathbf{x}_{\gamma}(t,m)$ is the location at time $t$ of the particle which started at $m>0$. Consider $q>p$ such that $q+p=1$ and let $\gamma=q-p$ and $\tau=p/q$. For $m>0$, $t\geq 0$ and $x\in \Z$, it is shown in \cite{TW3} that,
\begin{equation}\label{TW_prob_equation}
P(\mathbf{x}(\gamma^{-1}t,m)\leq x) = \int_{S_{\tau^+}}\frac{d\mu}{\mu} \prod_{k=0}^{\infty} (1-\mu\tau^k)\det(I+\mu J_{t,m,x,\mu})_{L^2(\Gamma_{\eta})}
\end{equation}
where $S_{\tau^+}$ is a circle centered at zero of radius strictly between $\tau$ and 1, and where the kernel of the Fredholm determinant (see Section \ref{pre_lem_ineq_sec}) is given by
\begin{equation}\label{J_eqn_def}
J_{t,m,x,\mu}(\eta,\eta')=\int_{\Gamma_{\zeta}} \exp\{\Psi_{t,m,x}(\zeta)-\Psi_{t,m,x}(\eta')\}\frac{f(\mu,\zeta/\eta')}{\eta'(\zeta-\eta)}d\zeta
\end{equation}
where $\eta$ and $\eta'$ are on $\Gamma_{\eta}$, a circle centered at zero of radius $\rho$ strictly between $\tau$
and $1$, and the $\zeta$ integral is on $\Gamma_{\zeta}$, a circle centered at zero of radius strictly between $1$ and $\rho\tau^{-1}$ (so as to ensure that $|\zeta/\eta|\in (1,\tau^{-1})$), and where, for fixed $\xi$,
\begin{eqnarray*}
\nonumber f(\mu,z)&=&\sum_{k=-\infty}^{\infty} \frac{\tau^k}{1-\tau^k\mu}z^k,\\
\Psi_{t,m,x}(\zeta) &=& \Lambda_{t,m,x}(\zeta)-\Lambda_{t,m,x}(\xi),\\
 \nonumber \Lambda_{t,m,x}(\zeta) &=& -x\log(1-\zeta) + \frac{t\zeta}{1-\zeta}+m\log\zeta
 .
\end{eqnarray*}
\begin{remark}
 Throughout the rest of the paper we will only include the subscripts on $J$, $\Psi$ and $\Lambda$ when we want to emphasize their dependence  on a given variable. 
\end{remark}

\subsection{The weakly asymmetric limit of the Tracy-Widom ASEP formula}\label{formal_calc_subsec}

The Tracy and Widom ASEP formula (\ref{TW_prob_equation}) provides an exact expression for the probability $P(F_{\e}(T,X)+\tfrac{T}{4!}\leq s)$ by interpreting it in terms of a probability of the location of a tagged particle (\ref{string_of_eqns}). It is of great interest to understand this limit ($F_T(s)$) since, as we have seen, it describes a number of interesting limiting objects.

We will now present a formal computation of the expressions given in Theorem \ref{epsilon_to_zero_theorem} (see Section \ref{WASEP_crossover}) for  $F_T(s)$. After presenting the formal argument, we will stress that there are a number of very important technical points which arise during this argument, many of which require serious work to resolve. In Section \ref{epsilon_section} we will provide a rigorous proof of Theorem \ref{epsilon_to_zero_theorem}  in which we deal with all of the possible pitfalls.

\begin{definition}\label{quantity_definitions}
Recall the definitions for the relevant quantities in this limit:
\begin{eqnarray*}
&& p=\frac{1}{2}-\frac{1}{2}\e^{1/2},\qquad q=\frac{1}{2}+\frac{1}{2}\e^{1/2}\\
&& \gamma=\e^{1/2},\qquad \tau=\frac{1-\e^{1/2}}{1+\e^{1/2}}\\
&& x=\e^{-1}X,\qquad t=\e^{-3/2}T\\
&& m=\frac{1}{2}\left[\e^{-1/2}\left(-a+\log(\e^{-1/2}/2)+\frac{X^2}{2T}\right)+\frac{1}{2}t+x\right]\\
&& \left\{F_{\e}(T,X)+\frac{T}{4!}\leq s\right\} = \left\{\mathbf{x}(\frac{t}{\gamma},m)\leq x\right\},
\end{eqnarray*}
where $a=a(s)$ is defined in the statement of Theorem \ref{main_result_thm}. We also define the contours $\Gamma_{\eta}$ and $\Gamma_{\zeta}$ to be
\begin{equation*}
 \Gamma_{\eta}=\{z:|z|=1-\tfrac{1}{2}\e^{1/2}\} \qquad \textrm{and} \qquad \Gamma_{\zeta}=\{z:|z|=1+\tfrac{1}{2}\e^{1/2}\} 
\end{equation*}

\end{definition}
The first term in the integrand of (\ref{TW_prob_equation}) is the infinite product $\prod_{k=0}^{\infty}(1-\mu \tau^k)$. Observe that $\tau\approx 1-2\e^{1/2}$ and that $S_{\tau^+}$, the contour on which $\mu$ lies, is a circle centered at zero of radius between $\tau$ and 1. The infinite product is not well behaved along most of this contour, so we will deform the contour to one along which the product is not highly oscillatory. Care must be taken, however, since the Fredholm determinant has poles at every $\mu=\tau^k$. The deformation must avoid passing through them. Observe now that
\begin{equation*}
 \prod_{k=0}^{\infty}(1-\mu \tau^k) = \exp\{\sum_{k=0}^{\infty} \log(1-\mu \tau^k)\},
\end{equation*}
and that for small $|\mu|$,
\begin{eqnarray}\nonumber
\sum_{k=0}^{\infty} \log(1-\mu (1-2\e^{1/2})^k)& \approx& \e^{-1/2} \int_0^{\infty} \log(1-\mu e^{-2 r}) dr \\ & \approx & \e^{-1/2}\mu \int_0^{\infty} e^{-2 r} dr = -\frac{\e^{-1/2}\mu}{2}.
\end{eqnarray}
With this in mind define
\begin{equation*}
\tilde\mu = \e^{-1/2}\mu,
\end{equation*}
from which we see that if the Riemann sum approximation is reasonable then the infinite product converges to $e^{-\tilde\mu/2}$. We make the $\mu \mapsto \e^{-1/2}\tilde\mu$ change of variables and find that the above approximations are reasonable if we consider a $\tilde\mu$ contour 
\begin{equation*}
\mathcal{\tilde C}_{\e}=\{e^{i\theta}\}_{\pi/2\leq \theta\leq 3\pi/2} \cup \{x\pm i\}_{0<x<\e^{-1/2}-1}.
\end{equation*}
Thus the infinite product goes to $e^{-\tilde\mu/2}$.

Now we turn to the Fredholm determinant. We determine a candidate for the pointwise limit of the kernel.  That the combination of these two pointwise limits gives the actual limiting formula as $\e$ goes to zero is, of course, completely unjustified at this point. Also, the pointwise limits here disregard the existence of a number of singularities encountered during the argument. 

The kernel $J(\eta,\eta')$ is given by an integral and the integrand has three main components: An exponential term
\begin{equation*}
 \exp\{\Lambda(\zeta)-\Lambda(\eta')\},
\end{equation*}
a rational function term (we include the differential with this term for scaling purposes)
\begin{equation*}
\frac{d\zeta}{\eta'(\zeta-\eta)},
\end{equation*}
and the term
\begin{equation*}
\mu f(\mu,\zeta/\eta').
\end{equation*}
We will proceed by the method of steepest descent, so in order to determine the region along the $\zeta$ and $\eta$ contours which affects the asymptotics we consider the exponential term first. The argument of the exponential is given by $\Lambda(\zeta)-\Lambda(\eta')$ where
\begin{equation*}
\Lambda(\zeta)=-x\log(1-\zeta) + \frac{t\zeta}{1-\zeta}+m\log(\zeta),
\end{equation*}
and where, for the moment, we take $m=\frac{1}{2}\left[\e^{-1/2}(-a+\frac{X^2}{2T})+\frac{1}{2}t+x\right]$. The real expression for $m$ has a $\log(\e^{-1/2}/2)$ term which we define in with the $a$ for the moment (recall that $a$ is defined in the statement of Theorem \ref{main_result_thm}).
Recall that $x, t$ and $m$ all depend on $\e$. For small $\e$, $\Lambda(\zeta)$ has a critical point in an $\e^{1/2}$ neighborhood of -1. For purposes of having a nice ultimate answer, we choose to center in on  \begin{equation*}
\xi=-1-2\e^{1/2}\frac{X}{T}.
\end{equation*}
We can rewrite the argument of the exponential as $(\Lambda(\zeta)-\Lambda(\xi))-(\Lambda(\eta')-\Lambda(\xi))=\Psi(\zeta)-\Psi(\eta')$. The idea in \cite{TW3} for extracting asymptotics of this term is  to deform the $\zeta$ and $\eta$ contours to lie along curves such that outside the scale $\e^{1/2}$ around $\xi$, $\re\Psi(\zeta)$ is large and negative, and $\re\Psi(\eta')$ is large and positive. Hence we can ignore those parts of the contours. Then, rescaling around $\xi$ to blow up this $\e^{1/2}$ scale, we obtain the asymptotic exponential term. This final change of variables then sets the scale at which we should analyze the other two terms in the integrand for the $J$ kernel.

Returning to $\Psi(\zeta)$, we make a  Taylor expansion  around $\xi$ and find that in a neighborhood of $\xi$,
\begin{equation*}
 \Psi(\zeta) \approx -\frac{T}{48} \e^{-3/2}(\zeta-\xi)^3 + \frac{a}{2}\e^{-1/2}(\zeta-\xi).
\end{equation*}
This suggests the change of variables,
\begin{equation}\label{change_of_var_eqn}
 \tilde\zeta = 2^{-4/3}\e^{-1/2}(\zeta-\xi) \qquad \tilde\eta' = 2^{-4/3}\e^{-1/2}(\eta'-\xi),
\end{equation}
and likewise for $\tilde\eta$. After this our Taylor expansion takes the form
\begin{equation}\label{taylor_expansion_term}
 \Psi(\tilde\zeta) \approx -\frac{T}{3} \tilde\zeta^3 +2^{1/3}a\tilde\zeta.
\end{equation}
In the spirit of steepest descent analysis, we would like the $\zeta$ contour to leave $\xi$ in a direction where this Taylor expansion is decreasing rapidly. This is accomplished by leaving at an angle $\pm 2\pi/3$. Likewise, since $\Psi(\eta)$ should increase rapidly, $\eta$ should leave $\xi$ at angle $\pm\pi/3$. The $\zeta$ contour was originally centred at zero and of radius $1+\e^{1/2}/2$ and the $\eta$ contour of radius $1-\e^{1/2}/2$. In order to deform these contours without changing the value of the determinant, care must be taken since there are  poles of $f$ whenever $\zeta/\eta'=\tau^k$, $k\in \Z$. We ignore this issue for the formal calculation, and deal with it carefully in Section \ref{epsilon_section} by using different contours.

Let us now assume that we can deform our contours to curves along which $\Psi$ rapidly decays in $\zeta$ and increases in $\eta$, as we move along them away from $\xi$. If we apply the change of variables in (\ref{change_of_var_eqn}), the straight part of our contours become infinite at angles $\pm2\pi/3$ and $\pm \pi/3$ which we call $\tilde\Gamma_{\zeta}$ and $\tilde\Gamma_{\eta}$. Note that this is {\em not} the actual definition of these contours which we use in the statement and proof of Theorem \ref{main_result_thm}  because of the singularity problem mentioned above. 

Applying this change of variables to the kernel of the Fredholm determinant changes the $L^2$ space and hence we must multiply the kernel by the Jacobian term $2^{4/3}\e^{1/2}$. We will include this term with the $\mu f(\mu,z)$ term and take the $\e\to0$  limit of that product.

As noted before, the term $2^{1/3}a\tilde\zeta$ should have been $2^{1/3}(a-\log(\e^{-1/2}/2))\tilde\zeta$ in the Taylor expansion above, giving
\begin{equation*}
 \Psi(\tilde\zeta) \approx -\frac{T}{3} \tilde\zeta^3 +2^{1/3}(a-\log(\e^{-1/2}/2))\tilde\zeta,
\end{equation*}
which would appear to blow up as $\e$ goes to zero. We now show how the extra $\log\e$ in the exponent can be absorbed into the $2^{4/3}\e^{1/2}\mu f(\mu,\zeta/\eta')$ term.
Recall
\begin{equation*}
 \mu f(\mu,z) = \sum_{k=-\infty}^{\infty} \frac{\mu \tau^k}{1-\tau^k \mu}z^k.
\end{equation*}
If we let $n_0=\lfloor \log(\e^{-1/2}) /\log(\tau)\rfloor$, then observe that for $1 < |z| < \tau^{-1}$,
\begin{equation*}
 \mu f(\mu,z) = \sum_{k=-\infty}^{\infty} \frac{ \mu \tau^{k+n_0}}{1-\tau^{k+n_0}\mu}z^{k+n_0} =z^{n_0} \tau^{n_0}\mu \sum_{k=-\infty}^{\infty} \frac{ \tau^{k}}{1-\tau^{k}\tau^{n_0}\mu}z^{k}.
\end{equation*}
By the choice of $n_0$, $\tau^{n_0}\approx \e^{-1/2}$ so 
\begin{equation*}
 \mu f(\mu,z) \approx z^{n_0} \tilde\mu f(\tilde\mu,z).
\end{equation*}
The discussion on the exponential term indicates that it suffices to understand the behaviour of this function when $\zeta$ and $\eta'$ are within $\e^{1/2}$ of $\xi$. Equivalently, letting $z=\zeta/\eta'$, it suffices to understand $ \mu f(\mu,z) \approx z^{n_0} \tilde\mu f(\tilde\mu,z)$ for
\begin{equation*}
 z= \frac{\zeta}{\eta'}=\frac{\xi +  2^{4/3}\e^{1/2}\tilde\zeta}{\xi +  2^{4/3}\e^{1/2}\tilde\eta'}\approx 1-\e^{1/2}\tilde z, \qquad \tilde z=2^{4/3}(\tilde\zeta-\tilde\eta').
\end{equation*}
Let us now consider $z^{n_0}$ using the fact that $\log(\tau)\approx -2\e^{1/2}$:
\begin{equation*}
z^{n_0} \approx (1-\e^{1/2}\tilde z)^{\e^{-1/2}(\frac{1}{4}\log\e)} \approx e^{-\frac{1}{4}\tilde z \log(\e)}.
\end{equation*}
Plugging back in the value of $\tilde z$ in terms of $\tilde\zeta$ and $\tilde\eta'$ we see that this prefactor of $z^{n_0}$ exactly cancels the $\log\e$ term which accompanies $a$ in the exponential.

What remains is to determine the limit of $2^{4/3}\e^{1/2}\tilde\mu f(\tilde\mu, z)$ as $\e$ goes to zero, for $z\approx 1-\e^{1/2} \tilde z$. This can be found by interpreting the infinite sum as a Riemann sum approximation for a certain integral. Define $t=k\e^{1/2}$ and observe that
\begin{equation}\label{Riemann_limit} 
 \e^{1/2}\tilde\mu f(\tilde\mu,z) = \sum_{k=-\infty}^{\infty} \frac{ \tilde\mu \tau^{t\e^{-1/2}}z^{t\e^{-1/2}}}{1-\tilde\mu \tau^{t\e^{-1/2}}}\e^{1/2} \rightarrow \int_{-\infty}^{\infty} \frac{\tilde\mu e^{-2t}e^{-\tilde z t}}{1-\tilde\mu e^{-2t}}dt.
\end{equation}
This used the fact that $\tau^{t\e^{-1/2}}\rightarrow e^{-2t}$ and that $z^{t\e^{-1/2}}\rightarrow e^{-\tilde z t}$, which hold at least pointwise in $t$. For (\ref{Riemann_limit}) to hold , we must have $\re \tilde z$ bounded inside $(0,2)$, but we disregard this difficulty for  the heuristic proof. If we change variables of $t$ to $t/2$ and multiply the top and bottom by $e^{-t}$ then we find that
\begin{equation*}
 2^{4/3}\e^{1/2}\mu f(\mu,\zeta/\eta') \rightarrow 2^{1/3} \int_{-\infty}^{\infty} \frac{\tilde\mu e^{-\tilde zt/2}}{e^{t}-\tilde\mu}dt.
\end{equation*}
As far as the final term, the rational expression, under the change of variables and zooming in on $\xi$, the factor of $1/\eta'$ goes to -1 and the $\frac{d\zeta}{\zeta-\eta'}$ goes to $\frac{d\tilde\zeta}{\tilde\zeta-\tilde\eta'}$.

Thereby we formally obtain from $\mu J$ the kernel $-K_{a'}^{\csc}(\tilde\eta,\tilde\eta')$ acting on $L^2(\tilde\Gamma_{\eta})$, where
\begin{equation*}
 K_{a'}^{\csc}(\tilde\eta,\tilde\eta') = \int_{\tilde\Gamma_{\zeta}} e^{-\frac{T}{3}(\tilde\zeta^3-\tilde\eta'^3)+2^{1/3}a'(\tilde\zeta-\tilde\eta')} \left(2^{1/3}\int_{-\infty}^{\infty} \frac{\tilde\mu e^{-2^{1/3}t(\tilde\zeta-\tilde\eta')}}{e^{t}-\tilde\mu}dt\right) \frac{d\tilde\zeta}{\tilde\zeta-\tilde\eta},
\end{equation*}
with $a'=a+\log2$. Recall that the $\log2$ came from the $\log(\e^{-1/2}/2)$ term.

We have the identity
\begin{equation}\label{cscid}
 \int_{-\infty}^{\infty} \frac{\tilde\mu e^{-\tilde zt/2}}{e^{t}-\tilde\mu}dt =(-\tilde\mu)^{-\tilde z/2}\pi \csc(\pi \tilde z/2),
\end{equation}
where the branch cut in $\tilde\mu$ is taken along the positive real axis, hence $(-\tilde\mu)^{-\tilde z/2} =e^{-\log(-\tilde\mu)\tilde z/2}$ where $\log$ is taken with the standard branch cut along the negative real axis.
We may use the identity to rewrite the kernel as
\begin{equation*}
 K_{a'}^{\csc}(\tilde\eta,\tilde\eta') = \int_{\tilde\Gamma_{\zeta}} e^{-\frac{T}{3}(\tilde\zeta^3-\tilde\eta'^3)+2^{1/3}a'(\tilde\zeta-\tilde\eta')} \frac{\pi 2^{1/3}(-\tilde\mu)^{-2^{1/3}(\tilde\zeta-\tilde\eta')}}{ \sin(\pi 2^{1/3}(\tilde\zeta-\tilde\eta'))} \frac{d\tilde\zeta}{\tilde\zeta-\tilde\eta}.
\end{equation*}
Therefore we have shown formally that
\begin{equation*}
 \lim_{\e\rightarrow 0} P(F_{\e}(T,X)+\tfrac{T}{4!}\leq s) := F_T(s) = \int_{\mathcal{\tilde C}}e^{-\tilde \mu/2}\frac{d\tilde\mu}{\tilde\mu}\det(I-K_{a'}^{\csc})_{L^2(\tilde\Gamma_{\eta})},
\end{equation*}
where $a'=a+\log 2$. To make it cleaner we replace $\tilde\mu/2$  with  $\tilde\mu$. This only affects the $\tilde\mu$ term above given now by $(-2\tilde\mu)^{-\tilde z/2}$$=$$(-\tilde\mu)^{-2^{1/3}(\tilde\zeta-\tilde\eta')} e^{-2^{1/3}\log2(\tilde\zeta-\tilde\eta')}$. This can be absorbed and cancels the $\log2$ in $a'$ and thus we obtain,
\begin{equation*}
F_T(s) = \int_{\mathcal{\tilde C}}e^{-\tilde \mu}\frac{d\tilde\mu}{\tilde\mu}\det(I-K_{a}^{\csc})_{L^2(\tilde\Gamma_{\eta})},
\end{equation*}
which, up to the definitions of the contours $\tilde\Gamma_{\eta}$ and $\tilde\Gamma_{\zeta}$, is the desired limiting formula.

We now briefly note some of the problems and pitfalls of the preceeding formal argument, all of which will be addressed in the real proof of Section \ref{epsilon_section}.

Firstly, the pointwise convergence of both the prefactor infinite product and the Fredholm determinant is certainly not enough to prove convergence of the $\tilde\mu$ integral. Estimates must be made to control this convergence or to show that we can cut off the tails of the $\tilde\mu$ contour at negligible cost and then show uniform convergence on the trimmed contour.

Secondly, the deformations of the $\eta$ and $\zeta$ contours to the steepest descent curves is {\it entirely} illegal, as it involves passing through many poles of the kernel (coming from the $f$ term). In the case of \cite{TW3} this problem could be dealt with rather simply by just slightly modifying the descent curves. However, in our case, since $\tau$ tends to $1$ like $\e^{1/2}$, such a patch is much harder and involves very fine estimates to show that there exists suitable contours which stay close enough together, yet along which $\Psi$ displays the necessary descent and ascent required to make the argument work. This issues also comes up in the convergence of (\ref{Riemann_limit}). In order to make sense of this we must ensure that $1 < |\zeta/\eta'| < \tau^{-1}$ or else the convergence and the resulting expression make no sense.

Finally, one must make precise tail estimates to show that the kernel convergence is in the sense of trace-class norm. The Riemann sum approximation argument  can in fact be made rigorous (following the proof of Proposition \ref{originally_cut_mu_lemma}). We choose, however, to give an alternative proof of the validity of that limit in which we identify and prove the limit of $f$ via analysis of singularities and residues.

\section{Proof of the weakly asymmetric limit of the Tracy-Widom ASEP formula}\label{epsilon_section}

In this section we give a proof of Theorem \ref{epsilon_to_zero_theorem}, for which a formal derivation was presented in Section \ref{formal_calc_subsec}. The heart of the argument is Proposition \ref{uniform_limit_det_J_to_Kcsc_proposition} which is proved in Section \ref{J_to_K_sec} and also relies on a number of technical lemmas. These lemmas as well as all of the other propositions are proved in Section \ref{props_and_lemmas_sec}.

\subsubsection{Proof of Theorem \ref{epsilon_to_zero_theorem}}\label{proof_of_WASEP_thm_sec}

The expression given in equation (\ref{TW_prob_equation}) for $P(F_{\e}(T,X)+\tfrac{T}{4!}\leq s)$ contains an integral over a $\mu$ contour of a product of a prefactor infinite product and a Fredholm determinant. The first step towards taking the limit of this as $\e$ goes to zero is to control the prefactor, $\prod_{k=0}^{\infty} (1-\mu\tau^k)$. Initially $\mu$ lies on a contour $S_{\tau^+}$ which is centered at zero and of radius between $\tau$ and 1.  Along this contour the partial products (i.e., product up to $N$) form a highly oscillatory sequence and hence it is hard to control the convergence of the sequence.

\begin{figure}
\begin{center}
 \includegraphics[scale=.17]{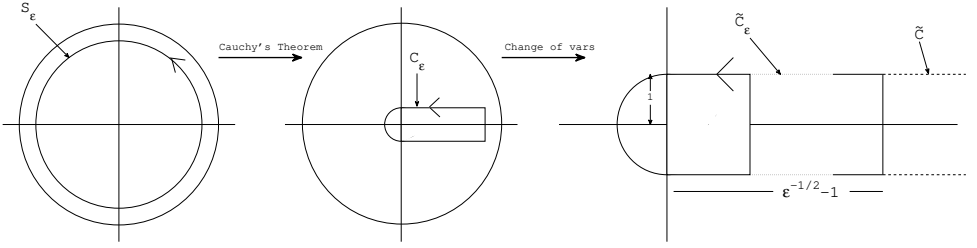}
\caption{The $S_{\e}$ contour is deformed to the $C_{\e}$ contour via Cauchy's theorem and then a change of variables leads to $\tilde{C}_{\e}$, with its infinite extension $\tilde{C}$.}\label{deform_to_c}
\end{center}
\end{figure}

The first step in our proof is to deform the $\mu$ contour $S_{\tau^+}$ to\begin{equation*}
 \mathcal{C}_{\e} = \{\e^{1/2}e^{i\theta}\}\cup \{x\pm i \e^{1/2}\}_{0<x\leq 1-\e^{1/2}}\cup \{1-\e^{1/2}+\e^{1/2}iy\}_{-1<y<1},
\end{equation*}
a long, skinny cigar shaped contour 
(see Fig. \ref{deform_to_c}.)
We orient $ \mathcal{C}_{\e}$ counter-clockwise. Notice that this new contour still includes all of the poles at $\mu=\tau^{k}$ associated with the $f$ function in the $J$ kernel.

In order to justify replacing $S_{\tau^+}$ by $\mathcal{C}_{\e}$ we need the following (for the proof see Section \ref{proofs_sec}):

\begin{lemma}\label{deform_mu_to_C}
In equation (\ref{TW_prob_equation}) we can replace the contour $S_\e$ with $\mathcal{C}_\e$ as the contour of integration for $\mu$ without affecting the value of the integral.
\end{lemma}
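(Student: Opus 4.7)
The plan is to apply Cauchy's theorem. I will show that the integrand is analytic throughout the open region lying between $S_{\tau^+}$ and $\mathcal{C}_\e$, after which equality of the two contour integrals follows immediately since both are positively oriented.

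The only places where the integrand can fail to be analytic inside the open unit disk are $\mu=0$, coming from the $d\mu/\mu$ factor, and the poles of the series $f(\mu,\zeta/\eta')=\sum_{k\in\Z}\tau^k(\zeta/\eta')^k/(1-\tau^k\mu)$ which feed the Fredholm determinant. These candidate poles sit at $\mu=\tau^{-k}$ for $k\in\Z$. Those with $k\geq 0$, namely $\{1,\tau^{-1},\tau^{-2},\ldots\}$, lie on or outside the unit circle; those with $k\leq -1$, namely $\{\tau,\tau^2,\ldots\}$, lie in the closed disk $|\mu|\leq\tau$. Hence $f(\mu,\cdot)$, and consequently the determinant, is analytic throughout the annulus $\tau<|\mu|<1$; the prefactor $\prod_{k=0}^{\infty}(1-\mu\tau^k)$ is entire in $\mu$ and so introduces no further singularities.

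I would then check that both components of the symmetric difference of the two interiors lie in this safe region. First, the part of the interior of $\mathcal{C}_\e$ lying outside $S_{\tau^+}$ is contained in the annulus $\tau<|\mu|<1$: it satisfies $|\mu|>r$, where $r\in(\tau,1)$ is the radius of $S_{\tau^+}$, while $|\mu|^2\leq(1-\e^{1/2})^2+\e<1$ for $\e$ small, since this bound holds on the cigar. Second, the part of the interior of $S_{\tau^+}$ lying outside $\mathcal{C}_\e$ consists of points satisfying $|\mu|<r<1$ together with at least one of $|\im\mu|>\e^{1/2}$, $\re\mu<-\e^{1/2}$, or $\re\mu>1-\e^{1/2}$; in every such case the point lies off the real segment $[0,1-\e^{1/2}]$ which contains $\{0\}\cup\{\tau^k:k\geq 1\}$, while remaining inside the unit disk, so the integrand is analytic there as well. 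Cauchy's theorem then gives the desired equality.

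The main technical obstacle is to justify rigorously that $\det(I+\mu J_{t,m,x,\mu})_{L^2(\Gamma_\eta)}$ is an analytic function of $\mu$ in precisely the region where the kernel $J_{t,m,x,\mu}$ is well-defined. This reduces to two standard ingredients: first, that for $\mu$ off the pole set of $f$, the operator $J_{t,m,x,\mu}$ is trace class on $L^2(\Gamma_\eta)$ and depends holomorphically on $\mu$; second, that Fredholm determinants of holomorphic trace-class-valued families are holomorphic. Both follow from the explicit contour-integral representation of the kernel in (\ref{J_eqn_def}) and standard bounds on the $\zeta$-integral uniform on compact subsets of the deformation region.
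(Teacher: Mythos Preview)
Your approach is correct and is essentially the same as the paper's: both reduce the deformation to Cauchy's theorem after verifying that the integrand is analytic in $\mu$ in the region between the two contours, identifying the only possible singularities as $\mu=0$ and $\mu=\tau^k$, and then checking that the determinant is analytic via holomorphy of the trace-class family $\mu\mapsto J_\mu$ together with the fact that Fredholm determinants of analytic trace-class families are analytic (the paper's Lemma~\ref{Analytic_fredholm_det_lemma}). The paper carries out the step you leave as ``standard'' in explicit detail: it shows that the difference quotient $(J_{\mu+h}-J_\mu)/h$ converges in trace norm by computing the formal $\mu$-derivative $f'(\mu,z)=\sum_k \tau^{2k}z^k/(1-\tau^k\mu)^2$ and proving uniform convergence of the kernel difference quotients on $\Gamma_\eta\times\Gamma_\eta$, which via Lemma~\ref{trace_convergence_lemma} upgrades to trace-norm convergence.
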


Having made this deformation of the $\mu$ contour, we now observe that the natural scale for $\mu$ is on order $\e^{1/2}$. With this in mind we make the change of variables
\begin{equation*}
 \mu = \e^{1/2}\tilde\mu.
\end{equation*}
\begin{remark}
Throughout the proof of this theorem and its lemmas and propositions, we will use the tilde to denote variables which are $\e^{1/2}$ rescaled versions of the original, untilded variables.
\end{remark}

The $\tilde\mu$ variable now lives on the contour 
\begin{equation*}
 \mathcal{\tilde C}_{\e} = \{e^{i\theta}\}\cup \{x\pm i\}_{0<x\leq \e^{-1/2}-1}\cup \{\e^{-1/2}-1+iy\}_{-1<y<1}.
\end{equation*}
which grow and ultimately approach
\begin{equation*}
 \mathcal{\tilde C} = \{e^{i\theta}\}\cup \{x\pm i\}_{x>0}.
\end{equation*}
In order to show convergence of the integral as $\e$ goes to zero, we must consider two things, the convergence of the integrand for $\tilde\mu$ in some compact region around the origin on $\mathcal{\tilde{C}}$, and the controlled decay of the integrand on $\mathcal{\tilde C}_{\e}$ outside of that compact region. This second consideration will allow us to approximate the integral by a finite integral in $\tilde\mu$, while the first consideration will tell us what the limit of that integral is. When all is said and done, we will paste back in the remaining part of the $\tilde\mu$ integral and have our answer.
With this in mind we give the following bound which is proved in Section \ref{proofs_sec},

\begin{lemma}\label{mu_inequalities_lemma}
 Define two regions, depend on a fixed parameter $r\geq 1$,
\begin{eqnarray*}
 R_1 &=& \{\tilde\mu : |\tilde\mu|\leq \frac{r}{\sin(\pi/10)}\}\\
 R_2 &=& \{\tilde\mu : \re(\tilde\mu)\in [\frac{r}{\tan(\pi/10)},\e^{-1/2}], \textrm{ and } \im(\tilde\mu)\in [-2,2]\}.
\end{eqnarray*}
$R_1$ is compact and $R_1 \cup R_2$ contains all of the contour $\mathcal{\tilde{C}}_{\e}$. Furthermore define the function (the infinite product after the change of variables)
\begin{equation*}
 g_{\e}(\tilde\mu) = \prod_{k=0}^{\infty} (1-\e^{1/2}\tilde\mu \tau^k).
\end{equation*}
Then uniformly in $\tilde\mu\in R_1$,
\begin{equation}\label{g_e_ineq1}
g_{\e}(\mu)\to e^{-\tilde\mu/2}
\end{equation}
Also, for all $\e<\e_0$ (some positive constant) there exists a constant $c$ such that for all $\tilde\mu\in R_2$ we have the following tail bound:
\begin{equation}\label{g_e_ineq2}
|g_{\e}(\tilde\mu)| \leq |e^{-\tilde\mu/2}| |e^{-c\e^{1/2}\tilde\mu^2}|.
\end{equation}
(By the choice of $R_2$, for all $\tilde\mu\in R_2$, $\re(\tilde\mu^2)>\delta>0$ for some fixed $\delta$. The constant $c$ can be taken to be $1/8$.)
\end{lemma}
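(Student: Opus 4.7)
The plan is to take logarithms of the infinite product and analyze the resulting sum:
\begin{equation*}
\log g_\e(\tilde\mu) = \sum_{k=0}^\infty \log(1-\e^{1/2}\tilde\mu\tau^k) = -\sum_{n=1}^\infty \frac{(\e^{1/2}\tilde\mu)^n}{n(1-\tau^n)},
\end{equation*}
the second equality being valid whenever $|\e^{1/2}\tilde\mu|<1$, obtained by Taylor-expanding each logarithm and exchanging sums via $\sum_{k\ge 0}\tau^{nk} = 1/(1-\tau^n)$. The key exact algebraic identities, both consequences of $\tau = (1-\e^{1/2})/(1+\e^{1/2})$, are $\e^{1/2}/(1-\tau) = (1+\e^{1/2})/2$ and $\e/(2(1-\tau^2)) = \e^{1/2}(1+\e^{1/2})^2/8$. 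These drive the $n=1$ and $n=2$ terms, respectively, and give the limit $1/2$ and the constant $c = 1/8$ appearing in the lemma.

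For the uniform convergence claim (\ref{g_e_ineq1}) on the compact set $R_1$, the bound $|\e^{1/2}\tilde\mu| = O(\e^{1/2})$ uniformly makes all terms with $n \ge 2$ uniformly small: using the crude lower bound $1-\tau^n \ge 1-\tau$, the $n$-th term is $O(\e^{(n-1)/2}|\tilde\mu|^n/n^2)$, so the tail $\sum_{n \ge 2}$ is $O(\e^{1/2})$. The $n=1$ term is exactly $-\tilde\mu(1+\e^{1/2})/2 \to -\tilde\mu/2$, and exponentiating gives $g_\e(\tilde\mu) \to e^{-\tilde\mu/2}$ uniformly on $R_1$. This is the rigorous version of the Riemann sum heuristic from Section 1.6, and the argument is routine.

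For the tail estimate (\ref{g_e_ineq2}) on $R_2$, take real parts and write $\tilde\mu = x+iy$ to obtain
\begin{equation*}
\log|g_\e(\tilde\mu)| = -\frac{x(1+\e^{1/2})}{2} - \frac{(x^2-y^2)\e^{1/2}(1+\e^{1/2})^2}{8} - \sum_{n \ge 3}\frac{\re(\tilde\mu^n)\e^{n/2}}{n(1-\tau^n)}.
\end{equation*}
The definition of $R_2$ is engineered so that $x \ge r/\tan(\pi/10) \ge 1/\tan(\pi/10) > 2 \ge |y|$, whence $x^2 - y^2 \ge \delta > 0$; combined with $(1+\e^{1/2})^2 \ge 1$, the first two explicit terms already supply the desired upper bound $-x/2 - \e^{1/2}(x^2-y^2)/8$ with $c = 1/8$. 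The remaining task is to show that the $n \ge 3$ tail does not destroy this bound.

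The main obstacle is precisely this uniform control of the $n \ge 3$ tail on $R_2$. The difficulty is that near the ``tip'' of the cigar contour, where $\re\tilde\mu \sim \e^{-1/2}$, the ratio $|\e^{1/2}\tilde\mu|$ approaches $1$, the series converges only slowly, and naive termwise estimates produce quantities of order $\e^{-1/2}$. The plan is a two-region split: in the \emph{bulk} of $R_2$, where $|\e^{1/2}\tilde\mu| \le 1-\delta_0$ for some absolute constant $\delta_0 > 0$, the $n \ge 3$ tail is geometrically convergent and can be absorbed by the slack $-x\e^{1/2}/2$ left over from the exact $n=1$ evaluation; in the \emph{boundary layer} near the tip, one applies the functional equation $g_\e(\tilde\mu) = (1-\e^{1/2}\tilde\mu)\,g_\e(\tilde\mu\tau)$ a finite (bounded in $\e$) number of times, peeling off small factors of size $O(\e^{1/2})$ each, until $\tilde\mu$ is mapped into the bulk, where the series estimate takes over. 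Checking that the constants produced by this iteration are uniform for $\e \in (0,\e_0)$ is the principal technical step.
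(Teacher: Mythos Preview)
Your argument for the uniform convergence on $R_1$ is correct and essentially the same as the paper's: both control $\log g_\e(\tilde\mu)+\tilde\mu/2$ by the quadratic remainder in the Taylor expansion of each $\log(1-\e^{1/2}\tilde\mu\tau^k)$ and sum the resulting geometric series.

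For the tail bound on $R_2$ your route diverges from the paper's, and the specific plan has a gap. You expand $\log(1-z)=-\sum_{n\ge1}z^n/n$ and sum over $k$ first, producing $-\sum_{n\ge1}(\e^{1/2}\tilde\mu)^n/(n(1-\tau^n))$, and then worry (correctly) that near the tip of the cigar, where $|\e^{1/2}\tilde\mu|\uparrow 1$, the $n\ge3$ tail is not uniformly small. Your proposed fix, iterating the functional equation $g_\e(\tilde\mu)=(1-\e^{1/2}\tilde\mu)\,g_\e(\tilde\mu\tau)$ ``a finite (bounded in $\e$) number of times'' to push $\tilde\mu$ into the bulk, does not work as stated: each application multiplies $|\e^{1/2}\tilde\mu|$ by $\tau\approx 1-2\e^{1/2}$, so moving from $|\e^{1/2}\tilde\mu|\approx 1-\e^{1/2}$ down to $1-\delta_0$ requires of order $\e^{-1/2}$ iterations, not boundedly many. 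Tracking the product of that many peeled factors essentially reconstructs the full product and gives no simplification.

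The paper avoids this difficulty entirely by \emph{not} swapping the order of summation. It keeps the sum over $k$ and uses, for each individual term $z_k=\e^{1/2}\tilde\mu\tau^k$, the pointwise inequality
\[
\re\log(1-z)\;\le\;-\re z-\tfrac12\re(z^2),
\]
valid for $z$ in a narrow wedge about the positive real axis with $\re z\le 1$ (which is where all the $z_k$ live when $\tilde\mu\in R_2$, since $\arg z_k=\arg\tilde\mu$ is small and $|z_k|\le|\e^{1/2}\tilde\mu|\le1$). Summing this inequality in $k$ gives directly
\[
\re\log g_\e(\tilde\mu)\le -\re(\tilde\mu)\sum_{k\ge0}\e^{1/2}\tau^k-\tfrac12\re(\tilde\mu^2)\sum_{k\ge0}\e\tau^{2k}
\le -\tfrac12\re(\tilde\mu)-\tfrac18\e^{1/2}\re(\tilde\mu^2),
\]
with no convergence issue because the $k$-sums are plain geometric series. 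The point is that the inequality $\re\log(1-z)\le -\re z-\tfrac12\re z^2$ remains true (indeed becomes vacuously strong) as $z\to 1^-$, so the tip of the cigar causes no trouble at all when you work termwise in $k$. This is the idea you are missing; once you see it, the boundary-layer / functional-equation machinery becomes unnecessary.
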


We now turn our attention to the Fredholm determinant term in the integrand. Just as we did for the prefactor infinite product in Lemma \ref{mu_inequalities_lemma} we must establish uniform convergence of the determinant for $\tilde\mu$ in a fixed compact region around the origin, and a suitable tail estimate valid outside that compact region. The tail estimate must be such that for each finite $\e$, we can combine the two tail estimates (from the prefactor and from the determinant) and show that their integral over the tail part of $\mathcal{\tilde C}_{\e}$ is small and goes to zero as we enlarge the original compact region. For this we have the following two propositions (the first is the most substantial and is proved in Section \ref{J_to_K_sec}, while the second is proved in Section \ref{proofs_sec}).

\begin{proposition}\label{uniform_limit_det_J_to_Kcsc_proposition}
Fix $s\in \R$, $T>0$ and $X\in \R$. Then for any compact subset of $\mathcal{\tilde C}$ we have that for all $\delta>0$ there exists an $\e_0>0$ such that for all $\e<\e_0$ and all $\tilde\mu$ in the compact subset,
\begin{equation*}
\left|\det(I+\e^{1/2}\tilde\mu J_{\e^{1/2}\tilde\mu})_{L^2(\Gamma_{\eta})} - \det(I-K^{\csc}_{a'})_{L^2(\tilde\Gamma_{\eta})}\right|<\delta.
\end{equation*}
Here $a'=a+\log2$ and $K_{a'}^{\csc}$ is defined in Def. \ref{k_csc_definition} and depends implicitly on $\tilde\mu$.
\end{proposition}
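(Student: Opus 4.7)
My approach is a rigorous steepest-descent analysis of the Tracy-Widom kernel $\e^{1/2}\tilde\mu J_{\e^{1/2}\tilde\mu}(\eta,\eta')$, substantially more delicate than the formal derivation of Section \ref{formal_calc_subsec} because the poles of $f(\mu,\zeta/\eta')$ at $\zeta/\eta'=\tau^{-k}$ collapse onto $|\zeta/\eta'|=1$ at rate $O(\e^{1/2})$ and therefore obstruct contour deformation. I would organize the argument in three stages: construct admissible $\e$-dependent contours for $\eta,\eta',\zeta$ that implement steepest descent near the critical point $\xi=-1-2\e^{1/2}X/T$ of $\Psi$; prove pointwise convergence of the rescaled kernel to the integrand defining $K^{\csc}_{a'}$; and upgrade pointwise convergence to trace-class convergence, which is sufficient for convergence of the Fredholm determinants uniformly in $\tilde\mu$ on compact subsets of $\mathcal{\tilde C}$.

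For the first stage, I would replace $\Gamma_\eta,\Gamma_\zeta$ by $\e$-dependent contours which, after the substitution $\tilde\zeta=2^{-4/3}\e^{-1/2}(\zeta-\xi)$ and analogous ones for $\tilde\eta,\tilde\eta'$, approximate (after a final routine deformation) the contours $\tilde\Gamma_\eta,\tilde\Gamma_\zeta$ of Definition \ref{thm_definitions}. Near $\xi$ the new contours must leave at the steepest-descent angles ($\pm\pi/3$ for $\eta$, $\pm 2\pi/3$ for $\zeta$), while globally $|\zeta/\eta'|$ must remain trapped in the shrinking annulus $(1,\tau^{-1})$ so that no pole of $f$ is crossed; controlling this simultaneously is the core technical difficulty. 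I would build the contours by gluing short steepest-descent segments at $\xi$ to longer arcs returning close to the original circles, then invoke Cauchy's theorem on the strip between the old and new contours to verify that the deformation picks up no residues. On the resulting contours, the Taylor expansion \eqref{taylor_expansion_term} supplies the cubic exponential $e^{-\frac{T}{3}(\tilde\zeta^3-\tilde\eta'^3)+2^{1/3}a'(\tilde\zeta-\tilde\eta')}$ in an $O(1)$ ball in the rescaled variables, while quantitative bounds on $\re\Psi$ outside this ball truncate the contour with exponentially small error uniformly in $\tilde\mu$.

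For the second stage, the divergent $\log\e$ produced by the $\log(\e^{-1/2}/2)$ term in $m$ is absorbed by extracting a factor $z^{n_0}$ with $n_0=\lfloor\log(\e^{-1/2})/\log\tau\rfloor$ from the sum defining $f(\mu,z)$; since $\tau^{n_0}\approx \e^{-1/2}$ cancels the $\e^{1/2}$ from $\mu=\e^{1/2}\tilde\mu$, the shifted series converges to the integral $2^{1/3}\int_{-\infty}^{\infty}\tilde\mu e^{-2^{1/3}t(\tilde\zeta-\tilde\eta')}/(e^{t}-\tilde\mu)\,dt$. In place of a raw Riemann-sum argument for \eqref{Riemann_limit}, I would use the closed-form cosecant identity \eqref{cscid} and a residue analysis of $f$ viewed as a meromorphic function of $z$ to give explicit error estimates, with constants uniform in $\tilde\mu$ on compact subsets of $\mathcal{\tilde C}$; the rational factor $d\zeta/[\eta'(\zeta-\eta)]$ is handled directly by the change of variables.

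For the third stage, to upgrade pointwise kernel convergence to Fredholm determinant convergence I would factor $\e^{1/2}\tilde\mu J$ as a product of two Hilbert-Schmidt operators, splitting the cubic exponential symmetrically between the input and output variables, and control each factor in $L^{2}$ by a Gaussian-type majorant built from the steepest-descent bounds on $\re\Psi$. Dominated convergence then yields Hilbert-Schmidt convergence of each factor, hence trace-class convergence of the product, and therefore convergence of the Fredholm determinants uniformly on compact $\tilde\mu$ subsets of $\mathcal{\tilde C}$. The principal obstacle throughout is the first stage: threading the contours through the increasingly dense family of poles while preserving both analyticity and the steepest-descent structure; once this is accomplished, the remaining two stages reduce to dominated convergence with exponentially decaying majorants supplied by the cubic phase.
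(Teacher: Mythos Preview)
Your three-stage strategy matches the paper's proof architecture (Section \ref{J_to_K_sec}): deform contours so that the kernel localizes near $\xi$, establish pointwise convergence of the rescaled kernel, then upgrade to trace-class convergence. Stages 2 and 3 are essentially what the paper does via Lemmas \ref{compact_eta_zeta_taylor_lemma} and \ref{muf_compact_sets_csc_limit_lemma} and the extension argument at the end of Section \ref{J_to_K_sec}.

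There is, however, a concrete problem in Stage 1. You propose that near $\xi$ the $\eta$ and $\zeta$ contours leave at the steepest-descent angles $\pm\pi/3$ and $\pm 2\pi/3$. But along such rays, with $\eta'=\xi+r_1 e^{\pm i\pi/3}$ and $\zeta=\xi+r_2 e^{\pm 2i\pi/3}$, one computes $|\zeta/\eta'|\approx 1+(r_1+r_2)/2$. The annulus constraint $|\zeta/\eta'|\in(1,\tau^{-1})\approx(1,1+2\e^{1/2})$ then forces $r_1+r_2<4\e^{1/2}$, i.e.\ the local segments can extend only to length $<2$ in the rescaled variables. That is not enough: to make the ``circular'' remainder negligible (your truncation step) you need $l$ large, and at $l<2$ the exponential $e^{-\Psi}$ is still $O(1)$, so the tail is not small. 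This is exactly why the paper abandons the steepest-descent angles and instead takes \emph{vertical} segments $\Gamma_{\eta,l}^{vert},\Gamma_{\zeta,l}^{vert}$ through $\xi\pm\e^{1/2}/2$: along vertical lines $|\zeta/\eta'|=1+\e^{1/2}+O(l^2\e)$ uniformly, so the annulus constraint holds for \emph{any} fixed $l$ as $\e\to 0$. The limiting contours $\tilde\Gamma_\eta,\tilde\Gamma_\zeta$ in Definition \ref{thm_definitions} are vertical for precisely this reason, and the deformation you describe as ``routine'' is not available if your angled segments are forced to be short.

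The paper's proof also differs from your sketch in the treatment of the far parts: rather than ``arcs returning close to the original circles'', it uses an explicit perturbation $\kappa(\theta)$ of the unit circle (equation \eqref{kappa_eqn}) engineered so that $\re(\Psi+n_0\log)$ is provably monotone along $\Gamma_{\eta,l}^{circ},\Gamma_{\zeta,l}^{circ}$ (Lemma \ref{kill_gamma_2_lemma}). On the unmodified circles this monotonicity is not obvious, and your proposal does not indicate how you would obtain it.
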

\begin{proposition}\label{originally_cut_mu_lemma}
There exist $c,c'>0$ and $\e_0>0$ such that for all $\e<\e_0$ and all $\tilde\mu\in\mathcal{\tilde C}_{\e}$, 
\begin{equation*}
\left|g_{\e}(\tilde\mu)\det(I+\e^{1/2}\tilde\mu J_{\e^{1/2}\tilde\mu})_{L^2(\Gamma_{\eta})}\right| \leq c'e^{-c|\tilde\mu|}.
\end{equation*} 
\end{proposition}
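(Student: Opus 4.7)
The plan is to split the $\tilde\mu$-contour $\mathcal{\tilde C}_\e$ into two regimes matching the two bounds of Lemma \ref{mu_inequalities_lemma}. Fix $r$ large enough that the compact region $R_1$ contains the unit arc $\{e^{i\theta}\}_{\pi/2\leq\theta\leq3\pi/2}$ together with a bounded initial portion of the two horizontal rays of $\mathcal{\tilde C}_\e$; let the complementary tail of $\mathcal{\tilde C}_\e$ lie inside $R_2$. On the compact piece, $|\tilde\mu|$ is bounded, so the desired bound $|g_\e \det|\leq c'e^{-c|\tilde\mu|}$ reduces to uniform boundedness of the integrand. On the tail piece I will exploit the Gaussian-in-$\tilde\mu$ decay of $g_\e$ to absorb at-worst polynomial growth of the Fredholm determinant in $|\tilde\mu|$.

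For the compact piece, Lemma \ref{mu_inequalities_lemma} gives $g_\e(\tilde\mu)\to e^{-\tilde\mu/2}$ uniformly, so $|g_\e(\tilde\mu)|$ is uniformly bounded on $R_1\cap\mathcal{\tilde C}_\e$ for all small $\e$. By Proposition \ref{uniform_limit_det_J_to_Kcsc_proposition}, the determinants $\det(I+\e^{1/2}\tilde\mu J_{\e^{1/2}\tilde\mu})$ converge uniformly on this compact set to the limiting crossover determinant, which is continuous and hence bounded. Combining the two bounds and using that $|\tilde\mu|$ is bounded on this set, the desired estimate holds after enlarging $c'$.

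The bulk of the work is in $R_2\cap\mathcal{\tilde C}_\e$, where $\re(\tilde\mu)\in[r/\tan(\pi/10),\,\e^{-1/2}]$ and $|\im(\tilde\mu)|\leq 2$. For $r$ sufficiently large, $\re(\tilde\mu^2)\geq \tfrac12 (\re(\tilde\mu))^2$, and Lemma \ref{mu_inequalities_lemma} yields
\[
|g_\e(\tilde\mu)|\leq \exp\!\bigl(-\tfrac12\re(\tilde\mu)-\tfrac{c}{2}\e^{1/2}(\re(\tilde\mu))^2\bigr).
\]
To bound the Fredholm determinant on this set I would apply the standard estimate $|\det(I+A)|\leq\exp(\|A\|_1)$ to $A=\e^{1/2}\tilde\mu J_{\e^{1/2}\tilde\mu}$, factoring the kernel as a product of two Hilbert--Schmidt operators so that the trace norm is controlled by two Hilbert--Schmidt norms. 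Writing the $\zeta$-integrand of $J$ as $e^{\Psi(\zeta)}[\,\cdot\,]\cdot e^{-\Psi(\eta')}[\,\cdot\,]$ along the steepest-descent contours $\Gamma_\zeta,\Gamma_\eta$ constructed in the proof of Proposition \ref{uniform_limit_det_J_to_Kcsc_proposition}, the cubic decay of $\re\Psi$ away from the critical point $\xi$ makes the Hilbert--Schmidt integrals finite, while the sum defining $f(\mu,\cdot)$ is controlled on the annulus $\{1<|z|<\tau^{-1}\}$ by a constant times $|\mu|/\mathrm{dist}(\mu,\{\tau^k\}_{k\in\Z})$.

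The main obstacle, and the reason one cannot simply quote Proposition \ref{uniform_limit_det_J_to_Kcsc_proposition}, is that as $|\tilde\mu|\nearrow\e^{-1/2}$ one has $|\mu|\nearrow 1$, where the function $f(\mu,z)$ develops singularities near $|z|=1$ while simultaneously the contours $\Gamma_\eta,\Gamma_\zeta$ squeeze toward the unit circle at rate $\e^{1/2}$. The delicate choice of contours already required for the proof of Proposition \ref{uniform_limit_det_J_to_Kcsc_proposition} should keep $|\zeta/\eta'|$ uniformly bounded away from $\{\tau^k\}$ and deliver at most polynomial-in-$|\tilde\mu|$ growth for $\|J\|_1$, hence for $\log|\det(I+\e^{1/2}\tilde\mu J)|$, possibly with a factor of $\e^{-1/2}$. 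This polynomial growth is swamped by the $e^{-c\e^{1/2}(\re(\tilde\mu))^2/2}$ factor coming from $g_\e$ as soon as $\e^{1/2}|\tilde\mu|$ is bounded away from zero, while the remaining $e^{-\re(\tilde\mu)/2}$ factor supplies the required $e^{-c|\tilde\mu|}$ decay uniformly in $\e<\e_0$, completing the proof.
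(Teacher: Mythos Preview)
Your overall strategy---factor the kernel as a product of Hilbert--Schmidt operators, bound the trace norm, and use $|\det(I+A)|\le e^{\|A\|_1}$---matches the paper's.  The gap is in how you balance the growth of the determinant against the decay of $g_\e$.  You hope that ``at most polynomial-in-$|\tilde\mu|$ growth for $\|J\|_1$'' will be absorbed by the Gaussian correction $e^{-c\e^{1/2}(\re\tilde\mu)^2}$, but this cannot work uniformly in $\e$.  On $\mathcal{\tilde C}_\e$ one has $|\tilde\mu|\le \e^{-1/2}$, so $\e^{1/2}(\re\tilde\mu)^2\le \re\tilde\mu$: the Gaussian term contributes at best another factor $e^{-c|\tilde\mu|}$, and only when $\e^{1/2}|\tilde\mu|$ is of order one.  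For the intermediate range, say $|\tilde\mu|\sim \e^{-1/4}$, the Gaussian correction is merely $O(1)$, and the available decay is just $e^{-\re(\tilde\mu)/2}$.  If $\log|\det|$ grows like $C|\tilde\mu|^p$ with $p>1$, or even with $p=1$ and $C\ge 1/2$, the product is not bounded by $c'e^{-c|\tilde\mu|}$.

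The paper resolves this by proving the sharper bound $\|\e^{1/2}\tilde\mu J\|_1\le C|\tilde\mu|^\alpha$ with $\alpha<1$ (in fact $\alpha=1/2$), so that $|\det|\le e^{C|\tilde\mu|^\alpha}$ is beaten by the plain $e^{-\re(\tilde\mu)/2}$ term alone, with no appeal to the Gaussian correction.  The key estimate is Lemma~\ref{final_estimate}: $|\tilde\mu f(\tilde\mu,z)|\le c(\alpha)\,|\tilde\mu|^\alpha/|1-z|$ uniformly for $z$ in the relevant annulus and $\tilde\mu\in\mathcal{\tilde C}_\e$.  Obtaining this requires recentering the doubly infinite sum at $k^*=\lfloor \tfrac12\e^{-1/2}\log|\tilde\mu|\rfloor$ (which produces the factor $|z|^{k^*}\approx|\tilde\mu|^{1/2}$), then controlling the remaining sum by a Riemann-sum comparison with $\int \omega e^{-t}/(1-\omega e^{-2t})\,dt$ for $\omega=\tilde\mu\tau^{k^*}$ near the unit circle.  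Your crude bound $|f(\mu,z)|\lesssim |\mu|/\mathrm{dist}(\mu,\{\tau^k\})$ only yields linear growth in $|\tilde\mu|$ with an uncontrolled constant, which is exactly the borderline that fails.
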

This exponential decay bound on the integrand shows that that, by choosing a suitably large (fixed) compact region around zero along the contour $\mathcal{\tilde C}_{\e}$, it is possible to make the $\tilde\mu$ integral outside of this region arbitrarily small, uniformly in $\e\in (0,\e_0)$. This means that we may assume henceforth  that $\tilde\mu$ lies in a compact subset of $\mathcal{\tilde C}$.

Now that we are on a fixed compact set of $\tilde\mu$, the first part of  Lemma \ref{mu_inequalities_lemma} and Proposition \ref{uniform_limit_det_J_to_Kcsc_proposition} combine to show that the integrand converges uniformly to
\begin{equation*}
 \frac{e^{-\tilde\mu/2}}{\tilde\mu} \det(I-K^{\csc}_{a'})_{L^2(\tilde\Gamma_{\eta})}
\end{equation*}
and hence the integral converges to the integral with this integrand.

To finish the proof of the limit in Theorem \ref{epsilon_to_zero_theorem}, it is necessary that for any $\delta$ we can find a suitably small $\e_0$ such that the difference between the two sides of the limit differ by less than $\delta$ for all $\e<\e_0$. Technically we are in the position of a $\delta/3$ argument. One portion of $\delta/3$ goes to the cost of cutting off the $\tilde\mu$ contour outside of some compact set. Another $\delta/3$ goes to the uniform convergence of the integrand. The final portion goes to repairing the $\tilde\mu$ contour. As $\delta$ gets smaller, the cut for the $\tilde\mu$ contour must occur further out. Therefore the limiting integral will be over the limit of the $\tilde\mu$ contours, which we called $\mathcal{\tilde C}$. The final $\delta/3$ is spent on the following Proposition, whose proof is given in Section \ref{proofs_sec}.

\begin{proposition}\label{reinclude_mu_lemma}
There exists $c,c'>0$ such that for all $\tilde\mu\in \mathcal{\tilde C}$ with $|\tilde\mu|\geq 1$,
\begin{equation*}
\left|\frac{e^{-\tilde\mu/2}}{\tilde\mu} \det(I-K^{\csc}_{a})_{L^2(\tilde\Gamma_{\eta})}\right| \leq |c'e^{-c\tilde\mu}|.
\end{equation*}
\end{proposition}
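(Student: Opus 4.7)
The plan is to exploit the shape of $\mathcal{\tilde C}$: on the bounded half-circle $\{e^{i\theta}\}_{\pi/2\leq\theta\leq 3\pi/2}$ we have $|\tilde\mu|=1$ and $\re(\tilde\mu)\leq 0$, so $c'e^{-c\re(\tilde\mu)}\geq c'$ and it suffices to show the integrand is uniformly bounded on this compact set---a consequence of continuity of $K^{\csc}_a$ in $\tilde\mu$ on $\mathcal{\tilde C}$ (where $-\tilde\mu$ avoids the negative real axis so the principal branch of the logarithm is well-defined) together with standard Fredholm determinant estimates. The real work lies on the rays $\{x\pm i\}_{x>0}$, where $|e^{-\tilde\mu/2}|=e^{-x/2}$ provides exponential decay; I must show that $|\det(I-K^{\csc}_a)|$ grows at most subexponentially in $x$.

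For this I would use $|\det(I-K^{\csc}_a)_{L^2(\tilde\Gamma_\eta)}|\leq\exp(\|K^{\csc}_a\|_1)$ and bound the trace norm via a Hilbert--Schmidt factorization $K^{\csc}_a=AB$. Parameterizing $\tilde\zeta=-c_3/2+is$ on $\tilde\Gamma_\zeta$ and $\tilde\eta=c_3/2+ir$, $\tilde\eta'=c_3/2+ir'$ on $\tilde\Gamma_\eta$, the cubic factor satisfies $|e^{-T(\tilde\zeta^3-\tilde\eta'^3)/3}|\leq Ce^{-Tc_3(s^2+r'^2)/2}$, giving Gaussian decay in both $s$ and $r'$. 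The natural factorization places the Cauchy kernel $1/(\tilde\zeta-\tilde\eta)$ into $A$ and the csc/power ratio into $B$, splitting the cubic factor evenly so that each of $A,B$ carries $e^{-Tc_3 s^2/4}$. Using $\int(c_3^2+(s-r)^2)^{-1}\,dr=\pi/c_3$ followed by integration of the Gaussian in $s$, $\|A\|_2$ is bounded by a constant independent of $\tilde\mu$.

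The key step is the estimate on the ratio appearing in $B$. With $w=-2^{1/3}(\tilde\zeta-\tilde\eta')=\tfrac12-2^{1/3}i(s-r')$ and the principal branch of $\log$, a direct computation gives $|(-\tilde\mu)^w|=|\tilde\mu|^{1/2}e^{2^{1/3}(s-r')\arg(-\tilde\mu)}$, while the identity $\sin(-\pi/2+iy)=-\cosh(y)$ gives $|\sin(\pi 2^{1/3}(\tilde\zeta-\tilde\eta'))|=\cosh(\pi 2^{1/3}(s-r'))$. Since $|\arg(-\tilde\mu)|\leq\pi$ for every $\tilde\mu\in\mathcal{\tilde C}$, combining with $\cosh(y)\geq\tfrac12 e^{|y|}$ shows the ratio is at most $2|\tilde\mu|^{1/2}$ uniformly in $s,r'$. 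Together with the Gaussian decay from the cubic factors, this yields $\|B\|_2\leq C|\tilde\mu|^{1/2}$ and hence $\|K^{\csc}_a\|_1\leq\|A\|_2\|B\|_2\leq C|\tilde\mu|^{1/2}$.

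Combining everything on the rays, $|\tilde\mu^{-1}e^{-\tilde\mu/2}\det(I-K^{\csc}_a)|\leq e^{-x/2+C|\tilde\mu|^{1/2}}$. Since $|\tilde\mu|^{1/2}\leq\sqrt{x+1}\leq\sqrt{x}+1$ on the rays, $-x/2+C|\tilde\mu|^{1/2}\leq -cx+C'$ for any $c\in(0,1/2)$ with a constant $C'$; absorbing $e^{C'}$ and the bounded-$\tilde\mu$ region (including the half-circle) into $c'$ gives the stated bound. The principal technical subtlety is the marginal cancellation between $\cosh(\pi 2^{1/3}(s-r'))$ and the growth of $|(-\tilde\mu)^w|$: $|\arg(-\tilde\mu)|$ approaches $\pi$ in the limit along the rays but never exceeds it, so the ratio saturates at order $|\tilde\mu|^{1/2}$---exactly the rate that the exponential decay of $e^{-\tilde\mu/2}$ can absorb, and no better.
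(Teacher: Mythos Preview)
Your proof is correct and follows essentially the same route as the paper's: bound $|\det(I-K^{\csc}_a)|\le e^{\|K^{\csc}_a\|_1}$, factor $K^{\csc}_a=AB$ with $A,B$ Hilbert--Schmidt, show $\|A\|_2$ is bounded independently of $\tilde\mu$, and show $\|B\|_2\le C|\tilde\mu|^{1/2}$ via the computation $|(-\tilde\mu)^{w}|/|\sin(\pi 2^{1/3}(\tilde\zeta-\tilde\eta'))|\le 2|\tilde\mu|^{1/2}$, which is exactly the paper's key estimate. The only cosmetic difference is the choice of regularizer placed into $A$: you borrow half of the Gaussian decay $e^{-Tc_3 s^2/4}$ from the cubic term, whereas the paper inserts an auxiliary $e^{-|\im\tilde\zeta|}$ into $A$ and compensates with $e^{|\im\tilde\zeta|}$ in $B$; both choices make $\|A\|_2$ finite and leave enough decay in $B$.
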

Recall that the kernel $K^{\csc}_{a}$ is a function of $\tilde\mu$.  
The argument used to prove this proposition immediately shows that $K_a^{\csc}$ is a trace class operator on $L^2(\tilde\Gamma_{\eta})$.

It is an immediate corollary of this exponential tail bound that for sufficiently large compact sets of $\tilde\mu$,  the cost to include the rest of the $\tilde\mu$ contour is less than $\delta/3$. This, along with the change of variables in $\tilde\mu$ described at the end of Section \ref{formal_calc_subsec} finishes the proof of Theorem \ref{epsilon_to_zero_theorem}.

\subsection{Proof of Proposition \ref{uniform_limit_det_J_to_Kcsc_proposition}}\label{J_to_K_sec}

In this section we provide all of the steps necessary to prove Proposition \ref{uniform_limit_det_J_to_Kcsc_proposition}. To ease understanding of the argument we relegate more technical points to lemmas whose proof we delay to Section \ref{JK_proofs_sec}.

During the proof of this proposition, it is important to  keep in mind that we are assuming that  $\tilde\mu$ lies in a fixed compact subset of $\mathcal{\tilde C}$. Recall that $\tilde\mu = \e^{-1/2}\mu$. We proceed via the following strategy  to find the limit of the Fredholm determinant as $\e$ goes to zero. The first step is to deform the contours $\Gamma_{\eta}$ and $\Gamma_{\zeta}$ to suitable curves along which there exists a small region outside of which the kernel of our operator is exponentially small. This justifies cutting the contours off outside of this small region. We may then rescale everything so this small region becomes order one in size. Then  we show uniform convergence of the kernel to the limiting kernel on the compact subset. Finally we need to show that we can complete the finite contour on which this limiting object is defined to an infinite contour without significantly changing the value of the determinant. 

Recall now that $\Gamma_{\zeta}$ is defined to be a circle centered at zero of radius $1+\e^{1/2}/2$ and $\Gamma_{\eta}$ is a circle centered at zero of radius $1-\e^{1/2}/2$ and that
\begin{equation*}
 \xi = -1 - 2\e^{1/2}\frac {X}{T}.
\end{equation*}
The function $f(\mu,\zeta/\eta')$ which shows up in the definition of the kernel for $J$ has poles as every point $\zeta/\eta'=z=\tau^k$ for $k\in \Z$. 

As long as we simultaneously deform the $\Gamma_{\zeta}$ contour as we deform $\Gamma_{\eta}$ so as to keep $\zeta/\eta'$ away from these poles, we may use Proposition \ref{TWprop1} (Proposition 1 of \cite{TW3}), to justify the fact that the determinant does not change under this deformation. In this way we may deform our contours to the following modified contours $\Gamma_{\eta,l},\Gamma_{\zeta,l}$:

\begin{figure}
\begin{center}
\includegraphics[scale=.6]{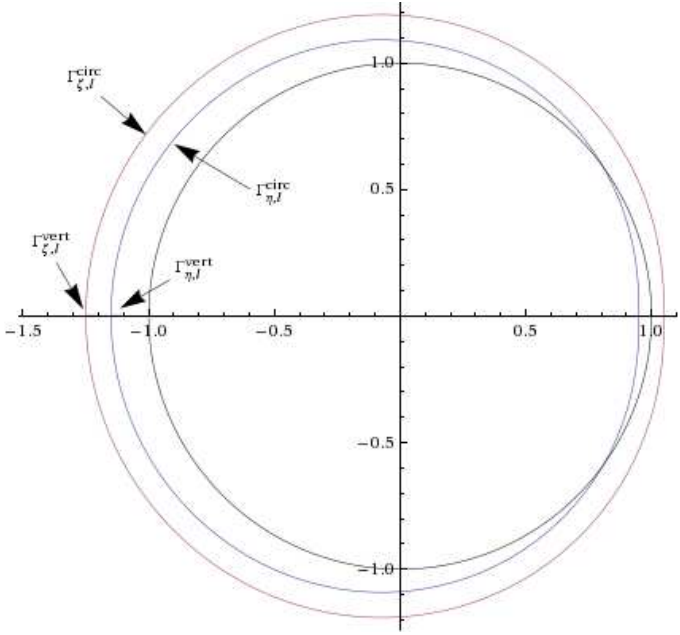}
\caption{$\Gamma_{\zeta,l}$ (the outer most curve) is composed of a small verticle section near $\xi$ labeled $\Gamma_{\zeta,l}^{vert}$ and a large almost circular (small modification due to the function $\kappa(\theta)$) section labeled $\Gamma_{\zeta,l}^{circ}$. Likewise $\Gamma_{\eta,l}$ is the middle curve, and the inner curve is the unit circle. These curves depend on $\e$ in such a way that $|\zeta/\eta|$ is bounded between $1$ and $\tau^{-1}\approx 1+2\e^{1/2}$.}\label{kappa_contour}
\end{center}
\end{figure}

\begin{definition}
Let $\Gamma_{\eta,l}$ and $\Gamma_{\zeta,l}$ be two families (indexed by $l>0$) of simple closed contours in $\C$ defined as follows. Let 
\begin{equation}\label{kappa_eqn}
\kappa(\theta) = \frac{2X}{T} \tan^2\left(\frac{\theta}{2}\right)\log\left(\frac{2}{1-\cos\theta}\right).
\end{equation}
Both $\Gamma_{\eta,l}$ and $\Gamma_{\zeta,l}$ will be symmetric across the real axis, so we need only define them on the top half. $\Gamma_{\eta,l}$ begins at $\xi+\e^{1/2}/2$ and moves along a straight vertical line for a distance $l\e^{1/2}$ and then joins the curve 
\begin{equation}\label{kappa_param_eqn}
 \left[1+\e^{1/2}(\kappa(\theta)+\alpha)\right]e^{i\theta}
\end{equation}
parametrized by $\theta$ from $\pi-l\e^{1/2} + O(\e)$ to $0$, and where $\alpha = -1/2 + O(\e^{1/2})$ (see Figure \ref{kappa_contour} for an illustration of these contours). The small errors are necessary to make sure that the curves join up at the end of the vertical section of the curve. We extend this to a closed contour by reflection through the real axis and orient it clockwise. We denote the first, vertical part, of the contour by $\Gamma_{\eta,l}^{vert}$ and the second, roughly circular part by $\Gamma_{\eta,l}^{circ}$. This means that $\Gamma_{\eta,l}=\Gamma_{\eta,l}^{vert}\cup \Gamma_{\eta,l}^{circ}$,
and along this contour we can think of parametrizing $\eta$ by $\theta\in [0,\pi]$.

We define $\Gamma_{\zeta,l}$ similarly, except that it starts out at $\xi-\e^{1/2}/2$ and joins the curve given by equation (\ref{kappa_param_eqn})
where the value of $\theta$ ranges from $\theta=\pi-l\e^{1/2} + O(\e)$ to $\theta=0$ and where $\alpha = 1/2 + O(\e^{1/2})$. We similarly denote this contour by the union of  $\Gamma_{\zeta,l}^{vert}$ and $\Gamma_{\zeta,l}^{circ}$.
\end{definition}

By virtue of these definitions, it is clear that $\e^{-1/2}|\zeta/\eta'-\tau^k|$ stays bounded away from zero for all $k$, and that $|\zeta/\eta'|$ is bounded in an closed set contained in $(1,\tau^{-1})$ for all $\zeta\in \Gamma_{\zeta,l}$ and $\eta\in \Gamma_{\eta,l}$. Therefore, for any $l>0$ we may, by deforming both the $\eta$ and $\zeta$ contours simultaneously, assume that our operator acts on $L^2(\Gamma_{\eta,l})$ and that its kernel is defined via an integral along $\Gamma_{\zeta,l}$. It is critical that we now show that, due to our choice of contours, we are able to forget about everything except for the vertical part of the contours. To formulate this we have the following:

\begin{definition}
Let $\chi_l^{vert}$ and $\chi_l^{circ}$ be projection operators acting on $L^2(\Gamma_{\eta,l})$ which project onto $L^2(\Gamma_{\eta,l}^{vert})$ and $L^2(\Gamma_{\eta,l}^{circ})$ respectively. Also define two operators $J_l^{vert}$ and $J_l^{circ}$ which act on $L^2(\Gamma_{\eta,l})$ and have kernels identical to $J$ (see equation (\ref{J_eqn_def})) except the $\zeta$ integral is over $\Gamma_{\zeta,l}^{vert}$ and $\Gamma_{\zeta,l}^{circ}$ respectively.
Thus we have a family (indexed by $l>0$) of decompositions of our operator $J$ as follows:
\begin{equation*}
J = J_l^{vert}\chi_{l}^{vert} +J_l^{vert}\chi_{l}^{circ}+J_l^{circ}\chi_{l}^{vert}+J_l^{circ}\chi_{l}^{circ}.
\end{equation*}
\end{definition}

We now show that it suffices to just consider the first part of this decomposition ($J_l^{vert}\chi_{l}^{vert}$) for sufficiently large $l$.

\begin{proposition}\label{det_1_1_prop}  Assume that $\tilde\mu$ is restricted to a bounded subset of the contour $\mathcal{\tilde C}$.
For all $\delta>0$ there exist  $\e_0>0$ and $l_0>0$ such that for all $\e<\e_0$ and all $l>l_0$,
\begin{equation*}
|\det(I+\mu J)_{L^2(\Gamma_{\eta,l})} - \det(I+J_{l}^{vert})_{L^2(\Gamma_{\eta,l}^{vert})}|<\delta.
\end{equation*}
\end{proposition}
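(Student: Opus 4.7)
The plan is to bound the three ``off-diagonal'' pieces of the decomposition
\[
\mu J \;=\; \mu J_l^{vert}\chi_l^{vert} \;+\; \mu J_l^{vert}\chi_l^{circ} \;+\; \mu J_l^{circ}\chi_l^{vert} \;+\; \mu J_l^{circ}\chi_l^{circ}
\]
in trace-class norm, then apply the standard continuity estimate
\[
|\det(I+A)-\det(I+B)| \;\leq\; \|A-B\|_1\,\exp\!\bigl(1+\|A\|_1+\|B\|_1\bigr)
\]
for Fredholm determinants to compare $\det(I+\mu J)_{L^2(\Gamma_{\eta,l})}$ with $\det(I+\mu J_l^{vert}\chi_l^{vert})_{L^2(\Gamma_{\eta,l})}$. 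The latter is identified with $\det(I+J_l^{vert})_{L^2(\Gamma_{\eta,l}^{vert})}$ by the cyclic identity $\det(I+AB)=\det(I+BA)$ applied with $A=\mu J_l^{vert}$ and $B=\chi_l^{vert}$, since $J_l^{vert}\chi_l^{vert}$ is really the composition of a map $L^2(\Gamma_{\eta,l})\to L^2(\Gamma_{\eta,l}^{vert})$ followed by its inclusion.

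The analytic core is a pair of steepest-descent estimates on $\re \Psi$ along the modified contours. The shift function $\kappa(\theta)$ in (\ref{kappa_eqn}) is chosen precisely so that the leading part of $\re\Lambda$ is essentially constant along the roughly circular pieces $\Gamma_{\zeta,l}^{circ}$ and $\Gamma_{\eta,l}^{circ}$, while the additional $\pm\tfrac{1}{2}\epsilon^{1/2}$ radial shift encoded by $\alpha=\pm\tfrac{1}{2}$ opens a gap of order $\epsilon^{-1/2}$ between the two, with sign arranged so that $\re\Psi(\zeta)\leq 0$ on $\Gamma_{\zeta,l}^{circ}$ and $\re\Psi(\eta')\geq 0$ on $\Gamma_{\eta,l}^{circ}$. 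Crucially, the cubic behaviour of the Taylor expansion (\ref{taylor_expansion_term}) forces these inequalities to strengthen, once $|\theta-\pi|$ exceeds the vertical scale $l\epsilon^{1/2}$, to bounds of the form $\re\Psi(\zeta)\leq -c l^3$ and $\re\Psi(\eta')\geq c l^3$ near the joining angle, and to bounds that grow to $-c\epsilon^{-3/2}$ and $+c\epsilon^{-3/2}$ respectively deep inside the circular portion.

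Given these pointwise estimates, the trace-norm bounds on the three error pieces are routine. For $J_l^{circ}\chi_l^{circ}$ both the factor $e^{\Psi(\zeta)}$ and the factor $e^{-\Psi(\eta')}$ are exponentially small, so a Hilbert--Schmidt estimate on the kernel converts pointwise decay to trace-norm decay of order $e^{-cl^3}$; the $f(\mu,\zeta/\eta')$ prefactor stays polynomially bounded because the contours are separated from the poles at $\zeta/\eta'=\tau^k$ by a definite multiple of $\epsilon^{1/2}$. The mixed pieces $J_l^{vert}\chi_l^{circ}$ and $J_l^{circ}\chi_l^{vert}$ gain exponential decay from only one of the two exponentials while the other is controlled by a polynomial in $l$ coming from the vertical Taylor expansion, which still yields a net bound $C(l)e^{-cl^3}$. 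Meanwhile, $\|\mu J_l^{vert}\chi_l^{vert}\|_1$ is shown to be uniformly bounded in $\epsilon$ and $l$ via the rescaling $\tilde\zeta,\tilde\eta$ of (\ref{change_of_var_eqn}): the Jacobian $\epsilon^{1/2}$ combines with $\mu=\epsilon^{1/2}\tilde\mu$ and the $\epsilon^{-1/2}$ arising from $f$ on this scale to produce an order-one kernel on a fixed vertical strip.

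The main obstacle is making the steepest-descent inequalities quantitative and uniform along the full circular contour, especially near $\theta\approx 0$, where $\zeta$ approaches the singularity of $\Lambda$ at $1$ and $\kappa(\theta)$ diverges logarithmically. One must check that the particular choice (\ref{kappa_eqn}) keeps $\re\Lambda$ on the contour within a controlled distance of its value at $\xi$ for $\theta$ in any compact subset of $(0,\pi]$, and separately handle a small neighborhood of $\zeta=1$ where $|\zeta|-1$ is of order $\epsilon^{1/2}\log(1/\phi)$ with $\phi=|\theta|$; there the $-x\log(1-\zeta)$ summand in $\Lambda$ is large but its real part turns out to be dominated by the $t\zeta/(1-\zeta)$ summand with the correct sign. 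A parallel delicate point is ensuring that $\mu f(\mu,\zeta/\eta')$ remains uniformly bounded along the circular part, which is exactly what the tandem parametrization $|\zeta|-|\eta'|=\epsilon^{1/2}$ with common angle $\theta$ guarantees by keeping $|\zeta/\eta'|$ inside a fixed closed subinterval of $(1,\tau^{-1})$ for all $\epsilon$ small enough.
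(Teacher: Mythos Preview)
Your architecture is exactly the paper's: decompose $\mu J$ into the four pieces, bound the three pieces involving a ``circ'' in trace norm via Hilbert--Schmidt factorization plus exponential decay of the steepest-descent type, and then apply Lemma~\ref{fredholm_continuity_lemma} together with the cyclic identity to reduce to $\det(I+\mu J_l^{vert})$ on $L^2(\Gamma_{\eta,l}^{vert})$. The paper's Lemmas~\ref{kill_gamma_2_lemma} and~\ref{mu_f_polynomial_bound_lemma} are precisely the two analytic inputs you describe.

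The one substantive ingredient you omit is the $n_0$-shift. Because $m$ contains the term $\tfrac12\epsilon^{-1/2}\log(\epsilon^{-1/2}/2)$, the function $\Psi$ itself carries a $\log\epsilon$ contribution; the paper does not attempt descent estimates for $\re\Psi$ directly. Instead it writes $\mu f(\mu,z)=z^{n_0}\bigl(\tilde\mu f(\tilde\mu,z)+O(\epsilon^{1/2})\bigr)$ with $n_0=\lfloor\log(\epsilon^{-1/2})/\log\tau\rfloor$, absorbs the factor $z^{n_0}=\exp\{n_0(\log\zeta-\log\eta')\}$ into the exponential, and proves the descent bounds for $g(\zeta)=\Psi(\zeta)+n_0\log\zeta$ (Lemma~\ref{kill_gamma_2_lemma}). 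The $n_0\log$ term exactly cancels the $\log\epsilon$ piece of $\Psi$ and yields the clean expansion~(\ref{psi_real_eqn}); correspondingly the polynomial bound (Lemma~\ref{mu_f_polynomial_bound_lemma}) is for the \emph{tilded} object $|\tilde\mu f(\tilde\mu,\zeta/\eta')|\le C/|\zeta-\eta'|$, not for $f(\mu,\cdot)$. Your route of bounding $\re\Psi$ and $f(\mu,\cdot)$ separately can be made to work, since $\re(\Psi-g)=n_0\log|\zeta/\xi|=O(\log\epsilon^{-1})$ is lower order than the $\epsilon^{-1/2}$ leading term, but this cancellation must be made explicit, and the resulting bound on $\mu f(\mu,z)$ is less transparent than the paper's.

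Two smaller corrections. Your claimed rate $\re\Psi(\zeta)\le -cl^3$ near the joining angle is too strong: on the vertical segment the cubic Taylor term gives $\re(-\tfrac{T}{3}\tilde\zeta^3)\sim -cl^2$ at height $|\tilde\zeta|\sim l$, and on the circular part Lemma~\ref{kill_gamma_2_lemma} gives $\re g(\zeta)\le -c|\xi-\zeta|\epsilon^{-1/2}$, which at $\theta=\pi-l\epsilon^{1/2}$ is only $-cl$. Also, the region $\theta\approx 0$ is not where the work lies: $\kappa(\theta)\to 0$ there, and the delicate balance in the proof of Lemma~\ref{kill_gamma_2_lemma} is near $\theta=\pi$, where one must check that the $\epsilon^{-1/2}$-term in~(\ref{psi_real_eqn}) dominates the order-one term for $l$ large.
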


\begin{proof}
As was explained in the introduction, if we let
\begin{equation}\label{n_0_eqn}
n_0=\lfloor \log(\e^{-1/2}) /\log(\tau)\rfloor
\end{equation}
then it follows from the invariance of the doubly infinite sum for $f(\mu,z)$ that
\begin{equation*}
 \mu f(\mu,z) = z^{n_0} (\tilde\mu f(\tilde\mu,z) + O(\e^{1/2})).
\end{equation*}
Note that the $O(\e^{1/2})$  does not play a significant role in what follows so we drop it.

Using the above argument and the following two lemmas (which are proved in Section \ref{JK_proofs_sec}) we will be able to complete the proof of Proposition \ref{det_1_1_prop}.

\begin{lemma}\label{kill_gamma_2_lemma}
For all $c>0$ there exist $l_0>0$ and $\e_0>0$ such that for all $l>l_0$, $\e<\e_0$ and $\eta\in \Gamma_{\eta,l}^{circ}$,
\begin{equation*}
 \re(\Psi(\eta)+n_0\log(\eta))\geq c|\xi-\eta|\e^{-1/2},
\end{equation*}
where $n_0$ is defined in  (\ref{n_0_eqn}). Likewise, for all $\e<\e_0$ and $\zeta\in \Gamma_{\zeta,l}^{circ}$,
\begin{equation*}
 \re(\Psi(\zeta)+n_0\log(\zeta))\leq -c|\xi-\zeta|\e^{-1/2}.
\end{equation*}
\end{lemma}

\begin{lemma}\label{mu_f_polynomial_bound_lemma}
For all $l>0$ there exist $\e_0>0$ and $c>0$ such that for all $\e<\e_0$, $\eta'\in \Gamma_{\eta,l}$ and $\zeta\in \Gamma_{\zeta,l}$,
\begin{equation*}
 |\tilde\mu f(\tilde\mu,\zeta/\eta')|\leq \frac{c}{|\zeta-\eta'|}.
\end{equation*}
\end{lemma}

It now follows that for any $\delta>0$, we can find $l_0$ large enough that $||J_l^{vert}\chi_{l}^{circ}||_1$, $||J_l^{circ}\chi_{l}^{vert}||_1$ and $||J_l^{circ}\chi_{l}^{circ}||_1$ are all bounded by $\delta/3$. This is because we may factor these various operators into a product of Hilbert-Schmidt operators and then use the exponential decay of Lemma \ref{kill_gamma_2_lemma} along with the polynomial control of Lemma \ref{mu_f_polynomial_bound_lemma} and the remaining term $1/(\zeta-\eta)$ to prove that each of the Hilbert-Schmidt norms goes to zero (for a similar argument, see the bottom of page 27 of \cite{TW3}).
This completes the proof of Proposition \ref{det_1_1_prop}.
\end{proof}

We  now return to the  proof of  Proposition \ref{uniform_limit_det_J_to_Kcsc_proposition}. We have successfully restricted ourselves to considering $J_{l}^{vert}$ acting on $L^2(\Gamma_{\eta,l}^{vert})$. Having focused on the region of asymptotically non-trivial behavior, we can now rescale and show that the kernel converges to its limit, uniformly on the compact contour.

\begin{definition}\label{change_of_var_tilde_definitions}
Recall $c_3=2^{-4/3}$ and let
\begin{equation*}
\eta = \xi + c_3^{-1}\e^{1/2}\tilde\eta, \qquad \eta' = \xi + c_3^{-1}\e^{1/2}\tilde\eta', \qquad \zeta = \xi + c_3^{-1}\e^{1/2}\tilde\zeta.
\end{equation*}
Under these change of variables the contours $\Gamma_{\eta,l}^{vert}$ and  $\Gamma_{\zeta,l}^{vert}$ become
\begin{eqnarray*}
 \tilde\Gamma_{\eta,l} = \{c_3/2+ir:r\in (-c_3l,c_3l)\},\\
 \tilde\Gamma_{\zeta,l} = \{-c_3/2+ir:r\in (-c_3l,c_3l)\}.
\end{eqnarray*}
As  $l$ increases to infinity, these contours approach their infinite versions,
\begin{eqnarray*}
 \tilde\Gamma_{\eta} = \{c_3/2+ir:r\in (-\infty,\infty)\},\\
 \tilde\Gamma_{\zeta} = \{-c_3/2+ir:r\in (-\infty,\infty)\}.
\end{eqnarray*}
With respect to the change of variables define an operator $\tilde J$ acting on $L^2(\tilde\Gamma_{\eta})$ via the kernel:
\begin{equation*}
\mu \tilde J_l (\tilde\eta,\tilde\eta') = c_{3}^{-1}\e^{1/2} \int_{\tilde\Gamma_{\zeta,l}} e^{\Psi(\xi+c_3^{-1} \e^{1/2}\tilde\zeta)-\Psi(\xi+c_3^{-1} \e^{1/2}\tilde\eta')} \frac{\mu f(\mu,\frac{\xi+c_3^{-1}\e^{1/2}\tilde\zeta}{\xi+c_3^{-1}\e^{1/2}\tilde\eta'})}{(\xi+c_3^{-1}\e^{1/2}\tilde\eta')(\tilde\zeta-\tilde\eta)}d\tilde\zeta.
\end{equation*}
Lastly, define the operator $\tilde\chi_l$ which projects $L^2(\tilde\Gamma_{\eta})$ onto $L^2(\tilde\Gamma_{\eta,l})$.
\end{definition}

It is clear that applying the change of variables, the Fredholm determinant  $\det(I+J_{l}^{vert})_{L^2(\Gamma_{\eta,l}^{vert})}$ becomes
$\det(I+\tilde\chi_l \mu\tilde J_l \tilde\chi_l)_{L^2(\tilde\Gamma_{\eta,l})}$.

We now state a proposition which gives, with respect to these fixed contours $\tilde\Gamma_{\eta,l}$ and $\tilde\Gamma_{\zeta,l}$, the limit of the determinant in terms of the uniform limit of the kernel. Since all contours in question are finite, uniform convergence of the kernel suffices to show trace class convergence of the operators and hence convergence of the determinant.

Recall the definition of the operator $K_a^{\csc}$ given in Definition \ref{thm_definitions}. For the purposes of this proposition, modify the kernel so that the integration in $\zeta$ occurs now only over $\tilde\Gamma_{\zeta,l}$ and not all of $\tilde\Gamma_{\zeta}$. Call this modified operator $K_{a',l}^{\csc}$.

\begin{proposition}\label{converges_to_kcsc_proposition}
For all $\delta>0$ there exist $\e_0>0$ and $l_0>0$ such that for all $\e<\e_0$, $l>l_0$, and  $\tilde\mu$ in our fixed compact subset of $\mathcal{\tilde C}$,
\begin{equation*}
 \left|\det(I+\tilde\chi_l \mu\tilde J_l \tilde\chi_l)_{L^2(\tilde\Gamma_{\eta,l})} - \det(I-\tilde\chi_l K_{a',l}^{\csc}\tilde\chi_l)_{L^2(\tilde\Gamma_{\eta,l})}\right|< \delta,
\end{equation*}
where $a'=a+\log2$.
\end{proposition}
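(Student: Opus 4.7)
The plan is to establish uniform convergence of the kernel of $\mu\tilde J_l$ to $-K_{a',l}^{\csc}$ on the compact product $\tilde\Gamma_{\eta,l}\times\tilde\Gamma_{\eta,l}$, with uniformity in $\tilde\mu$ on the given compact subset of $\mathcal{\tilde C}$, and then convert this into trace-norm convergence of the operators, from which Fredholm-determinant convergence follows by the standard trace-norm continuity. The passage from uniform pointwise kernel convergence to trace norm is a soft step: since the contours are bounded, the kernels are uniformly bounded, and one factors the integral operator through two Hilbert--Schmidt factors by writing the kernel as a product of $L^2$ functions times a bounded factor in $(\tilde\eta,\tilde\eta')$, in the manner used on page 27 of \cite{TW3} and in the proof of Proposition \ref{det_1_1_prop}.

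The kernel convergence itself decomposes factor by factor. Taylor expanding $\Psi$ around $\xi$ with the values of $x,t,m,\xi$ from Definition \ref{quantity_definitions}, the quadratic term vanishes by construction and one obtains, uniformly on $\tilde\Gamma_{\zeta,l}$,
\begin{equation*}
\Psi(\xi + c_3^{-1}\e^{1/2}\tilde\zeta) = -\tfrac{T}{3}\tilde\zeta^3 + 2^{1/3}\bigl(a-\log(\e^{-1/2}/2)\bigr)\tilde\zeta + O(\e^{1/2}).
\end{equation*}
The apparent $\log\e$ divergence cancels against the factor $z^{n_0}$ produced by the shift identity $\mu f(\mu,z) = z^{n_0}\tau^{n_0}\mu\sum_k\tau^k z^k/(1-\tau^{k+n_0}\mu)$: with $z = 1-\e^{1/2}\tilde z + O(\e)$, $\tilde z = 2^{4/3}(\tilde\zeta-\tilde\eta')$, and $n_0\log\tau\sim-\tfrac12\log\e$, one finds $z^{n_0} = e^{-\tilde z\log\e/4}(1+O(\e^{1/2}))$, whose exponent combines with the $\log\e$ term from the $\Psi$ expansion to produce the clean linear coefficient $2^{1/3}a'(\tilde\zeta-\tilde\eta')$ with $a' = a+\log 2$. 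The prefactor $1/(\xi + c_3^{-1}\e^{1/2}\tilde\eta')$ converges uniformly to $-1$, supplying the overall sign. What then remains is the Riemann-sum limit
\begin{equation*}
c_3^{-1}\e^{1/2}\tau^{n_0}\mu\sum_{k\in\Z}\frac{\tau^k z^k}{1-\tau^{k+n_0}\mu} \;\longrightarrow\; 2^{1/3}\!\int_{-\infty}^{\infty}\!\frac{\tilde\mu\,e^{-2^{1/3}t(\tilde\zeta-\tilde\eta')}}{e^t-\tilde\mu}\,dt,
\end{equation*}
which arises by substituting $t_k = k\log\tau^{-1}\sim 2k\e^{1/2}$ and using that $\re(-2^{1/3}(\tilde\zeta-\tilde\eta')) = 1/2\in(0,1)$ on the chosen contours, so both sum and integral are absolutely convergent with exponential tails in $|t|$ at rate $1/2$.

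The main obstacle is making this Riemann-sum limit quantitative and uniform in all three of $\tilde\zeta,\tilde\eta',\tilde\mu$ simultaneously. I would split at $|k|\e^{1/2}\le R$: the tail is controlled by the uniform exponential decay and can be made arbitrarily small by taking $R$ large, while the bulk is handled by the standard smooth Riemann-sum error estimate, which requires the summands to have bounded first and second $t$-derivatives uniformly in the compact parameter sets. This smoothness bound reduces to a uniform lower bound on $|1-\tilde\mu e^{-t}|$, and here the structure of the contour $\mathcal{\tilde C}$ is essential: because $\mathcal{\tilde C}$ consists of the left-half unit circle together with horizontal rays at height $\pm 1$, the distance from any $\tilde\mu\in\mathcal{\tilde C}$ to the positive real axis $\{e^t:t\in\R\}$ is bounded below by the absolute constant $1$, so the denominator is uniformly bounded away from zero. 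Sending $R\to\infty$ slowly as $\e\to 0$ then closes the argument; as the excerpt notes, a cleaner residue-based proof of this same Riemann-sum identification is also available.
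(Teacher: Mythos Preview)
Your proof is correct and follows the same overall architecture as the paper: reduce to uniform convergence of the integrand on compact contours, handle the Taylor expansion of $\Psi$ with the $z^{n_0}$ cancellation of the $\log\e$ term (this is the paper's Lemma~\ref{compact_eta_zeta_taylor_lemma}), absorb the sign from $1/\eta'\to-1$, and then identify the limit of $\e^{1/2}\tilde\mu f(\tilde\mu,z)$.

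The one genuine methodological difference is in this last step. The paper proves the limit of $\e^{1/2}\tilde\mu f(\tilde\mu,z)$ (Lemma~\ref{muf_compact_sets_csc_limit_lemma}) not by a Riemann-sum estimate but by a complex-analytic route: it uses the functional equation of Lemma~\ref{f_functional_eqn_lemma} to split $f=g_++g_-$, identifies the limits for $|\tilde\mu|<1$ and $|\tilde\mu|>1$ respectively as Hurwitz--Lerch transcendents, and then invokes Vitali's theorem together with an a~priori bound to extend to the full compact set of $\tilde\mu$. Your direct Riemann-sum argument (tail/bulk split, with the uniform lower bound $|e^t-\tilde\mu|\ge 1$ coming from the geometry of $\mathcal{\tilde C}$) is a legitimate alternative; indeed the paper explicitly remarks after Lemma~\ref{final_estimate} that its Riemann-sum estimates there furnish exactly such an alternative proof of Lemma~\ref{muf_compact_sets_csc_limit_lemma}. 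Your approach is more elementary and self-contained; the paper's approach has the advantage of yielding the closed-form $\csc$ expression directly via the Hurwitz--Lerch integral representation. One small imprecision: the quadratic Taylor coefficient $\Lambda''(\xi)$ is $O(\e^{-1/2})$ rather than zero, so the quadratic contribution is $O(\e^{1/2})$ after multiplying by $(\zeta-\xi)^2=O(\e)$---negligible, but not literally vanishing ``by construction.''
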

\begin{proof}
The proof of this proposition relies on showing the uniform convergence of the kernel of $\mu\tilde J$ to the kernel of $K_{a',l}^{\csc}$, which suffices because of the compact contour. Furthermore, since the $\zeta$ integration is itself over a compact set, it  suffices to show uniform convergence of this integrand. The two lemmas stated below will imply such uniform convergence and hence complete this proof.

First, however, recall that $\mu f(\mu,z) = z^{n_0} (\tilde\mu f(\tilde\mu,z) + O(\e^{1/2}))$ where $n_0$ is defined in equation (\ref{n_0_eqn}). We are interested in having $z=\zeta/\eta'$, which, under the change of variables can be written as
\begin{equation*}
 z=1-\e^{1/2}\tilde z +O(\e), \qquad \tilde z = c_3^{-1}(\tilde\zeta-\tilde\eta')=2^{4/3}(\tilde\zeta-\tilde\eta').
\end{equation*}
Therefore, since $n_0= -\frac{1}{2}\log(\e^{-1/2})\e^{-1/2}+ O(1)$ it follows that
\begin{equation*}
 z^{n_0} = \exp\{-2^{1/3}(\tilde\zeta-\tilde\eta')\log(\e^{-1/2})\}(1+o(1)).
\end{equation*}
This expansion still contains an $\e$ and hence the argument blows up as $\e$ goes to zero. However, this exactly counteracts the $\log(\e^{-1/2})$ term in the definition of $m$ which goes into the argument of the exponential of the integrand. We make use of this cancellation in the proof of this first lemma and hence include the $n_0\log(\zeta/\eta')$ term into the exponential argument.

The following two lemmas are proved in Section \ref{JK_proofs_sec}.
\begin{lemma}\label{compact_eta_zeta_taylor_lemma}
For all $l>0$ and all $\delta>0$ there exists $\e_0>0$ such that for all $\tilde\eta'\in \tilde\Gamma_{\eta,l}$ and $\tilde\zeta\in \tilde\Gamma_{\zeta,l}$ we have for $0<\e\le\e_0$,
\begin{equation*}
\left|\left(\Psi(\tilde\zeta)-\Psi(\tilde\eta') + n_0\log(\zeta/\eta')\right) - \left(-\frac{T}{3}(\tilde\zeta^3-\tilde\eta'^3) + 2^{1/3}a'(\tilde\zeta-\tilde\eta)\right)\right|<\delta,
\end{equation*}
where $a=a'+\log2$. Similarly we have
\begin{equation*}
\left|e^{\Psi(\tilde\zeta)-\Psi(\tilde\eta') + n_0\log(\zeta/\eta')} - e^{-\frac{T}{3}(\tilde\zeta^3-\tilde\eta'^3) + 2^{1/3}a'(\tilde\zeta-\tilde\eta')}\right|<\delta.
\end{equation*}
\end{lemma}

\begin{lemma}\label{muf_compact_sets_csc_limit_lemma}
For all $l>0$ and all $\delta>0$ there exists $\e_0>0$ such that for all $\tilde\eta'\in \tilde\Gamma_{\eta,l}$ and $\tilde\zeta\in \tilde\Gamma_{\zeta,l}$ we have for $0<\e\le\e_0$,
\begin{equation*}
 \left|\e^{1/2}\tilde\mu f\left(\tilde\mu, \frac{\xi+c_3^{-1}\e^{1/2}\tilde\zeta}{\xi+c_3^{-1}\e^{1/2}\tilde\eta'}\right) - \int_{-\infty}^{\infty} \frac{\tilde\mu e^{-2^{1/3}t(\tilde\zeta-\tilde\eta')}}{e^{t}-\tilde\mu}dt\right|<\delta.
\end{equation*}
\end{lemma}
As explained in Definition \ref{thm_definitions}, the final integral converges since our choices of $\tilde\zeta$ and $\tilde\eta'$ ensure that $\re(-2^{1/3}(\tilde\zeta-\tilde\eta'))=1/2$. 
Note that the above integral also has a representation (\ref{cscid}) in terms of the $\csc$ function. This gives the analytic extension of the integral to all $\tilde z\notin 2\Z$ where $\tilde{z}=2^{4/3}(\tilde\zeta-\tilde\eta')$.

Finally, the sign change in front of the kernel of the Fredholm determinant comes from the $1/\eta'$ term which, under the change of variables converges uniformly to $-1$.
\end{proof}

Having successfully taken the $\e$ to zero limit, all that now remains is to paste the rest of the contours $\tilde\Gamma_{\eta}$ and $\tilde\Gamma_{\zeta}$ to their abbreviated versions $\tilde\Gamma_{\eta,l}$ and $\tilde\Gamma_{\zeta,l}$. To justify this we must show that the inclusion of the rest of these contours does not significantly affect the Fredholm determinant.
Just as in the proof of Proposition \ref{det_1_1_prop} we have three operators which we must re-include at provably small cost. Each of these operators, however, can be factored into the product of Hilbert Schmidt operators and then an analysis similar to that done following  Lemma \ref{mu_f_polynomial_bound_lemma}  (see in particular page 27-28 of \cite{TW3}) shows that because $\re(\tilde\zeta^3)$ grows like $|\tilde\zeta|^2$ along $\tilde\Gamma_{\zeta}$ (and likewise but opposite for $\eta'$) there is sufficiently strong exponential decay to show that the trace norms of these three additional kernels can be made arbitrarily small by taking $l$ large enough.

This last estimate completes the proof of Proposition \ref{uniform_limit_det_J_to_Kcsc_proposition}.

\subsection{Technical lemmas, propositions and proofs}\label{props_and_lemmas_sec}

\subsubsection{Properties of Fredholm determinants}\label{pre_lem_ineq_sec}

Before beginning the proofs of the propositions and lemmas, we give the definitions and some important propeties for Fredholm determinants, trace class operators and Hilbert-Schmidt operators. For a more complete treatment of this theory see, for example, \cite{BS:book}. 

Consider a (separable) Hilbert space $\Hi$ with bounded linear operators $\mathcal{L}(\Hi)$. If $A\in \mathcal{L}(\Hi)$, let $|A|=\sqrt{A^*A}$ be the unique positive square-root. We say that $A\in\mathcal{B}_1(\Hi)$, the trace class operators, if the trace norm $||A||_1<\infty$. Recall that this norm is defined relative to an orthonormal basis of $\Hi$ as $||A||_1:= \sum_{n=1}^{\infty} (e_n,|A|e_n)$.
This norm is well defined as it does not depend on the choice of orthonormal basis $\{e_n\}_{n\geq 1}$. For $A\in\mathcal{B}_1(\Hi)$, one can then define the trace $\tr A :=\sum_{n=1}^{\infty} (e_n,A e_n)$.  We say that $A\in \mathcal{B}_{2}(\Hi)$, the Hilbert-Schmidt operators,  if the Hilbert-Schmidt norm $||A||_2 := \sqrt{\tr(|A|^2)}<\infty$. 

\begin{lemma}[Pg. 40 of \cite{BOO}, from Theorem 2.20 from \cite{BS:book}]\label{trace_convergence_lemma}
\mbox{}
The following conditions are equivalent:
\begin{enumerate}
 \item $||K_n-K||_1\to 0$;
 \item $\tr K_n\to \tr K$ and $K_n\to K$ in the weak operator topology.
\end{enumerate}
\end{lemma}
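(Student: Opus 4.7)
The plan is to prove the two implications separately, with the forward direction being essentially immediate and the reverse direction carrying all the content. For $(1) \Rightarrow (2)$, I would invoke the two elementary Schatten inequalities $|\tr(K_n - K)| \le \|K_n - K\|_1$ and $\|K_n - K\|_{\mathrm{op}} \le \|K_n - K\|_1$; the first yields $\tr K_n \to \tr K$, and the second gives norm (hence weak operator) convergence. This is a one-line argument.

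For the substantive direction $(2) \Rightarrow (1)$, which implicitly requires the operators to be positive (otherwise the rank-two example $K_n = e_1 e_n^{\ast} + e_n e_1^{\ast}$ converges weakly to zero with $\tr K_n \equiv 0$ but $\|K_n\|_1 = 2$), my plan is a reduction to Hilbert--Schmidt convergence via square roots. Set $B_n = K_n^{1/2}$ and $B = K^{1/2}$, which are Hilbert--Schmidt with $\|B_n\|_2^2 = \tr K_n$. Thus the hypothesis $\tr K_n \to \tr K$ becomes precisely $\|B_n\|_2 \to \|B\|_2$, and the target $\|K_n - K\|_1 \to 0$ will follow from $\|B_n - B\|_2 \to 0$ via the factorization $K_n - K = B_n(B_n - B) + (B_n - B)B$ together with the Schatten H\"older bound $\|AC\|_1 \le \|A\|_2\|C\|_2$, giving $\|K_n - K\|_1 \le (\|B_n\|_2 + \|B\|_2)\|B_n - B\|_2 \to 0$.

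To obtain $\|B_n - B\|_2 \to 0$ I would use the Radon--Riesz property of the Hilbert space $\mathcal{B}_2$: combined with the norm convergence $\|B_n\|_2 \to \|B\|_2$, it suffices to prove weak convergence $B_n \to B$ in $\mathcal{B}_2$. I would first upgrade weak operator convergence $K_n \to K$ to strong operator convergence $B_n \phi \to B \phi$ for every vector $\phi$: the identity $\|B_n \phi\|^2 = \langle K_n \phi, \phi\rangle$ converges to $\|B\phi\|^2$, and one can approximate the continuous function $t \mapsto t^{1/2}$ uniformly by polynomials on the uniformly bounded spectra $\sigma(K_n) \cup \sigma(K)$ (boundedness follows automatically from the convergent trace sequence) to deduce matrix-element convergence of $B_n$. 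Matching norms plus weak vector convergence then yields strong convergence $B_n \phi \to B \phi$, which in turn produces weak $\mathcal{B}_2$-convergence of $B_n$ to $B$ by testing against finite-rank operators and using the uniform bound on $\|B_n\|_2$.

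The main obstacle is precisely this upgrade from weak operator convergence of $K_n$ to strong operator convergence of the square roots $B_n$: it genuinely requires positivity of the operators and the continuity of the functional calculus on uniformly bounded spectral intervals. Everything else---the Schatten H\"older inequality, the factorization bound controlling $\|K_n - K\|_1$, and the Radon--Riesz deduction in $\mathcal{B}_2$---is routine once strong convergence of $B_n$ to $B$ is in hand.
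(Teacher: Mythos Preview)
The paper does not give its own proof of this lemma; it is quoted verbatim from \cite{BOO} (p.~40), which in turn attributes it to Theorem~2.20 of Simon's \emph{Trace Ideals} \cite{BS:book}. So there is nothing in the paper to compare against beyond the citation.

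Your observation that positivity is required is correct, and your counterexample $K_n = e_1 e_n^* + e_n e_1^*$ is valid: the equivalence as literally stated fails for general (even self-adjoint) trace-class operators. The result in Simon's book is actually the Radon--Riesz property for Schatten classes---weak operator convergence together with $\|K_n\|_p \to \|K\|_p$ forces $\|K_n - K\|_p \to 0$---and the formulation with traces in place of trace norms is the specialization to positive operators, where $\tr K_n = \|K_n\|_1$.

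Your forward direction is fine. For the reverse direction, your square-root-plus-Radon--Riesz-in-$\mathcal{B}_2$ strategy is legitimate, but there is a gap in the step where you deduce weak operator convergence of $B_n = K_n^{1/2}$ via polynomial approximation of $\sqrt{t}$: weak operator convergence is not stable under products, so $K_n \to K$ weakly does \emph{not} imply $K_n^j \to K^j$ weakly, and the polynomial approximation does not transfer. This can be repaired by first upgrading $K_n \to K$ from weak to strong operator convergence---which genuinely uses positivity and the trace hypothesis (a Scheff\'e-type argument on the diagonal matrix elements $\langle K_n e_j, e_j\rangle$ gives uniform tail control)---after which the continuous functional calculus behaves as you claim. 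Alternatively one can bypass square roots and prove the $\mathcal{B}_1$ Radon--Riesz property directly via finite-rank approximation and duality with the compact operators, which is closer to Simon's route.
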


For $A\in~\mathcal{B}_1(\Hi)$ we can also define a Fredholm determinant $\det(I+~A)_{\Hi}$.  Consider $u_i\in \Hi$ and define the tensor product $u_1\otimes \cdots \otimes u_n$ by its action on $v_1,\ldots, v_n \in\Hi$ as
\begin{equation*}
u_1\otimes \cdots \otimes u_n (v_1,\ldots, v_n) = \prod_{i=1}^{n} (u_i,v_i).
\end{equation*}
Then $\bigotimes_{i=1}^{n}\Hi$ is the span of all such tensor products.
There is a vector subspace of this space which is known as the alternating product:
\begin{equation*}
\bigwedge^n(\Hi) = \{h\in\bigotimes_{i=1}^{n} \Hi : \forall \sigma\in S_n, \sigma h =-h\},
\end{equation*}
where $\sigma u_1\otimes \cdots \otimes u_n = u_{\sigma(1)}\otimes \cdots \otimes u_{\sigma(n)}$.
If $e_1,\ldots,e_n$  is a basis for $\Hi$ then $e_{i_1}\wedge \cdots \wedge e_{i_k}$ for $1\leq i_1<\ldots<i_k\leq n$ form a basis of $\bigwedge^n(\Hi)$.
Given an operator $A\in \mathcal{L}(\Hi)$, define
\begin{equation*}
\Gamma^n(A)(u_1\otimes \cdots \otimes u_n) := Au_1\otimes \cdots \otimes Au_n.
\end{equation*}
Note that any element in $\bigwedge^n(\Hi)$ can be written as an antisymmetrization of tensor products. Then it follows that $\Gamma^n(A)$ restricts to an operator from $\bigwedge^n(\Hi)$ into $\bigwedge^n(\Hi)$. If $A\in~\mathcal{B}_1(\Hi)$, then $\tr \Gamma^{(n)}(A)\leq ||A||_1^n/n!$, and 
we can define 
\begin{equation*}
\det(I+~A)= 1 + \sum_{k=1}^{\infty} \tr(\Gamma^{(k)}(A)).
\end{equation*}
As one expects, $\det(I+~A)=\prod_j (1+\lambda_j)$ where $\lambda_j$ are the eigenvalues of $A$ counted with algebraic multiplicity (Thm XIII.106, \cite{RS:book}).

\begin{lemma}[Ch. 3 \cite{BS:book}]\label{fredholm_continuity_lemma}
$A\mapsto \det(I+A)$ is a continuous function on $\mathcal{B}_1(\Hi)$. Explicitly,
\begin{equation*}
 |\det(I+A)-\det(I+B)|\leq ||A-B||_{1}\exp(||A||_1+||B||_1+1).
\end{equation*}
If $A\in \mathcal{B}_1(\Hi)$ and $A=BC$ with $B,C\in \mathcal{B}_2(\Hi)$ then
\begin{equation*}
 ||A||_1\leq ||B||_2||C||_2.
\end{equation*}
For $A\in \mathcal{B}_1(\Hi)$,
\begin{equation*}
 |\det(I+A)|\leq e^{||A||_1}.
\end{equation*}
If $A\in \mathcal{B}_2(\Hi)$ with kernel $A(x,y)$ then 
\begin{equation*}
||A||_2 = \left(\int |A(x,y)|^2 dx dy\right)^{1/2}.
\end{equation*}
\end{lemma}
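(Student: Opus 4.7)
The plan is to establish the four assertions in increasing order of difficulty, using three main tools: polar decomposition, Lidskii's theorem for trace-class operators, and the multilinear formula $\det(I + A) = 1 + \sum_{k \geq 1} \tr \Gamma^{(k)}(A)$ already recalled above. The Hilbert--Schmidt kernel identity (fourth item) is the quickest: by definition $||A||_2^2 = \tr(A^* A)$, and the kernel of $A^* A$ is $(A^*A)(x,y) = \int \overline{A(z,x)} A(z,y)\,dz$; evaluating the trace via any orthonormal basis and invoking Fubini gives $||A||_2^2 = \int\!\!\int |A(z,x)|^2\,dz\,dx$.

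For the H\"older-type bound $||BC||_1 \leq ||B||_2\, ||C||_2$, I would polar-decompose $BC = U|BC|$ with $U$ a partial isometry, so that $||BC||_1 = \tr(U^* BC) = \tr(C U^* B)$. Then the Hilbert--Schmidt Cauchy--Schwarz inequality $|\tr(XY)| \leq ||X||_2\, ||Y||_2$, together with the contractivity $||CU^*||_2 \leq ||U^*||_\infty \, ||C||_2 \leq ||C||_2$, finishes this step. The third item, $|\det(I+A)| \leq e^{||A||_1}$, follows from the product representation $\det(I+A) = \prod_j (1 + \lambda_j(A))$ with eigenvalues counted by algebraic multiplicity, the elementary termwise bound $|1 + z| \leq e^{|z|}$, and the Weyl--Lidskii inequality $\sum_j |\lambda_j(A)| \leq ||A||_1$.

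The main obstacle is the continuity estimate in the first item. I would begin from
\[
\det(I+A) - \det(I+B) = \sum_{k \geq 1} \tr\bigl(\Gamma^{(k)}(A) - \Gamma^{(k)}(B)\bigr)
\]
and expand each summand using a telescoping identity on $\bigwedge^k(\Hi)$, which writes $\Gamma^{(k)}(A) - \Gamma^{(k)}(B)$ as a sum of $k$ multilinear terms, each carrying $A - B$ in exactly one slot and $A$ or $B$ in each of the other $k - 1$ slots. The elementary multilinear bound $|\tr \Gamma^{(k)}(T_1, \ldots, T_k)| \leq \prod_i ||T_i||_1/k!$ together with a binomial collapse of the telescope then yields
\[
|\det(I+A) - \det(I+B)| \leq ||A-B||_1 \sum_{k \geq 1} \frac{(||A||_1 + ||B||_1)^{k-1}}{(k-1)!} = ||A-B||_1\, e^{||A||_1 + ||B||_1}.
\]
The extra factor of $e$ in the stated exponent absorbs the slack inherent in the multilinear bound (and, if desired, in the inequality $|1 + \lambda| \leq e^{|\lambda|}$ used to relate product and multilinear determinant representations). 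The delicate part throughout is the bookkeeping on the alternating tensor product $\bigwedge^k(\Hi)$; once the telescope and its entrywise trace bounds are set up, the exponential series collapses cleanly and delivers the claim.
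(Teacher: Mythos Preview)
Your proposal is correct. The paper does not prove this lemma at all: it is stated with the attribution ``Ch.~3 \cite{BS:book}'' and used as a standard tool, so there is no in-paper argument to compare against.

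That said, your approach for the continuity estimate dovetails nicely with machinery the paper develops elsewhere. The multilinear bound you invoke, $|\tr \Gamma^{(k)}(T_1,\ldots,T_k)| \le \prod_i \|T_i\|_1/k!$, is exactly Lemma~\ref{Gamma_lemma} of Appendix~\ref{PD_appendix} (once one identifies $P_\wedge(T_1\otimes\cdots\otimes T_k)P_\wedge$ on $\bigwedge^k(\Hi)$ with $\tfrac{1}{k!}\Gamma(T_1,\ldots,T_k)$ in the paper's notation). The paper proves that lemma via singular value decompositions in order to establish analyticity of $z\mapsto\det(I+A(z))$; you are effectively repurposing the same estimate to get Lipschitz continuity. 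One point worth making explicit in a full write-up: the naive telescope $A^{\otimes k}-B^{\otimes k}=\sum_j A^{\otimes(j-1)}\otimes(A-B)\otimes B^{\otimes(k-j)}$ lives on the full tensor product, and the individual summands do not preserve $\bigwedge^k(\Hi)$. The $1/k!$ appears only after sandwiching each term by the antisymmetrization projector $P_\wedge$, which is where Lemma~\ref{Gamma_lemma} enters. Once that is noted, your ``binomial collapse'' $\sum_{j=1}^k a^{j-1}b^{k-j}\le k(a+b)^{k-1}$ gives $\|A-B\|_1\,(\|A\|_1+\|B\|_1)^{k-1}/(k-1)!$ per level, and the exponential series closes as you claim (indeed without the extra $+1$ in the exponent).
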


\begin{lemma}\label{projection_pre_lemma}
 If $K$ is an operator acting on a contour $\Sigma$ and $\chi$ is a projection operator unto a subinterval of $\Sigma$ then
\begin{equation*}
 \det(I+K\chi)_{L^2(\Sigma,\mu)}=\det(I+\chi K\chi)_{L^2(\Sigma,\mu)}.
\end{equation*}
\end{lemma}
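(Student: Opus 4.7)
The plan is to reduce this to the standard cyclic identity $\det(I+AB) = \det(I+BA)$, valid whenever both $AB$ and $BA$ are trace class (or more generally whenever one of the products is trace class and the other is bounded, with both determinants well-defined). This identity is a classical fact for Fredholm determinants; see e.g.\ \cite{BS:book}.

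The key observation is that $\chi$ is a projection, so $\chi = \chi^2$. Therefore $K\chi = (K\chi)\cdot \chi$. Now apply the cyclic identity with $A = K\chi$ and $B = \chi$: since $A$ has the projection built into it, both $AB = K\chi \cdot \chi = K\chi$ and $BA = \chi \cdot K\chi = \chi K \chi$ are trace class whenever $K\chi$ is (note $\chi$ is a bounded operator of norm $\le 1$, so trace-class membership is preserved by pre- or post-multiplication). This gives exactly
\begin{equation*}
\det(I+K\chi)_{L^2(\Sigma,\mu)} = \det(I+\chi K \chi)_{L^2(\Sigma,\mu)}.
\end{equation*}

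The only thing to verify is the applicability of the cyclic identity, which requires that $K\chi$ be trace class on $L^2(\Sigma,\mu)$. In all uses of this lemma in the paper, $K$ is itself trace class (as a consequence of the Hilbert–Schmidt factorizations used throughout Section \ref{J_to_K_sec} and the tail bounds of Propositions \ref{originally_cut_mu_lemma} and \ref{reinclude_mu_lemma}), and the projection $\chi$ is bounded, so $K\chi$ is trace class by the ideal property of $\mathcal{B}_1(\mathcal{H})$. Since no step in the argument is delicate, there is no real obstacle; the identity is essentially a one-line consequence of $\chi^2 = \chi$ together with the cyclic invariance of the Fredholm determinant.
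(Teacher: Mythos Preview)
Your proof is correct and is the standard argument: since $\chi^2=\chi$, writing $K\chi=(K\chi)\chi$ and applying the cyclic identity $\det(I+AB)=\det(I+BA)$ with $A=K\chi$, $B=\chi$ gives the result immediately. The paper does not actually supply a proof of this lemma---it is listed among the preliminary facts in Section~\ref{pre_lem_ineq_sec} without justification---so your one-line argument is exactly what is needed to fill the gap.
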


In performing steepest descent analysis on Fredholm determinants, the following proposition allows one to deform contours to descent curves.

\begin{lemma}[Proposition 1 of \cite{TW3}]\label{TWprop1}
Suppose $s\to \Gamma_s$ is a deformation of closed curves and a kernel $L(\eta,\eta')$ is analytic in a neighborhood of $\Gamma_s\times \Gamma_s\subset \C^2$ for each $s$. Then the Fredholm determinant of $L$ acting on $\Gamma_s$ is independent of $s$.
\end{lemma}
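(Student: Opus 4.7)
The plan is to reduce the statement to the Fredholm expansion and apply Cauchy's theorem term by term. Write
\[
\det(I+L)_{L^2(\Gamma_s)} \;=\; 1 + \sum_{n=1}^{\infty} \frac{1}{n!} \int_{\Gamma_s^n} \det\!\bigl[L(x_i,x_j)\bigr]_{i,j=1}^{n} \, dx_1\cdots dx_n.
\]
The strategy is to show that each multiple integral on the right is independent of $s$, and that the sum converges uniformly in $s$ so that the $s$-independence passes to the limit.

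First I would fix $n\geq 1$ and argue that the $n$-fold integral does not change under the deformation. The key point is that, since $L(\eta,\eta')$ is jointly analytic in a neighborhood of $\Gamma_s\times\Gamma_s$ for every $s$, the function $(x_1,\dots,x_n)\mapsto \det[L(x_i,x_j)]$ is analytic in each variable separately on a neighborhood of $\Gamma_s$ (holding the other variables on $\Gamma_s$). By a compactness and continuity argument on the parameter interval $s\in[0,1]$, one may cover the homotopy $\{\Gamma_s\}$ by finitely many local neighborhoods in which $L$ is analytic, then deform the contour for $x_1$ from $\Gamma_{s_0}$ to $\Gamma_{s_1}$ (for $s_1$ close to $s_0$) through this analytic neighborhood while keeping $x_2,\dots,x_n$ on $\Gamma_{s_0}$, invoking Cauchy's theorem. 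Repeating for $x_2,\dots,x_n$ in turn, and then chaining finitely many such local deformations, shows that the $n$-fold integral takes the same value on $\Gamma_s$ for every $s\in[0,1]$.

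For uniform convergence, I would use Hadamard's inequality: on a compact neighborhood containing $\bigcup_s \Gamma_s$, $|L|$ is bounded by some $M<\infty$, and the length $\ell(\Gamma_s)$ is bounded uniformly in $s$ by some $\ell<\infty$. Then $|\det[L(x_i,x_j)]|\leq n^{n/2}M^n$, so the $n$-th term is bounded by $\frac{1}{n!}n^{n/2}(M\ell)^n$, and by Stirling this is the general term of a convergent series, yielding absolute and uniform convergence in $s$. Summing the $s$-independent terms gives an $s$-independent total, finishing the proof.

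The main obstacle is the bookkeeping in the deformation step: strictly speaking one needs to deform all $n$ variables together, not one at a time, if the analyticity neighborhood is small compared with the total deformation. I would handle this by partitioning $[0,1]$ into sufficiently fine subintervals $[s_k,s_{k+1}]$ so that on each subinterval the relevant analytic neighborhoods from the hypothesis overlap, and applying the one-variable-at-a-time Cauchy argument on each piece; the compactness of $[0,1]$ makes this finite and unproblematic. No auxiliary machinery beyond the hypothesis, Cauchy's theorem, and Hadamard's bound is needed.
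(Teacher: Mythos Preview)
Your proof is correct and follows the standard argument. Note, however, that the paper does not supply its own proof of this lemma: it is quoted verbatim as Proposition~1 of \cite{TW3} and used as a black box. Your approach via the Fredholm expansion, term-by-term application of Cauchy's theorem, and the Hadamard bound for uniform convergence is precisely the argument given in \cite{TW3}, so there is nothing to compare.
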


The following lemma, provided to us by Percy Deift, with proof provided in Appendix \ref{PD_appendix}, allows us to use Cauchy's theorem when manipulating integrals which involve Fredholm determinants in the integrand.

\begin{lemma}\label{Analytic_fredholm_det_lemma}
Suppose $A(z)$ is an analytic map from a region $D\in \C$ into the trace-class operators on a (separable) Hilbert space $\mathcal{\Hi}$. Then $z\mapsto \det(I+A(z))$ is analytic on $D$.
\end{lemma}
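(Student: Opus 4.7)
The plan is to prove analyticity by combining the explicit Plemelj--Smithies series expansion of the Fredholm determinant with the standard fact that a uniform limit of analytic functions is analytic. Concretely, I would use the identity (recalled in the excerpt)
\begin{equation*}
\det(I+A(z)) = 1 + \sum_{n=1}^{\infty} \tr\bigl(\Gamma^{n}(A(z))\bigr),
\end{equation*}
together with the bound $|\tr(\Gamma^{n}(A(z)))| \le \|A(z)\|_1^{n}/n!$, and show (a) each term on the right is analytic in $z$, and (b) the series converges uniformly on compact subsets of $D$.

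First I would fix $z_0 \in D$ and pick a closed disk $\overline{B(z_0,r)} \subset D$. Since $A$ is analytic into the Banach space $\mathcal{B}_1(\mathcal{H})$, it is in particular continuous in trace norm, so $M := \sup_{z \in \overline{B(z_0,r)}} \|A(z)\|_1 < \infty$. The term-by-term bound $|\tr(\Gamma^{n}(A(z)))| \le M^{n}/n!$ then gives uniform convergence of the Plemelj--Smithies series on $\overline{B(z_0,r)}$. So the analyticity question reduces to showing that $z \mapsto \tr(\Gamma^{n}(A(z)))$ is analytic for every $n \ge 1$.

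For that step, the key observation is that $\tr(\Gamma^{n}(A))$ can be expressed as a polynomial in the traces $\tr(A^{k})$ for $k=1,\dots,n$ (via Newton's identities, or directly by the Plemelj--Smithies determinantal formula). It therefore suffices to prove that $z \mapsto \tr(A(z)^{k})$ is analytic for each $k$. Here I use two facts: first, that $(B_1,\dots,B_k) \mapsto B_1 \cdots B_k$ is a jointly continuous multilinear map from $\mathcal{B}_1(\mathcal{H})^{k}$ into $\mathcal{B}_1(\mathcal{H})$ (using $\|AB\|_1 \le \|A\|_1 \|B\|$ and finite composition), so $z \mapsto A(z)^{k}$ is analytic into $\mathcal{B}_1(\mathcal{H})$; second, that $\tr$ is a bounded linear functional on $\mathcal{B}_1(\mathcal{H})$, which preserves analyticity under composition. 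Combined, $\tr(A(z)^{k})$ is analytic on $D$, hence so is each $\tr(\Gamma^{n}(A(z)))$, and the uniform limit on $\overline{B(z_0,r)}$ is analytic on $B(z_0,r)$; since $z_0$ was arbitrary, this gives analyticity on all of $D$.

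The only real obstacle is bookkeeping: one must be careful that the notion of analyticity used for $A:D \to \mathcal{B}_1(\mathcal{H})$ (Fr\'echet-differentiability, or equivalently convergent local power series in trace norm) really does pass through the operations of composition, multilinear combination, and application of a bounded linear functional. All of these are classical properties of Banach-space-valued analytic functions, so the argument is essentially mechanical once that vocabulary is in place. An alternative, essentially equivalent, route would be to apply Morera's theorem directly: continuity of $\det(I+\cdot)$ on $\mathcal{B}_1(\mathcal{H})$ (Lemma \ref{fredholm_continuity_lemma}) gives continuity of $z \mapsto \det(I+A(z))$, and vanishing of the triangle integral follows by interchanging the uniform limit and the contour integral and then using analyticity of the individual $\tr(\Gamma^{n}(A(z)))$.
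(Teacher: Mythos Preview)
Your proposal is correct and follows essentially the same overarching structure as the paper's proof (Appendix~\ref{PD_appendix}): write $\det(I+A(z))$ as the Plemelj--Smithies series $\sum_n \tr(\Gamma^{(n)}(A(z)))$, bound the tail by $\|A(z)\|_1^n/n!$ to get local uniform convergence, show analyticity of each term, and pass to the limit.

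The genuine difference is in how the analyticity of each term $\tr(\Gamma^{(n)}(A(z)))$ is established. You reduce it algebraically, via Newton's identities, to polynomials in $\tr(A(z)^k)$, and then invoke only elementary Banach-space calculus (continuous multilinear products, $\tr$ as a bounded functional). The paper instead works directly on the alternating tensor powers $\bigwedge^k(\Hi)$: it proves a sharp trace-norm estimate for symmetrized products (Lemma~\ref{Gamma_lemma}), then expands $(A(z)+\delta)^{\otimes k}$ multilinearly and controls the remainder to compute the trace-class derivative of $\Gamma^{(k)}(A(z))$ explicitly. Your route is shorter and more elementary if all one wants is analyticity. The paper's route costs more machinery but yields more: it produces the explicit derivative formula
\[
\frac{d}{dz}\det(I+A(z)) = \sum_{k\ge 1}\tr\bigl(\Gamma^{(1,k-1)}(A'(z),A(z))\bigr),
\]
which is stated as part of Theorem~\ref{Fredholm_Analytic} and is not recovered by your argument.
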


\subsubsection{Proofs from Section \ref{proof_of_WASEP_thm_sec}}\label{proofs_sec}
We now turn to the proofs of the previously stated lemmas and propsitions.
\begin{proof}[Proof of Lemma \ref{deform_mu_to_C}]
The lemma follows from Cauchy's theorem once we  show that for fixed $\epsilon$, the integrand $\mu^{-1} \prod_{k=0}^{\infty} (1-\mu\tau^k)\det(I+\mu J_{\mu})$ is analytic in $\mu$ between $S_\e$ and $\mathcal{C}_\e$ (note that we now include a subscript $\mu$ on $J$ to emphasize the dependence of the kernel on $\mu$). It is clear that the infinite product and the $\mu^{-1}$ are analytic in this region. In order to show that $\det(I+\mu J_{\mu})$ is analytic in the desired region we
may appeal to Lemma \ref{Analytic_fredholm_det_lemma}. Therefore it suffices to show that the map $J(\mu)$ defined by $\mu\mapsto J_{\mu}$
is an analytic map from this region of $\mu$ between $S_\e$ and $\mathcal{C}_\e$ into the trace class operators (this suffices since the multiplication by $\mu$ is clearly analytic).  
The rest of this proof is devoted to the proof of this fact.

In order to prove this, we need to show that $J_{\mu}^h=\frac{J_{\mu+h}-J_{\mu}}{h}$ converges to some trace class operator  as $h\in \C$ goes to zero. By the criteria of Lemma \ref{trace_convergence_lemma} it suffices to prove that the kernel associated to $J_{\mu}^h$ converges uniformly in $\eta,\eta'\in\Gamma_{\eta}$ to the kernel of $J'_{\mu}$.  This will prove both the convergence of traces as well as the weak convergence of operators necessary to prove trace norm convergence and complete this proof. The operator $J'_{\mu}$ acts on $\Gamma_{\eta}$, the circle centered at zero and of radius $1-\tfrac{1}{2}\e^{1/2}$, as
\begin{equation*}
 J'_{\mu}(\eta,\eta')=\int_{\Gamma_{\zeta}} \exp\{\Psi(\zeta)-\Psi(\eta')\}\frac{f'(\mu,\zeta/\eta')}{\eta'(\zeta-\eta)}d\zeta
\end{equation*}
where
\begin{equation*}
 f'(\mu,z)=\sum_{k=-\infty}^{\infty} \frac{\tau^{2k}}{(1-\tau^k\mu)^2}z^k.
\end{equation*}
Our desired convergence will follow if we can show that 
\begin{equation*}
 \left|h^{-1}\left(f(\mu+h,\zeta/\eta')-f(\mu,\zeta/\eta')\right) - f'(\mu,\zeta/\eta')\right|
\end{equation*}
tends to zero uniformly in $\zeta\in \Gamma_{\zeta}$ and $\eta'\in \Gamma_{\eta}$ as $|h|$ tends to zero. 
Expanding this out and taking the absolute value inside of the infinite sum we have
\begin{equation}\label{sum_eqn}
 \sum_{k=-\infty}^{\infty} \left| h^{-1} \left(\frac{\tau^k}{1-\tau^k(\mu+h)}-\frac{\tau^k}{1-\tau^k(\mu)}\right) - \frac{\tau^{2k}}{(1-\tau^k(\mu))^2}\right| z^k
\end{equation}
where $z=|\zeta/\eta'|\in (1,\tau^{-1})$.
For $\e$ and $\mu$ fixed there is a $k^*$ such that for $k\ge k^*$,
\begin{equation*}
\left|\frac{\tau^k h}{1-\tau^k \mu}\right|<1. 
\end{equation*}
Furthermore, by choosing $|h|$ small enough we can make $k^*$ negative. As a result we also have that for small enough $|h|$, for all $k<k^*$,
\begin{equation*}
\left|\frac{h}{\tau^{-1}-\mu}\right|<1. 
\end{equation*}
Splitting our sum into  $k<k^*$ and $k\ge k^*$, and using the fact that $1/(1-w)=1+w+O(w^2)$ for $|w|<1$ we can Taylor expand as follows: For $k\geq k^*$
\begin{equation*}
\frac{\tau^k}{1-\tau^k(\mu+h)} = \frac{\tau^k}{1-\tau^k\mu}\frac{1}{1-\frac{\tau^k h}{1-\tau^k \mu}} = \frac{\tau^k \left(1+\frac{\tau^k h}{1-\tau^k \mu} + \left(\frac{\tau^k}{1-\tau^k\mu}\right)^2O(h^2)\right)}{1-\tau^k \mu}. 
\end{equation*}
Similarly, expanding the second term inside the absolute value in equation (\ref{sum_eqn}) and canceling with the third term we are left with
\begin{equation*}
 \sum_{k=k^*}^{\infty} \frac{\tau^{3k}}{(1-\tau^k \mu)^3} O(h) z^k.
\end{equation*}
The sum converges since $\tau^3 z <1$ and thus behaves like $O(h)$ as desired.
Likewise for $k<k^*$, by multiplying the numerator and denominator by $\tau^{-k}$, the same type of expansion works and we find that the error is given by the same summand as above but over $k$ from $-\infty$ to $k^*-1$. Again, however,
the sum converges since the numerator and denominator cancel each other for $k$ large negative, and $z^k$ is a convergent series for $k$ going to negative infinity. Thus this error series also behaves like $O(h)$ as desired. This shows the needed uniform convergence and completes the proof.
\end{proof}

\begin{proof}[Proof of Lemma \ref{mu_inequalities_lemma}]
We prove this with the scaling parameter $r=1$ as the general case follows in a similar way.
Consider
\begin{equation*}
\log(g_{\e}(\tilde\mu))=\sum_{k=0}^{\infty} \log(1-\e^{1/2}\tilde\mu \tau_\e^k).
\end{equation*}
 We have $\sum_{k=0}^{\infty} \epsilon^{1/2} \tau^k = \frac12(1 + \e^{1/2}c_\e )$ where $c_\e=O(1)$.  So for $\tilde\mu\in R_1$ we have
\begin{eqnarray*}
\nonumber|\log(g_{\e}(\tilde\mu))+\frac{\tilde\mu}2(1+ \e^{1/2}c_\e )| &=& \left|\sum_{k=0}^{\infty}\log(1-\epsilon^{1/2}\tilde\mu \tau^k)+ \epsilon^{1/2}\tilde\mu \tau^k\right|\\
 &\leq & \sum_{k=0}^{\infty} |\log(1-\epsilon^{1/2}\tilde\mu \tau^k)+ \epsilon^{1/2}\tilde\mu \tau^k|\\
\nonumber &\leq & \sum_{k=0}^{\infty} |\epsilon^{1/2}\tilde\mu \tau^k|^2 = \frac{\e|\tilde\mu|^2}{1-\tau^2}=
\frac{\e^{1/2} |\tilde\mu|^2}{4-4\e^{1/2}}\\ &\leq& c\e^{1/2}|\tilde\mu|^2
 \nonumber\leq  c' \e^{1/2}.
\end{eqnarray*}
 The second inequality uses the fact that for $|z|\leq 1/2$,
$|\log(1-z)+z|\leq |z|^2$. Since $\tilde\mu\in R_1$ it follows that $|z|=\e^{1/2}|\tilde\mu|$ is  bounded by $1/2$ for small enough $\e$. The constants here are finite and do not depend on any of the parameters.  This proves equation (\ref{g_e_ineq1}) and shows that the convergence is uniform in $\tilde\mu$ on $R_1$.

We now turn to the second inequality, equation (\ref{g_e_ineq2}). Consider the region, \begin{equation*}
 D=\{z:\arg(z)\in [-{\scriptstyle\frac{\pi}{10}},{\scriptstyle\frac{\pi}{10}}]\}\cap \{z:\im(z)\in (-{\scriptstyle\frac{1}{10}},{\scriptstyle\frac{1}{10}})\}\cap \{z:\re(z)\leq 1\}.
 \end{equation*}
For all $z\in D$,
\begin{equation}\label{ineq2}
 \re(\log(1-z))\leq \re(-z-z^2/2).
\end{equation}
For $\tilde\mu\in R_2$, it is clear that $\e^{1/2}\tilde\mu\in D$. Therefore, using (\ref{ineq2}),
\begin{eqnarray*}
 \nonumber\re(\log(g_{\e}(\tilde\mu))) &=& \sum_{k=0}^{\infty}\re[\log(1-\epsilon^{1/2}\tilde\mu \tau^k)]\\
&\leq & \sum_{k=0}^{\infty} \left(-\re[\e^{1/2}\tilde\mu \tau^k]-\re[(\e^{1/2}\tilde\mu \tau^k)^2/2]\right)\\
\nonumber&\leq & -\re(\tilde\mu/2)-\frac{1}{8}\e^{1/2}\re(\tilde\mu^2).
\end{eqnarray*}
This proves equation (\ref{g_e_ineq2}). Note that from the definition of $R_2$ we can calculate the argument of $\tilde\mu$ and we see that $|\arg \tilde\mu|\leq \arctan(2\tan(\tfrac{\pi}{10}))<\tfrac{\pi}{4}$ and $|\tilde\mu|\geq r\geq 1$. Therefore $\re(\tilde\mu^2)$ is positive and bounded away from zero for all $\tilde\mu\in R_2$.
\end{proof}

\begin{proof}[Proof of Proposition \ref{originally_cut_mu_lemma}]
This proof proceeds in a similar manner to the proof of Proposition \ref{reinclude_mu_lemma}, however, since in this case we have to deal with $\e$ going to zero and changing contours, it is, by necessity, a little more complicated. For this reason we encourage readers to first study the simpler proof of Proposition \ref{reinclude_mu_lemma}. 

In that proof we factor our operator into two pieces. Then, using the decay of the exponential term, and the control over the size of the $\csc$ term, we are able to show that the Hilbert-Schmidt norm of the first factor is finite and that for the second factor it is bounded by $|\tilde\mu|^{\alpha}$ for $\alpha<1$ (we show it for $\alpha=1/2$ though any $\alpha>0$ works, just with constant getting large as $\alpha\searrow 0$). This gives an estimate on the trace norm of the operator, which, by exponentiating, gives an upper bound $e^{c|\tilde\mu|^{\alpha}}$ on the size of the determinant. This upper bound is beat by the exponential decay in $\tilde\mu$ of the prefactor term $g_{\e}$.

For the proof of Proposition \ref{originally_cut_mu_lemma}, we do the same sort of factorization of our operator into $AB$, where here, 
\begin{equation*}
 A(\zeta,\eta)=\frac{e^{c[\Psi(\zeta)+n_0\log(\zeta)]}}{\zeta-\eta}
\end{equation*}
with $n_0$ as explained before the statement of Lemma \ref{kill_gamma_2_lemma}, and $0<c<1$ fixed,  and 
\begin{equation*}
B(\eta,\zeta) = e^{-c[\Psi(\zeta)+n_0\log(\zeta)]}e^{\Psi(\zeta)-\Psi(\eta)}\mu f(\mu,\zeta/\eta)\frac{1}{\eta}.     \end{equation*}
We must be careful in keeping track of the contours on which these operators act. As we have seen we may assume that the $\eta$ variables are on $\Gamma_{\eta,l}$ and the $\zeta$ variables on $\Gamma_{\zeta,l}$ for any fixed choice of $l\geq 0$. Now using the estimates of Lemmas \ref{kill_gamma_2_lemma} and \ref{compact_eta_zeta_taylor_lemma}, we compute that $||A||_2<\infty$ (uniformly in $\e<\e_0$ and, trivially, also in $\tilde\mu$). Here we calculate the Hilbert-Schmidt norm using Lemma \ref{fredholm_continuity_lemma}. Intuitively this norm is uniformly bounded as $\e$ goes to zero, because, while the denominator blows up as badly as $\e^{-1/2}$, the numerator is roughly supported only on a region of measure $\e^{1/2}$ (owing to the exponential decay of the exponential when $\zeta$ differs from $\xi$ by more than order $\e^{1/2}$).

We wish to control $||B||_2$ now. Using the discussion before Lemma  \ref{kill_gamma_2_lemma} we may rewrite $B$ as 
\begin{equation*}
B(\eta,\zeta) = e^{-c[\Psi(\zeta)+n_0\log(\zeta)]}e^{(\Psi(\zeta)+n_0\log(\zeta))-(\Psi(\eta)-n_0\log(\eta))}\tilde\mu f(\tilde\mu,\zeta/\eta)\frac{1}{\eta}
\end{equation*}
Lemmas \ref{kill_gamma_2_lemma} and \ref{compact_eta_zeta_taylor_lemma} apply and tell us that the exponential terms decay at least as fast as $\exp\{-\e^{-1/2}c'|\zeta-\eta|\}$. So the final ingredient in proving our proposition is  control of $|\tilde\mu f(\tilde\mu, z)|$ for $z=\zeta/\eta'$. We break it up into  two regions of $\eta',\zeta$: The first (1) when $|\eta'-\zeta|\leq c$ for a very small constant $c$ and the second (2) when $|\eta'-\zeta|> c$. We will compute $||B||_2$ as the square root of
\begin{equation}\label{case1case2}
 \int_{\eta,\zeta\in \textrm{Case (1)}} |B(\eta,\zeta)|^2 d\eta d\zeta + \int_{\eta,\zeta\in \textrm{Case (2)}} |B(\eta,\zeta)|^2 d\eta d\zeta.
\end{equation}
We will show that the first term can be bounded by $C|\tilde\mu|^{2\alpha}$ for any $\alpha<1$, while the second term can be bounded by a large constant. As a result $||B||_2\leq C|\tilde\mu|^{\alpha}$ which is exactly as desired since then $||AB||_1\leq e^{c|\tilde\mu|^{\alpha}}$.

Consider case (1) where $|\eta'-\zeta|\leq c$ for a constant $c$ which is positive but small (depending on $T$). One may easily check from the defintion of the contours that $\e^{-1/2}(|\zeta/\eta|-1)$ is contained in a compact subset of $(0,2)$. In fact, $\zeta/\eta'$ almost exactly lies along the curve $|z|=1+\e^{1/2}$ and in particular (by taking $\e_0$ and $c$ small enough) we can assume that $\zeta/\eta$ never leaves the region bounded by $|z|=1+(1\pm r)\e^{1/2}$ for any fixed $r<1$. Let us call this region $R_{\e,r}$. Then we have
\begin{lemma}\label{final_estimate}
Fix $\e_0$ and $r\in (0,1)$. Then for all $\e<\e_0$, $\tilde\mu\in\mathcal{\tilde C}_{\e}$ and $z\in R_{\e,r}$,
\begin{equation*}
|\tilde\mu f(\tilde\mu,z)| \leq c|\tilde\mu|^{\alpha}/|1-z|
\end{equation*}
for some $\alpha\in (0,1)$, with $c=c(\alpha)$  independent of $z$, $\tilde\mu$ and $\e$. 
\end{lemma}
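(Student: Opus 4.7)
The plan is to identify the sum $\tilde\mu f(\tilde\mu,z)$ with a Riemann-sum approximation of the integral appearing in (\ref{cscid}), and then extract the required power of $|\tilde\mu|$ from the resulting closed form $(-\tilde\mu)^{-\tilde z/2}\pi\csc(\pi\tilde z/2)$. First, set $\tilde z:=\e^{-1/2}(1-z)$, so that $z^{k}\approx e^{-\e^{1/2}\tilde z k}$ and $|1-z|=\e^{1/2}|\tilde z|$. A short calculation shows that for $z\in R_{\e,r}$ the real part $\re\tilde z$ lies in $[-(1+r),-(1-r)]$ when $|1-z|=O(\e^{1/2})$, and grows up to $O(\e^{-1/2})$ as $z$ moves away from $1$ along the annulus. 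In either case $\tilde z$ stays away from all poles of $\csc(\pi\tilde z/2)$ other than the one at $\tilde z=0$, so the closed form in (\ref{cscid}) is always well-defined in our regime.

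Next, I would prove the quantitative Riemann-sum approximation
\begin{equation*}
2\e^{1/2}\tilde\mu f(\tilde\mu,z) \;=\; \int_{-\infty}^{\infty}\frac{\tilde\mu\,e^{-\tilde z t/2}}{e^{t}-\tilde\mu}\,dt \;+\; E_{\e}(\tilde\mu,z),
\end{equation*}
with $|E_{\e}|\leq C\e^{1/2}|\tilde\mu|^{\alpha}/|1-z|$. This is done by splitting the sum at the resonance index $k_{\ast}=\lfloor\log|\tilde\mu|/|\log\tau|\rfloor$ (where $|\tilde\mu\tau^{k_{\ast}}|\sim 1$) and using (i) geometric decay of $|\tau z|^{k}$ for $k\gg k_{\ast}$ and of $|z|^{k}$ for $k\ll k_{\ast}$, and (ii) the uniform lower bound $|1-\tilde\mu\tau^{k}|\gtrsim\tau^{k}|\sin\arg(\tilde\mu)|$ on $\mathcal{\tilde C}_{\e}$, which follows from the fact that the contour $\mathcal{\tilde C}_{\e}$ keeps $|\im\tilde\mu|$ bounded below (by $1$ on the straight parts and by $|\sin\arg\tilde\mu|$ on the unit semicircle). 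The integral is then evaluated by (\ref{cscid}) as $(-\tilde\mu)^{-\tilde z/2}\pi\csc(\pi\tilde z/2)$.

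Finally, bound this closed form. Writing $|(-\tilde\mu)^{-\tilde z/2}|=|\tilde\mu|^{-\re\tilde z/2}\exp\bigl(\tfrac12\im\tilde z\cdot\arg(-\tilde\mu)\bigr)$ and using $|\arg(-\tilde\mu)|\leq\pi$ on $\mathcal{\tilde C}_{\e}$, together with the identity $|\sin(a+ib)|^{2}=\sin^{2}a+\sinh^{2}b$ to derive $|\csc(\pi\tilde z/2)|\lesssim \min\bigl\{|\tilde z|^{-1},\,e^{-\pi|\im\tilde z|/2}\bigr\}$, one sees that the potential growth of $(-\tilde\mu)^{-\tilde z/2}$ in $|\im\tilde z|$ is absorbed by the cosecant decay. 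What remains is the bound $|\tilde\mu|^{-\re\tilde z/2}\cdot|\tilde z|^{-1}$. Since $-\re\tilde z/2\leq\alpha:=(1+r)/2<1$ in the only binding regime (where $|\tilde z|=O(1)$, equivalently $|1-z|=O(\e^{1/2})$; for $|\tilde z|$ large the cosecant decay already makes the closed form small), and since $|\tilde z|^{-1}=\e^{1/2}/|1-z|$, we obtain $|\tilde\mu f(\tilde\mu,z)|\leq C|\tilde\mu|^{\alpha}/|1-z|$ with $\alpha=(1+r)/2$, as claimed. The main obstacle is controlling the Riemann-sum error uniformly in $\tilde\mu\in\mathcal{\tilde C}_{\e}$ (a contour whose length grows like $\e^{-1/2}$) and in $z$ with $|1-z|$ as small as $\e^{1/2}$; in particular, the resonance zone $|\tilde\mu\tau^{k}|\sim 1$ requires explicit use of the geometry of $\mathcal{\tilde C}_{\e}$ to keep $|1-\tilde\mu\tau^{k}|$ from degenerating.
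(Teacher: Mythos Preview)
Your plan shares the paper's central idea---a Riemann-sum approximation---but there is a genuine gap in the range of $z$ it handles.  You set $\tilde z=\e^{-1/2}(1-z)$ and want to identify $2\e^{1/2}\tilde\mu f(\tilde\mu,z)$ with $\int_{-\infty}^{\infty}\frac{\tilde\mu e^{-\tilde zt/2}}{e^{t}-\tilde\mu}\,dt$ and then invoke the closed form from~(\ref{cscid}).  But that integral converges only for $\re\tilde z\in(-2,0)$, which on the annulus $R_{\e,r}$ corresponds to $|1-z|=O(\e^{1/2})$.  As $z$ moves away from $1$ along $R_{\e,r}$ (and the lemma, as applied in Case~(1) of Proposition~\ref{originally_cut_mu_lemma}, needs it for $|1-z|$ up to a fixed small constant), $\re\tilde z$ becomes positive and of order $\e^{-1/2}$; the integral diverges and the Riemann-sum identification with the closed form collapses.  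Your appeal to ``for $|\tilde z|$ large the cosecant decay already makes the closed form small'' is circular: that bounds the analytic continuation, not the sum, and it is precisely the link between the two that fails in this regime.  At the analytic level the problem is that $z^{k}\approx e^{-\tilde z t_k/2}$ requires $|\log z|\cdot|k|$ small, which for $|1-z|$ of order one already breaks for $|k|=O(1)$.

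The paper avoids this by recentering first: shifting the summation index by $k^{*}=\lfloor\tfrac12\e^{-1/2}\log|\tilde\mu|\rfloor$ pulls out $|z|^{k^{*}}=|\tilde\mu|^{1/2}(1+O(\e^{1/2}))$---this is where the factor $|\tilde\mu|^{\alpha}$ actually comes from---and replaces $\tilde\mu$ by $\omega=\tilde\mu\tau^{k^{*}}$, which is approximately on the unit circle with $|1-\omega|>\e^{1/2}$.  The tails $|k|>\e^{-1/2}$ are then handled by summation by parts (no Riemann sum, no integral), giving $O(1/|1-z|)$ uniformly in $z$; only the finite middle block $|k|\le\e^{-1/2}$ is compared to the \emph{finite} integral $\int_{-1}^{1}\frac{\omega e^{-t}}{1-\omega e^{-2t}}\,dt$, which is bounded by $|\log(1-\omega)|=O(\log|\tilde\mu|)$.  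That recentering and the tail/summation-by-parts step are exactly what your argument is missing.
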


\begin{remark}
By changing the value of $\alpha$ in the definition of $\kappa(\theta)$ (which then goes into the definition of $\Gamma_{\eta,l}$ and $\Gamma_{\zeta,l}$) and also focusing the region $R_{\e,r}$ around $|z|=1+2\alpha \e^{1/2}$, we can take $\alpha$ arbitrarily small in the above lemma at a cost of increasing the constant $c=c(\alpha)$ (the same also applies for Proposition \ref{reinclude_mu_lemma}). The $|\tilde\mu|^{\alpha}$ comes from the fact that $(1+2\alpha \e^{1/2})^{\tfrac{1}{2}\e^{-1/2}\log|\tilde\mu|} \approx |\tilde\mu|^{\alpha}$. Another remark is that the proof below can be used to provide an alternative proof of Lemma \ref{muf_compact_sets_csc_limit_lemma} by studying the convergence of the Riemann sum directly rather than by using functional equation properties of $f$ and the analytic continuations.
\end{remark}

We complete the ongoing proof of Proposition \ref{originally_cut_mu_lemma} and then return to the proof of the above lemma. 

Case (1) is now done since we can estimate the first integral in equation (\ref{case1case2}) using Lemma \ref{final_estimate} and the exponential decay of the exponential term outside of $|\eta'-\zeta|=O(\e^{1/2})$. Therefore, just as with the $A$ operator, the $\e^{-1/2}$ blowup of $|\tilde\mu f(\tilde\mu,\zeta/\eta')|$ is countered by the decay of the exponential and we are just left with a large constant time $|\tilde\mu|^{\alpha}$.

Turing to case (2) we need to show that the second integral in equation (\ref{case1case2}) is bounded uniformly in $\e$ and $\tilde\mu\in \tilde\C_{\e}$. This case corresponds to $|\eta'-\zeta|>c$ for some fixed but small constant $c$. Since $\e^{-1/2}(|\zeta/\eta|-1)$ stays bounded in a compact set, using an argument almost identical to the proof of Lemma \ref{mu_f_polynomial_bound_lemma} we can show that $|\tilde\mu f(\tilde\mu,\zeta/\eta)|$ can be bounded by $C|\tilde\mu|^{C'}$ for positive yet finite constants $C$ and $C'$. The important point here is that there is only a finite power of $|\tilde\mu|$. Since $|\tilde\mu|<\e^{-1/2}$ this means that this term can blow up at most polynomially in $\e^{-1/2}$. On the other hand we know that the exponential term decays exponentially fast like $e^{-\e^{-1/2}c}$ and hence the second integral in equation (\ref{case1case2}) goes to zero.

We now return to the proof of Lemma \ref{final_estimate} which will complete the proof of  Proposition~\ref{originally_cut_mu_lemma}.

\begin{proof}[Proof of Lemma \ref{final_estimate}]
We will prove the desired estimate for $z:|z|=1+\e^{1/2}$. The proof for general $z\in R_{\e,r}$ follows similarly.
Recall that 
\begin{equation*}
 \tilde\mu f(\tilde\mu,z) = \sum_{k=-\infty}^{\infty} \frac{\tilde\mu \tau^k}{1-\tilde\mu \tau^k} z^k.
\end{equation*}
Since $\tilde\mu$ has imaginary part 1, the denominator is smallest when $\tau^{k}=1/|\tilde\mu|$, corresponding to 
\begin{equation*}
 k=k^*=\lfloor \tfrac{1}{2}\e^{-1/2} \log |\mu|\rfloor.
\end{equation*}
We start, therefore, by centering our doubly infinite sum at around this value, 
\begin{equation*}
 \tilde\mu f(\tilde\mu,z) =   \sum_{k=-\infty}^{\infty} \frac{\tilde\mu \tau^{k^*}\tau^k}{1-\tilde\mu\tau^{k^*} \tau^k} z^{k^*}z^k.
\end{equation*}
By the definition of $k^*$, 
\begin{equation*}
|z|^{k^*}= |\tilde\mu|^{1/2}(1+O(\e^{1/2}))
\end{equation*}
thus we find that 
\begin{equation*}
|\tilde\mu f(\tilde\mu,z)| =   |\tilde\mu|^{1/2}\left|\sum_{k=-\infty}^{\infty} \frac{\kapi\tau^k}{1-\kapi \tau^k} z^k\right|
\end{equation*}
where \begin{equation*}
\kapi =\tilde\mu\tau^{k^*}
\end{equation*}
and is roughly on the unit circle except for a small dimple near 1. To be more precise, due to the rounding in the definition of $k^*$ the $\kapi$ is not exactly on the unit circle, however we do have the following two properties:
\begin{equation*}
 |1-\kapi|>\e^{1/2}, \qquad |\kapi|-1=O(\e^{1/2}).
\end{equation*}
The section of $\mathcal{\tilde C}_{\e}$ in which $\tilde\mu=\e^{-1/2}-1+iy$ for $y\in (-1,1)$ corresponds to $\kapi$ lying along a small dimple around $1$ (and still respects $|1-\kapi|>\e^{1/2}$). We call the curve on which $\kapi$ lies $\Omega$.

We can bring the $|\tilde\mu|^{1/2}$ factor to the left and split the summation into three parts, so that $|\tilde\mu|^{-1/2}|\tilde\mu f(\tilde\mu,z)|$ equals
\begin{equation}\label{three_term_eqn}
 \left|\sum_{k=-\infty}^{-\e^{-1/2}} \frac{\kapi\tau^k}{1-\kapi \tau^k} z^k+ \sum_{k=-\e^{-1/2}}^{\e^{-1/2}} \frac{\kapi\tau^k}{1-\kapi \tau^k} z^k+ \sum_{k=\e^{-1/2}}^{\infty} \frac{\kapi\tau^k}{1-\kapi \tau^k} z^k\right|.
\end{equation}
We will control each of these term separately. The first and the third are easiest. Consider 
\begin{equation*}
\left|(z-1)\sum_{k=-\infty}^{-\e^{-1/2}} \frac{\kapi\tau^k}{1-\kapi \tau^k} z^k\right|.
\end{equation*}
We wish to show this is bounded by a constant which is independent of $\tilde\mu$ and $\e$. Summing by parts  the argument of the absolute value can be written as
\begin{equation}\label{eqn174}
\frac{\kapi\tau^{-\e^{-1/2}+1}}{1-\kapi\tau^{-\e^{-1/2}+1}}z^{-\e^{-1/2}+1}+(1-\tau)\sum_{k=-\infty}^{-\e^{-1/2}}\frac{\kapi\tau^k}{(1-\kapi\tau^k)(1-\kapi\tau^{k+1})}z^k.
\end{equation}
We have $\tau^{-\e^{-1/2}+1} \approx e^2$ and $|z^{-\e^{-1/2}+1}|\approx e^{-1}$ (where $e\sim 2.718$). The denominator of the first term  is therefore bounded from zero. Thus the absolute value of this term is bounded by a constant. For the second term of (\ref{eqn174}) we can bring the absolute value inside of the summation to get
\begin{equation*}
 (1-\tau)\sum_{k=-\infty}^{-\e^{-1/2}}\left|\frac{\kapi\tau^k}{(1-\kapi\tau^k)(1-\kapi\tau^{k+1})}\right||z|^k.
\end{equation*}
The first term in absolute values stays bounded above by a constant times the value at $k=-\e^{-1/2}$. Therefore, replacing this by a constant, we can sum in $|z|$ and we get $\frac{|z|^{-\e^{-1/2}}}{1-1/|z|}$. The numerator, as noted before, is like $e^{-1}$ but the denominator is like $\e^{1/2}/2$. This is cancelled by the term $1-\tau=O(\e^{1/2})$ in front. Thus the absolute value is bounded.

The argument for the third term of equation (\ref{three_term_eqn}) works in the same way, except rather than multiplying by $|1-z|$ and showing the result is constant, we multiply by $|1-\tau z|$. This is, however, sufficient since $|1-\tau z|$ and $|1-z|$ are effectively the same for $z$ near 1 which is where our desired bound must be shown carefully.

We now turn to the middle term in equation (\ref{three_term_eqn}) which is more difficult.
We will show that
\begin{equation*}
 \left|(1-z)\sum_{k=-\e^{-1/2}}^{\e^{-1/2}} \frac{\kapi\tau^k}{1-\kapi \tau^k} z^k\right|=O(\log|\tilde\mu|).
\end{equation*}
This is of smaller order than $|\tilde\mu|$ raised to any positive real power and thus finishes the proof. For the sake of simplicity  we will first show this with $z=1+\e^{1/2}$. The general argument for points $z$ of the same radius and non-zero angle is very similar as we will observe at the end of the proof.
For the special choice of $z$,  the prefactor $(1-z)=\e^{1/2}$.

The method of proof  is to show that this sum is well approximated by a Riemann sum. This idea was mentioned in the formal proof of the $\e$ goes to zero limit. In fact, the argument below can be used to make that formal observation rigorous, and thus provides an alternative method to the complex analytic approach we take in the proof of  Lemma \ref{muf_compact_sets_csc_limit_lemma}. The sum we have is given by
\begin{equation}\label{first_step_sum}
\e^{1/2} \sum_{k=-\e^{-1/2}}^{\e^{-1/2}}\frac{\kapi\tau^k}{1-\kapi \tau^k} z^k = \e^{1/2} \sum_{k=-\e^{-1/2}}^{\e^{-1/2}}\frac{\kapi(1-\e^{1/2}+O(\e))^k}{1-\kapi (1-2\e^{1/2}+O(\e))^k}
\end{equation}
where we have used the fact that $\tau z = 1-\e^{1/2} + O(\e)$.
Observe that if $k=t\e^{-1/2}$ then this sum is close to a Riemann sum for
\begin{equation}\label{integral_equation_sigma}
 \int_{-1}^{1}\frac{\kapi e^{-t}}{1-\kapi e^{-2t}} dt.
\end{equation}
We use this formal relationship to prove that the sum in equation (\ref{first_step_sum}) is $O(\log|\tilde\mu|)$. We do this in a few steps. The first step is to consider the difference between each term in our sum and the analogous term in a Riemann sum for the integral. After estimating the difference we show that this can be summed over $k$ and gives us a finite error. The second step is to estimate the error of this Riemann sum approximation to the actual integral. The final step is to note that 
\begin{equation*}
\int_{-1}^{1} \frac{\kapi e^{-t}}{1-\kapi e^{-2t}}dt \sim |\log(1-\kapi)|\sim \log|\tilde\mu|
\end{equation*}
for $\kapi\in \Omega$ (in particular where $|1-\kapi|>\e^{1/2}$).  Hence it is easy to check that it is smaller than any power of $|\tilde\mu|$.

A single term in the Riemann sum for the integral looks like 
$
 \e^{1/2} \frac{\kapi e^{-k\e^{1/2}}}{1-\kapi e^{-2k\e^{1/2}}}
$. Thus we are interested in estimating 
\begin{equation}\label{estimating_eqn}
 \e^{1/2}\left|\frac{\kapi(1-\e^{1/2}+O(\e))^k}{1-\kapi (1-2\e^{1/2}+O(\e))^k} - \frac{\kapi e^{-k\e^{1/2}}}{1-\kapi e^{-2k\e^{1/2}}} \right|.
\end{equation}
We claim that there exists $C<\infty$, independent of $\epsilon$ and $k$ satisfying $k\e^{1/2}\leq 1$, such that the previous line is bounded above by
\begin{equation}\label{giac}
  \frac{Ck^2\e^{3/2}}{(1-\kapi+\kapi 2k\e^{1/2})}+\frac{Ck^3\e^{2}}{(1-\kapi+\kapi 2k\e^{1/2})^2}.
\end{equation}
To prove that (\ref{estimating_eqn}) $\le $(\ref{giac}) we expand the powers of $k$ and the exponentials. For the numerator and denominator of the first term inside of the absolute value in  (\ref{estimating_eqn}) we have $\kapi(1-\e^{1/2}+O(\e))^k= \kapi-\kapi k\e^{1/2} + O(k^2\e)$ and 
\begin{eqnarray*}
\nonumber 1-\kapi (1-2\e^{1/2}+O(\e))^k & = & 1-\kapi+\kapi 2k\e^{1/2} -\kapi 2k^2\e+O(k\e)+O(k^3\e^{3/2})\\ & = &
(1-\kapi+\kapi 2k\e^{1/2})(1 - \frac{\kapi 2k^2\e +O(k\e)+O(k^3\e^{3/2})}{1-\kapi+\kapi 2k \e^{1/2}}).
\end{eqnarray*}
Using $1/(1-z)=1+z+O(z^2)$ for $|z|<1$ we see that 
\begin{eqnarray*}
&& \frac{\kapi(1-\e^{1/2}+O(\e))^k}{1-\kapi (1-2\e^{1/2}+O(\e))^k}\\
\nonumber&=&\frac{\kapi-\kapi k\e^{1/2} + O(k^2\e)}{1-\kapi+\kapi 2k\e^{1/2}}\left(1+\frac{\kapi 2k^2\e +O(k\e)+O(k^3\e^{3/2})}{1-\kapi+\kapi 2k \e^{1/2}}\right)\\
\nonumber& =& 
\frac{\left(\kapi-\kapi k \e^{1/2} + O(k^2\e)\right)\left(1-\kapi+\kapi 2k \e^{1/2} + \kapi 2k^2\e +O(k\e)+O(k^3\e^{3/2})\right)}{(1-\kapi+\kapi 2k\e^{1/2})^2}
\end{eqnarray*}
Likewise, the second term from equation (\ref{estimating_eqn}) can be similarly estimated and shown to be
\begin{equation*}
\frac{\kapi e^{-k\e^{1/2}}}{1-\kapi e^{-2k\e^{1/2}}}= \frac{\left(\kapi-\kapi k\e^{1/2} + O(k^2\e)\right)\left(1-\kapi+\kapi 2k \e^{1/2} + \kapi 2k^2\e +O(k^3\e^{3/2})\right)}{(1-\kapi+\kapi 2k\e^{1/2})^2}.
\end{equation*}
Taking the difference of these two terms, and noting the cancellation of a number of the terms in the numerator, gives (\ref{giac}).

To see that the  error in (\ref{giac}) is bounded after the summation over $k$ in the range $\{-\e^{-1/2},\ldots, \e^{-1/2}\}$, note that  this gives
\begin{eqnarray*}
\e^{1/2}\sum_{-\e^{-1/2}}^{\e^{1/2}}\frac{(2k\e^{1/2})^2}{1-\kapi+\kapi (2k\e^{1/2})}+\frac{(2k\e^{1/2})^3}{(1-\kapi+\kapi (2k\e^{1/2}))^2}\\
\nonumber\sim\int_{-1}^{1}\frac{(2t)^2}{1-\kapi+\kapi 2t} +\frac{(2t)^3}{(1-\kapi+\kapi 2t)^2} dt . 
\end{eqnarray*}
The Riemann sums and integrals
 are easily shown to be convergent for our $\kapi$ which  lies on $\Omega$, which is roughly the unit circle, and avoids the point 1 by distance $\e^{1/2}$.

Having completed this first step, we now must show that the Riemann sum for the integral in equation  (\ref{integral_equation_sigma}) converges to the integral. This uses the following estimate,\begin{equation}\label{riemann_approx_max}
\sum_{k=-\e^{-1/2}}^{\e^{-1/2}} \e^{1/2} \max_{(k-1/2)\e^{1/2}\leq t\leq (k+1/2)\e^{1/2}} \left| \frac{\kapi e^{-k\e^{1/2}}}{1-\kapi e^{-2k\e^{1/2}}} - \frac{\kapi e^{-t}}{1-\kapi e^{-2t}}\right|\le C
\end{equation}
 To show this, observe that for $t\in \e^{1/2}[k-1/2,k+1/2]$ we can expand the second fraction as 
\begin{equation}\label{sec_frac}
 \frac{\kapi e^{-k\e^{1/2}}(1+O(\e^{1/2}))}{1-\kapi e^{-2k\e^{1/2}}(1-2l\e^{1/2}+O(\e))} 
\end{equation}
where $l\in [-1/2,1/2]$.
Factoring the denominator as 
\begin{equation}
 (1-\kapi e^{-2k\e^{1/2}})(1+ \frac{\kapi e^{-2k\e^{1/2}}(2l\e^{1/2}+O(\e))}{1-\kapi e^{-2k\e^{1/2}}})
\end{equation}
we can use $1/(1+z)=1-z+O(z^2)$ (valid since $|1-\kapi e^{-2k\e^{1/2}}|>\e^{1/2}$ and $|l|\leq 1$) to rewrite equation (\ref{sec_frac}) as 
\begin{equation*}
 \frac{\kapi e^{-k\e^{1/2}}(1+O(\e^{1/2}))\left(1- \frac{\kapi e^{-2k\e^{1/2}}(2l\e^{1/2}+O(\e))}{1-\kapi e^{-2k\e^{1/2}}} \right)}{1-\kapi e^{-2k\e^{1/2}}}.
\end{equation*}
Canceling terms in this expression with the terms in the first part of equation (\ref{riemann_approx_max}) we find that we are left with terms bounded by
\begin{equation*}
 \frac{O(\e^{1/2})}{1-\kapi e^{-2k\e^{1/2}}} + \frac{O(\e^{1/2})}{(1-\kapi e^{-2k\e^{1/2}})^2}. 
\end{equation*}
These must be summed over $k$ and multiplied by the prefactor $\e^{1/2}$. Summing over $k$ we find that these are approximated by the integrals
\begin{equation*}
\e^{1/2}\int_{-1}^{1}\frac{1}{1-\kapi +\kapi 2t}dt,\qquad \e^{1/2}\int_{-1}^{1}\frac{1}{(1-\kapi +\kapi 2t)^2}dt
\end{equation*}
where $|1-\kapi|>\e^{1/2}$. The first integral has a logarithmic singularity at $t=0$ which gives $|\log(1-\kapi)|$ which is clearly bounded by a constant time $|\log\e^{1/2}|$ for $\kapi\in \Omega$. When multiplied by $\e^{1/2}$ this term is clearly bounded in $\e$. Likewise, the second integral diverges like $|1/(1-\kapi)|$ which is bounded by $\e^{-1/2}$ and again multiplying by the $\e^{1/2}$ factor in front shows that this term is bounded. This proves the Riemann sum approximation.

This estimate completes the proof of the desired bound when $z=1+\e^{1/2}$. The general case of $|z|=1+\e^{1/2}$ is proved along a similar line by letting $z= 1+\rho\e^{1/2}$ for $\rho$ on a suitably defined contour such that $z$ lies on the circle of radius $1+\e^{1/2}$. The prefactor is no longer $\e^{1/2}$ but rather now $\rho\e^{1/2}$ and all estimates must take into account $\rho$. However, going through this carefully one finds that the same sort of estimates as above hold and hence the theorem is proved in general.
\end{proof}
This lemma completes the proof of Proposition \ref{originally_cut_mu_lemma} 
\end{proof}

\begin{proof}[Proof of Proposition \ref{reinclude_mu_lemma}]
We will focus on the growth of the absolute value of the determinant. Recall that if $K$ is trace class then $|\det(I+K)|\leq e^{||K||_1}$. Furthermore, if $K$ can be factored into the product $K=AB$ where $A$ and $B$ are Hilbert-Schmidt, then $||K||_1\leq ||A||_2||B||_2$. We will demonstrate such a factorization and follow this approach to control the size of the determinant.

Define $A:L^2(\tilde\Gamma_{\zeta})\rightarrow L^2(\tilde\Gamma_{\eta})$ and $B:L^2(\tilde\Gamma_{\eta})\rightarrow L^2(\tilde\Gamma_{\zeta})$ via the kernels
\begin{eqnarray*}  && 
A(\tilde\zeta,\tilde\eta) = \frac{e^{-|\im(\tilde\zeta)|}}{\tilde\zeta-\tilde\eta}, \\  B(\tilde\eta,\tilde\zeta)& =& \nonumber e^{|\im(\tilde\zeta)|}e^{-\frac{T}{3}(\tilde\zeta^3-\tilde\eta^3)+a\tilde z} 2^{1/3}\frac{\pi (-\tilde\mu)^{\tilde z}}{\sin(\pi \tilde z)},
\end{eqnarray*}
where we let $\tilde z = 2^{1/3}(\tilde\zeta-\tilde\eta)$. Notice that we have put the factor $e^{-|\im(\tilde\zeta)|}$ into the $A$ kernel and removed it from the $B$ contour. The point of this is to help control the $A$ kernel, without significantly impacting the norm of the $B$ kernel.

Consider first $||A||_2$ which is given by
\begin{equation*}
 ||A||_2^2 = \int_{\tilde\Gamma_{\zeta}}\int_{\tilde\Gamma_{\eta}} d\tilde\zeta d\tilde\eta \frac{e^{-2|\im(\tilde\zeta)|}}{|\tilde\zeta-\tilde\eta|^2}.
\end{equation*}
The integral in $\tilde\eta$ converges and is independent of $\tilde\zeta$ (recall that $|\tilde\zeta-\tilde\eta|$ is bounded away from zero) while the remaining integral in $\tilde\zeta$ is clearly convergent (it is exponentially small as $\tilde\zeta$ goes away from zero along $\tilde\Gamma_{\zeta}$. Thus $||A||_{2}<c$ with no dependence on $\tilde\mu$ at all.

We now turn to computing $||B||_2$. First consider the cubic term $\tilde\zeta^3$. The contour $\tilde\Gamma_{\zeta}$ is parametrized by $-\frac{c_3}{2} + c_3 i r$ for $r\in (-\infty,\infty)$, that is, a straight up and down line just to the left of the $y$ axis. By plugging this parametrization in and cubing it, we see that, $\re(\tilde\zeta^3)$ behaves like $|\im(\tilde\zeta)|^2$. This is  crucial; even though our contours are parallel and only differ horizontally by a small distance, their relative locations lead to very different behavior for the real part of their cube. For $\tilde\eta$ on the right of the $y$ axis, the real part still grows quadratically, however with a negative sign. This is important because this implies that $|e^{-\frac{T}{3}(\tilde\zeta^3-\tilde\eta^3)}|$ behaves like the exponential of the real part of the argument, which is to say, like
\begin{equation*}
e^{-\frac{T}{3}(|\im(\tilde\zeta)|^2+|\im(\tilde\eta)|^2)}.
\end{equation*}
Turning to the $\tilde\mu$ term, observe that
\begin{eqnarray*}
 |(-\tilde\mu)^{-\tilde z}| &=& e^{\re\left[(\log|\tilde\mu|+i\arg(-\tilde\mu))(-\re(\tilde z)-i\im(\tilde z))\right]}\\
&=& e^{-\log|\tilde\mu| \re(\tilde z) +\arg(-\tilde\mu)\im(\tilde z)}.
\end{eqnarray*}
The $\csc$ term behaves, for large $\im(\tilde z)$ like $e^{-\pi |\im(\tilde z)|}$, and putting all these estimates together gives that for $\tilde\zeta$ and $\tilde\eta$ far from the origin on their respective contours, $|B(\tilde\eta,\tilde\zeta)|$ behaves like the following product of exponentials:
\begin{equation*}
e^{|\im(\tilde\zeta)|} e^{-\frac{T}{3}(|\im(\tilde\zeta)|^2+|\im(\tilde\eta)|^2)} e^{-\log|\tilde\mu|\re(\tilde z) + \arg(-\tilde\mu)\im(\tilde z) - \pi |\im(\tilde z)|}.
\end{equation*}
Now observe that, due to the location of the contours, $-\re(\tilde z)$ is constant and less than one (in fact equal to $1/2$ by our choice of contours). Therefore we may factor out the term $e^{-\log|\tilde\mu|\re(\tilde z)} = |\tilde\mu|^\alpha$ for $\alpha=1/2<1$.

The Hilbert-Schmidt norm of what remains is clearly finite and independent of $\tilde\mu$.  This is just due to the strong exponential decay from the quadratic terms  $-\im(\zeta)^2$ and $-\im(\eta)^2$ in the exponential. Therefore we find that $||B||_2\leq   c|\tilde\mu|^\alpha$ for some constant $c$.

This shows that $||K_a^{\csc}||_1$ behaves like $|\tilde\mu|^{\alpha}$ for $\alpha<1$. Using the bound that $|\det(I+K_a^{\csc})|\leq e^{||K_a^{\csc}||}$ we find that $|\det(I+K_a^{\csc})|\leq e^{|\tilde\mu|^{\alpha}}$. Comparing this to $e^{-\tilde\mu}$ we have our desired result. Note that the proof also shows that $K_a^{\csc}$ is trace class.
\end{proof}

\subsubsection{Proofs from Section \ref{J_to_K_sec}}\label{JK_proofs_sec}
\begin{proof}[Proof of Lemma \ref{kill_gamma_2_lemma}]
Before starting this proof, we remark that the choice (\ref{kappa_eqn}) of  $\kappa(\theta)$ was specifically to make the calculations in this proof more tractable. Certainly other choices of contours would do, however, the estimates could be harder. As it is, we used Mathematica as a preliminary tool to assist us in computing the series expansions and simplifying the resulting expressions.

Define $g(\eta)=\Psi(\eta)+n_0\log(\eta)$. We wish to control the real part of this function for both the $\eta$ contour and the $\zeta$ contour. Combining these estimates proves the lemma.

We may expand $g(\eta)$ into powers of $\e$ with the expression for $\eta$ in terms of $\kappa(\theta)$ from (\ref{kappa_eqn})  with $\alpha=-1/2$ (similarly $1/2$ for the $\zeta$ expansion). Doing this we see that the $n_0\log(\eta)$ term plays an important role in canceling the $\log(\e)$ term in the $\Psi$ and we are left with
\begin{equation}\label{psi_real_eqn}
\re(g(\eta)) =\e^{-1/2}\left( -{\scriptstyle\frac14}  \e^{-1/2}T\alpha \cot^2(\tfrac{\theta}{2}) + {\scriptstyle\frac18}T\left[\alpha+\kappa(\theta)\right]^2 \cot^2(\tfrac{\theta}{2}) \right)+ O(1).
\end{equation}
We must show that everything in the parenthesis above is bounded below by a positive constant times $|\eta-\xi|$ for all $\eta$ which start at roughly angle $l\e^{1/2}$. Equivalently we can show that the terms in the parenthesis behave bounded below by a positive constant times $|\pi-\theta|$, where $\theta$ is the polar angle of $\eta$.

The second part of this expression is clearly positive regardless of the value of $\alpha$. What this suggests is that we must show (in order to also be able to deal with $\alpha=1/2$ corresponding to the $\zeta$ estimate) that for $\eta$ starting at angle $l\e^{1/2}$ and going to zero, the first term dominates (if $l$ is large enough).

To see this we first note that since $\alpha=-1/2$, the first term is clearly positive and dominates for $\theta$ bounded away from $\pi$. This proves the inequality for any range of $\eta$ with $\theta$ bounded from $\pi$. Now observe the following asymptotic behavior of the following three functions of $\theta$ as $\theta$ goes to $\pi$:
\begin{eqnarray*}
\cot^2(\tfrac{\theta}{2}) &\approx& {\scriptstyle\frac{1}{4}}(\pi-\theta)^2\\
\tan^2(\tfrac{\theta}{2})&\approx& {4}(\pi-\theta)^{-2}\\
\log^2\left({\scriptstyle\frac{2}{1-\cos(\theta)}}\right) &\approx&{\scriptstyle \frac{1}{16} }(\pi-\theta)^4.
\end{eqnarray*}
The behaviour expressed above is dominant for $\theta$ close to $\pi$. We may expand the square in the second term in (\ref{psi_real_eqn}) and use the above expressions to find that for some suitable constant $C>0$ (which depends on $X$ and $T$ only), we have
\begin{equation*}
\re(g(\eta)) = \e^{-1/2}\left(-{\scriptstyle\frac1{16} }\e^{-1/2} T\alpha (\pi-\theta)^2 + C(\pi-\theta)^2\right) + O(1).
\end{equation*}
Now use the fact that $\pi-\theta\geq l\e^{1/2}$ to give
\begin{equation}\label{eqn_g}
\re(g(\eta)) = \e^{-1/2}\left(-{\scriptstyle\frac1{16} }lT\alpha (\pi-\theta) +{\scriptstyle \frac{X^2}{8T}}(\pi-\theta)^2\right) + O(1).
\end{equation}
Since $\pi-\theta$ is bounded by $\pi$, we see that taking $l$ large enough, the first term always dominates for the entire range of $\theta\in [0,\pi-l\e^{1/2}]$. Therefore since $\alpha=-1/2$, we find that we have have the desired lower bound in $\e^{-1/2}$ and $|\pi-\theta|$.

Turn now to the bound for $\re(g(\zeta))$. In the case of the $\eta$ contour we took $\alpha=-1/2$, however since we now are dealing with the $\zeta$ contour we must take $\alpha=1/2$. This change in the sign of $\alpha$ and the argument above shows that equation (\ref{eqn_g}) implies the desired bound for $\re(g(\zeta))$, for $l$ large enough.
\end{proof}

Before proving Lemma \ref{mu_f_polynomial_bound_lemma} we record the following key lemma on the meromorphic extension of $\mu f(\mu,z)$. Recall that $\mu f(\mu,z)$ has poles at $\mu= \tau^j$, $j\in \Z$.

\begin{lemma}\label{f_functional_eqn_lemma}
For $\mu\neq \tau^j$, $j\in \Z$,  $\mu f(\mu,z)$ is analytic in $z$ for $1<|z|<\tau^{-1}$ and extends analytically to all $z\neq 0$ or $\tau^k$ for $k\in \Z$. This extension is given by first writing $\mu f(\mu,z) = g_+(z)+g_-(z)$ where
\begin{equation*}
 g_+(z)=\sum_{k=0}^{\infty} \frac{\mu \tau^kz^k}{1-\tau^k\mu}\qquad\qquad g_-(z)=\sum_{k=1}^{\infty} \frac{\mu \tau^{-k}z^{-k}}{1-\tau^{-k}\mu},
\end{equation*}
and where $g_+$ is now defined for $|z|<\tau^{-1}$ and $g_-$ is defined for $|z|>1$. These functions satisfy the following two functional equations which imply the analytic continuation:
\begin{equation*}
 g_+(z)=\frac{\mu}{1-\tau z}+\mu g_+(\tau z),\qquad g_-(z)=\frac{1}{1-z}+\frac{1}{\mu}g_-(z/\tau).
\end{equation*}
By repeating this functional equation we find that
\begin{equation*}
 g_+(z)=\sum_{k=1}^{N}\frac{\mu^k}{1-\tau^k z}+\mu^N g_+(\tau^N z),\qquad g_-(z)=\sum_{k=0}^{N-1}\frac{\mu^{-k}}{1-\tau^{-k}z}+\mu^{-N}g_-(z\tau^{-N}).
\end{equation*}
\end{lemma}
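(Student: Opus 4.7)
The plan proceeds in four short steps. First, I would establish the claimed convergence of each series. For $g_+$ the denominators $1-\tau^k\mu$ approach $1$ as $k\to\infty$, so the $k$-th summand is eventually comparable to $\mu\tau^k z^k$, giving radius of convergence $\tau^{-1}$ in $z$; for $g_-$ the rewriting $\mu\tau^{-k}/(1-\tau^{-k}\mu)=\mu/(\tau^k-\mu)$ shows the terms approach $-z^{-k}$, giving convergence for $|z|>1$. The excluded values of $\mu$ are exactly those where some denominator vanishes, namely $\mu=\tau^{-k}$, $k\ge 0$ for $g_+$ and $\mu=\tau^{k}$, $k\ge 1$ for $g_-$, which together cover precisely $\mu\in\{\tau^j:j\in\Z\}$. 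On the common annulus $1<|z|<\tau^{-1}$ the identity $\mu f(\mu,z)=g_+(z)+g_-(z)$ is immediate from splitting the doubly infinite sum for $\mu f(\mu,z)$ at $k=0$.

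Second, I would derive the two functional equations by direct term-by-term manipulation. The key observation is a telescoping cancellation: subtracting $\mu g_+(\tau z)$ from $g_+(z)$ produces summands with numerator factor $(1-\mu\tau^k)$ that cancels the denominator, collapsing the series to $\sum_{k\ge 0}\mu\tau^k z^k=\mu/(1-\tau z)$. The computation for $g_-$ is analogous, producing $-\sum_{k\ge 1}z^{-k}=1/(1-z)$. Iterating each identity by a short induction on $N$ yields the displayed $N$-step formulas with remainders $\mu^N g_+(\tau^N z)$ and $\mu^{-N}g_-(z\tau^{-N})$.

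Finally, these iterated formulas supply the analytic continuation. Since $|\tau|<1$, the remainder $\mu^N g_+(\tau^N z)$ is analytic whenever $|\tau^N z|<\tau^{-1}$, i.e.\ for $|z|<\tau^{-N-1}$, and this bound grows without limit as $N\nearrow\infty$. Similarly $\mu^{-N}g_-(z\tau^{-N})$ is analytic for $|z|>\tau^N$, covering arbitrarily small radii. Reading off the poles from the explicit summands $\mu^k/(1-\tau^k z)$ and $\mu^{-k}/(1-\tau^{-k}z)$ shows that $g_+$ continues meromorphically to $\C\setminus\{0\}$ with simple poles exactly at $z=\tau^{-k}$, $k\ge 1$, and $g_-$ with simple poles exactly at $z=\tau^{k}$, $k\ge 0$; summing these gives the asserted meromorphic extension of $\mu f(\mu,z)$ with pole set $\{\tau^k:k\in\Z\}\cup\{0\}$. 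I expect no serious obstacle in this argument. The only point requiring real care is the bookkeeping that the continuations produced for different $N$ agree on their common domain (automatic from the uniqueness of analytic continuation once one checks that the $N$-step and $(N{+}1)$-step formulas differ only by the functional equation itself), and that the pole structure read off from the finite sums genuinely accounts for all singularities.
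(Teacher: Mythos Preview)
Your proposal is correct and follows essentially the same approach as the paper. The paper's proof is in fact even briefer than yours: it verifies only the $g_+$ functional equation by the same telescoping computation you describe (writing $\tfrac{1}{1-\mu\tau^k}=1+\tfrac{\mu\tau^k}{1-\mu\tau^k}$ and splitting the sum), remarks that $g_-$ is similar, and leaves the convergence domains, the iterated formulas, and the analytic continuation implicit in the lemma statement; your write-up fills in these routine details explicitly.
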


\begin{proof}
We prove the $g_+$ functional equation, since the $g_-$ one follows similarly. Observe that
\begin{eqnarray*}
\nonumber  g_+(z) & =&  \sum_{k=0}^{\infty} \mu(\tau z)^k ( 1+\frac{1}{1-\mu\tau^k} -1) \\&= &\frac{\mu}{1-\tau z}+\sum_{k=0}^{\infty} \frac{\mu^2\tau^k}{1-\mu\tau^k} (\tau z)^k = \frac{\mu}{1-\tau z} + \mu g_+(\tau z),
\end{eqnarray*}
which is the desired relation.
\end{proof}

\begin{proof}[Proof of Lemma \ref{mu_f_polynomial_bound_lemma}]
Recall that $\tilde\mu$ lies on a compact subset of $\mathcal{\tilde C}$ and hence that $|1-\tilde\mu \tau^k|$ stays bounded from below as $k$ varies. Also observe that due to our choices of contours for $\eta'$ and $\zeta$, $|\zeta/\eta'|$ stays bounded in $(1,\tau^{-1})$. Write $z=\zeta/\eta'$. Split  $\tilde\mu f(\tilde\mu ,z)$ as $g_+(z)+g_-(z)$ (see Lemma \ref{f_functional_eqn_lemma} above), we see that $g_+(z)$ is bounded by a constant time $1/(1-\tau z)$ and likewise $g_-(z)$ is bounded by a constant time $1/(1-z)$. Writing this in terms of $\zeta$ and $\eta'$ again we have our desired upperbound.
\end{proof}

\begin{proof}[Proof of Lemma \ref{compact_eta_zeta_taylor_lemma}]
By the discussion preceding the statement of this lemma it suffices to consider the expansion without $n_0\log(\zeta/\eta')$ and without the $\log\e$ term in $m$ since, as we will see, they exactly cancel out. Therefore, for the sake of this proof we modify the definition of $m$ given in equation (\ref{m_eqn}) to be
\begin{equation*}
 m=\frac{1}{2}\left[\e^{-1/2}(-a'+\frac{X^2}{2T})+\frac{1}{2}t+x\right].
\end{equation*}
where $a'=a+\log2$.

The argument now amounts to a Taylor series expansion with control over the remainder term.  Let us start by recording the first four derivatives of $\Lambda(\zeta)$:
\begin{eqnarray*}
 \Lambda(\zeta) &=& -x\log(1-\zeta)+\frac{t\zeta}{1-\zeta} + m\log \zeta\\
 \Lambda'(\zeta) &=& \frac{x}{1-\zeta}+\frac{t}{(1-\zeta)^2}+\frac{m}{\zeta}\\
 \Lambda''(\zeta) &=& \frac{x}{(1-\zeta)^2}+\frac{2t}{(1-\zeta)^3}-\frac{m}{\zeta^2}\\
 \Lambda'''(\zeta) &=& \frac{2x}{(1-\zeta)^3}+\frac{6t}{(1-\zeta)^4}+\frac{2m}{\zeta^3}\\
 \Lambda''''(\zeta) &=& \frac{6x}{(1-\zeta)^4}+\frac{24 t}{(1-\zeta)^5} -\frac{6m}{\zeta^4}.
\end{eqnarray*}
We Taylor expand $\Psi(\zeta)=\Lambda(\zeta)-\Lambda(\xi)$ around $\xi$ and then expand in $\e$ as $\e$ goes to zero and find that
\begin{eqnarray*}
 \Lambda'(\xi) &=& \tfrac{a'}{2}\e^{-1/2} + O(1)\\
 \Lambda''(\xi) &=& O(\e^{-1/2})\\
 \Lambda'''(\xi) &=& \tfrac{-T}{8} \e^{-3/2} +O(\e^{-1})\\
 \Lambda''''(\xi) &=& O(\e^{-3/2}).
\end{eqnarray*}
A Taylor series remainder estimate shows then that
\begin{eqnarray*}
\nonumber&& \left|\Psi(\zeta) - \left[\Lambda'(\xi)(\zeta-\xi)+{\scriptstyle\frac1{2!}}\Lambda''(\xi)(\zeta-\xi)^2 + {\scriptstyle\frac1{3!}}\Lambda'''(\xi)(\zeta-\xi)^3\right]\right|\\&&\quad \leq \sup_{t\in B(\xi,|\zeta-\xi|)} {\scriptstyle\frac1{4!}}|\Lambda''''(t)| |\zeta-\xi|^4,
\end{eqnarray*}
where $B(\xi,|\zeta-\xi|)$ denotes the ball around $\xi$ of radius $|\zeta-\xi|$. Now considering the scaling we have that $\zeta-\xi = c_3^{-1}\e^{1/2}\tilde\zeta$ so that when we plug this in along with the estimates on derivatives of $\Lambda$ at $\xi$, we find that the equation above becomes
\begin{equation*}
 \left|\Psi(\zeta) - \left[2^{1/3}a'\tilde\zeta -\tfrac{T}{3}\tilde\zeta^3 \right]\right|=O(\e^{1/2}).
\end{equation*}
From this we see that if we included the $\log\e$ term in with $m$ it would, as claimed, exactly cancel the $n_0\log(\zeta/\eta')$ term. The above estimate therefore proves the desired first claimed result.

The second result follows readily from
 $|e^z-e^w|\leq |z-w|\max\{|e^z|,|e^w|\}$ and the first result, as well as the boundedness of the limiting integrand.
\end{proof}

\begin{proof}[Proof of Lemma \ref{muf_compact_sets_csc_limit_lemma}]

Expanding in $\e$ we have that
\begin{equation*}
 z=\frac{\xi+c_{3}^{-1}\e^{1/2}\tilde\zeta}{\xi+c_{3}^{-1}\e^{1/2}\tilde\eta'} = 1-\e^{1/2}\tilde z + O(\e)
\end{equation*}
where the error is uniform for our range of $\tilde\eta'$ and $\tilde\zeta$ and where
\begin{equation*}
 \tilde z = c_{3}^{-1}(\tilde\zeta-\tilde\eta').
\end{equation*}
We now appeal to the functional equation for $f$,  explained in Lemma \ref{f_functional_eqn_lemma}. Therefore we wish to study $\e^{1/2}g_{+}(z)$ and $\e^{1/2}g_{-}(z)$ as $\e$ goes to 0 and show that they converge uniformly to suitable integrals. First consider the $g_{+}$ case.
Let us, for the moment, assume that $|\tilde\mu|<1$. We know that $|\tau z|<1$, thus for any $N\geq 0$, we have
\begin{equation*}
\e^{1/2} g_{+}(z) = \e^{1/2}\sum_{k=1}^{N} \frac{\tilde\mu^k}{1-\tau^k z} + \e^{1/2}\tilde\mu^N g_{+}(\tau^N z).
\end{equation*}
Since, by assumption, $|\tilde\mu|<1$, the first sum is the partial sum of a convergent series. Each term may be expanded in $\e$. Noting that
\begin{equation*}
 1-\tau^k z = 1-(1-2\e^{1/2}+O(\e))(1-\e^{1/2}\tilde z +O(\e)) = (2k+\tilde z)\e^{1/2} +kO(\e),
\end{equation*}
we find that
\begin{equation*}
\e^{1/2}\frac{\tilde\mu^k}{1-\tau^k z} = \frac{\tilde\mu^k}{2k+\tilde z} + k O(\e^{1/2}).
\end{equation*}
The last part of the expression for $g_{+}$ is bounded in $\e$, thus we end up with the following asymptotics
\begin{equation*}
\e^{1/2} g_{+}(z) = \sum_{k=1}^{N} \frac{\tilde\mu^k}{2k+\tilde z} + N^2 O(\e^{1/2}) + \tilde\mu^N O(1).
\end{equation*}
It is possible to choose $N(\e)$ which goes to infinity, such that $N^2 O(\e^{1/2}) = o(1)$. Then for any fixed compact set contained in $\C\setminus \{-2,-4,-6,\ldots\}$  we have uniform convergence of this sequence of analytic functions to some function, which is necessarily analytic and equals
\begin{equation*}
\sum_{k=1}^{\infty} \frac{\tilde\mu^k}{2k+\tilde z}.
\end{equation*}
This expansion is valid for $|\tilde\mu|<1$ and for all $\tilde z\in \C\setminus\{-2,-4,-6,\ldots\}$.

Likewise for $\e^{1/2}g_{-}(z)$, for $|\tilde\mu|>1$ and for $\tilde z\in \C\setminus\{-2,-4,-6,\ldots\}$, we have uniform convergence to the analytic function
\begin{equation*}
\sum_{k=-\infty}^{0} \frac{\tilde\mu^k}{2k+\tilde z}.
\end{equation*}

We now introduce the Hurwitz Lerch transcendental function and relate some basic properties of it which can be found in \cite{SC:2001s}.
\begin{equation*}
\Phi(a,s,w) = \sum_{k=0}^{\infty} \frac{a^k}{(w+k)^s}
\end{equation*}
for $w>0$ real and either $|a|<1$ and $s\in \C$ or $|a|=1$ and $\re(s)>1$. For $\re(s)>0$ it is possible to analytically extend this function using the integral formula
\begin{equation*}
\Phi(a,s,w) = \frac{1}{\Gamma(s)} \int_0^{\infty} \frac{e^{-(w-1)t}}{e^t-a} t^{s-1} dt,
\end{equation*}
where additionally $a\in \C\setminus[1,\infty)$ and $\re(w)>0$.

Observe that we can express our series in terms of this function as
\begin{eqnarray*}
\sum_{k=1}^{\infty} \frac{\tilde\mu^k}{2k+\tilde z} = \frac{1}{2}\tilde \mu  \Phi(\tilde \mu , 1, 1+\tilde z/2),\\
\sum_{k=-\infty}^{0} \frac{\tilde\mu^k}{2k-\tilde z} = -\frac{1}{2}\Phi(\tilde \mu^{-1} , 1, -\tilde z/2).
\end{eqnarray*}
These two functions can be analytically continued using the integral formula onto the same region where $\re(1+\tilde z/2)>0$ and $\re(-\tilde z/2)>0$ -- i.e. where $\re(\tilde z/2)\in (-1,0)$. Additionally the analytic continuation is valid for all $\tilde\mu$ not along $\R^+$.

We wish now to use Vitali's convergence theorem to conclude that $\tilde\mu f(\tilde\mu, z)$  converges uniformly for general $\tilde\mu$ to the sum of these two analytic continuations. In order to do that we need a priori boundedness of $\e^{1/2}g_+$ and $\e^{1/2}g_-$ for compact regions of $\tilde\mu$ away from $\R^+$. This, however, can be shown directly as follows. By assumption on $\tilde\mu$ we have that $|1-\tau^k \tilde\mu|>c^{-1}$ for some positive constant $c$. Consider $\e^{1/2}g_+$ first.
\begin{equation*}
 |\e^{1/2}g_+(z)| \leq \e^{1/2}\tilde\mu \sum_{k=0}^{\infty} \frac{|\tau z|^k}{|1-\tau^k \tilde\mu|} \leq c\e^{1/2} \frac{1}{1-|\tau z|}.
\end{equation*}
We know that $|\tau z|$ is bounded to order $\e^{1/2}$ away from $1$ and therefore this show that $|\e^{1/2}g_+(z)|$ has an upperbound uniform in $\tilde\mu$. Likewise we can do a similar computation for $\e^{1/2}g_-(z)$ and find the same result, this time using that $|z|$ is bounded to order $\e^{1/2}$ away from $1$.

As a result of this apriori boundedness, uniform in $\tilde\mu$, we have that for compact sets of $\tilde\mu$ away from $\R^+$, uniformly in $\e$, $\e^{1/2}g_+$ and $\e^{1/2}g_-$ are uniformly bounded as $\e$ goes to zero. Therefore Vitali's convergence theorem implies that they converge uniformly to their analytic continuation.

Now observe that
\begin{equation*}
\frac{1}{2}\tilde \mu \Phi(\tilde \mu ,1, 1+\tilde z/2) = \frac{1}{2}\int_0^{\infty} \frac{\tilde\mu e^{-\tilde z t/2}}{e^t-\tilde \mu}dt,
\end{equation*}
and 
\begin{equation*}
-\frac{1}{2}\Phi(\tilde\mu^{-1}, 1,-\tilde z/2)= -\frac{1}{2}\int_0^{\infty} \frac{e^{-(-\tilde z/2 -1)t}}{e^{t} -1/\tilde{\mu}}dt= \frac{1}{2}\int_{-\infty}^{0}  \frac{\tilde\mu e^{-\tilde z t/2}}{e^{t}-\tilde \mu}dt.
\end{equation*}
Therefore, by a simple change of variables in the second integral, we can combine these as a single integral
\begin{equation*}
\frac{1}{2}\int_{-\infty}^{\infty} \frac{\tilde\mu e^{-\tilde z t/2}}{e^t-\tilde \mu}dt  = \frac{1}{2}\int_0^{\infty} \frac{\tilde\mu s^{-\tilde z/2}}{s-\tilde\mu} \frac{ds}{s}.
\end{equation*}
The first of the above equations proves the lemma, and for an alternative expression we use the second of the integrals (which followed from the change of variables $e^t=s$) and thus, on the region $\re(\tilde z/2)\in (-1,0)$ this integral converges and equals
\begin{equation*}
{\scriptstyle\frac{1}{2}}\pi (-\tilde\mu)^{-\tilde z} \csc(\pi \tilde z/2).
\end{equation*}
This function is, in fact, analytic for $\tilde\mu\in \C\setminus[0,\infty)$ and for all $\tilde z\in \C \setminus 2\Z$. Therefore it is the analytic continuation of our asymptotic series.
\end{proof}

\section{Weakly asymmetric limit of the corner growth model}\label{BG}

Recall the definitions in Section \ref{asepscalingth} of WASEP, its height function  (\ref{defofheight}), and, for $X\in  \e\mathbb Z$ 
and $T\ge  0$,
\begin{equation}\label{scaledhgt}
Z_\eps(T,X) =\tfrac{1}{2} \eps^{-1/2}\exp\left \{ - \lambda_\eps h_{\eps^{1/2}}(\eps^{-2}T,[\eps^{-1}X]) + \nu_\eps \eps^{-2}T\right\}
\end{equation}
where, for  $\epsilon\in(0,1/4)$, let $p ={\scriptstyle\frac12} - {\scriptstyle\frac12} \epsilon^{1/2}$, $q ={\scriptstyle\frac12} + {\scriptstyle\frac12}\epsilon^{1/2}$ and $\nu_\e$ and $\lambda_\e$ are as in (\ref{nu}) and (\ref{lambda}), and the closest integer $[x]$ is given by 
\begin{equation*}
[x] = \lfloor x+\tfrac12\, \rfloor.
\end{equation*}   

 Let us describe in simple terms the dynamics in $T$ of $Z_\eps(T,X)$
defined in (\ref{scaledhgt}).   It grows continuously exponentially at rate $\eps^{-2}\nu_\eps $ and jumps at rates \begin{equation*}
r_-(X)=\epsilon^{-2}q(1-\eta(x))\eta(x+1)= \frac14\eps^{-2}q(1-\hat\eta(x))(1+\hat\eta(x+1))  \end{equation*}
to $e^{-2\lambda_\epsilon}Z_\eps$ and  \begin{equation*}r_+(X)=\epsilon^{-2}p\eta(x)(1-\eta(x+1))=\frac14\eps^{-2}p(1+\hat\eta(x))(1-\hat\eta(x+1))   \end{equation*}
to $e^{2\lambda_\epsilon}Z_\eps$, independently at each site $X=\e x\in \eps\mathbb Z$ (recall that $\hat{\eta}=2\eta-1$). We write this as follows,
\begin{eqnarray*}
dZ_\eps(X) & = &\left\{ \eps^{-2}\nu_\eps  + ( e^{-2\lambda_\eps }-1) r_-(X)
+ ( e^{2\lambda_\eps }-1)r_+(X) \right\}  Z_\eps(X) dT\nonumber \\
&&  + ( e^{-2\lambda_\eps }-1) Z_\eps(X) dM_-(X)
+ ( e^{2\lambda_\eps }-1) Z_\eps(X) dM_+(X)
\end{eqnarray*}
where $dM_\pm(X) = dP_\pm(X) -r_\pm (X) dT$ where
$P_-(X), P_+(X)$, $X\in \eps\mathbb{Z}$ are  independent Poisson processes running at 
rates $r_-(X), r_+(X)$, and $d$ always refers to change in macroscopic time $T$.  Let \begin{equation*}\mathcal{D}_\eps = 2\sqrt{pq} = 1-\frac12 \eps + O (\eps^2)\end{equation*} and $\Delta_\eps$ be the $\eps\mathbb Z$ Laplacian, $\Delta f(x) = \eps^{-2}(f(x+\eps) -2f(x) + f(x-\eps))$.  We also have
\begin{equation*}
 {\scriptstyle{\frac12}} \mathcal{D}_\eps \Delta_\eps Z_\eps (X)   =   {\scriptstyle{\frac12}} \eps^{-2} \mathcal{D}_\eps
 ( e^{-\lambda_\eps \hat\eta(x+1)} -2 + e^{\lambda_\eps \hat\eta(x)} ) Z_\eps(X).
 \end{equation*}
 The parameters have been carefully chosen so that 
  \begin{equation*}
 {\scriptstyle{\frac12}} \eps^{-2} \mathcal{D}_\eps
 ( e^{-\lambda_\eps \hat\eta(x+1)} -2 + e^{\lambda_\eps \hat\eta(x)} )= \eps^{-2}\nu_\eps  + ( e^{-2\lambda_\eps }-1) r_-(X)
+ ( e^{2\lambda_\eps }-1)r_+(X).
\end{equation*}
Hence \cite{G},\cite{BG}, 
\begin{equation}\label{sde7}
dZ_\eps = {\scriptstyle{\frac12}} \mathcal{D}_\eps \Delta_\eps Z_\eps +  Z_\eps dM_\eps \end{equation}
where  \begin{equation*}
dM_\eps(X)= ( e^{-2\lambda_\eps }-1)  dM_-(X)
+ ( e^{2\lambda_\eps }-1)  dM_+(X)
\end{equation*} are martingales in $T$ with
\begin{equation*}
d\langle M_\eps(X),M_\eps(Y)\rangle= \eps^{-1}\mathbf{1}(X=Y) b_\eps(\tau_{-[\eps^{-1}X]}\eta) dT.
\end{equation*}
Here $\tau_x\eta(y) = \eta(y-x)$ and 
\begin{equation*}
b_\eps(\eta)
=1- \hat\eta(1) \hat\eta(0)+ \hat{b}_\eps(\eta)
\end{equation*}
where \begin{eqnarray}\nonumber \hat{b}_\eps(\eta) & = & \eps^{-1}\{ [ p( (e^{-2\lambda_\eps}-1)^2-4\eps ) + q( (e^{2\lambda_\eps}-1)^2-4\eps )] 
\\ && + [q(e^{-2\lambda_\eps}-1)^2-p(e^{2\lambda_\eps}-1)^2 ](\hat\eta(1)- \hat\eta(0)) \\&& \nonumber - [q(e^{-2\lambda_\eps}-1)^2+p(e^{2\lambda_\eps}-1)^2 -\eps]\hat\eta(1) \hat\eta(0)\}
.\end{eqnarray}
Clearly $b_\eps,\hat{b}_\eps\ge 0$. It is easy to check that there is a $C<\infty$ such that \begin{equation*}\hat{b}_\eps\le C\eps^{1/2}\end{equation*}
and, for sufficiently small $\eps>0$,
\begin{equation}\label{bdona} 
b_\eps\le 3.
\end{equation}
Note that  (\ref{sde7}) is equivalent to the 
integral equation,
\begin{eqnarray}\label{sde3}
{Z}_\eps(T,X) & = &  \eps\sum_{Y\in \eps \mathbb{Z}} p_\eps(T,X-Y) Z_\eps(0,Y) 
\\ && \nonumber  + \int_0^T \eps\sum_{Y\in \eps \mathbb{Z}} p_\eps(T-S,X-Y)  Z_\eps(S,Y)d{M}_\eps(S,Y) 
\end{eqnarray}
where $p_\eps(T,X) $ are the  (normalized) transition probabilities for the continuous time random 
walk with generator $ {\scriptstyle{\frac12}} \mathcal{D}_\eps \Delta_\eps $.  The normalization is
multiplication of the actual transition probabilities by $\eps^{-1}$ so that 
\begin{equation*}
 p_\eps(T,X) \to p(T,X) = \frac{ e^{-X^2/ 2T} }{\sqrt{2\pi T}}.
\end{equation*}
We need some apriori bounds.
\begin{lemma}\label{apriori} For $0< T\le T_0$, and for each $q=1,2,\ldots$, there is a $C_q= C_q(T_0)<\infty$ such that 
\begin{enumerate}[i.]
\item $E [ Z_\eps^2(T,X) ] \le C_2p_\eps^2(T,X)$;
\item $ E\left[ \left( Z_\eps(T,X)-\eps\sum_{Y\in \eps \mathbb{Z}} p_\eps(T,X-Y) Z_\eps(0,Y)\right)^2 \right] \le C_2 p_\eps^2(T,X)$;
\item $E [ Z^{2q}_\eps(T,X) ] \le C_q p_\eps^{2q}(T,X)$.
\end{enumerate}
\end{lemma}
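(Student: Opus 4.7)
The plan is to work from the mild form (\ref{sde3}) and control moments of $Z_\eps(T,X)$ via It\^o's isometry and the Burkholder--Davis--Gundy inequality, combined with the uniform bound $b_\eps\le 3$ from (\ref{bdona}) together with heat-kernel convolution estimates. I will split $Z_\eps = D + N$, where
\begin{equation*}
D(T,X) = \eps \sum_{Y\in\eps\Z} p_\eps(T,X-Y) Z_\eps(0,Y), \qquad N(T,X) = \int_0^T \eps \sum_Y p_\eps(T-S,X-Y) Z_\eps(S,Y)\, dM_\eps(S,Y).
\end{equation*}
The initial data $Z_\eps(0,Y) = (\eps^{-1/2}/2)\exp(-\lambda_\eps |Y/\eps|)$ is an approximate delta of macroscopic width $\eps^{1/2}$ and total mass $1+o(1)$. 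A first step is to use the local CLT for the walk generating $p_\eps$, together with a case split on the regime $T\lessgtr\eps$, to establish the pointwise bound $D(T,X)\le C\,p_\eps(T,X)$ uniformly in $\eps$, $T\in(0,T_0]$, and $X\in \eps\Z$.

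For (i) and (ii), It\^o's isometry and the quadratic-variation formula $d\langle M_\eps(X),M_\eps(Y)\rangle = \eps^{-1}\mathbf{1}(X=Y)\, b_\eps\, dT$ combined with $b_\eps\le 3$ give
\begin{equation*}
E[N(T,X)^2] \le 3\int_0^T \eps\sum_Y p_\eps^2(T-S,X-Y)\, E[Z_\eps^2(S,Y)]\, dS.
\end{equation*}
Writing $F(T,X)=E[Z_\eps^2(T,X)]$ and using $F = D^2 + E[N^2]$ (the cross term has mean zero), this yields the closed integral inequality
\begin{equation*}
F(T,X) \le C\, p_\eps^2(T,X) + 3 \int_0^T \eps\sum_Y p_\eps^2(T-S,X-Y) F(S,Y)\, dS.
\end{equation*}
I will then prove the Chapman--Kolmogorov-type estimate $\int_0^T \eps \sum_Y p_\eps^2(T-S,X-Y) p_\eps^2(S,Y) dS \le C'\sqrt{T}\, p_\eps^2(T,X)$, which follows from Gaussian approximation of $p_\eps$ together with the beta integral $\int_0^T dS/\sqrt{S(T-S)} = \pi$. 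Picard iteration in a weighted space normalized by $p_\eps^2$ then closes the bound to $F(T,X)\le C_2\, p_\eps^2(T,X)$ for $T\le T_0$, proving (i); applying the same iteration directly to the $E[N^2]$ bound gives (ii).

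For (iii) I apply the Burkholder--Davis--Gundy inequality to the martingale $N(T,X)$, followed by Minkowski's integral inequality: for integer $q\ge 1$,
\begin{equation*}
\|N(T,X)\|_{2q}^2 \le C_q \int_0^T \eps \sum_Y p_\eps^2(T-S,X-Y)\, \|Z_\eps(S,Y)\|_{2q}^2\, dS.
\end{equation*}
Setting $G(T,X)=\|Z_\eps(T,X)\|_{2q}^2$ and using the triangle inequality $(a+b)^2\le 2a^2+2b^2$ in $L^{2q}$ produces
\begin{equation*}
G(T,X)\le 2C\, p_\eps^2(T,X) + 2C_q\int_0^T \eps\sum_Y p_\eps^2(T-S,X-Y)\, G(S,Y)\, dS,
\end{equation*}
which closes via exactly the same Picard argument to $G(T,X)\le \tilde C_q\, p_\eps^2(T,X)$. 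Since $E[Z_\eps^{2q}] = \|Z_\eps\|_{2q}^{2q} \le (\tilde C_q\, p_\eps^2)^q$, this yields (iii) with $C_q=\tilde C_q^q$.

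The main technical obstacle will be establishing the Chapman--Kolmogorov convolution bound uniformly in $\eps$: specifically, controlling the convolution of $p_\eps^2$ against itself on time scales down to $T\sim\eps^2$, where the local CLT for $p_\eps$ must be used carefully to dominate microscopic corrections to the Gaussian profile away from the diffusive regime. A secondary delicate point is the pointwise estimate $D\le C\, p_\eps$ in the small-$T$, large-$|X|$ regime, where one must compare the exponential tails $\exp(-\eps^{-1/2}|Y|)$ of the initial data against the Gaussian tails $\exp(-X^2/2T)$ of $p_\eps$; the remaining regimes are handled by standard convolution estimates showing $D\to p_\eps$ as $T\gg\eps$ and by a crude maximum-principle bound for $T\ll\eps$.
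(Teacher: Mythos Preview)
Your proposal is correct, and for parts (i) and (ii) it follows essentially the same route as the paper: bound the deterministic convolution $D$ by $Cp_\eps$ using that the initial data is an approximate delta, apply It\^o's isometry with $b_\eps\le 3$ to get the closed integral inequality for $E[Z_\eps^2]$, and close it by iterating against the convolution estimate $\int_0^T \eps\sum_Y p_\eps^2(T-S,X-Y)p_\eps^2(S,Y)\,dS\le C\sqrt{T}\,p_\eps^2(T,X)$. The paper phrases the closure as summing the iterated series $\sum_n C^nT^{n/2}(n!)^{-1/2}$, which is exactly your Picard iteration.

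For (iii) your argument genuinely differs from the paper's, and is cleaner. After BDG the paper works with $g_\eps(T,X)=E[Z_\eps^{2q}]/p_\eps^{2q}$, expands the $q$-th power of the bracket as a $q$-fold integral over $\Delta_q'(T)\times\R^q$, applies Schwarz, and then invokes the somewhat ad hoc pointwise inequality
\[
\prod_{i=1}^q g_\eps^{1/q}(S_i,Y_i)\ \le\ C\sum_{i=1}^q \frac{\prod_{j\neq i} p_\eps^{2/(q-1)}(S_j,Y_j)}{p_\eps^{2}(S_i,Y_i)}\,g_\eps(S_i,Y_i)
\]
to reduce to a one-variable iteration. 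Your use of Minkowski's integral inequality, $\|{\int f\,dS}\|_{L^q(\Omega)}\le\int\|f\|_{L^q(\Omega)}dS$, bypasses all of this: it converts $E[\langle N\rangle^q]^{1/q}$ directly into $\int_0^T\eps\sum_Y p_\eps^2\,\|Z_\eps\|_{2q}^2\,dS$, so that $G=\|Z_\eps\|_{2q}^2$ satisfies exactly the same scalar integral inequality as $E[Z_\eps^2]$ in part (i). This buys you a single unified Gronwall/Picard argument for all $q$, with no multi-fold integrals and no auxiliary product-to-sum inequality. The only point to watch is that the BDG step for this jump martingale should, strictly speaking, go through $[N]$ rather than $\langle N\rangle$; but since the jump sizes of $M_\eps$ are $O(\eps^{1/2})$ while the rates are $O(\eps^{-2})$, replacing $[N]^q$ by $\langle N\rangle^q$ up to constants is standard, and the paper itself invokes this step without further comment.
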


\begin{proof} Within the proof, $C$ will denote a finite number which does not depend on any
other parameters except $T$ and $q$, but may change from line 
to line.  Also, for ease of notation, we identify functions on $\eps \mathbb{Z}$ with those on $\mathbb{R}$ by $f(x)=f([x])$.

 First, note that 
\begin{equation*}
Z_\eps(0,Y) = {\scriptstyle\frac12}\epsilon^{-1/2} \exp\{ -\eps^{-1}\lambda_\eps |Y| \}= {\scriptstyle\frac12} \epsilon^{-1/2} \exp\{ -\eps^{-1/2} |Y| + O(\eps^{1/2}) \}
\end{equation*}
is an approximate delta function, from which we check that 
\begin{equation}\label{yy}
\eps\sum_{Y\in \eps \mathbb{Z}} p_\eps(T,X-Y) Z_\eps(0,Y) \le C  p_\eps(T,X).
\end{equation}
Let \begin{equation*}
f_\eps(T,X)= E[ Z_\eps^2(T,X)] .
\end{equation*}
From (\ref{yy}), (\ref{sde3}) we get 
\begin{equation}\label{tt}
f_\eps(T,X)\le C p^2_\eps(T,X)
+ C\int_0^T \int_{-\infty}^{\infty} p^2_\eps(T-S,X-Y)  f_\eps(S,Y) dSdY. 
\end{equation}
Iterating we obtain, \begin{equation}\label{tt2}
f_\eps(T,X) \le   \sum_{n=0}^\infty C^n I_{n,\eps}(T,X)
\end{equation}
where, for $\Delta_n=\Delta_n(T)=\{0=t_0\le T_1<\cdots< T_n<T\}$,$X_0=0$,
\begin{equation*}
 I_{n,\eps}(T,X)= \int_{\Delta_n} \int_{\mathbb{R}^n} 
\prod_{i=1}^{n} p^2_\eps(T_i-T_{i-1},X_i-X_{i-1})  p_\eps^2(T-T_n, X-x_n)\prod_{i=1}^{n} dX_i dT_i. 
\end{equation*}
One readily checks that 
\begin{equation*}
 I_{n,\eps}(T,X)\le C^n T^{n/2} (n!)^{-1/2}p^2_\eps(T,X). 
\end{equation*}
From which we obtain $i$,
\begin{equation*}
f_\eps(T,X) \le   C\sum_{n=0}^\infty (CT)^{n/2} (n!)^{-1/2} p^2_\eps(T,X)\le C' p^2_\eps(T,X).
\end{equation*}
Now we turn to $ii$.  From  (\ref{sde3}), the term on right hand side is bounded by a constant multiple of 
\begin{equation*}
   \int_0^T \int_{-\infty}^{\infty} p^2_\eps(T-S,X-Y)  E[Z^2_\eps(S,Y)]dYdS.
\end{equation*}
Using   $i$, this is in turn bounded by $C\sqrt{T} p^2_\eps(T,X) $, 
which proves $ii$.

Finally we prove $iii$.  Fix a $q\ge 2$. By standard methods of martingale analysis and (\ref{bdona}), we have
\begin{eqnarray*}
&& E\Big[\Big(\int_0^T\eps\sum_{Y\in \eps \mathbb{Z}} p_\eps(T-S,X-Y)  Z_\eps(S,Y)d{M}_\eps(S,Y) \Big)^{2q}\Big]
\\&& \qquad  \le C   E\Big[\Big(\int_0^T \eps\sum_{Y\in \eps \mathbb{Z}} p^2_\eps(T-S,X-Y)  Z^2_\eps(S,Y)dS \Big)^{q}\Big].\nonumber
\end{eqnarray*}
Let 
\begin{equation*}
g_\eps(T,X) =E[Z^{2q}_\eps(T,X)]/ p_\eps^{2q}(T,X).
\end{equation*}
From the last inequality, and Schwarz's inequality, we have 
\begin{equation*}
 g_\eps(T,X) \le C (1+  \int_{\Delta'_q(T)}\int_{\mathbb{R}^q} \prod_{i=1}^q p^2_\eps(S_i-S_{i-1},X_i-X_{i-1})p_\eps^{2}(S_i,Y_i)g^{1/q}_\eps(S_i,Y_i)dY_idS_i ).\end{equation*}
Now use the fact that
\begin{equation*}
\prod_{i=1}^q g^{1/q}_\eps(S_i,Y_i) \le C\sum_{i=1}^q  \frac{\prod_{j\neq i} p^{2/(q-1)}_\eps(S_j,Y_j)}{p^{2}_\eps(S_i,Y_i)}g_\eps(S_i,Y_i) \end{equation*}
and iterate the inequality to obtain $iii$.
\end{proof}

 We now turn to the tightness.  In fact, although we are in a different regime, the arguments of
  \cite{BG} actually
extend to our case.  For each $\delta>0$, let $\mathscr{P}^\delta_\eps$ be the distributions of the processes $\{Z_\eps(T,X)\}_{\delta\le T}$ on  $D_u([\delta,\infty); D_u(\mathbb R))$ where $D_u$ refers to right continuous paths with left limits  with  the topology of uniform convergence on compact sets. Because the discontinuities of $Z_\eps(T,\cdot)$ are restricted to
 $\eps(1/2+\Z)$, it is measurable as a $D_u(\R)$-valued random function (see Sec. 18 of 
\cite{Bill}.)   Since the jumps of $Z_\eps(T,\cdot)$ are uniformly small, local uniform convergence works for us just as well the standard Skhorohod topology.    The following summarizes results which are contained \cite{BG} but not explicitly stated there in the form we need.      
\begin{theorem} \cite{BG}  There is an explicit $p<\infty$ such that if there exist $C, c < \infty$ for which 
\begin{equation}\label{deltainit}
\int_{-\infty}^{\infty} Z_\eps^p(\delta,X)d\mathscr{P}^\delta_\eps \le C e^{ c|X| } ,\qquad X \in \eps \mathbb{Z},
\end{equation}
 then $\{\mathscr{P}^\delta_\eps\}_{0\le \eps\le 1/4}$ is tight.  
 Any limit point $\mathscr{P}^\delta$ is supported on $C([\delta,\infty); C(\mathbb R))$ and solves  the martingale problem for the stochastic heat equation (\ref{she}) after time~$\delta$.
\end{theorem}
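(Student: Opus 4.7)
The plan is to adapt the Bertini--Giacomin framework \cite{BG} to the present setting, using hypothesis (\ref{deltainit}) as an effective initial condition at time $\delta$ in place of their regular starting profile. The three inputs on which BG rely are precisely the integral equation (\ref{sde3}), the bracket bound (\ref{bdona}), and the a priori $L^2$ estimate of Lemma \ref{apriori}, all of which are available here verbatim. The exponential weight $e^{c|X|}$ in (\ref{deltainit}) is preserved under convolution with $p_\eps$ on any bounded time interval, so every subsequent estimate can be localized in $X$ at mild exponential cost.

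For tightness I would proceed in three steps. \emph{First}, iterate the argument in the proof of Lemma \ref{apriori}.(iii), but starting from the $L^p$ bound (\ref{deltainit}) rather than from a delta, to obtain $E[Z_\eps^{2q}(T,X)] \le C_q e^{c_q|X|}$ uniformly for $T$ in a compact subset of $[\delta,\infty)$ and for $q$ up to some $q_\ast$ determined by $p$. \emph{Second}, decompose $Z_\eps(T,X)-Z_\eps(T',X')$ via (\ref{sde3}) into a deterministic convolution of $Z_\eps(\delta,\cdot)$ with $p_\eps$, which is H\"older regular by classical local limit estimates for the discrete heat kernel, and a stochastic integral against $dM_\eps$, which I would estimate by Burkholder--Davis--Gundy using (\ref{bdona}) together with the moment bound from Step 1. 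Choosing $p$ large enough that a two-parameter Kolmogorov--Chentsov criterion applies then delivers tightness in $D_u([\delta,\infty);D_u(\R))$. \emph{Third}, each jump of $Z_\eps$ has multiplicative size $|e^{\pm 2\lambda_\eps}-1| = O(\eps^{1/2})$, so the supremum of jump sizes on any compact space-time window vanishes in probability, and any limit point is supported on $C([\delta,\infty);C(\R))$.

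For the martingale problem I would pair the integral form of (\ref{sde7}) against $\varphi \in C_c^\infty(\R)$, producing a discrete martingale
\begin{equation*}
N^\eps_T(\varphi) \;=\; \eps\!\sum_{X\in\eps\Z}\varphi(X)\bigl[Z_\eps(T,X)-Z_\eps(\delta,X)\bigr] \;-\; \int_\delta^T \tfrac{1}{2}\mathcal{D}_\eps\,\eps\!\sum_X (\Delta_\eps\varphi)(X)\, Z_\eps(S,X)\, dS
\end{equation*}
with predictable bracket
\begin{equation*}
\langle N^\eps(\varphi)\rangle_T \;=\; \int_\delta^T \eps\!\sum_X \varphi^2(X)\,Z_\eps^2(S,X)\, b_\eps\!\left(\tau_{-[\eps^{-1}X]}\eta(S)\right)\, dS.
\end{equation*}
Along any weak subsequential limit $\mathcal{Z}$ the drift converges to $\tfrac{1}{2}\int \varphi''\mathcal{Z}$ since $\mathcal{D}_\eps \to 1$ and $\Delta_\eps\varphi \to \varphi''$ uniformly, so $N^\eps(\varphi)$ converges to a continuous martingale $N(\varphi)$ whose bracket I must identify as $\int_\delta^T\!\!\int\varphi^2(X)\mathcal{Z}^2(S,X)\,dX\,dS$ in order to recover (\ref{she}).

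The hard part will be precisely this bracket limit, because $b_\eps = 1 - \hat\eta(1)\hat\eta(0) + \hat{b}_\eps$ and the nearest-neighbour correlator $\hat\eta(1)\hat\eta(0)$ does not average to zero in a generic state. The resolution is the Bertini--Giacomin Boltzmann--Gibbs substitution: the pointwise Hopf--Cole identity $Z_\eps(X+\eps) = Z_\eps(X)\exp\{-\lambda_\eps\hat\eta(\eps^{-1}X+1)\}$, squared and Taylor-expanded in $\lambda_\eps = \eps^{1/2}+O(\eps^{3/2})$, rewrites $(1-\hat\eta(x)\hat\eta(x+1))Z_\eps^2(X)$ as $Z_\eps^2(X)$ plus a discrete spatial gradient of a quadratic expression in neighbouring values of $Z_\eps$ plus an $O(\eps^{1/2})$ remainder. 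Pairing with $\eps\varphi^2(X)$ and summing by parts transfers the gradient onto $\varphi^2$, producing an extra $\eps$-factor that kills it in the limit; the $\hat{b}_\eps$ contribution is already $O(\eps^{1/2})$ by construction. The uniform $L^2$ bound from Step~1 absorbs both error terms, so the bracket converges to the desired $\int_\delta^T\!\int\varphi^2\mathcal{Z}^2$, identifying $\mathcal{Z}$ as a solution of the SHE martingale problem and completing the proof.
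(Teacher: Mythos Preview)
Your proposal is correct and is precisely the Bertini--Giacomin approach that the paper invokes. The paper does not give an independent proof of this theorem: it simply records that the argument of \cite{BG} goes through verbatim once one observes that every estimate there is driven by the linear equation (\ref{sde7}) for $Z_\eps$ together with a moment bound of the form (\ref{deltainit}) on the data at the starting time, so that replacing their near-equilibrium initial condition by the hypothesis (\ref{deltainit}) at time $\delta$ changes nothing. Your sketch fills in exactly those steps---moment iteration, Kolmogorov--Chentsov tightness via (\ref{sde3}) and Burkholder--Davis--Gundy with (\ref{bdona}), vanishing jump sizes, and the bracket identification through the Hopf--Cole gradient trick---and this is the content of \cite{BG} to which the paper defers.
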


It appears that $p=10$ works in \cite{BG}, though it almost certainly can be improved to $p=4$.
Note that the process level convergence is more than we need for the one-point function.  However, it could be useful in the future. Although not explicitly stated there the theorem is proved in \cite{BG}. The key point is that all computations in \cite{BG} after the initial time are done using the equation (\ref{sde7}) for $Z_\eps$,
which scales linearly in $Z_\eps$.  So the only input is a bound like (\ref{deltainit}) on the initial data.  In \cite{BG}, this
is made as an assumption, which can easily be checked for initial data close to equilibrium.  In the present case, it follows from 
$iii$ of Lemma \ref{apriori}. 

   The measures $\mathscr{P}^{\delta_1}$ and $\mathscr{P}^{\delta_2}$ for $\delta_1<\delta_2$ can be chosen to be consistent  on $C( [\delta_2,\infty), C(\mathbb{R}) ) $ and because of this there is a limit measure $\mathscr{P}$ on $C((0,\infty), C(\mathbb{R}) ) $ which is consistent with any $\mathscr{P}^{\delta}$   when restricted to $C( [\delta,\infty), C(\mathbb{R}) )$.  From the uniqueness of the martingale problem for $t\ge \delta>0$ and the corresponding martingale representation theorem \cite{KS}  there is a  space-time
white noise $\dot{\mathscr{W}}$, on a possibly  enlarged probability space, $(\bar\Omega,\bar{\mathscr{F}}_T, \bar{\mathscr{P}})$ such that under $\bar{\mathscr{P}}$, for any $\delta>0$,
\begin{eqnarray*}
{Z}(T,X)  & = & \int_{-\infty}^{\infty} p(T-\delta,X-Y) Z(\delta,Y)  dY\nonumber \\ &&
 + \int_\delta^{T} \int_{-\infty}^{\infty} p(T-S,X-Y)  Z(S,Y)\bar{\mathcal{W}} (dY,dS). 
\end{eqnarray*}
Finally $ii$ of Lemma \ref{apriori} shows that under $\bar{\mathscr{P}}$, 
\begin{equation*}
\int_{-\infty}^{\infty} p(T-\delta,X-Y) Z(\delta,Y) dY \to p(T,X)
\end{equation*}
as $\delta\searrow 0$, which completes the proof.

\section{Alternative forms of the crossover distribution function}\label{kernelmanipulations}
We now demonstrate how the various alternative formulas for $F_{T}(s)$ given in Theorem \ref{main_result_thm} are derived from the cosecant kernel formula of Theorem \ref{epsilon_to_zero_theorem}.

\subsection{Proof of the crossover Airy kernel formula}\label{cross_over_airy_sec}
We prove this by showing that 
\begin{equation*}\det(I-K_a^{\csc})_{L^2(\tilde\Gamma_{\eta})} =\det(I - K_{\sigma_{T,\tilde\mu}})_{{L}^2(\kappa_T^{-1}a,\infty)}
\end{equation*}
where $K_{\sigma_{T,\tilde\mu}}$ and $\sigma_{T,\tilde\mu}$ are given in the statement of Theorem \ref{main_result_thm} and $\kappa_T = 2^{-1/3} T^{1/3}$.

The kernel $K^{\csc}_{a}(\tilde\eta,\tilde\eta')$ is given by equation (\ref{k_csc_definition}) as
\begin{equation*}
\int_{\tilde\Gamma_{\zeta}} e^{-\tfrac{T}{3}(\tilde\zeta^3-\tilde\eta'^3)+2^{1/3}a(\tilde\zeta-\tilde\eta')}\bigg(2^{1/3}\int_{-\infty}^{\infty} \frac{\tilde\mu e^{-2^{1/3}t(\tilde \zeta - \tilde \eta')}}{ e^t - \tilde \mu}dt  \bigg)\frac{d\tilde\zeta}{\tilde\zeta-\tilde\eta},
\end{equation*}
where we recall that the inner integral converges since $\re(-2^{1/3}(\tilde\zeta-\tilde\eta'))=1/2$ (see the discussion in Definition \ref{thm_definitions}).
For $\re(z)<0$ we have the following nice identity: 
\begin{equation*}
 \int_a^{\infty} e^{xz} dx= -\frac{e^{az}}{z},
\end{equation*}
which, noting that $\re(\tilde\zeta-\tilde\eta')<0$, we may apply to the above kernel to get
\begin{equation*}
-2^{2/3}\int_{\tilde\Gamma_{\zeta}}\int_{-\infty}^{\infty}\int_{a}^{\infty} e^{-\frac{T}{3}(\tilde\zeta^3-\tilde\eta'^3)-2^{1/3}a\tilde\eta'} \frac{\tilde\mu e^{-2^{1/3}t(\tilde \zeta - \tilde \eta')}}{ e^t - \tilde \mu} e^{2^{1/3}(a-x)\tilde\eta} e^{2^{1/3}x\tilde \zeta } dx dt d\tilde \zeta.
\end{equation*}
This kernel can be factored as a product $ABC$ where
\begin{eqnarray*}
 A:L^2(a,\infty)\rightarrow L^2(\tilde\Gamma_{\eta}),\qquad B:L^2(\tilde\Gamma_{\zeta})\rightarrow L^2(a,\infty),\qquad C:L^2(\tilde\Gamma_{\eta})\rightarrow L^2(\tilde\Gamma_{\zeta}),
\end{eqnarray*}
and the operators are given by their kernels
\begin{eqnarray*}
&&  A(\tilde\eta,x) = e^{2^{1/3}(a-x)\tilde\eta}, \qquad B(x,\tilde\zeta) = e^{2^{1/3}x \tilde\zeta},  \\  
\nonumber&&  C(\tilde\zeta,\tilde\eta) = -2^{2/3}\int_{-\infty}^{\infty} \exp\left\{-\frac{T}{3}(\tilde\zeta^3-\tilde\eta^3)-2^{1/3}a\tilde\eta\right\}\frac{\tilde\mu e^{-2^{1/3}t(\tilde \zeta - \tilde \eta)}}{ e^t - \tilde \mu}dt.
\end{eqnarray*}
Since $\det(I-ABC) = \det(I-BCA)$ we consider $BCA$ acting on $L^2(a,\infty)$ with kernel 
\begin{equation*}
-2^{2/3}\int_{-\infty}^{\infty}\int_{\Gamma_{\tilde\zeta}}\int_{\Gamma_{\tilde\eta}}
e^{-\frac{T}{3}(\tilde\zeta^3-\tilde\eta^3)+2^{1/3}(x-t)\tilde\zeta -2^{1/3}(y-t)\tilde\eta}\frac{\tilde\mu}{e^t - \tilde\mu} d\tilde\eta d\tilde\zeta dt.
\end{equation*}
Using the formula for the Airy function given by 
\begin{equation*}
\Ai(r) = \int_{\tilde\Gamma_{\zeta}} \exp\{-\frac{1}{3}z^3 +rz\} dz
\end{equation*}
and replacing $t$ with $-t$ we find that our kernel equals
\begin{equation*}
2^{2/3}T^{-2/3}\int_{-\infty}^{\infty}\frac{\tilde\mu}{\tilde\mu - e^{-t}} \Ai\big(T^{-1/3}2^{1/3}(x+t)\big)\Ai\big(T^{-1/3}2^{1/3}(y+t)\big)dt.
\end{equation*}
We may now change variables in $t$ as well as in $x$ and $y$ to absorb the factor of $T^{-1/3}2^{1/3}$. 
To rescale $x$ and $y$ use $\det(I - K(x,y))_{L^2( ra,\infty)} = \det (I- rK(rx,ry))_{L^2(a,\infty)}$. This completes the proof.

\subsection{Proof of the Gumbel convolution formula}\label{gumbel_convolution_sec}
%
Before starting we remark that throughout this proof we will dispense with the tilde with respect to $\tilde\mu$ and $\mathcal{\tilde C}$. We choose to prove this formula directly from the form of the Fredholm determinant given in the crossover Airy kernel formula of Theorem \ref{main_result_thm}. However, we make note that it is possible, and in some ways simpler (though a little messier) to prove this directly from the $\csc$ form of the kernel. Our starting point is the formula for $F_T(s)$ given in equation (\ref{sigma_Airy_kernel_formula}). The integration in $\mu$ occurs along a complex contour and even though we haven't been writting it explicitly, the integral is divided by $2\pi i$. We now demonstrate how to squish this contour to the the positive real line (at which point we will start to write the $2\pi i$). The pole in the term $\sigma_{T,\mu}(t)$ for $\mu$ along $\R^+$ means that the integral along the positive real axis from above will not exactly cancel the integral from below.

Define a family of contour $\mathcal{ C}_{\delta_1,\delta_2}$ parametrized by $\delta_1,\delta_2>0$ (small). The contours are defined in terms of three sections \begin{equation*}
\mathcal{C}_{\delta_1,\delta_2} = \mathcal{ C}_{\delta_1,\delta_2}^{-}\cup \mathcal{C}_{\delta_1,\delta_2}^{circ}\cup \mathcal{C}_{\delta_1,\delta_2}^{+}
\end{equation*}
traversed counterclockwise, where 
\begin{equation*}
 \mathcal{C}_{\delta_1,\delta_2}^{circ} = \{\delta_2 e^{i\theta}:\delta_1\leq \theta\leq 2\pi -\delta_1\}
\end{equation*}
and where $\mathcal{C}_{\delta_1,\delta_2}^{\pm}$ are horizontal lines extending from $\delta_1 e^{\pm i\delta_2}$ to $+\infty$.

We can deform the original $\mu$ contour $\mathcal{\mu}$ to any of these contours without changing the value of the integral (and hence of $F_T(s)$). To justify this we use Cauchy's theorem. However this requires the knowledge that the determinant is an analytic function of $\mu$ away from $\R^+$. This may be proved similarly to the proof of Lemma \ref{deform_mu_to_C} and relies on Lemma \ref{Analytic_fredholm_det_lemma}. As such we do not include this computation here. 

Fixing $\delta_2$ for the moment we wish to consider the limit of the integrals over these contours as $\delta_1$ goes to zero. The resulting integral be we written as $I_{\delta_2}^{circ} + I_{\delta_2}^{line}$ where
\begin{eqnarray*}
I_{\delta_2}^{circ} &=& \oint_{|\mu|=\delta_2} \frac{d\mu}{\mu} e^{-\mu} \det(I-K_{T,\mu})_{L^2(\kappa_T^{-1}a,\infty)},\\
I_{\delta_2}^{line} &=& -\lim_{\delta_{1}\rightarrow 0} \int_{\delta_2}^{\infty} \frac{d\mu}{\mu} e^{-\mu} [\det(I-K_{T,\mu+i\delta_i})-\det(I-K_{T,\mu-i\delta_i})] 
\end{eqnarray*}
\begin{claim}
 $I_{\delta_2}^{circ}$ exists and $\lim_{\delta_{2}\rightarrow 0}I_{\delta_2}^{circ} = 1$.
\end{claim}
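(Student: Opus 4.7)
The plan is to reduce the claim to the elementary residue computation $\frac{1}{2\pi i}\oint_{|\mu|=\delta_2}\frac{e^{-\mu}}{\mu}\,d\mu = 1$ by showing that the Fredholm determinant $\det(I-K_{T,\mu})_{L^2(\kappa_T^{-1}a,\infty)}$ converges to $1$ uniformly on the circle $|\mu|=\delta_2$ as $\delta_2\searrow 0$. The existence of $I_{\delta_2}^{circ}$ as a limit of the slit-circle integrals reduces, after this uniform convergence is established, to the existence of the analogous limit with the determinant replaced by $1$, which is trivial.

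The main technical input is the estimate that $\|K_{T,\mu}\|_1\to 0$ as $\delta_2\searrow 0$, uniformly for $\mu$ on the circle away from the positive real axis. To get this, I would factor $K_{T,\mu}$ as $AB$ where
\begin{equation*}
A(x,t)=\sqrt{\sigma_{T,\mu}(t)}\,\Ai(x+t),\qquad B(t,y)=\sqrt{\sigma_{T,\mu}(t)}\,\Ai(y+t),
\end{equation*}
(with any convenient branch of the square root), and bound $\|K_{T,\mu}\|_1\le\|A\|_2\|B\|_2$ using Lemma \ref{fredholm_continuity_lemma}. Since $|\sigma_{T,\mu}(t)| = |\mu|/|\mu-e^{-\kappa_T t}|$, for $\mu=\delta_2 e^{i\theta}$ with $\theta\in[\delta_1,2\pi-\delta_1]$ one has $|\mu-e^{-\kappa_T t}|\ge c(\delta_1)>0$ uniformly in $t\in\mathbb{R}$, so $|\sigma_{T,\mu}(t)|\le\delta_2/c(\delta_1)$. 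Together with the super-exponential decay of $\Ai(x+t)$ as $x+t\to+\infty$ and standard bounds on $\Ai$ for negative argument (used after restricting to $x\ge\kappa_T^{-1}a$), this yields $\|A\|_2^2,\|B\|_2^2 = O(\delta_2)$ uniformly. Hence $\|K_{T,\mu}\|_1 = O(\delta_2)$, and the continuity estimate $|\det(I-K_{T,\mu})-1|\le\|K_{T,\mu}\|_1\exp(\|K_{T,\mu}\|_1+1)$ from Lemma \ref{fredholm_continuity_lemma} gives the required uniform convergence of the determinant to $1$.

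The main obstacle is the behavior at the endpoints of the slit, i.e.\ near $\mu=\delta_2$ on the positive real axis, where $\sigma_{T,\mu}$ has a pole at $t_0=-\kappa_T^{-1}\log\mu$ and the trace-norm bound above degenerates. I would handle this by coupling $\delta_1$ to $\delta_2$, for instance $\delta_1=\delta_2^{1/2}$, so that on the truncated arc $\mathcal{C}_{\delta_1,\delta_2}^{circ}$ one still has $c(\delta_1)\gg \delta_2$; the contribution of the missing arc of length $O(\delta_1\delta_2)$ to $I_{\delta_2}^{circ}$ is then shown to be negligible by a separate argument, using that $\det(I-K_{T,\mu})$ extends continuously in $\mu$ from each side of $\mathbb{R}^+$ (interpreting $K_{T,\mu}$ via the principal value prescription already used in the Gumbel formula's statement) and that $|d\mu/\mu|\le d\theta$ is integrable. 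This is the step where the principal-value subtlety that forces the separation into $I_{\delta_2}^{circ}+I_{\delta_2}^{line}$ actually has to be addressed.

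Finally, combining the uniform bound $|\det(I-K_{T,\mu})-1|\le\eta(\delta_2)$ with $\eta(\delta_2)\to 0$, one writes
\begin{equation*}
I_{\delta_2}^{circ} = \frac{1}{2\pi i}\oint_{|\mu|=\delta_2}\frac{e^{-\mu}}{\mu}\,d\mu + \frac{1}{2\pi i}\oint_{|\mu|=\delta_2}\frac{e^{-\mu}}{\mu}\bigl(\det(I-K_{T,\mu})-1\bigr)d\mu,
\end{equation*}
where the first term equals $1$ by the residue theorem applied to the entire function $e^{-\mu}$, and the second term is bounded in absolute value by $e^{\delta_2}\eta(\delta_2)$ and so vanishes as $\delta_2\searrow 0$. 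This yields $\lim_{\delta_2\to 0}I_{\delta_2}^{circ}=1$ as claimed.
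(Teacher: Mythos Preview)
Your key estimate is wrong. You claim that for $\mu=\delta_2 e^{i\theta}$ with $\theta\in[\delta_1,2\pi-\delta_1]$ one has $|\mu-e^{-\kappa_T t}|\ge c(\delta_1)$ uniformly in $t$, with $c(\delta_1)$ independent of $\delta_2$. But $e^{-\kappa_T t}$ sweeps out all of $(0,\infty)$, and the distance from $\mu$ to the positive real axis is $\delta_2|\sin\theta|$ when $\cos\theta>0$ and $\delta_2$ otherwise. So the best uniform lower bound is $\delta_2\sin\delta_1$, which depends on $\delta_2$. Consequently $|\sigma_{T,\mu}(t)|=|\mu|/|\mu-e^{-\kappa_T t}|\le 1/\sin\delta_1$ is merely bounded, not $O(\delta_2)$, and your Hilbert--Schmidt bound collapses. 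The trace norm \emph{does} go to zero, but through a different mechanism: $\sigma_{T,\mu}$ is essentially a smoothed step function transitioning from $0$ to $1$ near $t_0=\kappa_T^{-1}\log(1/\delta_2)\to+\infty$, so the kernel is effectively $\int_{t_0}^\infty\Ai(x+t)\Ai(y+t)\,dt$, which vanishes by Airy decay. Your factorization would work if you used this instead.

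The paper avoids the issue entirely by passing to the equivalent cosecant kernel $K_a^{\csc}$ on $L^2(\tilde\Gamma_\eta)$. There the $\tilde\mu$-dependence enters through the factor $(-\tilde\mu)^{-2^{1/3}(\tilde\zeta-\tilde\eta')}$, and since $\re\bigl(-2^{1/3}(\tilde\zeta-\tilde\eta')\bigr)=\tfrac12$ on the chosen contours, $|(-\tilde\mu)^{-2^{1/3}(\tilde\zeta-\tilde\eta')}|=|\tilde\mu|^{1/2}$. The standard Hilbert--Schmidt factorization (as in Proposition~\ref{reinclude_mu_lemma}) then gives $\|K_a^{\csc}\|_1=O(|\tilde\mu|^{1/2})$ uniformly over the circle, which is exactly what you need. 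This formulation also cleanly disposes of the existence question: in the $\csc$ kernel the branch cut of $\log(-\tilde\mu)$ lies along $\mathbb{R}^+$, so the determinant is well-defined and continuous as $\theta\to 0^+$ and $\theta\to 2\pi^-$ separately, and the full-circle integral exists for each fixed $\delta_2$. Your proposed coupling $\delta_1=\delta_2^{1/2}$ does not address this, since existence of $I_{\delta_2}^{circ}$ is a statement about the $\delta_1\to 0$ limit at \emph{fixed} $\delta_2$.
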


\begin{proof}
It is easiest, in fact, to prove this claim by replacing the determinant by the $\csc$ determinant: equation (\ref{k_csc_definition}). From that perspective the $\mu$ at 0 and at $2\pi$ are on opposite sides of the branch cut for $\log(-\mu)$, but are still defined (hence the $I_{\delta_2}^{circ}$ is clearly defined). As far as computing the limit, one can do the usual Hilbert-Schmidt estimate and show that, uniformly over the circle $|\mu|=\delta_2$, the trace norm goes to zero as $\delta_2$ goes to zero. Thus the determinant goes uniformly to 1 and the claim follows. 
\end{proof}

Turning now to $I_{\delta_2}^{line}$, that this limit exists can be seen by going to the equivalent $\csc$ kernel (where this limit is trivially just the kernel on different levels of the $\log(-\mu)$ branch cut). Notice now that we can write the operator $K_{T,\mu+i\delta_1}=K_{\delta_1}^{\rm sym}+K_{\delta_1}^{\rm asym}$ and likewise $K_{T,\mu-i\delta_1}=K_{\delta_1}^{\rm sym}-K_{\delta_1}^{\rm asym}$ where $K_{\delta_1}^{\rm sym}$ and $K_{\delta_1}^{\rm asym}$ also act on $L^2(\kappa_T^{-1}a,\infty)$ and are given by their kernels
\begin{eqnarray*}
K_{\delta_1}^{{\rm sym}}(x,y) &=& \int_{-\infty}^{\infty}\frac{\mu(\mu-b)+\delta_1^2}{(\mu-b)^2+\delta_1^2} \Ai(x+t)\Ai(y+t)dt\\
K_{\delta_1}^{{\rm asym}}(x,y) &=& \int_{-\infty}^{\infty}\frac{-i\delta_1 b}{(\mu-b)^2+\delta_1^2} \Ai(x+t)\Ai(y+t)dt,
\end{eqnarray*}
where $b=b(t)=e^{-\kappa_T t}$. 

From this it follows that 
\begin{equation*}
 K^{\rm sym}(x,y):= \lim_{\delta_1\rightarrow 0} K_{\delta_1}^{\rm sym}(x,y)  =\mathrm{P.V.}\int \frac{\mu}{\mu-e^{-\kappa_T t}} \Ai(x+t)\Ai(y+t)dt.
\end{equation*}
As far as $K_{\delta_1}^{{\rm asym}}$, since $\mu-b$ has a unique root at $t_0=-\kappa_T^{-1}\log\mu$, it follows from the Plemelj formula \cite{D} that 
\begin{equation*}
\lim_{\delta_1\rightarrow 0}K_{\delta_1}^{\rm asym}(x,y) = -\frac{\pi i} {\kappa_T} \Ai(x+t_0)\Ai(y+t_0).
\end{equation*}
With this in mind we define
\begin{equation*}
 K^{\rm asym}(x,y) = \frac{2\pi i} {\kappa_T} \Ai(x+t_0)\Ai(y+t_0).
\end{equation*}
We see that $K^{\rm asym}$ is a multiple of the projection operator onto the shifted Airy functions.

We may now collect the calculations from above and we find that
\begin{eqnarray*}
\nonumber I_{\delta_2}^{line} &=& -\frac{1}{2\pi i}\int_{\delta_2}^{\infty} \frac{d\mu}{\mu}e^{-\mu} [\det(I-K^{\rm sym}+\tfrac{1}{2}K^{\rm asym})-\det(I-K^{\rm sym}-\tfrac{1}{2}K^{\rm asym})]\\
&=& -\frac{1}{2\pi i}\int_{\delta_2}^{\infty} \frac{d\mu}{\mu}e^{-\mu} \det(I-K^{\rm sym})\mathrm{tr}\left((I-K^{\rm sym})^{-1}K^{\rm asym}\right)
\end{eqnarray*}
where both $K^{\rm sym}$ and $K^{\rm asym}$ act on $L^2(\kappa_T^{-1}a,\infty)$ and where we have used the fact that $K^{\rm asym}$ is rank one, and if you have $A$ and $B$, where $B$ is rank one, then
\begin{equation*}
\det(I-A+B) = \det(I-A)\det(I+ (I-A)^{-1}B) = \det(I-A)\left[1+\mathrm{tr}\left((I-A)^{-1}B\right)\right].
\end{equation*}
As stated above we've only shown the pointwise convergence of the kernels to $K^{\rm sym}$ and $K^{\rm asym}$. However, using the decay properties of the Airy function and the exponential decay of $\sigma$ this can be strengthened to trace-class convergence. 

We may now take $\delta_2$ to zero and find that
\begin{eqnarray*}\nonumber
F_T(s) & = &  \lim_{\delta_2\rightarrow 0 } (I_{\delta_2}^{circ} + I_{\delta_2}^{line}) \\ &= &1-\frac{1}{2\pi i}\int_0^{\infty} \frac{d\mu}{\mu}e^{-\mu} \det(I-K^{\rm sym})\mathrm{tr}\left((I-K^{\rm sym})^{-1}K^{\rm asym}\right)
\end{eqnarray*}
with $K^{\rm sym}$ and $K^{\rm asym}$ as above acting on $L^2(\kappa_T^{-1}a,\infty)$ and where the integral is improper at zero.

We can simplify our operators so that by changing variables and replacing $x$ by $x+t_0$ and $y$ by $y+t_0$. We can also change variables from $\mu$ to $e^{-r}$. With this in mind we redefine the operators $K^{\rm sym}$ and $K^{\rm asym}$ to act on $L^2(\kappa_T^{-1}(a-r),\infty)$ with kernels
\begin{eqnarray*}
K^{\rm sym}(x,y) &=& \mathrm{P.V.} \int \sigma(t)\Ai(x+t)\Ai(y+t)dt\\
\nonumber K^{\rm asym}(x,y) &=&  \Ai(x)\Ai(y),
\end{eqnarray*}
where $\sigma(t) = \frac{1}{1-e^{-\kappa_T t}}$. In terms of these operators we have
\begin{equation*}
 F_T(s) = 1-\int_{-\infty}^{\infty} e^{-e^{-r}} f(a-r) dr
\end{equation*}
where 
\begin{equation*}
f(r) =\kappa_T^{-1} \det(I-K^{\rm sym})_{L^2(\kappa_T^{-1}r,\infty)}\mathrm{tr}\left((I-K^{\rm sym})^{-1}K^{\rm asym}\right)_{L^2(\kappa_T^{-1}r,\infty)}.
\end{equation*}
 Calling $G(r)=e^{-e^{-r}}$ and observing that $K^{\rm sym}=K_{\sigma_T}$ and $K^{\rm asym}=\rm{P}_{\Ai}$ this completes the proof of the first part of the Gumbel convolution formula.
Turning now to the Hilbert transform formula, we may isolate the singularity of $\sigma_T(t)$ from the above kernel $K^{\rm sym}$ (or $K_{\sigma_T}$) as follows.
%
%
Observe that we may write $\sigma_T(t)$as
\begin{equation*}
\sigma_T(t) = \tilde\sigma_T(t) + \frac{1}{\kappa_T t}
\end{equation*}
where $\tilde\sigma_T(t)$ (given in equation (\ref{tilde_sigma_form})) is a smooth function, non-decreasing on the real line, with $\tilde\sigma_T(-\infty)=0$ and $\tilde\sigma_T(+\infty)=1$. Moreover, $\tilde\sigma_T^{\prime}$ is an approximate delta function with width $\kappa_T^{-1}= 2^{1/3}T^{-1/3}$. The principle value integral of the $\tilde\sigma_T(t)$ term can be replaced by a simple integral. The new term gives
\begin{equation*}
\mathrm{P.V.}\int \frac{1}{\kappa_T t}\Ai(x+t)\Ai(y+t). 
\end{equation*}
This is $\kappa_T^{-1}$ times the Hilbert transform of the product of Airy functions, which is explicitly computable \cite{Varlamov} with the result begin 
\begin{equation*}
\mathrm{P.V.}\int \frac{1}{\kappa_T t}\Ai(x+t)\Ai(y+t) = \kappa_T^{-1} \pi G_{\frac{x-y}{2}}(\frac{x+y}{2})
\end{equation*}
where $G_a(x)$ is given in equation (\ref{tilde_sigma_form}).

\section{Formulas for a class of generalized integrable integral operators}\label{int_int_op_sec}

Presently we will consider a certain class of Fredholm determinants and make two computations involving these determinants. The second of these computations closely follows the work of Tracy and Widom and is based on a similar calculation done in \cite{TWAiry}. In that case the operator in question is the Airy operator. We deal with the family of operators which arise in considering $F_{T}(s)$.

Consider the class of Fredholm determinants $\det(I-K)_{L^2(s,\infty)}$ with operator $K$ acting on $L^2(s,\infty)$ with kernel 
\begin{equation}\label{gen_int_int_ker}
K(x,y) =\int_{-\infty}^{\infty} \sigma(t) \Ai(x+t)\Ai(y+t)dt,
\end{equation}
where $\sigma(t)$ is  a  function  which is
smooth except at a finite number of points at which it has bounded jumps and which approaches   $0$ at $-\infty$ and $1$ at $\infty$, exponentially fast. These operators are, in a certain sense, generalizations of the class of integrable integral operators (see \cite{BorodinDeift}). 

The kernel can be expressed alternatively as 
\begin{equation}\label{kernel}
K(x,y) = \int_{-\infty}^{\infty}\sigma'(t) \frac{\varphi(x+t) \psi(y+t) - \psi(x+t) \varphi(y+t)}{
x-y} dt,
\end{equation}
 with $\varphi(x)=\textrm{Ai}(x)$ and $\psi(x)= \textrm{Ai}^{\prime}(x)$ and $\Ai(x)$ the Airy function.

This, and the entire generalization we will now develop is analogous to what is known for the Airy operator which is defined by its kernel $K_{\Ai}(x,y)$ on $L^2(-\infty,\infty)$
\begin{equation*}
K_{\Ai}(x,y) = \int_{-\infty}^{\infty} \chi(t) \Ai(x+t)\Ai(y+t)dt = \frac{\Ai(x)\Ai^\prime(x)-\Ai(y)\Ai^\prime(x)}{x-y},
\end{equation*}
where presently $\chi(t)=\mathbf{1}_{\{t\geq 0\}}$.

Note that the $\sigma(t)$ in our main result is not exactly of this type.  However, one can smooth out the $\sigma$, and apply the results
of this section to obtain formulas, which then can be shown to converge to the desired formulas as the smoothing is removed.  
It is straightforward to control the convergence in terms of trace norms, so we will not provide further details here.

\subsection{Symmetrized determinant expression}\label{symmetrized_sec}

It is well known that 
\begin{equation*}
\det(I-K_{\Ai})_{L^2(s,\infty)} = \det(I-\sqrt{\chi_{s}}K_{\Ai}\sqrt{\chi_{s}})_{L^2(-\infty,\infty)}
\end{equation*}
where $\chi_{s}$ is the multiplication operator by $\mathbf{1}_{\{\bullet\geq s\}}$ (i.e., $(\chi_s f)(x) = \mathbf{1}(x\geq s)f(x)$).

The following proposition shows that for our class of determinants the same relation holds, and
provides the proof of formula (\ref{sym_F_eqn}) of Theorem 1.

\begin{proposition}
For $K$ in the class of operators with kernel as in (\ref{gen_int_int_ker}), 
\begin{equation*}
\det(I-K)_{L^2(s,\infty)} = \det(I-\hat{K}_s)_{L^2(-\infty,\infty)},
\end{equation*}
where the kernel for $\hat{K}_s$ is given by 
\begin{equation*}
\hat{K}_s(x,y) = \sqrt{\sigma(x-s)}K(x,y) \sqrt{\sigma(y-s)}.
\end{equation*}
\end{proposition}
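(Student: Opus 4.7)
The plan is to prove the identity via the standard symmetrization trick for Fredholm determinants, exploiting the fact that $\sigma$ enters $K$ only as a multiplication operator sandwiched between two copies of the Airy integral transform. The target, in the form needed for (\ref{sym_F_eqn}), has $K_{\Ai}$ (rather than the generalized $K$) in the middle of $\hat K_s$, which is what the factorization will produce.

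Concretely, I would factor $K = A\,M_\sigma\,A$, where $M_\sigma$ denotes multiplication by $\sigma$ on $L^2(\mathbb{R})$ and $A$ is the self-adjoint operator on $L^2(\mathbb{R})$ with kernel $A(x,t)=\Ai(x+t)$. (This is immediate from (\ref{gen_int_int_ker}).) Rewriting the restriction as
\begin{equation*}
\det(I-K)_{L^2(s,\infty)} = \det(I-\chi_s A M_\sigma A \chi_s)_{L^2(\mathbb{R})},
\end{equation*}
with $\chi_s$ the projection onto $L^2([s,\infty))$, I would then split $M_\sigma=M_{\sqrt\sigma}M_{\sqrt\sigma}$ and apply the cyclic identity $\det(I-UV)=\det(I-VU)$ with $U=\chi_s A M_{\sqrt\sigma}$ and $V=M_{\sqrt\sigma}A\chi_s$, obtaining
\begin{equation*}
\det\!\bigl(I-M_{\sqrt\sigma}\,(A\chi_s A)\,M_{\sqrt\sigma}\bigr)_{L^2(\mathbb{R})}.
\end{equation*}
A direct calculation identifies the inner block as a shifted Airy kernel, $(A\chi_s A)(x,y)=\int_s^\infty \Ai(x+u)\Ai(y+u)\,du=K_{\Ai}(x+s,y+s)$. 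A unitary change of integration variable $x\mapsto x-s,\ y\mapsto y-s$ on the outer $L^2(\mathbb{R})$ preserves the determinant, translates the Airy kernel back to its standard location, and absorbs the shift into the weights. This produces exactly $\hat K_s(x,y)=\sqrt{\sigma(x-s)}\,K_{\Ai}(x,y)\,\sqrt{\sigma(y-s)}$, yielding the desired identity.

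The main technical obstacle is verifying trace-class regularity so that the cyclic determinant manipulation is legitimate at each step. Neither $A$ nor $K$ itself is Hilbert--Schmidt on $L^2(\mathbb{R})$ (indeed $A^2=I$), so the multiplier $\sqrt\sigma$ is essential. I would check that $U=\chi_s A M_{\sqrt\sigma}$ is Hilbert--Schmidt by computing
\begin{equation*}
\|U\|_2^2 \;=\; \int_s^\infty\!\!\int_{-\infty}^{\infty} \Ai(x+t)^2\,\sigma(t)\,dt\,dx,
\end{equation*}
and observing that the $x$-integral is controlled by $\int_{s+t}^\infty \Ai(u)^2\,du$, which decays super-exponentially as $t\to+\infty$ (from the $+\infty$ Airy asymptotics) and grows only like $|t|^{1/2}$ as $t\to-\infty$ (from the oscillatory regime), so the exponential decay of $\sigma$ at $-\infty$ ensures finiteness. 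The same estimate gives $V\in\mathcal{B}_2$, so $UV$ and $VU$ are in $\mathcal{B}_1$ and all determinants in the chain are well-defined. A parallel Hilbert--Schmidt factorization $\hat K_s = (M_{\sqrt{\sigma(\cdot-s)}}A\chi_0)(\chi_0 A M_{\sqrt{\sigma(\cdot-s)}})$ shows directly that $\hat K_s$ is trace-class on $L^2(\mathbb{R})$, which is needed for the right-hand side to make sense on its own.
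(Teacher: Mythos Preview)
Your proposal is correct and follows essentially the same route as the paper: factor $K$ as (Airy transform)$\cdot$(multiplication by $\sigma$)$\cdot$(Airy transform), split $\sigma=\sqrt\sigma\cdot\sqrt\sigma$, cycle using $\det(I-UV)=\det(I-VU)$, and identify the inner block $A\chi_s A$ as the shifted Airy kernel $K_{\Ai}(x+s,y+s)$ before translating. The paper's proof is exactly this argument in slightly different notation (their $L_{-\infty}$ is your $A$), though it omits the Hilbert--Schmidt verification you supply; your observation that the middle of $\hat K_s$ is $K_{\Ai}$ rather than the generalized $K$ is also what the paper's proof actually yields and what is needed for (\ref{sym_F_eqn}).
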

 
\begin{proof}
Define $L_s: L^2(s,\infty)\to L^2(-\infty,\infty)$ by
\begin{equation*}
(L_sf)(x) = \int_s^\infty  {\rm Ai}(x+y)f(y)dy.
\end{equation*}
Also define 
$\sigma:  L^2(-\infty,\infty)\to L^2(-\infty,\infty)$ by
$(\sigma f)(x) = \sigma(x) f(x)$,
and similarly 
$\chi_s:  L^2(-\infty,\infty)\to L^2(s,\infty)$ by
$(\chi_s f)(x) = \mathbf{1}(x\ge s) f(x)
$.
Then
\begin{equation*}
K = \chi_s L_{-\infty}\sigma L_s. 
\end{equation*}
We have 
\begin{equation*}
\det(I - K)_{L^2(s,\infty)} = \det(I - \tilde K_s)_{L^2(-\infty,\infty)}
\end{equation*}
where
\begin{equation*}
\tilde K_s = \sqrt{\sigma} L_s \chi_s L_{-\infty}\sqrt{\sigma}.
\end{equation*}
The key point is that
\begin{equation*}
L_s \chi_sL_{-\infty} (x,y) = K_{\Ai}(x+s,y+s) 
\end{equation*}
where $K_{\Ai}$ is the Airy kernel. One can also see now that this operator is self-adjoint on the real line.
\end{proof}

\subsection{Painlev\'{e} II type integro-differential equation}\label{integro_differential}
We now develop an integro-differential equation expression for $\det(I-K)_{L^2(s,\infty)}$. 
This provides the proof of Proposition \ref{prop2}.  

Recall that $F_{\mathrm{GUE}}(s)=\det(I-K_{\Ai})_{L^2(s,\infty)}$ can be expressed in terms of a non-linear version of the Airy function, known as Painlev\'{e} II, as follows. Let $q$ be the unique (Hastings-McLeod) solution to Painlev\'{e} II:
\begin{equation*}
\frac{d^2}{ds^2}q(s) = (s+2q^2(s))q(s)
\end{equation*}
subject to $q(s)\sim \Ai(s)$ as $s\rightarrow \infty$. Then
\begin{equation*}
 \frac{d^2}{ds^2} \log \det(I-K_{\Ai})_{L^2(s,\infty)} = q^2(s).
\end{equation*}
From this one shows that 
\begin{equation*}
 F_{\mathrm{GUE}}(s) = \exp \left( -\int_{s}^{\infty} (x-s) q^2(x)dx\right).
\end{equation*}
See  \cite{TWAiry} for details. We now show that an analogous expression exists for the class of operators described in (\ref{gen_int_int_ker}).

\begin{proposition}\label{pII_prop}
For $K$ in the class of operators with kernel as in (\ref{gen_int_int_ker}), let $q(t,s)$  be the solution to
\begin{equation}\label{5.15}
\frac{d^2}{ds^2} q_t(s) = \left(s+t + 2\int_{-\infty}^{\infty} \sigma^\prime(r)q_r^2(s)dr\right) q_t(s)
\end{equation}
subject to $q_t(s)\sim \Ai(t+s)$ as $s\rightarrow \infty$. Then we have
\begin{eqnarray}\label{sicks}
\frac{d^2}{ds^2} \log\det(I-K)_{L^2(s,\infty)} &=& \int_{-\infty}^{\infty} \sigma^\prime(t)q_t^2(s)dt,\\
\nonumber \det(I-K)_{L^2(s,\infty)} &=& \exp\left(-\int_s^{\infty}(x-s)\int_{-\infty}^{\infty} \sigma^\prime(t)q_t^2(x)dtdx\right)
\end{eqnarray}
\end{proposition}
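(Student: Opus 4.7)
My plan is to follow the Tracy--Widom derivation in \cite{TWAiry} of the Painlev\'e~II representation of $F_{\mathrm{GUE}}$, extending their rank-one integrable analysis to the one-parameter family of resolvent functions indexed by $t$. The starting point is to rewrite $K$ in the integrable-operator form (\ref{kernel}), obtained by integrating by parts in $t$ and using the Wronskian-type identity
\begin{equation*}
(y-x)\,\Ai(x+t)\Ai(y+t) \;=\; \partial_t\bigl[\Ai(x+t)\Ai'(y+t) - \Ai'(x+t)\Ai(y+t)\bigr],
\end{equation*}
together with the hypotheses $\sigma(-\infty)=0,\sigma(+\infty)=1$ and exponential decay of $\sigma'$. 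Writing $\varphi=\Ai,\psi=\Ai'$ and $\phi_t(x)=\varphi(x+t)$, $\psi_t(x)=\psi(x+t)$, I introduce the dressed functions
\begin{equation*}
Q_t(x;s) \;=\; \bigl((I-K)^{-1}\phi_t\bigr)(x),\qquad P_t(x;s) \;=\; \bigl((I-K)^{-1}\psi_t\bigr)(x),\qquad x\ge s,
\end{equation*}
and set $q_t(s)=Q_t(s;s)$, $p_t(s)=P_t(s;s)$. Standard Fredholm theory then gives $F'(s)= R(s,s;s)$ for $F(s):=\log\det(I-K)_{L^2(s,\infty)}$, where $R=K(I-K)^{-1}$ is the resolvent kernel, while the Its--Izergin--Korepin--Slavnov formalism applied to the integrable form of $K$ yields
\begin{equation*}
R(x,y;s) \;=\; \int_{-\infty}^\infty \sigma'(t)\,\frac{Q_t(x)P_t(y)-P_t(x)Q_t(y)}{x-y}\,dt.
\end{equation*}

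The next step is to derive a coupled system of ODEs for $q_t,p_t$ by exploiting two commutators of $K$. Let $D=d/dx$ and let $M$ denote multiplication by $x$. Integration by parts in $t$ gives the kernel identity $(\partial_x+\partial_y)K(x,y)=-\int\sigma'(r)\phi_r(x)\phi_r(y)dr$, while the Airy equation $\Ai''(z)=z\Ai(z)$ combined with the integrable form of $K$ gives $(x-y)K(x,y)=\int\sigma'(r)[\phi_r(x)\psi_r(y)-\psi_r(x)\phi_r(y)]dr$. Both translate, via the standard identity $[A,(I-K)^{-1}]=(I-K)^{-1}[A,K](I-K)^{-1}$ applied with $A=D$ and $A=M$, into integro-differential formulas for $\partial_xQ_t(x;s)$ and $\partial_xP_t(x;s)$ in terms of the $\{Q_r,P_r\}_{r\in\mathbb R}$, of scalar overlaps $\langle\phi_r,Q_t\rangle_{L^2(s,\infty)}$ and $\langle\psi_r,Q_t\rangle_{L^2(s,\infty)}$, and of a boundary contribution at $x=s$ proportional to $R(x,s;s)$. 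Combining these with the standard endpoint formula $\partial_s Q_t(x;s)=-R(x,s;s)q_t(s)$ (derived by differentiating the Fredholm equation $(I-K)Q_t=\phi_t$ in $s$), the $R(x,s;s)$ boundary terms telescope and one obtains
\begin{equation*}
\partial_s q_t(s) \;=\; p_t(s),\qquad \partial_s p_t(s) \;=\; \Bigl(s+t+2\int_{-\infty}^{\infty}\sigma'(r)\,q_r^2(s)\,dr\Bigr)q_t(s),
\end{equation*}
which is the claimed second-order equation. The coupling $2\int\sigma'(r)q_r^2(s)dr$ emerges when simplifying the overlap $\langle\phi_r,Q_t\rangle$ by applying $\partial_s$ and using $\partial_s Q_t=-R(\cdot,s)q_t$ to integrate once in $s$. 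The boundary condition $q_t(s)\sim\Ai(s+t)$ as $s\to\infty$ is immediate since the operator $K$ on $L^2(s,\infty)$ has vanishing trace norm as $s\to\infty$, so $(I-K)^{-1}\to I$ and $Q_t\to\phi_t$.

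Finally, evaluating the IIKS resolvent formula on the diagonal via L'H\^opital gives $R(s,s;s)=\int\sigma'(t)\bigl[p_t(s)\,\partial_xQ_t(x;s)|_{x=s}-q_t(s)\,\partial_xP_t(x;s)|_{x=s}\bigr]dt$, and substituting the formulas for $\partial_xQ_t,\partial_xP_t$ derived above, together with the ODE system for $q_t,p_t$, reduces the total derivative $\frac{d}{ds}R(s,s;s)$ to the desired scalar $\pm\int\sigma'(t)q_t^2(s)dt$; this is the generalization of the Tracy--Widom identity $\frac{d}{ds}R(s,s;s)=-q(s)^2$ for the Airy kernel. Integrating twice in $s$, using the trivial boundary values $F(+\infty)=0$ and $F'(+\infty)=0$, produces the integrated formula (\ref{sicks}) for $\det(I-K)_{L^2(s,\infty)}$. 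The principal obstacle will be the bookkeeping of boundary contributions in the two commutator applications, where several terms proportional to $R(x,s;s)q_t(s)$ and $R(x,s;s)p_t(s)$ must cancel exactly against the endpoint derivatives $\partial_sQ_t,\partial_sP_t$; uniform tail control in $t$ of $q_t(s),p_t(s)$, inherited from the Airy decay together with the exponential decay of $\sigma'$ at $\pm\infty$, is what justifies the interchanges of $t$-integration with $s$- and $x$-differentiation throughout.
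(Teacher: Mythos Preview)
Your overall strategy matches the paper's: both follow Tracy--Widom by introducing the dressed functions $Q_t=(I-K)^{-1}\tau_t\varphi$, $P_t=(I-K)^{-1}\tau_t\psi$, computing the commutators $[M,K]$ and $[D,K]$, and using them to derive differential equations in $s$ that close on the boundary data $q_t,p_t$. The IIKS representation of $R(x,y)$ and the final step $\frac{d}{ds}R(s,s)=-\int\sigma'(t)q_t^2\,dt$ are also exactly as in the paper.

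However, the first-order system you write down is not what the commutator calculation actually produces, and this is where the real work lies. After the $R(x,s)$ boundary terms cancel one does \emph{not} obtain $\partial_s q_t=p_t$; one obtains
\[
\frac{dq_t}{ds}=p_t-\int_{-\infty}^{\infty}\sigma'(\tilde t)\,q_{\tilde t}\,u_{t,\tilde t}\,d\tilde t,
\qquad
\frac{dp_t}{ds}=(s+t)q_t+\int_{-\infty}^{\infty}\sigma'(\tilde t)\bigl\{p_{\tilde t}u_{t,\tilde t}-q_{\tilde t}[v_{t,\tilde t}+v_{\tilde t,t}]\bigr\}d\tilde t,
\]
where $u_{t,\tilde t}=\langle \tau_t\varphi,Q_{\tilde t}\rangle$ and $v_{t,\tilde t}=\langle \tau_t\varphi,P_{\tilde t}\rangle$ are two-parameter overlap functions. (The same extra term appears already in the rank-one Airy case, where $q'=p-qu$, not $q'=p$.) These overlaps cannot be eliminated by ``integrating once in $s$'' as you suggest; the paper instead derives the additional first-order equations $\frac{d}{ds}u_{t,\tilde t}=-q_tq_{\tilde t}$ and $\frac{d}{ds}v_{t,\tilde t}=-q_tp_{\tilde t}$, and then identifies the conserved quantity (the \emph{first integral})
\[
\int_{-\infty}^{\infty}\sigma'(t')\,u_{t,t'}u_{t',\tilde t}\,dt'-\bigl[v_{t,\tilde t}+v_{\tilde t,t}\bigr]=q_tq_{\tilde t}.
\]
It is only after substituting this first integral into the expression for $q_t''$ that the $u$'s and $v$'s collapse and the closed equation (\ref{5.15}) emerges with the coupling $2\int\sigma'(r)q_r^2\,dr$. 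Your proposal skips this entire mechanism, so as written the derivation of (\ref{5.15}) has a gap at precisely the step that distinguishes the $\sigma'$-smeared problem from the classical Airy case.
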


\begin{proof}
As mentioned, we follow the work of Tracy and Widom \cite{TWAiry} very closely, and make the necessary modifications to our present setting. Consider an operator $K$ of the type described in (\ref{gen_int_int_ker}). We use the notation $K\doteq
K(x,y)$ to indicate that operator $K$ has kernel $K(x,y)$. 
It will  be convenient to think of our operator $K$ as acting, not on
$(s,\infty)$, but
on $(-\infty,\infty)$ and to have kernel
\begin{equation*} 
K(x,y) \chi_s(y) 
\end{equation*}
where $\chi$ is the characteristic function of $(s,\infty)$.  Since the integral
operator $K$ is trace class and depends smoothly on
the parameter $s$, we have  the well known formula
\begin{equation}\label{dLog}
\frac{d}{ds}\log\det\left(I-K\right)=-\textrm{tr}\left(\left(I-K\right)^{-1}
 \frac{\partial K}{\partial s}\right).
\end{equation}
By calculus
\begin{equation}\label{Kderiv}
\frac{\partial K}{\partial s}\doteq -K(x,s)\delta(y-s). 
\end{equation}
Substituting this into the above expression gives
\begin{equation*} 
\frac{d}{ds} \log\det\left(I-K\right)= - R(s,s)
\end{equation*}
where $R(x,y)$ is the resolvent kernel of $K$, i.e.\  $R=(I-K)^{-1}K\doteq
R(x,y)$. The resolvent
kernel $R(x,y)$ is smooth in $x$ but discontinuous in $y$ at $y=s$.  The
quantity
$R(s,s)$ is interpreted to mean the limit of $R(s,y)$ as $y$ goes to $s$ from above:
\begin{equation*} 
\lim_{y\rightarrow s^+}R(s,y).
\end{equation*}

\subsubsection{Representation for $R(x,y)$}
If $M$ denotes the multiplication operator, $(Mf)(x)=x f(x)$,  then
\begin{eqnarray*} \nonumber\left[M,K\right] & \doteq & x K(x,y)- K(x,y) y  = (x-y) K(x,y) \\ &=& \int_{-\infty}^{\infty} \sigma'(t)\{
 \varphi(x+t) \psi(y+t) - \psi(x+t) \varphi(y+t)\} dt \end{eqnarray*}
where $\varphi(x) = {\rm Ai}(x)$ and $\psi(x)= {\rm Ai}'(x)$.   As an operator equation this is
\begin{equation*} 
\left[M,K\right]=\int_{-\infty}^{\infty} \sigma'(t) \{\tau_t\varphi\otimes \tau_t\psi - \tau_t\psi\otimes \tau_t\varphi\} dt, 
\end{equation*}
where   $a\otimes b\doteq a(x) b(y)$ and $\left[\cdot,\cdot\right]$
denotes the commutator. The operator $\tau_{t}$ acts as $(\tau_{t}f)(x) = f(x+t)$.
Thus
\begin{eqnarray}\label{comm1}
\left[M,\left(I-K\right)^{-1}\right]&=&\left(I-K\right)^{-1} \left[M,K\right]
 \left(I-K\right)^{-1}
 \nonumber \\
 	&=&\int\sigma'(t)\{ \left(I-K\right)^{-1}\left(\tau_t\varphi\otimes \tau_t\psi - \tau_t\psi\otimes
\tau_t\varphi\right)
	\left(I-K\right)^{-1}\} dt\nonumber\\
	&=&\int\sigma'(t)\{ Q_t\otimes P_t - P_t\otimes Q_t \} dt,
	\end{eqnarray}
where we have introduced
\begin{equation*}
Q_t(x;s)=Q_t(x)= \left(I-K\right)^{-1} \tau_t\varphi \ \ \ \textrm{and} \ \ \
P_t(x;s)=P_t(x)=
 \left(I-K\right)^{-1}
\tau_t\psi.
\end{equation*}
Note an important point here that as $K$ is self-adjoint we can use the
transformation $\tau_t\varphi\otimes \tau_t\psi(I-K)^{-1}=  \tau_t\varphi\otimes (I-K)^{-1}\tau_t\psi$.  
On the other hand since $(I-K)^{-1}\doteq \rho(x,y)=\delta(x-y)+R(x,y)$,
\begin{equation}\label{comm2}
 \left[M,\left(I-K\right)^{-1}\right]\doteq (x-y)\rho(x,y)=(x-y) R(x,y). 
\end{equation}
Comparing (\ref{comm1}) and (\ref{comm2}) we see that
\begin{equation*}
R(x,y) = \int_{-\infty}^{\infty} \sigma'(t) \{ \frac{Q_t(x) P_t(y) - P_t(x) Q_t(y)}{x- y}\} dt, \ \ x,y\in (s,\infty). 
\end{equation*}
Taking $y\rightarrow x$ gives
\begin{equation*}
R(x,x)= \int_{-\infty}^{\infty} \sigma'(t) \{Q_t^\prime(x) P_t(x) - P_t^\prime(x) Q_t(x)\} dt 
\end{equation*}
 where the ${}^\prime$ denotes differentiation with respect to $x$.
 
Introducing
\begin{equation*}
q_t(s)=Q_t(s;s) \ \ \ \textrm{and} \ \ \ p_t(s) = P_t(s;s), 
\end{equation*}
we have
\begin{equation}\label{RDiag} 
R(s,s) = \int_{-\infty}^{\infty} \sigma'(t) \{ Q_t^\prime(s;s) p_t(s) - P_t^\prime(s;s) q_t(s)\} dt,\ \  s<x,y<\infty.
\end{equation}

\subsubsection{Formulas for $Q_t^\prime(x)$ and $P_t^\prime(x)$}
As we just saw, we need expressions for $Q_t^\prime(x)$ and $P_t^\prime(x)$. If
$D$ denotes the differentiation operator, $d/dx$,  then
\begin{eqnarray}\label{Qderiv1}
Q_t^\prime(x;s)&=& D \left(I-K\right)^{-1} \tau_t\varphi \nonumber=  \left(I-K\right)^{-1} D\tau_t\varphi +
\left[D,\left(I-K\right)^{-1}\right]\tau_t\varphi\nonumber\\
&=& \left(I-K\right)^{-1} \tau_t\psi +
\left[D,\left(I-K\right)^{-1}\right]\tau_t\varphi\nonumber\\
&=& P_t(x) + \left[D,\left(I-K\right)^{-1}\right]\tau_t\varphi. 
\end{eqnarray}

We need the commutator
\begin{equation*}
\left[D,\left(I-K\right)^{-1}\right]=\left(I-K\right)^{-1} \left[D,K\right]
\left(I-K\right)^{-1}. 
\end{equation*}
Integration by parts shows
\begin{equation*} 
\left[D,K\right] \doteq \left( \frac{\partial K}{\partial x} + \frac{\partial K}{\partial y}\right)
+ K(x,s) \delta(y-s). 
\end{equation*}
The $\delta$ function comes from differentiating the
characteristic function $\chi$.
Using the specific form for $\varphi$ and $\psi$  ($\varphi^\prime=\psi$,
$\psi^\prime=x\varphi$),
\begin{equation*} 
\left( \frac{\partial K}{\partial x} + \frac{\partial K}{\partial y}\right) =
\int_{-\infty}^{\infty}\sigma'(t)\tau_{t}\varphi(x) \tau_{t}\varphi(y)dt. \end{equation*}
Thus
\begin{equation}\label{DComm} 
\left[D,\left(I-K\right)^{-1}\right]\doteq - \int_{-\infty}^{\infty}\sigma'(t) Q_t(x) Q_t(y)dt + R(x,s) \rho(s,y).
\end{equation}
(Recall $(I-K)^{-1}\doteq \rho(x,y)$.)  We now use this in (\ref{Qderiv1})
\begin{eqnarray*}
 Q_t^\prime(x;s)&=&P_t(x) -\int_{-\infty}^{\infty} \sigma'(\tilde t) Q_{\tilde t}(x) \left(Q_{\tilde t},\tau_t\varphi\right) d\tilde t+ R(x,s) q_t(s) \\
 \nonumber&=& P_t(x) - \int_{-\infty}^{\infty} \sigma'(\tilde t)Q_{\tilde t}(x) u_{t,\tilde t}(s) + R(x,s) q_t(s)
 \end{eqnarray*}
 where the inner product $\left(Q_{\tilde t},\tau_t\varphi\right)$ is denoted by $u_{t,\tilde t}(s)$ and  $u_{t,\tilde t}(s)=u_{\tilde t,t}(s)$.
 Evaluating  at $x=s$  gives
\begin{equation*}
 Q_t^\prime(s;s) = p_t(s) - \int_{-\infty}^{\infty} \sigma'(\tilde t)q_{\tilde t}(s) u_{t,\tilde t}(s) +R(s,s) q_t(s).  
\end{equation*}
We now apply the same procedure to compute $P^\prime$ encountering the one new feature that since $\psi^\prime(x)=x\varphi(x)$ we need to introduce an additional commutator term.
 \begin{eqnarray*}
\nonumber P_t^\prime(x;s)&=& D \left(I-K\right)^{-1} \tau_t\psi= \left(I-K\right)^{-1} D\tau_t\psi + \left[D,\left(I-K\right)^{-1}\right]\tau_t\psi\\
&=& (M+t) \left(I-K\right)^{-1} \tau_t\varphi +
\left[\left(I-K\right)^{-1},M\right]\tau_t\varphi+
 \left[D,\left(I-K\right)^{-1}\right]\tau_t\psi.
\end{eqnarray*}
Writing it explicitly, we get $(x+t) Q_t(x) +R(x,s) p_t(s) + \Xi$ where
\begin{eqnarray*}
 \nonumber \Xi &=& \int_{-\infty}^{\infty} \sigma'(\tilde t)\left(P_{\tilde t}\otimes Q_{\tilde t}-Q_{\tilde t}\otimes P_{\tilde t}\right)\tau_t\varphi d\tilde t-\int_{-\infty}^{\infty}\sigma(\tilde t) (Q_{\tilde t}\otimes Q_{\tilde t})\tau_t\psi d\tilde t\\
&=&  \int_{-\infty}^{\infty} \sigma'(\tilde t)\left\{ 
 P_{\tilde t}(x)\left(Q_{\tilde t},\tau_t\varphi\right) -  Q_{\tilde t}(x) \left(P_{\tilde t},\tau_t\varphi\right)- Q_{\tilde t}(x) \left(Q_{\tilde t},\tau_t\psi\right)\right\}d\tilde t\\
 \nonumber&=&  \int_{-\infty}^{\infty} \sigma'(\tilde t)\left\{ 
 P_{\tilde t}(x)u_{t,\tilde t}(s)-  Q_{\tilde t}(x) v_{t,\tilde t}(s)- Q_{\tilde t}(x) v_{\tilde t,t}(s)\right\}d\tilde t,
 \end{eqnarray*}
with the notation $v_{t,\tilde t}(s)=\left(P_{\tilde t},\tau_t\varphi\right)=\left(\tau_{\tilde t}\psi,Q_t\right)$.
 Evaluating  at $x=s$ gives
 \begin{eqnarray*}
P^{\prime}(s;s) & = & (s+t) q_t(s) + \int_{-\infty}^{\infty} \sigma'(\tilde t)\left\{ 
 p_{\tilde t}(s)u_{t,\tilde t}(s)-  q_{\tilde t}(s) v_{t,\tilde t}(s)- q_{\tilde t}(s) v_{\tilde t,t}(s)\right\}d\tilde t \nonumber
\\&&  +R(s,s) p_t(s).
\end{eqnarray*}
 Using this and the expression for $Q^\prime(s;s)$ in (\ref{RDiag})
 gives,
 \begin{equation*}
 R(s,s)= \int_{-\infty}^{\infty} \sigma'(t) \{ p_t^2-s q_t^2  -\int_{-\infty}^{\infty} \sigma'(\tilde t) \{  [q_{\tilde t} p_t+p_{\tilde t} q_t] u_{t,\tilde t}  -q_{\tilde t}q_{ t}[ v_{t,\tilde t}+ v_{\tilde t, t}]\} \}d\tilde t dt.
\end{equation*}
 \subsubsection{First order equations for $q$, $p$, $u$ and $v$}
 By the chain rule
 \begin{equation}\label{qDeriv}
 \frac{dq_t}{ds} = \left( \frac{\partial}{\partial x}+\frac{\partial}{\partial
s}\right) Q_t(x;s)\left\vert_{x=s}. \right.
\end{equation}
 We have already computed the partial of $Q(x;s)$ with respect to $x$.
 The partial with respect to $s$ is, using (\ref{Kderiv}),
 \begin{equation*}
  \frac{\partial}{\partial s} Q_t(x;s)= \left(I-K\right)^{-1} \frac{\partial K}{\partial s}
   \left(I-K\right)^{-1}\tau_t \varphi= - R(x,s) q_t(s).
  \end{equation*}
 Adding the two partial derivatives  and
evaluating at $x=s$
  gives,
 \begin{equation}\label{qEqn}  
 \frac{dq_t}{ds} = p_t - \int_{-\infty}^{\infty} \sigma'(\tilde t)q_{\tilde t} u_{t,\tilde t} d\tilde t. 
\end{equation}
  A similar calculation gives,
  \begin{equation*}
 \frac{dp}{ds}=  (s+t) q_t + \int_{-\infty}^{\infty} \sigma'(\tilde t)\left\{ 
 p_{\tilde t}u_{t,\tilde t}-  q_{\tilde t} [v_{t,\tilde t}+ v_{\tilde t,t}]\right\}d\tilde t.
\end{equation*}
  We derive first order differential equations for  $u$ and $v$  by
differentiating
  the inner products.  $ u_{t,\tilde t}(s) = \int_s^\infty \tau_t\varphi(x) Q_{\tilde t}(x;s)\, dx$,
  \begin{eqnarray*}
  \frac{du_{t,\tilde t}}{ds}&=& -\tau_t\varphi(s) q_{\tilde t}(s) + \int_s^\infty \tau_t\varphi(x) \frac{\partial Q_{\tilde t}(x;s)}{\partial s}\, dx \\
  &=& -\left(\tau_t\varphi(s)+\int_s^\infty R(s,x) \tau_t\varphi(x)\,dx\right) q_{\tilde t}(s)\\
  &=& -\left(I-K\right)^{-1} \tau_t\varphi(s) \, q_{\tilde t}(s)= - q_tq_{\tilde t}.
  \end{eqnarray*}
   Similarly,
  $\frac{dv_{t,\tilde t}}{ds} = - q_tp_{\tilde t}.$
  \subsubsection{Integro-differential equation for $q_t$}
  From the first order differential equations for $q_t$, $u_t$ and $v_{t,\tilde t}$ it follows
  that the derivative in $s$ of
$\int_{-\infty}^{\infty} \sigma'(t') u_{t,t'}u_{t',\tilde t} dt' - [ v_{t,\tilde t}+v_{\tilde t, t}] -q_tq_{\tilde t}    
$
 is zero.  Examining the behavior near $s=\infty$ to check that
  the constant of integration is zero then gives,
  \begin{equation*}  
\int_{-\infty}^{\infty} \sigma'(t') u_{t,t'}u_{t',\tilde t} dt' - [ v_{t,\tilde t}+v_{\tilde t, t}]=q_tq_{\tilde t} , 
\end{equation*}
  a \textit{first integral}.
 Differentiate (\ref{qEqn}) with respect to $s$, to get
 \begin{equation*}
 q_t''= (s+t)q_t + \int_{-\infty}^{\infty} \sigma'(\tilde t) \Big\{ \int_{-\infty}^{\infty} \sigma'(t') q_{t'} u_{\tilde t, t'} dt' u_{t,\tilde t} - q_{\tilde t}[ v_{t,\tilde t} + v_{\tilde t, t} ] 
 + q_t q_{\tilde t}^2 \Big\} d\tilde t
 \end{equation*}
  and then use the  first integral to deduce that $q$ satisfies   (\ref{5.15}).
  
   Since the kernel of $[D,(I-K)^{-1}]$ is $(\partial/\partial
x+\partial/\partial y)R(x,y)$,
  (\ref{DComm}) says
  \begin{equation*} 
\left(\frac{\partial}{\partial x}+\frac{\partial}{\partial
y}\right)R(x,y)=- \int_{-\infty}^{\infty}\sigma'(t) Q_t(x) Q_t(y)dt + R(x,s) \rho(s,y). 
\end{equation*}
  In computing $\partial Q(x;s)/\partial s$ we showed that
  \begin{equation*} 
\frac{\partial}{\partial s} \left(I-K\right)^{-1}\doteq \frac{\partial}{\partial
s}R(x,y) = -R(x,s)\rho(s,y). 
\end{equation*}
  Adding these two expressions,
  \begin{equation*} 
\left(\frac{\partial}{\partial x}+\frac{\partial}{\partial y}+
  \frac{\partial}{\partial s}\right)R(x,y)=- \int_{-\infty}^{\infty}\sigma'(t) Q_t(x) Q_t(y)dt , 
\end{equation*}
  and then evaluating at $x=y=s$ gives
  \begin{equation*}
 \frac{d}{ds}R(s,s)=- \int_{-\infty}^{\infty}\sigma'(t)q_t^2(s) dt.
\end{equation*}
  Hence 
$ q_t^{\prime\prime}=\Big\{ s+t   - 2R'\Big\}  q_t.$
  Integrating and recalling (\ref{dLog}) gives,
   \begin{equation*} 
\frac{d}{ds}\log\det\left(I-K\right)=-\int_s^\infty \int_{-\infty}^{\infty}\sigma'(t)q_t^2(x) dt \, dx; 
\end{equation*}
  and hence,
  \begin{equation*} 
\log\det\left(I-K\right)=-\int_s^\infty\left(\int_y^\infty
\int_{-\infty}^{\infty}\sigma'(t)q_t^2(x) dt\,dx\right)\, dy.
\end{equation*}
Rearranging gives (\ref{sicks}).
This completes the proof of Proposition \ref{pII_prop}.
\end{proof}

\section{Proofs of Corollaries to Theorem \ref{main_result_thm}}\label{corollary_sec}
\subsection{Large time $F_{GUE}$ asymptotics (Proof of Corollary \ref{TW})}\label{twasymp}
We describe how to turn the idea described after Corollary \ref{TW} into a rigorous proof.
The first step is to cut the $\tilde\mu$ contour off outside of a compact region around the origin. Proposition \ref{reinclude_mu_lemma} shows that for a fixed $T$, the tail of the $\tilde\mu$ integrand is exponentially decaying in $\tilde\mu$. A quick inspection of the proof shows that increasing $T$ only further speeds up the decay. This justifies our ability to cut the contour at minimal cost. Of course, the larger the compact region, the smaller the cost (which goes to zero).

We may now assume that $\tilde\mu$ is on a compact region. We will show the following critical point: that $\det(I-K_{a}^{\csc})_{L^2(\tilde\Gamma_{\eta})}$ converges (uniformly in $\tilde\mu$) to the Fredholm determinant with kernel
\begin{equation}\label{limiting_kernel}
\int_{\Gamma_{\tilde\zeta}}e^{-\frac{1}{3}(\tilde\zeta^3-\tilde\eta'^3)+2^{1/3}s (\tilde\zeta-\tilde\eta')} \frac{d\tilde\zeta}{(\zeta-\eta')(\zeta-\eta)}.
\end{equation}
This claim shows that we approach, uniformly, a limit which is independent of $\tilde\mu$. Therefore, for large enough $T$ we may make the integral arbitrarily close to the integral of $\frac{e^{-\tilde\mu}}{\tilde\mu}$ times the above determinant (which is independent of $\tilde\mu$), over the cutoff $\tilde\mu$ contour. The $\tilde\mu$ integral approaches $1$ as the contour cutoff moves towards infinity, and the determinant is equal to $F_{\mathrm{GUE}}(2^{1/3}s)$ which proves the corollary. A remark worth making is that the complex contours on which we are dealing are not the same as those of \cite{TW3}, however, owing to the decay of the kernel and the integrand (in the kernel definition), changing the contours to those of \cite{TW3} has no effect on the determinant.

All that remains, then, is to prove the uniform convergence of the Fredholm determinant claimed above.

The proof of the claim follows in a rather standard manner. We start by taking a change of variables in the equation for $K_{a}^{\csc}$ in which we replace $\tilde\zeta$ by $T^{-1/3}\tilde\zeta$ and likewise for $\tilde\eta$ and $\tilde\eta'$. The resulting kernel is then given by
\begin{equation*}
T^{-1/3} \int_{\tilde\Gamma_{\zeta}} e^{-\frac{1}{3}(\tilde\zeta^3-\tilde\eta'^3)+2^{1/3}(s+a')(\tilde\zeta-\tilde\eta')} \frac{ \pi 2^{1/3}(-\tilde\mu)^{-2^{1/3}T^{-1/3}(\tilde\zeta-\tilde\eta')}}{\sin(\pi 2^{1/3}T^{-1/3}(\tilde\zeta-\tilde\eta'))} \frac{d\tilde\zeta}{\tilde\zeta-\tilde\eta}.
\end{equation*}
Notice that the $L^2$ space as well as the contour of $\tilde\zeta$ integration should have been dilated by a factor of $T^{1/3}$. However, it is possible (using Lemma \ref{TWprop1}) to show that we may deform these contours back to their original positions without changing the value of the determinant. We have also used the fact that $a=T^{1/3}s-\log\sqrt{2\pi T}$ and hence $T^{-1/3}a = s+a'$ where $a'=-T^{-1/3}\log\sqrt{2\pi T}$.

We may now factor this, just as in Proposition \ref{reinclude_mu_lemma}, as $AB$ and likewise we may factor our limiting kernel (\ref{limiting_kernel}) as $K'=A'B'$ where
\begin{eqnarray*}
&& A(\tilde\zeta,\tilde\eta)  = \frac{e^{-|\im(\tilde\zeta)|}}{\tilde\zeta-\tilde\eta}\\
\nonumber  B(\tilde\eta,\tilde\zeta) &= &e^{|\im(\tilde\zeta)|} e^{-\frac{1}{3}(\tilde\zeta^3-\tilde\eta^3)+2^{1/3}(s+a')(\tilde\zeta-\tilde\eta)} \frac{\pi2^{1/3}T^{-1/3} (-\tilde\mu)^{-2^{1/3}T^{-1/3}(\tilde\zeta-\tilde\eta)}}{ \sin(\pi 2^{1/3}T^{-1/3}(\tilde\zeta-\tilde\eta))}
\end{eqnarray*}
and similarly 
\begin{eqnarray*}
A'(\tilde\zeta,\tilde\eta) &=& \frac{e^{-|\im(\tilde\zeta)|}}{\tilde\zeta-\tilde\eta}\\
\nonumber B'(\tilde\eta,\tilde\zeta) &=& e^{|\im(\tilde\zeta)|} e^{-\frac{1}{3}(\tilde\zeta^3-\tilde\eta'^3)+2^{1/3}s)(\tilde\zeta-\tilde\eta')} \frac{1}{\tilde\zeta-\tilde\eta}
\end{eqnarray*}
Notice that $A=A'$. Now we use the estimate 
\begin{equation*}
|\det(I-K_a^{\csc})-\det(I-K')|\leq ||K_a^{\csc}-K'||_1 \exp\{1+||K_a^{\csc}||_1+||K'||_1\}. 
\end{equation*}
Observe that $||K_a^{\csc}-K'||_1\leq ||AB-AB'||_1\leq ||A||_2||B-B'||_2$. Therefore it suffices to show that $||B-B'||_2$ goes to zero (the boundedness of the trace norms in the exponential also follows from this). 
This is an explicit calculation and is easily made by taking into account the decay of the exponential terms, and the fact that $a'$ goes to zero. The uniformness of this estimate for compact sets of $\tilde\mu$ follows as well. This completes the proof of Corollary \ref{TW}.

\subsection{Small time Gaussian asymptotics }\label{Gaussian_asymptotics}
\begin{proposition}
As $T\beta^{4} \searrow0$, $2^{1/2}\pi^{-1/4}\beta^{-1}T^{-1/4}\mathcal{F}_\beta(T,X)$ converges in distribution to a standard Gaussian.
\end{proposition}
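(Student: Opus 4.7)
By the scaling identity $\mathcal{F}_\beta(T,X)\stackrel{d}{=}\mathcal{F}(\beta^4T,\beta^2X)$ and the stationarity of $\mathcal{F}(T,\cdot)$ in $X$, it suffices to show that
$$2^{1/2}\pi^{-1/4}T^{-1/4}\mathcal{F}(T,0)\xrightarrow{d}\mathcal{N}(0,1)\qquad\text{as }T\searrow 0.$$
The plan is to expand $\mathcal{Y}:=e^{\mathcal{F}(T,0)}=\mathcal{Z}(T,0)/p(T,0)$ in Wiener chaos, identify the first order term as the leading Gaussian, and verify that the remainder is negligible at scale $T^{1/4}$.

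From the chaos expansion (\ref{nine}), $\mathcal{Y}=1+\sum_{n\ge 1}(-1)^nJ_n$ with
$J_n=\int_{\Delta_n(T)}\int_{\mathbb R^n}p_{t_1,\ldots,t_n}\,\mathscr{W}(dt_1dx_1)\cdots\mathscr{W}(dt_ndx_n)$.
The one-point bridge density $p_t(x)=g(t,x)g(T-t,x)/g(T,0)$ (with $g$ the standard heat kernel) is Gaussian in $x$ with variance $t(T-t)/T$, so
$$\mathrm{Var}(J_1)=\int_0^T\!\int_{\mathbb R}p_t(x)^2\,dx\,dt=\frac{\sqrt{T}}{2\sqrt{\pi}}\int_0^T\frac{dt}{\sqrt{t(T-t)}}=\frac{\sqrt{\pi T}}{2},$$
and hence $2^{1/2}\pi^{-1/4}T^{-1/4}J_1$ is already exactly $\mathcal{N}(0,1)$. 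For $n\ge 2$, the Markov factorisation $p_{t_1,\ldots,t_n}(x_1,\ldots,x_n)=\prod_{i=0}^{n}g(t_{i+1}-t_i,x_{i+1}-x_i)/g(T,0)$ (with $t_0=x_0=x_{n+1}=0$, $t_{n+1}=T$) combined with $\int g(s,y)^2dy=(4\pi s)^{-1/2}$ yields $\|J_n\|_2^2\lesssim(n!)^{-1/2}T^{n/2}$, in line with the bound recorded just after (\ref{nine}). Summing gives $\|\mathcal{Y}-1+J_1\|_2=O(T^{1/2})$ and therefore $\|\mathcal{Y}-1\|_2=O(T^{1/4})$.

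To transfer this estimate to $\mathcal{F}=\log\mathcal{Y}$, I would work on the event $A_T=\{|\mathcal{Y}-1|\le 1/2\}$. By Chebyshev, $\mathbb{P}(A_T^c)\le 4\|\mathcal{Y}-1\|_2^2=O(T^{1/2})\to 0$. On $A_T$ the elementary inequality $|\log(1+u)-u|\le u^2$ (valid for $|u|\le 1/2$) gives
$$|\mathcal{F}+J_1|\le|\mathcal{F}-(\mathcal{Y}-1)|+|\mathcal{Y}-1+J_1|\le(\mathcal{Y}-1)^2+|\mathcal{Y}-1+J_1|.$$
Gaussian hypercontractivity applied chaos-by-chaos produces $\|\mathcal{Y}-1\|_4=O(T^{1/4})$, and hence $\|(\mathcal{Y}-1)^2\|_2=\|\mathcal{Y}-1\|_4^2=O(T^{1/2})$. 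Combined with the tail estimate from the previous paragraph, $\|\mathbf{1}_{A_T}(\mathcal{F}+J_1)\|_2=O(T^{1/2})$, and Chebyshev yields $\mathbb{P}(\{|\mathcal{F}+J_1|>\varepsilon T^{1/4}\}\cap A_T)=O(T^{1/2}/\varepsilon^2)\to 0$. Together with $\mathbb{P}(A_T^c)\to 0$, this gives $T^{-1/4}(\mathcal{F}+J_1)\to 0$ in probability. Since $-J_1\stackrel{d}{=}J_1$ by the symmetry of white noise, Slutsky's theorem completes the proof.

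The main technical point is the fourth-moment control $\|\mathcal{Y}-1\|_4=O(T^{1/4})$ needed to bound the quadratic Taylor remainder of the logarithm; this is where Gaussian hypercontractivity enters. The event-splitting on $A_T$ is what lets us avoid any a priori moment estimate on $\mathcal{F}=\log\mathcal{Z}-\log p$, since on the small-probability bad event $A_T^c$ we rely only on the $L^2$ concentration of $\mathcal{Y}$ around $1$ rather than on negative moments of $\mathcal{Z}$.
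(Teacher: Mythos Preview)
Your proof is correct and follows essentially the same strategy as the paper: identify the $n=1$ term in the Wiener chaos expansion of $e^{\mathcal F}$ as the Gaussian limit, bound the $n\ge 2$ tail by $O(T^{1/2})$ in $L^2$, and then pass from $\mathcal Y=e^{\mathcal F}$ back to $\mathcal F$. Your initial reduction to $\beta=1$, $X=0$ via scaling and stationarity is a nice simplification the paper does not make explicit.

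The one genuine methodological difference is in how the logarithm is handled. The paper exploits monotonicity: it rewrites the event as
\[
\{\mathcal F_\beta\le c s\}=\{\mathcal Y_\beta-1\le e^{cs}-1\},
\]
uses $e^{cs}-1=cs+O(\beta^2T^{1/2})$, and disposes of the higher-chaos remainder $\beta^2T^{1/2}\Omega$ by a single Chebyshev step. This requires only the $L^2$ bound on $\Omega$ and nothing more. You instead Taylor-expand $\log(1+u)$ on the good event $\{|\mathcal Y-1|\le 1/2\}$ and control the quadratic remainder $(\mathcal Y-1)^2$ via $\|\mathcal Y-1\|_4=O(T^{1/4})$, which you obtain from chaos-by-chaos hypercontractivity. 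This is correct (the series $\sum_{n\ge 1}3^{n/2}(n!)^{-1/4}T^{n/4}$ converges for small $T$), but it is a heavier tool than the paper needs. The paper's exponentiation trick buys simplicity at no cost; your route buys nothing extra here, though the hypercontractivity/event-splitting template is of independent interest when one cannot simply exponentiate the threshold.
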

\begin{proof}  We have from (\ref{nine}),
\begin{equation*}
\mathcal{F}_\beta(T,X) = \log \left(1+ \beta T^{1/4} G(T,X)  + \beta^2 T^{1/2} \Omega(\beta,T,X)\right)
\end{equation*}
where
\begin{equation*}G(T,X) = T^{-1/4}\int_0^T \int_{-\infty}^\infty \frac{ p(T-S,X-Y)p(S,Y)}{p(T,X)} \mathscr{W}(dY,dS)
\end{equation*}
and
\begin{equation*}
\Omega(\beta,T,X) = T^{-1/2}
\sum_{n=2}^\infty \int_{\Delta_n(T)} \int_{\mathbb R^n}(-\beta)^{n-2}
p_{ t_1, \ldots, t_n}(x_1,\ldots,x_n) \mathscr{W} (dt_1 dx_1) \cdots \mathscr{W} (dt_n dx_n).
\end{equation*}
It is elementary to show that for each $T_0<\infty$ there is a $C=C(T_0)<\infty$ such that, for $T<T_0$
\begin{equation*}
E[\Omega^2(\beta,T,X)] \le C.
\end{equation*}
$G(T,X)$ is Gaussian and 
\begin{equation*}
E[ G^2(T,X)] = T^{-1/2}\int_0^T \int_{-\infty}^\infty \frac{ p^2(T-S,X-Y)p^2(S,Y)}{p^2(T,X)} dYdS
= \frac12 \sqrt{\pi }.
\end{equation*}
Hence by Chebyshev's inequality,
\begin{eqnarray*}
F_T(2^{-1/2}\pi^{1/4} \beta T^{1/4} s)  & =  & P( \beta T^{1/4} G(T,X)  + \beta^2 T^{1/2} \Omega(\beta,T,X)\le e^{2^{-1/2}\pi^{1/4} \beta T^{1/4} s}-1)
\nonumber
\\ & = & \int_{-\infty}^s \frac{e^{-x^2/2}}{\sqrt{2\pi}} dx + O( \beta T^{1/4}).\end{eqnarray*}
\end{proof}

        

\appendix
\section{Analytic properties of Fredholm Determinants}\label{PD_appendix}
The following appendix addresses the question of analytic properties of Fredholm determinants and is based on communications from Percy Deift. For a general discussion of Fredholm determinants see \cite{BS:book,RS:book}.

Suppose $A(z)$ is an analytic map from the region $D\in \C$ into the trace-class operators on a (separable) Hilbert space $\Hi$. Then we have the following result.

\begin{theorem}\label{Fredholm_Analytic}
 With $A:D\rightarrow \mathcal{B}_1(\Hi)$ as above, the map
$
z\mapsto \det(1+A(z))$
is analytic on $D$ and
\begin{eqnarray*}
\nonumber\frac{d}{dz}\det(1+A(z)) &=&
\tr A' +\tr(A'\otimes A+ A \otimes A') \\
&&+\tr(A'\otimes A \otimes A + A\otimes A' \otimes A +A\otimes A \otimes A' )+\cdots.
\end{eqnarray*}
\end{theorem}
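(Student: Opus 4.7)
The strategy is to approximate $\det(I+A(z))$ by partial sums of its defining series and exploit the fact that uniform limits on compacta of analytic functions remain analytic. Writing
\begin{equation*}
S_N(z) := 1 + \sum_{k=1}^{N} \tr\bigl(\Gamma^{(k)}(A(z))\bigr),
\end{equation*}
I would first establish analyticity of each $S_N$ on $D$, then uniform convergence $S_N \to \det(I+A)$ on compact subsets of $D$, and finally identify the derivative by termwise differentiation.

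For the first step, the key observation is that $A \mapsto \tr \Gamma^{(k)}(A)$ is a continuous homogeneous polynomial of degree $k$ on $\mathcal{B}_1(\mathcal{H})$. Its polarization is the symmetric continuous $k$-linear form obtained by symmetrizing $(A_1,\dots,A_k)\mapsto \tr[A_1\otimes\cdots\otimes A_k\big|_{\bigwedge^k\mathcal{H}}]$, and is bounded by $\prod_i \|A_i\|_1/k!$ via the same estimate that gives $|\tr\Gamma^{(k)}(A)|\le \|A\|_1^k/k!$ recorded in Section \ref{pre_lem_ineq_sec}. Near any $z_0 \in D$, analyticity of $A$ provides a trace-norm-convergent expansion $A(z) = \sum_{n\ge 0} (z-z_0)^n B_n$ with $B_n \in \mathcal{B}_1(\mathcal{H})$. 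Substituting this expansion into the $k$-linear form and expanding multilinearly yields an absolutely (trace-norm) convergent power series in $z-z_0$ for $\tr\Gamma^{(k)}(A(z))$, establishing analyticity of this term and hence of $S_N$.

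For the second step, on any compact $K \subset D$ the continuous function $z \mapsto \|A(z)\|_1$ attains a finite maximum $M = M(K)$. The tail estimate
\begin{equation*}
\bigl|\det(I+A(z)) - S_N(z)\bigr| \;\leq\; \sum_{k=N+1}^{\infty} \frac{M^k}{k!}
\end{equation*}
then holds uniformly in $z \in K$ and vanishes as $N \to \infty$. The classical Weierstrass theorem on uniform limits of analytic functions immediately gives analyticity of $\det(I+A(z))$ on all of $D$.

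For the derivative formula, uniform convergence of analytic functions on compacta also implies local uniform convergence of derivatives. Differentiating each $S_N$ via the chain rule applied to the continuous polynomial $\tr\Gamma^{(k)}$, and using the symmetric polarization above, yields
\begin{equation*}
\frac{d}{dz}\tr\Gamma^{(k)}(A(z)) = \sum_{j=1}^{k} \tr\bigl[A(z)\otimes\cdots\otimes A'(z)\otimes\cdots\otimes A(z)\bigr]_{\bigwedge^k\mathcal{H}},
\end{equation*}
with $A'(z)$ in the $j$-th slot; summing over $k$ recovers the stated expansion. The main technical obstacle is verifying that $\tr\Gamma^{(k)}$ is a continuous polynomial on $\mathcal{B}_1(\mathcal{H})$ with the expected polarization and that substituting an analytic $\mathcal{B}_1$-valued function preserves analyticity. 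This amounts to checking that polarization commutes with the trace-norm-convergent power series for $A(z)$, which is standard in the calculus of holomorphic maps between Banach spaces but requires careful bookkeeping on the antisymmetric tensor products $\bigwedge^k\mathcal{H}$.
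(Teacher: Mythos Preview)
Your outline is correct and follows essentially the same architecture as the paper: show each $\tr\Gamma^{(k)}(A(z))$ is analytic, bound the partial sums uniformly on compacta by $e^{\|A(z)\|_1}$, and invoke Weierstrass. The paper differs only in how it establishes analyticity of each term: rather than appealing to the abstract fact that a continuous homogeneous polynomial composed with a Banach-space-valued holomorphic map is holomorphic, it writes out the difference quotient $h^{-1}\{\Gamma^{(k)}(A(z+h))-\Gamma^{(k)}(A(z))\}$ directly, expands $(A+\delta)^{\otimes k}$ binomially, and shows convergence in $\mathcal{B}_1(\bigwedge^k\mathcal{H})$ to $\Gamma^{(1,k-1)}(A',A)$.

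The one place where your argument is softer than you indicate is the multilinear bound. You assert that the polarization of $\tr\Gamma^{(k)}$ is bounded by $\prod_i\|A_i\|_1/k!$ ``via the same estimate'' as the diagonal case in Section~\ref{pre_lem_ineq_sec}. This is precisely the content of the paper's Lemma~\ref{Gamma_lemma}, and it does not follow from the diagonal bound by polarization alone (polarization of a degree-$k$ polynomial loses a factor $k^k/k!$). The paper proves it directly from the singular value decompositions of the $A_i$, and in particular checks that the symmetrized operator $\sum_{\pi}A_{\pi(1)}\otimes\cdots\otimes A_{\pi(k)}$ genuinely maps $\bigwedge^k\mathcal{H}$ to itself---which is needed for your polarization to even make sense on the alternating product. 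Once you supply that lemma, your more abstract route and the paper's hands-on route are equivalent.
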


We first prove the following  useful 

\begin{lemma}\label{Gamma_lemma}
 Suppose $A_1,\ldots,A_k \in \mathcal{B}_1(\Hi)$. Then
$
 \Gamma(A_1,\ldots, A_k) = \sum_{\pi\in S_k}A_{\pi(1)}\otimes \cdots \otimes A_{\pi(k)}
$ maps $\bigwedge^k (\Hi)$ to $\bigwedge^k (\Hi)$ and $\Gamma(A_1,\ldots, A_k) \in \mathcal{B}_1(\bigwedge^k (\Hi))$ with norm
\begin{equation}\label{Gamma_ineq}
||\Gamma(A_1,\ldots, A_k)||_1 \leq ||A_1||_1 ||A_2||_1\cdots ||A_k||_1.
\end{equation}
\end{lemma}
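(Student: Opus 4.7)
The plan is to prove the trace-norm bound (\ref{Gamma_ineq}) by reducing the question to a controlled estimate on the antisymmetrizing projection $P_-:\bigotimes^k\Hi \to \bigwedge^k(\Hi)$, defined by $P_-=\frac{1}{k!}\sum_{\sigma\in S_k}\mathrm{sgn}(\sigma)\,\sigma$, where $\sigma$ permutes slots of the tensor product. First I would verify the algebraic identity
\begin{equation*}
\Gamma(A_1,\ldots,A_k)\,h = k!\,P_-(A_1\otimes\cdots\otimes A_k)\,h, \qquad h\in\bigwedge^k(\Hi).
\end{equation*}
The key computation is the commutation relation $\sigma(A_1\otimes\cdots\otimes A_k) = (A_{\sigma(1)}\otimes\cdots\otimes A_{\sigma(k)})\,\sigma$, together with $\sigma h = \mathrm{sgn}(\sigma)h$ for antisymmetric $h$; substituting, each summand $A_{\pi(1)}\otimes\cdots\otimes A_{\pi(k)}$ applied to $h$ rewrites as $\mathrm{sgn}(\pi)\,\pi\,(A_1\otimes\cdots\otimes A_k)\,h$, and summing over $\pi\in S_k$ rebuilds the right-hand side. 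As an immediate corollary, $\Gamma$ carries $\bigwedge^k(\Hi)$ into itself, settling the first assertion of the lemma.

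Given the identity, bound (\ref{Gamma_ineq}) reduces to the estimate
\begin{equation*}
\bigl\|P_-(A_1\otimes\cdots\otimes A_k)P_-\bigr\|_1 \leq \frac{1}{k!}\,\|A_1\|_1\cdots\|A_k\|_1
\end{equation*}
for the operator viewed on $\bigwedge^k(\Hi)$. To prove this I would expand each $A_i$ in its singular value decomposition $A_i=\sum_n s_n^{(i)}\,|\phi_n^{(i)}\rangle\langle\psi_n^{(i)}|$ with orthonormal families $\{\phi_n^{(i)}\},\{\psi_n^{(i)}\}$ and $\sum_n s_n^{(i)}=\|A_i\|_1$. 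Substituting, applying the triangle inequality for the trace norm, and using $\bigl\|\,|u\rangle\langle v|\,\bigr\|_1 = \|u\|\|v\|$, the estimate reduces to the unit-vector bound
\begin{equation*}
\bigl\|P_-(\phi_1\otimes\cdots\otimes\phi_k)\bigr\|\leq \frac{1}{\sqrt{k!}}.
\end{equation*}
This follows from the identity $\|P_-(\phi_1\otimes\cdots\otimes\phi_k)\|^2 = \frac{1}{k!}\det(\langle\phi_i,\phi_j\rangle)_{i,j}$, obtained by expanding $\langle P_-v,P_-v\rangle = \langle v,P_-v\rangle$ and collecting products of inner products over the symmetric group, combined with Hadamard's inequality $\det(\langle\phi_i,\phi_j\rangle)\leq\prod_i\|\phi_i\|^2$ which gives $\leq 1$ for unit vectors. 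Multiplying by the $k!$ from the identity recovers (\ref{Gamma_ineq}).

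The main subtlety is that the naive bound obtained by applying the triangle inequality directly to the defining permutation sum of $\Gamma$ gives $k!\,\|A_1\|_1\cdots\|A_k\|_1$, which is off by a factor of $k!$ from what is claimed. The saving comes from the antisymmetry of the domain: the $k!$ produced by the permutation sum is exactly cancelled by the $1/\sqrt{k!}$ contraction that Hadamard's inequality extracts from every antisymmetrized elementary tensor. Once this cancellation is isolated, the entire proof is an organized package consisting of the algebraic identity, the singular value decomposition, the triangle inequality for $\|\cdot\|_1$, and the Gram determinant computation; no further analytic input is needed.
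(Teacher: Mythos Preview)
Your proof is correct and is essentially the same as the paper's: both use the singular value decomposition of the $A_i$ together with the Gram--Hadamard bound $\|P_-(\phi_1\otimes\cdots\otimes\phi_k)\|^2=\frac{1}{k!}\det(\langle\phi_i,\phi_j\rangle)\le\frac{1}{k!}$ on antisymmetrized unit tensors. The only organizational difference is that you first isolate the clean identity $\Gamma|_{\bigwedge^k}=k!\,P_-(A_1\otimes\cdots\otimes A_k)P_-$ and then bound, whereas the paper computes the rank-one expansion of $\Gamma$ on wedge products directly (their formula (\ref{blah})) and reads off the same estimate; the underlying content is identical.
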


\begin{proof}
Since $A_j$ are trace class, they are also compact. Compact operators have singular value decompositions, which is to say that for each $j\in 1,\ldots, k$ there exists a decomposition of $A_j$ as
\begin{equation*}
A_j = \sum_{i\geq 1} a_{ji} (\alpha_{ji},\bullet)\alpha_{ji}',
\end{equation*}
where $a_{ji}\geq 0$, $\sum_{i=1}^{\infty} a_{ji}<\infty$, and $\{\alpha_{ji}\}$ as well as $\{\alpha_{ji}'\}$ are orthonormal. For $u_1,\ldots, u_k\in \Hi$, we write
\begin{equation*}
u_i\wedge \cdots \wedge u_k = \frac{1}{\sqrt{k!}} \sum_{\sigma\in S_k} \sgn(\sigma) u_{\sigma(1)}\otimes \cdots \otimes u_{\sigma(k)} \in \bigwedge^k (\Hi).
\end{equation*}
We will show in a moment that
\begin{equation}\label{blah}
 \Gamma(A_1,\ldots, A_k) u_1\wedge u_2\wedge \cdots \wedge u_k=\sum_{i_1,\ldots, i_k\geq 1}\prod_{l=1}^{k} a_{l,i_l}  \left((\bigwedge_{l=1}^{k} \alpha_{l,i_l}) , (\bigwedge_{l=1}^{k} u_l)\right) \bigwedge \alpha_{l,i_l}'.
\end{equation}
Hence, as linear combinations of $u_1\wedge \cdots \wedge u_k$ are dense in $\bigwedge^k (\Hi)$, we have
\begin{equation*}
\Gamma(A_1,\ldots,A_k) = \sum_{i_1,\ldots, i_k\geq 1} a_{1,i_1}\cdots a_{k,i_k} (\alpha_{1,i_1}\wedge \cdots \wedge \alpha_{k,i_k} ,\bullet) \alpha_{1,i_1}'\wedge \cdots \wedge \alpha_{k,i_k}',
\end{equation*}
which is the generalization of the singular value decomposition to the alternating product of operators.
As $||(u,\bullet)v||_{\mathcal{B}_1} = |(u,v)| \leq ||u||\cdot ||v||$ for any rank 1 operator in a Hilbert space, 
\begin{equation*}
||\Gamma(A_1,\ldots,A_k)||_{\mathcal{B}_1(\bigwedge^k(\Hi))} \leq \sum_{i_1,\ldots ,i_k\geq 1} a_{1,i_1}\cdots a_{k,i_k}  = ||A_1||_{\mathcal{B}_1} \cdots ||A_k||_{\mathcal{B}_1},\end{equation*}
 as
\begin{align}\nonumber
& ||(\alpha_{1,i_1}\wedge \cdots \wedge \alpha_{k,i_k},\bullet) \alpha_{1,i_1}'\wedge \cdots \wedge \alpha_{k,i_k}'||_{\mathcal{B}_1(\bigwedge^k (\Hi))}\\& \qquad \leq ||\alpha_{1,i_1}\wedge \cdots\wedge \alpha_{k,i_k}||\cdot ||\alpha_{1,i_1}'\wedge \cdots \wedge \alpha_{k,i_k}'||\leq 1.
\end{align}
This proves equation (\ref{Gamma_ineq}).
It remains to proves (\ref{blah}). Note that the left hand side can be written as 
\begin{align}
& \frac{1}{\sqrt{k!}} \sum_{\sigma\in S_k} \sum_{\pi\in S_k} \sgn(\sigma) (A_{\pi(1)}\otimes \cdots \otimes A_{\pi(k)}) u_{\sigma(1)}\otimes \cdots \otimes u_{\sigma(k)}\\ \nonumber
&= \sum_{i_1,\ldots, i_k\geq 1} \frac{1}{\sqrt{k!}} \sum_{\sigma,\pi \in S_k} \sgn(\sigma) \prod_{l=1}^{k} a_{\pi(l),i_l} \bigotimes_{l=1}^k ((\alpha_{\pi(l),i_l},\bullet)\alpha_{pi(l),i_l}')\bigotimes_{l=1}^{k} u_{\sigma(l)}
\end{align}
We recognize that $\sum_{\sigma\in S_k} \sgn(\sigma)\prod_{l=1}^{k} (\alpha_{\pi(l),i_l},u_{\sigma(l)})=\det\left[ (\alpha_{\pi(l),i_l},u_m)\right]_{l,m=1}^{k}$ so that, after a permutation, the whole thing becomes
\begin{align*}
&\sum_{i_1,\ldots, i_k\geq 1} \frac{1}{\sqrt{k!}} \sum_{\pi \in S_k} \sgn(\pi) \prod_{l=1}^{k} a_{l,i_l} \det\left[ (\alpha_{l,i_l},u_m)\right]_{l,m=1}^{k} \bigotimes_{l=1}^{k} \alpha_{\pi(l),i_{\pi(l)}}'\\
&=\sum_{i_1,\ldots, i_k\geq 1}\prod_{l=1}^{k} a_{l,i_l}  \left((\bigwedge_{l=1}^{k} \alpha_{l,i_l}) , (\bigwedge_{l=1}^{k} u_l)\right) \frac{1}{\sqrt{k!}} \sum_{\pi\in S_k} \sgn(\pi) \bigotimes_{l=1}^{k} \alpha_{\pi(l),i_{\pi(l)}}'\\
&=\sum_{i_1,\ldots, i_k\geq 1}\prod_{l=1}^{k} a_{l,i_l}  \left((\bigwedge_{l=1}^{k} \alpha_{l,i_l}) , (\bigwedge_{l=1}^{k} u_l)\right) \bigwedge \alpha_{l,i_l}'.
\end{align*}
which gives  (\ref{blah}).
\end{proof}

Now let $A,B\in \mathcal{B}_1(\Hi)$. For $l,m\geq 0$, $k=l+m$, define
\begin{equation*}
\Gamma^{(l,m)} (A,B) = \frac{1}{l!m!} \Gamma(A,\ldots, A, B,\ldots B),
\end{equation*}
where there are $l$ $A$'s and $m$ $B$'s. Clearly $\Gamma^{(l,m)} (A,B) = \sum c_1\otimes \cdots \otimes c_k$ where the sum is over all $m+l \choose m$ ways of designating $l$ of the $c_i$'s as $A$ and the other $m$ as $B$. As an example, $\Gamma^{(1,2)}(A,B) = A\otimes B\otimes B + B\otimes A\otimes B + B\otimes B\otimes A$.

\begin{corollary}[Corollary to Lemma \ref{Gamma_lemma}]
\begin{equation*}
||\Gamma^{(l,m)} (A,B)||_{\mathcal{B}_1(\bigwedge^k(\Hi))} \leq \frac{||A||_1^l}{l!}\frac{||B||_1^m}{m!}.
\end{equation*}
\end{corollary}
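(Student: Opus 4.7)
The plan is to reduce this directly to Lemma \ref{Gamma_lemma}. By the very definition,
\begin{equation*}
\Gamma^{(l,m)}(A,B) = \frac{1}{l!\,m!}\,\Gamma(\underbrace{A,\ldots,A}_{l},\underbrace{B,\ldots,B}_{m}),
\end{equation*}
so I would first apply Lemma \ref{Gamma_lemma} with the $k=l+m$ arguments $A_1=\cdots=A_l=A$ and $A_{l+1}=\cdots=A_{l+m}=B$. Inspection of the proof of that lemma shows that the bound
\begin{equation*}
\|\Gamma(A_1,\ldots,A_k)\|_1 \le \|A_1\|_1\cdots\|A_k\|_1
\end{equation*}
makes no use of the $A_j$ being distinct, so it specializes to
\begin{equation*}
\|\Gamma(\underbrace{A,\ldots,A}_{l},\underbrace{B,\ldots,B}_{m})\|_{\mathcal{B}_1(\bigwedge^{k}(\Hi))} \le \|A\|_1^{l}\,\|B\|_1^{m}.
\end{equation*}

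Next I would use the fact that the trace norm is homogeneous of degree one, so dividing by the scalar $l!\,m!$ yields
\begin{equation*}
\|\Gamma^{(l,m)}(A,B)\|_{\mathcal{B}_1(\bigwedge^{k}(\Hi))} = \frac{1}{l!\,m!}\,\|\Gamma(\underbrace{A,\ldots,A}_{l},\underbrace{B,\ldots,B}_{m})\|_{\mathcal{B}_1(\bigwedge^{k}(\Hi))} \le \frac{\|A\|_1^{l}}{l!}\,\frac{\|B\|_1^{m}}{m!},
\end{equation*}
which is the claimed inequality.

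There is no real obstacle here beyond checking the combinatorial identification: the $k!$ permutations in $\Gamma(A,\ldots,A,B,\ldots,B)$ collapse to $\binom{k}{l}$ distinct tensor products, each appearing $l!\,m!$ times, which is exactly the prefactor absorbed into the definition of $\Gamma^{(l,m)}$. Since this is a direct specialization and scaling, the proof is essentially a single observation and I would present it as such.
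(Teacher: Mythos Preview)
Your proof is correct and is exactly the argument the paper intends: the corollary is stated without proof because it follows immediately from Lemma~\ref{Gamma_lemma} by specializing $A_1=\cdots=A_l=A$, $A_{l+1}=\cdots=A_k=B$ and using the homogeneity of the trace norm to absorb the $1/(l!\,m!)$ prefactor.
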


We can now proceed with:
\begin{proof}[Proof of Theorem \ref{Fredholm_Analytic}]
Fix $z\in D$ and let $A(z+h) = A(z) + \delta = A +\delta$. For $k\geq 1$,
\begin{align*}
\nonumber&A(z+h)\otimes \cdots \otimes A(z+h)= A+\delta \otimes \cdots \otimes A+\delta= A\otimes\cdots \otimes A + \Gamma^{(1,k-1)}(\delta, A) +\\ & +\Gamma^{(2,k-2)}(\delta,A) +\cdots + A^{(l,k-l)}(\delta, A)+\cdots + \delta \otimes \cdots \otimes \delta.
\end{align*}
Thus
\begin{equation*}\nonumber
h^{-1} \left\{ A(z+h)\otimes \cdots \otimes A(z+h) - A(z)\otimes \cdots \otimes A(z)\right\} = A^{(1,k-1)}({\delta}/{h},A)+\Delta(h),
\end{equation*}
where, by the Corollary,
\begin{equation*}
||\Delta(h)||_{\mathcal{B}_1(\bigwedge^k(\Hi))}\leq \frac{1}{|h|}  \frac{||\delta||_1^2}{2} \frac{||A||_1^{k-2}}{(k-2)!} +\cdots+\frac{1}{|h|}\frac{||\delta||_1^k}{k!}.
\end{equation*}
Observe that $||\delta||_1 = ||A(z+h)-A(z)||_1 = O(h)$. Write
\begin{equation*}
 A^{(1,k-1)}(\frac{\delta}{h},A) = \Gamma^{(1,k-1)}(A',A) + \Gamma^{(1,k-1)}(\frac{A(z+h)-A(z)}{h}-A'(z),A),
\end{equation*}
and then observe that by the Corollary
\begin{align}
&||\Gamma^{(1,k-1)}(\frac{A(z+h)-A(z)}{h}-A'(z),A(z))||_{\mathcal{B}_1(\bigwedge^k(\Hi))}\\
\nonumber&\leq ||\frac{A(z+h)-A(z)}{h}-A'(z)||_{\mathcal{B}_1} \frac{1}{(k-1)!}||A(z)||_{\mathcal{B}_1}^{k-1} = O(h).
\end{align}
Combining these observations shows that
\begin{equation*}\nonumber
h^{-1}\left\{A(z+h)\otimes \cdots \otimes A(z+h) - A(z)\otimes \cdots \otimes A(z)\right\}  = \Gamma^{(1,k-1)}(A',A)+O(h),
\end{equation*}
and hence the function $z\mapsto A(z)\otimes \cdots \otimes A(z) = \Gamma^{(k)}(A(z))$ is an analytic map from $D$ to $\mathcal{B}_1(\bigwedge^k (\Hi))$ for all $k\geq 1$ and
\begin{equation*}\nonumber 
\frac{d}{dz} A(z)\otimes \cdots \otimes A(z) = \Gamma^{(1,k-1)}(A',A) = A'\otimes A\otimes \cdots \otimes A +\cdots + A\otimes \cdots \otimes A\otimes A'.
\end{equation*}
 It then follows that $z\mapsto \tr \Gamma^{(k)}(A(z))$ is analytic for $k\geq 1$ from $D$ to $\C$.
Hence for any $n\geq 1$,
$
1+\sum_{k=1}^{n} \tr \Gamma^{(k)}(A(z))
$
 is analytic in $D$ and
\begin{equation*}\nonumber 
|1+\sum_{k=1}^{n} \tr \Gamma^{(k)}(A(z))|\leq 1+\sum_{k=1}^{n} ||\Gamma^{(k)}(A(z))||_{\mathcal{B}_1(\bigwedge^k(\Hi))}\leq 1+ \sum_{k=1}^{n}\frac{||A(z)||_{\mathcal{B}_1(\bigwedge^k(\Hi))}^k}{k!},
\end{equation*}
which is bouned by  $e^{||A(z)||}$, and so for $z$ in a compact subset of $D$, the functions $1+\sum_{k=1}^{n} \tr \Gamma^{(k)}(A(z))$ are uniformly bounded in $n$. It follows that $z\mapsto \det(I+A(z))=\lim_{n\rightarrow \infty} \sum_{k=0}^{n} \tr\Gamma^{(k)}(A(z))$ is analytic in $D$ and
\begin{align}
\nonumber&\frac{d}{dz} \det(I+A(z)) = \lim_{n\rightarrow \infty} \sum_{k=0}^{n} \frac{d}{dz} \tr \Gamma^{(k)}(A(z))=\sum_{k=1}^{\infty} \tr(\Gamma^{(1,k-1)}(A'(z),A(z)))\\
\nonumber&= \sum_{k=1}^{\infty} \tr (A'(z)\otimes A(z)\otimes \cdots \otimes A(z) + \cdots +A(z)\otimes \cdots\otimes A'(z)).
\end{align}
\end{proof}



\ack 
We would like to thank Percy Deift for his ongoing support and assistance with this project, as well as travel funding he provided to IC. We thank Craig Tracy and Harold Widom for discussing this matter during a visit in the summer of 2009, and further thank Tracy for ongoing interest and support.  JQ and GA wish to thank Kostya Khanin and Balint Vir\'ag for many interesting discussions and encouragement, and Tom Bloom and Ian Graham for their helpful suggestions on function theory. IC wishes to thank G\'{e}rard Ben Arous and Antonio Auffinger for helpful discussions and comments as well as Sunder Setheraman for an early discussion about the WASEP crossover. IC also wishes to thank all of the participants of the ASEP seminar which occurred during the 2008-2009 year. This collaboration was initially encouraged by Ron Peled, and we thank him graciously for playing matchmaker. We also thank Alex Bloemendal and David and Nora Ihilchik for hosting IC during his visits to Toronto. GA and JQ are supported by the Natural Science and Engineering Research Council of Canada.
IC is funded by the NSF graduate research fellowship, and has also received support from the PIRE grant OISE-07-30136.


\frenchspacing
\bibliographystyle{plain}

\end{document}